\documentclass[12pt]{article}
\usepackage{a4wide}
\usepackage{amsmath, amsthm, amsfonts, amssymb, bbm}
\usepackage[mathscr]{eucal}
\usepackage{graphics}
\usepackage{xypic}
\usepackage[all]{xy}\xyoption{rotate}
\usepackage[english]{babel}
\usepackage[font=small,format=plain,labelfont=bf,up]{caption}
\usepackage[bbgreekl]{mathbbol}
\usepackage{tikz}
\usetikzlibrary{matrix,arrows}

\usepackage{hyperref}       
\hypersetup{       
   pdftex,
   colorlinks=true,        
   pdfstartview=FitH,      
   allcolors=[rgb]{0,0.3,0.6},        
   bookmarks=true,          
	 bookmarksdepth=section, 
   bookmarksopen=false,     
   bookmarksnumbered=true, 
	 backref, pagebackref   
}

\allowdisplaybreaks

\theoremstyle{plain}

\newtheorem{theorem}{Theorem}
\newtheorem{lemma}[theorem]{Lemma}
\newtheorem{proposition}[theorem]{Proposition}
\newtheorem{conjecture}[theorem]{Conjecture}
\newtheorem{corollary}[theorem]{Corollary}

\theoremstyle{definition}

\newtheorem{remark}[theorem]{Remark}

\newtheorem{definition}[theorem]{Definition}

\numberwithin{equation}{section}
\numberwithin{theorem}{section}

\newcommand\void[1]{}


\bibliographystyle{amsalpha}

\newcommand\be            {\begin{equation}}
\newcommand\ee            {\end{equation}}

\def\ZZ{{\mathbb Z}}
\def\RR{{\mathbb R}}
\def\CC{{\mathbb C}}
\def\NN{{\mathbb N}}

\newcommand{\rmi}{\mathrm{i}}

\newcommand{\tensor}{\otimes}

\newcommand{\Ncop}{\Delta_{t}}
\newcommand{\one}{\boldsymbol{1}}

	\newcommand{\re}[1]{[\hspace*{-1.7pt}[#1]\hspace*{-1.7pt}]}
\newcommand{\e}[1]{e_{#1}}

\newcommand{\ffrac}[2]{\mbox{\footnotesize $\displaystyle\frac{#1}{#2}$}}
\newcommand{\half}{%
  \mathchoice{\ffrac{1}{2}}{\frac{1}{2}}{\frac{1}{2}}{\frac{1}{2}}}

\newcommand{\SingTyp}[1]{\mathcal{F}_{#1}}  
\newcommand{\SingAtyp}[1]{\mathcal{M}_{#1}} 
\newcommand{\SingStag}[1]{\mathcal{P}_{#1}} 
\newcommand{\TripIrr}[1]{\mathcal{W}_{#1}}  
\newcommand{\TripTyp}[1]{\mathcal{V}_{#1}}
\newcommand{\TripStag}[1]{\mathcal{R}_{#1}} 

\newcommand{\SingAlg}[1]{\mathcal{M} \bigl( #1 \bigr)}                     
\newcommand{\TripAlg}[1]{\mathcal{W} \bigl( #1 \bigr)}                     

\newcommand{\UqgTyp}[1]{V_{#1}} 
\newcommand{\UqgStag}[1]{P_{#1}} 

\newcommand{\UQG}[1]{\overline{U}_{#1}^H s\ell(2)} 

\newcommand{\rep}{\mathrm{\bf Rep}\,}
\newcommand{\repsimple}{\mathrm{\bf Rep}_{\langle\mathrm{s}\rangle}\,}
\newcommand{\repsimpleinfinity}{\mathrm{\bf Rep}^{\oplus}_{\langle\mathrm{s}\rangle}\,}
\newcommand{\repfd}{\rep^{\!\mathrm{fd}}}

\newcommand{\repwt}{\rep_{\!\mathrm{wt}\,}}
\newcommand{\repfdwt}{\rep^{\!\mathrm{fd}}_{\!\mathrm{wt}\,}}

\newcommand{\eps}{\epsilon}

\newcommand{\q}{q}

\newcommand{\catU}{\mathcal{C}}

\newcommand{\catUfd}{\catU^{\mathsf{fd}}}

\newcommand{\catQ}{\mathcal{D}}

	\newcommand{\algC}{\Lambda}

\newcommand{\repS}{\mathsf{S}}

\newcommand{\stprp}{\mathsf{a}}
\newcommand{\atprp}{\mathsf{v}}

\newcommand{\Cmod}{{}_\algC\catU_p}
	\newcommand{\Cmodl}{{}_{\algC}(\catU_{\mathit{p}}^\oplus)^\mathrm{fg\text{-}loc}}

\newcommand{\funF}{\mathcal{F}}
\newcommand{\funG}{\mathcal{G}}

\newcommand{\idem}{\boldsymbol{e}}

\newcommand{\catTrip}[1]{\rep\TripAlg{#1}}

\newcommand{\UresSL}[1]{\overline{U}_{q} s\ell(#1)}

\newcommand{\SLiiZ}{SL(2,\ZZ)}

\newcommand{\Q}{\overline{U}^{(\Phi)}_{q} s\ell(2)}  
\newcommand{\Qb}{\overline{U}^{(\Phi,\zeta)}_{q} s\ell(2)}  

\newcommand{\modO}{\mathcal{O}}
\newcommand{\modX}{\mathcal{X}}


\newcommand{\fuse}{\mathbin{\boxtimes}}                                 


\newcommand{\ribbon}{{\boldsymbol{v}}}

\newcommand{\Salpha}{\boldsymbol{\alpha}}
\newcommand{\Sbeta}{\boldsymbol{\beta}}

 \newcommand{\id}{\mathrm{id}}

\newcommand{\brC}{c}

\newcommand{\dE}{E_{\catQ}}
\newcommand{\dF}{F_{\catQ}}

\newcommand{\lra}{\longrightarrow}
\newcommand{\dses}[5]{0 \lra #1 \overset{#2}{\lra} #3 \overset{#4}{\lra} #5 \lra 0} 

\newcommand{\tensorC}{\tensor_\algC}
\newcommand{\tensorL}{\tensor_\Lambda}
\newcommand{\cat}{\mathcal{C}}
\newcommand{\catD}{\mathcal{D}}
\newcommand{\fun}{\mathcal{F}}

\newcommand{\assD}{\alpha^{\catD}}
\newcommand{\flip}{\tau}                    

\newcommand{\gcg}{\ , \ } 
\newcommand{\gp}{\ .}
\newcommand{\gc}{\ ,}

\newcommand{\assoc}{\alpha} 
\newcommand{\repQ}{\catQ_p}

\newcommand{\tfunF}{\tilde\funF}

\newcommand{\eq}{\mathrm{eq}}
\newcommand{\im}{\mathrm{im}}
\newcommand{\Span}{\mathrm{span}}

\newcommand{\Hplus}{\mathcal{H}^\mathrm{\oplus}}
\newcommand{\RS}{\mathsf{S}}

\sloppy

\title{A quasi-Hopf algebra for the \\
triplet vertex operator algebra}

\author{Thomas Creutzig\thanks{Department of Mathematical and Statistical Sciences, University of Alberta,
Edmonton, Alberta  T6G 2G1, Canada. 
}
~~,~~~
Azat M. Gainutdinov\thanks{Laboratoire de Math\'ematiques et Physique Th\'eorique CNRS, Universit\'e de Tours,
Parc de Grammont, 37200 Tours, 
France.
}\, \thanks{Institut f\"ur Mathematik,
Mathematisch-naturwissenschaftliche Fakult\"at,
Universit\"at Z\"urich, Winterthurerstr.\,190, CH-8057 Z\"urich, Switzerland.}\, ${}^{\S}$
~,~~~
Ingo Runkel\thanks{Fachbereich Mathematik, Universit\"at Hamburg, Bundesstr.\,55, 20146 Hamburg, Germany.}
}

\date{}

\begin{document}

\newcommand {\tr}{\text{tr}}
\newcommand {\ch}{\text{ch}}

\maketitle

\begin{abstract}
We give a new factorisable ribbon quasi-Hopf algebra $U$, whose underlying algebra is that of the restricted quantum group for $s\ell(2)$ at a $2p$'th root of unity. The representation category of $U$ is conjecturally ribbon-equivalent to that  of the triplet vertex operator algebra $\TripAlg{p}$. 
We obtain $U$ via a simple current extension from the unrolled restricted quantum group at the same root of unity. The representation category of the unrolled quantum group is conjecturally equivalent to that of the singlet vertex operator algebra $\SingAlg{p}$, and our construction is parallel to extending $\SingAlg{p}$ to  $\TripAlg{p}$.

We illustrate the procedure in the simpler example of passing from the Hopf algebra for the group algebra $\CC\ZZ$ to a quasi-Hopf algebra for $\CC\ZZ_{2p}$, which corresponds to passing from the Heisenberg vertex operator algebra to a lattice extension.
\end{abstract}

\thispagestyle{empty}

\newpage 

\tableofcontents

\newpage

\section{Introduction}

We define a new factorisable ribbon quasi-Hopf algebra,
 called $\Q$, which is a modification of $\UresSL2$, the restricted quantum group for $s\ell(2)$ with $q$ the primitive $2p$'th root of unity $q = e^{i \pi/p}$ and $p \ge 2$ an integer,
and $\Phi$ is a 3-cocycle of $\ZZ_{2p}$. 
The quasi-Hopf algebra $\Q$ is interesting for at least two reasons, which we explain in turn. 

The first point of interest is the relation to $\TripAlg{p}$, the triplet vertex operator algebra (VOA),
which provides  one of the best investigated logarithmic conformal field theories \cite{Kausch:1990vg,GK,FHST,Carqueville:2005nu,AM2}.
The relation between $\UresSL2$ and $\TripAlg{p}$ has first been observed in \cite{FGST}. The equivalence
\be\label{eq:intro-equiv}
	\rep \UresSL2 ~\cong~\rep \TripAlg{p}
\ee
of $\CC$-linear categories was conjectured in~\cite{Feigin:2005xs} (with proof for $p=2$) and shown for general $p$ in \cite{Nagatomo:2009xp}.
 However, the category $\rep \TripAlg{p}$ is a braided tensor category \cite{HLZ,H}, while $\rep \UresSL2$ is not braidable \cite{KS,GR1}. It is then an obvious question if there is some modification of the coproduct of $\UresSL2$ which leaves the algebra structure untouched  and which allows for a universal $R$-matrix, such that \eqref{eq:intro-equiv} has a chance of becoming a braided monoidal equivalence. This is achieved by the quasi-Hopf algebra $\Q$ which we now introduce.

\medskip

	As a $\CC$-algebra, 
$\Q$ is generated by $E$, $F$, $K^{\pm1}$ with relations 
\begin{align}
  &KEK^{-1}=q^2E
  \quad,\quad
  KFK^{-1}=q^{-2}F
  \quad,\quad
  [E,F]=\ffrac{K-K^{-1}}{q-q^{-1}}\ ,
  \nonumber \\
  &E^{p}=0=F^{p}
    \quad,\quad
  K^{2p}=\one 
\end{align}
and has dimension $2p^3$.
Define the central idempotents $\idem_0 = \tfrac12 (\one+K^p)$ and $\idem_1 =\one - \idem_0$. The coproduct is given by $\Delta(K) = K\otimes K$ and
\begin{align}
\Delta(E) &= E\otimes K + (\idem_0+\q\,\idem_1)\otimes E \ ,
\nonumber
\\
\Delta(F) &= F\otimes 1 +  (\idem_0+\q^{-1}  \idem_1)K^{-1}\otimes F \ .\end{align}
The counit is defined by $\eps(E)=0=\eps(F)$ and $\eps(K)=1$.
The coproduct is non-coassociative and we have the non-trivial coassociator \begin{equation}
\Phi = \one\otimes\one\otimes\one +  \idem_1\otimes\idem_1\otimes \bigl(K^{-1}-\one\bigr)\ .
\end{equation}
The antipode is
\be
S(E)= -EK^{-1}(\idem_0 + q\,\idem_1) ~~ , \quad
S(F) = -KF (\idem_0+q^{-1}\idem_1) ~~ , \quad
S(K) = K^{-1}\ .
\ee
and the evaluation and coevaluation elements are $\Salpha=\one$ 
and $\Sbeta=\idem_0 + K^{-1}\idem_1$. 
The universal $R$-matrix and the ribbon element for $\Q$ are 
	given in \eqref{eq:R-quasiH} and \eqref{eq:ribbon-quasiH}
(where one has to set the additional gauge parameter $t$ to $1$).	
Our first main result is (see Theorem~\ref{thm:equiv-1}\,(1)):

\begin{theorem}\label{thm:main1-intro}
$\Q$ is a factorisable ribbon quasi-Hopf algebra.
\end{theorem}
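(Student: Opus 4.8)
The plan is to verify the three layers of structure --- quasi-bialgebra, antipode, and quasi-triangular ribbon --- directly on the generators $E,F,K$, and then to establish the non-degeneracy needed for factorisability. The guiding observation is that $\Phi$, its inverse, the $R$-matrix of \eqref{eq:R-quasiH} and the ribbon element of \eqref{eq:ribbon-quasiH} are all built only from $K$ and the central idempotents $\idem_0,\idem_1$, so every axiom, once the generators $E,F$ have been commuted past, collapses to a finite identity in the commutative subalgebra $\CC[K]$. Concretely, I would first check that $\Delta$ respects the defining relations of $\Q$: the $K$-conjugation relations are immediate because $\idem_0,\idem_1$ commute with $K$ and $\Delta(K)=K\otimes K$, while $[E,F]=\tfrac{K-K^{-1}}{q-q^{-1}}$, $E^p=F^p=0$ and $K^{2p}=\one$ follow from short computations using $q^{2p}=1$ and the invertibility of $\idem_0+q^{\pm1}\idem_1$. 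Then I would verify that $\Phi$ is invertible with $\Phi^{-1}=\one\otimes\one\otimes\one+\idem_1\otimes\idem_1\otimes(K-\one)$, that $(\id\otimes\eps\otimes\id)(\Phi)=\one\otimes\one$, the mixed quasi-coassociativity identities $(\id\otimes\Delta)(\Delta(x))\cdot\Phi=\Phi\cdot(\Delta\otimes\id)(\Delta(x))$ for $x\in\{E,F,K\}$, and the pentagon equation for $\Phi$. Next I would check the antipode axioms with $\Salpha=\one$ and $\Sbeta=\idem_0+K^{-1}\idem_1$: the contraction identities $\sum S(x_{(1)})\Salpha\,x_{(2)}=\eps(x)\Salpha$ and $\sum x_{(1)}\Sbeta\,S(x_{(2)})=\eps(x)\Sbeta$ on generators, together with the two $\Phi$-twisted conditions $\sum X_1\Sbeta\,S(X_2)\Salpha\,X_3=\one$ and $\sum S(Y_1)\Salpha\,Y_2\Sbeta\,S(Y_3)=\one$, where $\Phi=\sum X_1\otimes X_2\otimes X_3$ and $\Phi^{-1}=\sum Y_1\otimes Y_2\otimes Y_3$.

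For the quasi-triangular ribbon structure I would, with the universal $R$-matrix of \eqref{eq:R-quasiH} at gauge $t=1$, verify $\Delta^{\mathrm{op}}(x)=R\,\Delta(x)\,R^{-1}$ on $E,F,K$ together with the two hexagon axioms expressing $(\Delta\otimes\id)(R)$ and $(\id\otimes\Delta)(R)$ in terms of $R_{12},R_{13},R_{23}$ and the relevant permutations of $\Phi^{\pm1}$. Then, for the ribbon element $v$ of \eqref{eq:ribbon-quasiH}, I would check that $v$ is central, $\eps(v)=1$, $S(v)=v$, $v^2=u\,S(u)$ for the Drinfeld element $u$ attached to $(R,\Phi,\Salpha,\Sbeta)$, and $\Delta(v)=(R_{21}R_{12})^{-1}(v\otimes v)$. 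As above, each of these reduces to a finite identity among $K,\idem_0,\idem_1$; the only genuine complication relative to the Hopf case is that the nontrivial $\Phi$ must be carried through every manipulation.

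Factorisability is the step I expect to be the main obstacle. I would show that the quasi-Hopf analogue of the Drinfeld map $\Q^{*}\to\Q$, $\phi\mapsto(\phi\otimes\id)(M)$ built from the monodromy matrix $M=R_{21}R_{12}$ (suitably corrected by $\Phi$), is a linear isomorphism; since $\dim\Q^{*}=\dim\Q=2p^{3}$ it suffices to prove injectivity, or equivalently to show that the M\"uger centre of $\rep\Q$ is trivial, i.e. the only objects with trivial double braiding are direct sums of the tensor unit. Since the underlying algebra of $\Q$ is exactly that of $\UresSL2$, whose block decomposition and $2p$ simple modules are completely known, one route is an explicit block-by-block computation of $M$ (or of the double braiding on the simples) verifying non-degeneracy of the induced pairing. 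A structurally cleaner route --- and the conceptual reason the construction succeeds --- is to realise $\rep\Q$ as the category of local modules over the commutative algebra implementing the simple-current extension of the ribbon category of weight modules for $\UQG{q}$, and to transport non-degeneracy of the braiding along this extension; the price is setting up that extension framework, but it also produces the braided structure on $\Q$ conceptually rather than by inspection, which is in the spirit of the paper's stated approach.
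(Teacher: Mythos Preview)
Your ``guiding observation'' is false, and the plan that hinges on it does not go through. You write that $\Phi$, its inverse, the $R$-matrix \eqref{eq:R-quasiH} and the ribbon element \eqref{eq:ribbon-quasiH} are all built only from $K$ and the central idempotents $\idem_0,\idem_1$. This is true of $\Phi$, but not of $R$ or $\ribbon$: the $R$-matrix carries the full nilpotent part $\sum_n \frac{(q-q^{-1})^n}{[n]!}\,q^{n(n-1)/2}\,K^sE^n\otimes K^rF^n$, and $\ribbon$ contains $F^nE^n$. Consequently the hexagon identities for $(\Delta\otimes\id)(R)$ and $(\id\otimes\Delta)(R)$, the intertwining relation $\Delta^{\mathrm{op}}(x)=R\,\Delta(x)\,R^{-1}$, and the ribbon conditions do \emph{not} collapse to finite identities in $\CC[K]$; they are genuine computations in the noncommutative algebra with nested sums over PBW monomials. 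A direct verification along these lines is possible in principle, but it is substantially more work than you indicate, and nothing in your outline suggests how you would organise it.

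The paper avoids this altogether. Its proof is precisely the ``structurally cleaner route'' you relegate to your last paragraph: it constructs an explicit $\CC$-linear equivalence $\funF\colon\repfd\Q\to\fgloc{\algC}{(\catU_p^\oplus)}$, equips $\funF$ with a multiplicative structure, and then \emph{solves} the diagrams \eqref{eq:transport-assoc-diag}--\eqref{eq:transport-twist} for $\Phi$, $R$ and $\ribbon$. Because the target category is already known to be ribbon, the resulting data automatically satisfy pentagon, hexagon and ribbon axioms (Remark~\ref{rem:solve-pent-hex-autom}); no direct check is needed. Only the antipode identities \eqref{eq:Salpha-1}--\eqref{eq:Salpha-2} are verified by hand, and those are indeed short. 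Factorisability is then shown by computing the element $\hat{\mathcal{D}}$ of \eqref{eq:Q-for-fact} from the monodromy matrix and reading off that it expands over a pair of bases of $\Q$. So the transport-of-structure argument is not an optional alternative for factorisability---it is the entire mechanism by which the paper establishes the ribbon quasi-Hopf structure in the first place.
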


Factorisability means that a certain element in $\Q^{\,\otimes 2}$ is non-degenerate -- for Hopf algebras this reduces to non-degeneracy of the monodromy matrix $R_{21}R$. We refer to \cite{BT} and to \cite[Rem.\,6.6]{FGR1} for details. 
It is natural to expect $\rep \TripAlg{p}$ to be factorisable, too (see e.g.\ \cite[Conj.\,5.7]{GnR2} and \cite[Conj.\,3.2]{CG}).
This leads to the conjecture (Corollary~\ref{cor:main} to Conjectures~\ref{conj:M-ribbon} and~\ref{conj:M-C}):

\begin{conjecture}\label{conj:trip-QG}
$\rep \Q \cong \rep \TripAlg{p}$ as $\CC$-linear ribbon categories.
\end{conjecture}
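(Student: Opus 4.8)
The plan is to deduce the conjecture from two more basic statements — one internal to the vertex-algebra side and one relating the singlet $\SingAlg{p}$ to the unrolled restricted quantum group $\UQG{q}$ — in exact parallel to the way $\Q$ is constructed from $\UQG{q}$ by a simple-current extension. Writing $(-)^{\mathrm{loc}}_A$ for the category of (finitely generated) local modules over a commutative algebra $A$ in a braided category, formed inside a suitable ind-completion when $A$ is infinite-dimensional, the target is the chain of ribbon equivalences
\begin{equation*}
\rep \Q \;\cong\; \mathcal{C}^{\mathrm{loc}}_{A'} \;\cong\; (\rep \SingAlg{p})^{\mathrm{loc}}_{A} \;\cong\; \rep \TripAlg{p},
\end{equation*}
where $\mathcal{C}$ is a suitable braided category of weight $\UQG{q}$-modules, $A\in\rep\SingAlg{p}$ is the commutative algebra realising the extension $\SingAlg{p}\subset\TripAlg{p}$, and $A'\in\mathcal{C}$ is its image under the conjectural equivalence $\mathcal{C}\simeq\rep\SingAlg{p}$.

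First I would invoke Conjecture~\ref{conj:M-C} in the form of a braided equivalence $\Psi\colon\mathcal{C}\xrightarrow{\;\sim\;}\rep\SingAlg{p}$, upgraded to a \emph{ribbon} equivalence by Conjecture~\ref{conj:M-ribbon}, the essential point of the latter being that the balancing of $\UQG{q}$ matches the exponentiated energy operator $e^{2\pi\rmi L_0}$ on $\SingAlg{p}$-modules. On the level of abelian categories, Grothendieck rings and even fusion rules this is largely accessible, since the block structure and fusion of $\SingAlg{p}$ are known; the genuinely hard part — and what I expect to be the main obstacle — is the identification of the associativity and braiding isomorphisms, which on the vertex-algebra side come from HLZ intertwining operators \cite{HLZ} and on the quantum-group side from the coassociator and universal $R$-matrix of $\UQG{q}$. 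This is precisely why the statement is a conjecture rather than a theorem.

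Next I would use that $\TripAlg{p}$ arises from $\SingAlg{p}$ as a lattice-type simple-current extension: there is a haploid commutative algebra $A$ in $\rep\SingAlg{p}$ assembled from the invertible (simple-current) objects, together with a ribbon equivalence $\rep\TripAlg{p}\cong(\rep\SingAlg{p})^{\mathrm{loc}}_{A}$; this is by now standard vertex-algebra extension theory and is available for the triplet, so I would simply cite it. Transporting $A$ through $\Psi$ produces a commutative algebra $A'=\Psi^{-1}(A)$ in $\mathcal{C}$, and here Conjecture~\ref{conj:M-ribbon}, together with an explicit comparison of the simple-current data on the two sides, is exactly what guarantees that $A'$ coincides with the algebra object out of which $\Q$ is built in this paper; by the corresponding structure result (Theorem~\ref{thm:equiv-1}) one then obtains a ribbon equivalence $\rep\Q\cong\mathcal{C}^{\mathrm{loc}}_{A'}$. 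Since passing to local modules over a fixed commutative algebra is functorial for braided, and hence ribbon, equivalences, composing $\rep\Q\cong\mathcal{C}^{\mathrm{loc}}_{A'}\xrightarrow{\;\Psi\;}(\rep\SingAlg{p})^{\mathrm{loc}}_{A}\cong\rep\TripAlg{p}$ gives the asserted $\CC$-linear ribbon equivalence.

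Finally, as a consistency check that does not rely on the full strength of Conjecture~\ref{conj:M-C}: the $\CC$-linear equivalence \eqref{eq:intro-equiv} already matches simple modules and their projective covers (the latter because $\Q$ and $\UresSL2$ have the same underlying algebra), so one can independently verify that the coassociator, universal $R$-matrix and ribbon element of $\Q$ reproduce the triplet's associativity constants, double braidings and the action of $e^{2\pi\rmi L_0}$ on these objects, and that factorisability of $\Q$ (Theorem~\ref{thm:main1-intro}) is reflected in the expected factorisability of $\rep\TripAlg{p}$. Such checks pin down the remaining ambiguity but do not by themselves prove the conjecture; the HLZ-level comparison in the first step is where the real work lies.
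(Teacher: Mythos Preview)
Your proposal is correct and follows essentially the same route as the paper: the paper presents this conjecture as Corollary~\ref{cor:main} to Conjectures~\ref{conj:M-ribbon} and~\ref{conj:M-C}, deducing it via the chain $\rep\Q \cong \fgloc{\algC}{(\catU_p^\oplus)} \cong {}_{\TripAlg{p}}(\repsimpleinfinity\SingAlg{p})^{\mathrm{fg\text{-}loc}} \cong \rep\TripAlg{p}$ exactly as you outline. The one point where the paper is slightly more careful than your sketch is the identification of the transported algebra $A' = \Psi^{-1}(\TripAlg{p})$ with $\algC$: rather than ``an explicit comparison of the simple-current data'', the paper first establishes that $\TripAlg{p}$ carries a non-degenerate invariant pairing (from self-contragredience and simplicity of the VOA), transports this to $A'$, and then invokes the uniqueness result Proposition~\ref{prop:comm-alg-unique} to conclude $A' \cong \algC$ as algebras --- so the algebra structure is pinned down by abstract properties rather than by matching structure constants.
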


In the way we presented it, the conjecture comes a bit out of the blue, but we will now provide some support for it by analysing fusion rules and by exploiting the connection to the singlet model (in diagram~\eqref{eq:intro-cat-diag} below).

Fusion rules for $\rep \TripAlg{p}$ (the product in the Grothendieck ring) are known \cite{TW}, see also \cite{FHST, GR2}, and one can compute from these the Perron-Frobenius dimension of all irreducible $\TripAlg{p}$-modules. They are given by the unique one-dimensional 
	representation of the fusion ring, 
for which all irreducibles are represented by positive real numbers. One finds that all Perron-Frobenius dimensions of $\rep \TripAlg{p}$ are positive integers.
Finite tensor categories with this property are necessarily equivalent to representation categories of a quasi-Hopf algebra, see e.g.\ \cite[Prop.\,6.1.14]{EGNO-book}, where dimensions of irreducibles are provided by the Perron-Frobenius dimensions.
Moreover, the Perron-Frobenius dimensions of $\rep \TripAlg{p}$  agree with the corresponding dimensions of the simple $\UresSL2$ modules under the equivalence~\eqref{eq:intro-equiv}.
It is thus clear from the start that Conjecture~\ref{conj:trip-QG} has to be true for some quasi-Hopf algebra with underlying algebra $\UresSL2$.

\begin{remark}\label{rem:intro-first}
For $p=2$, the triplet VOA agrees with the even part of
the super-VOA for a single pair of symplectic fermions. A quasi-Hopf algebra whose representation category is conjecturally ribbon equivalent to representations of the even part of the super-VOA for $N$ pairs of symplectic fermions was given in \cite{GR1,FGR2}, building on previous work on the symplectic fermions category in \cite{Runkel:2012cf,Davydov:2012xg}.
For $N=1$ this quasi-Hopf algebra is isomorphic to the one presented here (with $p=2$), see Remark~\ref{rem:p-odd}.
\end{remark}

The second point of interest is the relation to the unrolled quantum group $\UQG{q}$ and to the singlet VOA $\SingAlg{p}$. In fact, this relation was the main motivation to write this paper, and it can be summarised in the following diagram, whose ingredients we proceed to explain:
\be\label{eq:intro-cat-diag}
\raisebox{\height}{
\xymatrix{
\repwt \UQG{q} ~\ni \algC
\hspace*{2em}
\ar[d]_[right]{\sim}^{~\text{conjecture}} 
\ar@{~>}[rrr]^{\text{\begin{minipage}{5.5em}pass to local\\[-.5em] $\algC$-modules\end{minipage}}}
&&&
\hspace*{2em}
\algC\text{-mod}^\mathrm{loc}
	\ar[d]_[right]{\sim}^{~\text{consequence}} 
&\mbox{}\;\rep \Q
\ar[l]_\sim^{\mathcal{F}} 
\\
\repsimple \SingAlg{p} ~\ni \TripAlg{p}
\hspace*{2em}
\ar@{~>}[rrr]^{\text{\begin{minipage}{7.5em}pass to represen-
\\[-.5em]
tations of $\TripAlg{p}$\end{minipage}}}
&&&
\hspace*{2em}
\rep \TripAlg{p}
}}
\ee
Let us start in the upper left corner. The unrolled quantum group $\UQG{q}$ is defined in terms of generators and relations similar to the restricted quantum group $\UresSL2$, 
but with two important differences. 
Firstly, the relation $K^{2p}=\one$ is not imposed (so that $K$ has infinite order) and there is an additional generator $H$.
For the convention used here
 we refer to 
\cite[Sec.\,6.3]{GPT}, \cite{CGP} and Section~\ref{sec:unrolled-restricted} for details. 
Variants of the unrolled quantum group were also considered in~\cite{Oh} and in the spin chain literature, see e.g.~\cite{KS91} 
where $S_z$ plays the role of $H$ and $q^{S_z}$ that of $K$.
The algebra $\UQG{q}$ is infinite-dimensional, and it turns out that it has a continuum of simple modules. 

$\repwt \UQG{q}$ denotes the category of finite-dimensional modules which are of weight type. A module is of weight type if $H$ is diagonalisable and $K$ acts as $q^H$. This category can be turned into a ribbon category via an $R$-matrix and ribbon element defined directly on such modules
\cite{Oh,CGP} (see Section~\ref{sec:unrolled-restricted}).

The singlet VOA $\SingAlg{p}$ \cite{Fl, Ad}
is a sub-VOA of the triplet VOA $\TripAlg{p}$. 
We denote by $\repsimple\SingAlg{p}$ the smallest full subcategory of the category of $\SingAlg{p}$-modules which contains all simple modules and which is complete with respect to taking tensor products, finite sums and subquotients.
$\repsimple\SingAlg{p}$ is conjecturally a ribbon category
which satisfies \cite{CM, CGP, CMR}:\footnote{\label{thefootnote}
	As far as we are aware, this formulation of the conjecture is due to David Ridout and Simon Wood (private communication at the ESI Workshop ``Modern trends in TQFT'', February 2014).} 

\begin{conjecture}\label{conj:sing-QG}
$\repwt
 \UQG{q}
\cong \repsimple \SingAlg{p}$ as $\CC$-linear ribbon categories.
\end{conjecture}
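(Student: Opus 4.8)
The plan is to establish the claimed equivalence in three successive refinements --- first as a $\CC$-linear (abelian) equivalence, then as a tensor equivalence, and finally as a braided ribbon equivalence --- though the difficulty is very unevenly distributed. The category $\repwt\UQG{q}$ is entirely explicit: the algebra is infinite-dimensional but of weight type, so its simple, indecomposable and projective weight modules, the block decomposition, and the $R$-matrix and ribbon element are all obtained by direct computation with the triangular decomposition, just as for $\UresSL2$ but with $K$ of infinite order. Almost all of the real content therefore lies on the vertex-operator-algebra side, in controlling the logarithmic representation theory and the braided tensor structure of $\repsimple\SingAlg{p}$.

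\emph{Step 1: the $\CC$-linear equivalence.} First I would match the two categories object by object. On the quantum group side the simple weight modules fall into a finite family of ``atypical'' ones, built from the divided powers of $E$ and $F$, together with a one-parameter continuum of $p$-dimensional ``typical'' ones; the atypical blocks carry the doubly infinite $A_\infty^\infty$ quiver, the typical blocks are semisimple, and the indecomposable projectives are explicit. On the singlet side the structure of the simple $\SingAlg{p}$-modules, their self-extensions and Loewy filtrations is described by Adamovi\'c--Milas and by Creutzig--Milas \cite{AM2,CM,CMR}: the typical Fock-type modules are rigid and projective, the atypical ones sit in doubly infinite zig-zag towers. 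The identification is then pinned down by comparing (i) conformal weights modulo $\ZZ$ with the eigenvalue of the prospective ribbon element (equivalently, with the $H$-grading), and (ii) the $SL(2,\ZZ)$-action on (pseudo-)characters, which is known on both sides and must be intertwined. This step is close to what is already in the literature; the one subtlety is that, because of the continuum of simples and the absence of projective covers in $\repsimple\SingAlg{p}$, one really works in the direct-limit completion $\repsimpleinfinity\SingAlg{p}$, matched against the corresponding completion of $\repwt\UQG{q}$.

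\emph{Step 2: upgrade to a tensor equivalence.} Here one must first import, as an external input, that $\repsimple\SingAlg{p}$ is a braided tensor category at all: the Huang--Lepowsky--Zhang machinery \cite{HLZ,H} has to be shown to apply even though $\SingAlg{p}$ is not $C_2$-cofinite (this rests on a suitable $C_1$-cofiniteness / finiteness of the spaces of logarithmic intertwining operators, itself open for general $p$). Granting it, one computes the fusion rules of $\repsimple\SingAlg{p}$ --- partly known from Creutzig--Milas and accessible through the simple-current embedding $\SingAlg{p}\subset\TripAlg{p}$, whose fusion rules are known \cite{TW} --- and checks that they reproduce the tensor product of weight $\UQG{q}$-modules, which is purely combinatorial (tensoring with a typical module is ``free''; the remaining products follow truncated Clebsch--Gordan-type rules). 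One then has to show the associators agree: since $\UQG{q}$ is an honest, coassociative Hopf algebra, $\repwt\UQG{q}$ has trivial associator, so the task is to prove that the associativity isomorphisms of $\repsimple\SingAlg{p}$ --- given by analytically continued (regularised) products of intertwining operators --- are gauge-equivalent to the identity. This can be organised block by block and, using the free-boson (lattice) realisation of $\SingAlg{p}$ together with its screening operators, reduced to hypergeometric identities, in the spirit of the quasi-Hopf computations in the present paper and of the symplectic-fermion case. I expect this step --- both the Huang--Lepowsky--Zhang input and the explicit computation of the associativity, and below the braiding, structure constants for all $p$ --- to be the main obstacle; the quantum group side is elementary by comparison, and it is precisely the analytic control of logarithmic intertwiners for the singlet that keeps the statement a conjecture.

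\emph{Step 3: upgrade to a braided ribbon equivalence.} It remains to match braiding and twist. The twist on $\repsimple\SingAlg{p}$ acts as $e^{2\pi\rmi L_0}$ on each generalised $L_0$-eigenspace, and under the Step 1 identification of conformal weights with $H$-weights this must reproduce the ribbon element of $\UQG{q}$, which is built from $q^{-H^2/2}$ times the appropriate power of $K$. For the braiding, the $R$-matrix of $\repwt\UQG{q}$ factors on weight modules as $q^{H\otimes H/2}$ times a nilpotent sum $\sum_{n\ge0}c_n\,E^n\otimes F^n$; on the singlet side the first factor should be the monodromy of the free-boson vertex operators and the second the contribution of the screenings, and one matches the two on explicit intertwining operators, after which naturality and the hexagon axioms are automatic. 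Carrying the completion bookkeeping of Step 1 through Steps 2 and 3 then yields $\repwt\UQG{q}\cong\repsimple\SingAlg{p}$ as $\CC$-linear ribbon categories. As indicated in diagram~\eqref{eq:intro-cat-diag}, passing to local modules over the simple-current algebra on each side gives $\algC\text{-mod}^{\mathrm{loc}}\cong\rep\TripAlg{p}$, which combined with the equivalence $\mathcal{F}$ is Conjecture~\ref{conj:trip-QG}.
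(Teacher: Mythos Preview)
The statement you are trying to prove is a \emph{conjecture} in the paper --- it is stated as Conjecture~\ref{conj:sing-QG} in the introduction and restated as Conjecture~\ref{conj:M-C} in Section~\ref{sec:corr-VOA-qHopf}, and the paper offers no proof of it. Rather, the paper treats this conjecture as an \emph{assumption}: Corollary~\ref{cor:main} (the ribbon equivalence $\rep\TripAlg{p}\cong\rep\Q$) is explicitly derived as a consequence of Conjectures~\ref{conj:M-ribbon} and~\ref{conj:M-C}, and the supporting evidence listed in Section~\ref{sec:corr-VOA-qHopf} (matching Loewy diagrams, twist eigenvalues, Grothendieck fusion rules, and asymptotic dimensions) is presented as motivation, not as proof. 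There is therefore no ``paper's own proof'' to compare your proposal against.

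Your write-up is accordingly not a proof but a strategy outline, and you yourself identify the genuine gaps: in Step~2 you acknowledge that applicability of the Huang--Lepowsky--Zhang tensor-category machinery to $\repsimple\SingAlg{p}$ is open (this is exactly Conjecture~\ref{conj:M-ribbon}), that the fusion rules \eqref{CM-fusion} are conjectural for general $p$, and that the explicit computation of associativity and braiding isomorphisms from logarithmic intertwining operators has not been carried out. These are not technical lacunae that a referee would ask you to fill in --- they are precisely the open problems that make the statement a conjecture. What you have written is a reasonable roadmap (and overlaps substantially with the evidence the paper itself cites), but it should be presented as such, not as a proof.
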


Conjectures \ref{conj:trip-QG} and \ref{conj:sing-QG} are in fact not independent. This will be the content of the wiggly lines in \eqref{eq:intro-cat-diag} which we explain next. We start with the bottom one. We already said that $\SingAlg{p} \subset \TripAlg{p}$ is a sub-VOA, and so in particular,
$\TripAlg{p} \in \repsimple \SingAlg{p}$.\footnote{
	Actually, this is not quite right. $\TripAlg{p}$ is an infinite direct sum of $\SingAlg{p}$-modules, and it does not satisfy the finiteness conditions we impose on modules in $\repsimple \SingAlg{p}$. Instead, we should replace $\repsimple \SingAlg{p}$ by a certain completion. 
	The same caveat applies to $\algC \in \repwt \UQG{q}$.
We will treat this point carefully in the main text, but for the sake of the introduction we will gloss over it.}
Furthermore, one can think of $\TripAlg{p}$-modules as $\SingAlg{p}$-modules with extra structure. Indeed, one can even give an algebraic description of $\TripAlg{p}$-modules internal to $\repsimple \SingAlg{p}$. To present it, we need a little detour.

Let $\cat$ be a braided monoidal category and let $A \in \cat$ be an algebra, i.e.\ an object together with multiplication morphism $\mu\colon A \otimes A \to A$ and unit morphism $\eta \colon \one \to A$, subject to associativity and unit conditions. An $A$-module is an object $M \in \cat$ together with a morphism $\rho \colon A \otimes M \to M$, again subject to associativity and unit condition. 
We assume that $A$ is commutative, i.e.\ that $\mu \circ c_{A,A} = \mu$, where $c_{U,V}$ denotes the braiding in~$\cat$.
The key notion is this: an $A$-module $M$ is called \textsl{local} (or \textsl{dyslexic}) \cite{Pareigis:1995} if 
\be
	\rho \circ c_{M,A} \circ c_{A,M} ~=~ \rho \ .
\ee
In words, precomposing the action with the double-braiding has no effect.
	The full subcategory of local $A$-modules is a braided monoidal category, 
see \cite{Pareigis:1995} and Section~\ref{sec:local-modules} for more details.

We now return to the bottom wiggly line of \eqref{eq:intro-cat-diag}. It stands for: think of $\TripAlg{p}$ as a commutative algebra in $\repsimple \SingAlg{p}$; consider the braided monoidal category of its local modules; this is precisely $\rep \TripAlg{p}$. This is indeed known to be true for quite a general class of VOA extensions (see \cite{HKL,CKM} and Section~\ref{sec:singlet}), to which $\SingAlg{p} \subset \TripAlg{p}$ conjecturally belongs. 
The algebra $\TripAlg{p}$ has another important property, namely it allows for a non-degenerate invariant pairing, cf.\ Section~\ref{sec:corr-VOA-qHopf}.

\medskip

At this point we need to make no further conjectures to complete the diagram in~\eqref{eq:intro-cat-diag}. 
Let $\algC \in \repwt \UQG{q}$ be the preimage of $\TripAlg{p}$ under the ribbon equivalence in Conjecture~\ref{conj:sing-QG}. Then $\algC$ is again a commutative algebra. The detailed formulation of the conjecture involves stating the equivalence functor on objects,
in Section~\ref{sec:corr-VOA-qHopf},
 and so it produces the underlying object of $\algC$,
\be
	\algC = \bigoplus_{m \in \ZZ} \CC_{2pm} \ ,
\ee
where $\CC_{2pm}$ denotes the one-dimensional $\UQG{q}$-module where $E$ an $F$ act as zero, $K$ acts as the identity and $H$ by multiplication with $2pm$. Note that this is indeed a weight module as $q^H = q^{2pm} = 1 = K$.
We do not a priori know the product on $\algC$, but we show that up to isomorphism there is only one which is commutative and allows for a non-degenerate invariant pairing (Proposition~\ref{prop:comm-alg-unique}).
Commutative algebras which  -- like $\algC$ above --  are direct sums of invertible objects, are often called simple current extensions, cf.\ Remark~\ref{rem:Lamp-algebra}.
Our second main result is (Theorem~\ref{thm:equiv-1}\,(2)):

\begin{theorem}
There is a ribbon equivalence $\mathcal{F}
  \colon  
  \rep\Q \to
  \algC\text{-}\mathrm{mod}^\mathrm{loc}$, where $\algC\text{-}\mathrm{mod}^\mathrm{loc}$ denotes the category of local $\algC$-modules in $\repwt\UQG{q}$.\footnote{
	Here we are again glossing over some detail, as we must only consider finitely generated local modules. This is explained in the main text.}
\end{theorem}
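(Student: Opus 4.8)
The plan is to construct the functor $\mathcal{F}$ together with a quasi-inverse by hand, and then to check that transporting the ribbon structure of $\repwt\UQG{q}$ through this equivalence reproduces exactly the quasi-Hopf data of $\Q$ listed in the introduction. Since $\Q$ and $\UresSL2$ have the same underlying algebra, a $\Q$-module is just a finite-dimensional $\UresSL2$-module $V$, on which $K$ has eigenvalues among the $2p$-th roots of unity. I would define $\mathcal{F}(V):=\bigoplus_{j\in\ZZ}V^{[j]}$, where $V^{[j]}$ is a copy of the $q^{j}$-eigenspace of $K$ on $V$, with $H$ acting on $V^{[j]}$ by $j$, with $K=q^{H}$, with $E\colon V^{[j]}\to V^{[j+2]}$ and $F\colon V^{[j]}\to V^{[j-2]}$ induced from the action on $V$, and with $\algC=\bigoplus_{m\in\ZZ}\CC_{2pm}$ acting by the canonical weight shifts $\CC_{2pm}\otimes V^{[j]}\xrightarrow{\sim}V^{[j+2pm]}$. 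One verifies that $\mathcal{F}(V)$ lies in a suitable direct-sum completion of $\repwt\UQG{q}$, that it is finitely generated over $\algC$ with fibre $\bigoplus_{j=0}^{2p-1}V^{[j]}\cong V$, and that it is a local $\algC$-module; locality is in fact automatic because $q^{2p}=1$, so the monodromy of any $\CC_{2pm}$ with an integer weight space acts as the identity.

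For the quasi-inverse I would send a finitely generated local $\algC$-module $M$ to $\mathcal{G}(M):=\bigoplus_{j=0}^{2p-1}M_{j}$, the sum of the $H$-weight spaces in the fundamental domain $\{0,\dots,2p-1\}$ for $H\bmod 2p\ZZ$. Locality forces all $H$-weights of $M$ into $\ZZ$, and the fact that $\algC$ is a simple current extension---all multiplication maps $\CC_{2pm}\otimes\CC_{2pm'}\to\CC_{2p(m+m')}$ are isomorphisms, which uses the non-degenerate invariant pairing on $\algC$ from Proposition~\ref{prop:comm-alg-unique}---makes the action of $\CC_{2p}$ on $M$ an invertible weight shift; hence $M$ is free over $\algC$ on $\mathcal{G}(M)$. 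Then $\mathcal{G}(M)$ inherits a $\UresSL2$-action in which $K=q^{H}$ now has order dividing $2p$ (consistency uses $q^{2p}=1$) and the relations $E^{p}=F^{p}=0$ hold because they already hold in $\UQG{q}$ and the $\algC$-action commutes with $E,F$. This gives $\mathcal{G}(M)\in\rep\Q$, and $\mathcal{F},\mathcal{G}$ are mutually quasi-inverse $\CC$-linear abelian equivalences.

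It then remains to promote $\mathcal{F}$ to a braided, and finally ribbon, equivalence and to identify the transported structure with the data of $\Q$. On free modules, $\mathcal{F}(V)\tensorC\mathcal{F}(W)$ is the $\algC$-quotient of $\bigoplus_{j,k}V^{[j]}\otimes W^{[k]}$ with the $\UQG{q}$-action built from the coproduct of $\UQG{q}$; choosing weight representatives and pushing $E,F$ back into the fundamental domain requires $\algC$-shifts, so the $R$-matrix scalars of $\repwt\UQG{q}$ incurred in these shifts enter the comultiplication, while reading the associativity of $\tensorC$ against two different choices of representatives produces a nontrivial natural isomorphism on triple tensor products. This yields a quasi-Hopf structure on $\UresSL2$ in the ``weight-representative'' gauge; a gauge transformation then brings it to the normal form with coproduct $\Delta$, coassociator $\Phi=\one\otimes\one\otimes\one+\idem_1\otimes\idem_1\otimes\bigl(K^{-1}-\one\bigr)$, universal $R$-matrix \eqref{eq:R-quasiH} and ribbon element \eqref{eq:ribbon-quasiH} (with the gauge parameter $t$ set to $1$). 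The pentagon for $\Phi$ is the coherence of $\tensorC$, the hexagons and ribbon axioms hold because they hold in $\repwt\UQG{q}$ and descend to local modules, and factorisability is inherited in the same way (or quoted from Theorem~\ref{thm:main1-intro}).

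The main obstacle is twofold. First there is real bookkeeping behind the ``finitely generated local modules'' proviso: $\algC$ is infinite-dimensional, so one must set up a direct-sum completion of $\repwt\UQG{q}$ in which $\algC$ is an algebra object, define local and finitely generated $\algC$-modules there, and check that $\mathcal{F}$ lands among them and is essentially surjective---this is where the simple-current structure of $\algC$ is used crucially. Second, and more seriously, one must fix the equivalence of Conjecture~\ref{conj:sing-QG} on objects (which pins down $\algC$ and, by Proposition~\ref{prop:comm-alg-unique}, its product), choose a gauge, and carry out the finite but fiddly cocycle computation identifying the transported associator and $R$/ribbon data with the exact formulas in the introduction---in particular getting the idempotents $\idem_0,\idem_1$, the root-of-unity phases and the gauge parameter $t$ right, and verifying that the $3$-cocycle of $\ZZ_{2p}$ coming from the quadratic form on $2p\ZZ\subset\CC$ is cohomologous to $\Phi$. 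The $\CC\ZZ\to\CC\ZZ_{2p}$ warm-up and the symplectic-fermion case (Remark~\ref{rem:intro-first}) are the models for executing this last step.
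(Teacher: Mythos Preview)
Your proposal is correct and follows essentially the same route as the paper. The functors $\mathcal{F}$ and $\mathcal{G}$ you describe are exactly the paper's $\funF_\RS$ and $\funG_\RS$ (with $\RS=\{0,\dots,2p-1\}$), and your identification of the two obstacles---the direct-sum completion needed to house $\algC$ and its finitely generated local modules, and the explicit cocycle/gauge computation that pins down $\Delta$, $\Phi$, $R$, $\ribbon$---is accurate.

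One presentational difference worth noting: you propose to first transport the monoidal structure in whatever gauge the weight representatives dictate and then Drinfeld-twist to the normal form of $\Q$. The paper instead fixes the multiplicative structure $\funF_{M,N}$ by a judicious choice of constants $\zeta_{a,b}$ (depending on the parity of $a$ and on the odd parameter $t$) so that the intertwining condition for $E$ and $F$ directly forces the modified coproduct $\Delta_t$, and the coherence diagrams then yield $\Phi_t$, $R_t$, $\ribbon_t$ on the nose, with no subsequent twist. This is not a different argument, just a cleaner bookkeeping device: rather than proving two gauges are cohomologous, one picks the right gauge from the start. Both approaches rely on the same underlying observation you state---that the pentagon, hexagon and ribbon axioms are automatic because they hold in $\Cmodl$ and are transported along the equivalence.
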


This explains the top wiggly arrow and the arrow labelled $\mathcal{F}$ in \eqref{eq:intro-cat-diag}. In fact, we reverse-engineered the quasi-Hopf algebra $\Q$ based on the restricted quantum group $\UresSL2$ 
(recall the discussion after Conjecture~\ref{conj:trip-QG})
to make the equivalence $\mathcal{F}$ work.

Since the algebras $\algC$ and $\TripAlg{p}$ are related by a	(conjectural)
 ribbon equivalence, so are their categories of local modules. Hence the equivalence given by the right vertical arrow in \eqref{eq:intro-cat-diag} is indeed a consequence. 
But together with the equivalence $\mathcal{F}$, this is precisely the statement of Conjecture~\ref{conj:trip-QG}, so that this is actually a consequence of the other conjectures (and hence it is a corollary in the main text).

\subsubsection*{Outlook}

Here we collect a few direction for further investigation based on our results.

\medskip

\noindent
$\bullet$ 
The intersection of kernels of screening charges acting on the vacuum module 
 of lattice vertex algebras corresponding to rescaled root lattices of
Lie algebras $\mathfrak g$ give 
generalisations of the triplet vertex algebra, and -- after taking appropriate invariants -- of the singlet vertex algebra
\cite{FT, CM2, Lentner:2017dkg, Flandoli:2017}. 
Not much is known about these vertex algebras, but a natural expectation are correspondences to (unrolled) restricted quantum groups of $\mathfrak g$
(as described in \cite{GP,Lentner:2017oxe}).
It would be interesting to construct quasi-Hopf algebras for these, or for more general ``unrolled" Hopf-algebras~\cite{Andrus:2017}, along the lines presented in this paper.

\medskip

\noindent
$\bullet$
In~\cite{GLO} a construction is presented which is in a sense opposite to the
simple current extension we use here, and which applies to general simple Lie algebras $\mathfrak g$. 
In the $\mathfrak g = s\ell(2)$ case, one starts from a rank-one
lattice VOA whose representation category is equivalent to the category of  $\ZZ_{2p}$-graded vector spaces equipped with a certain abelian 3-cocycle (cf.\ Remark~\ref{rem:Lamp-algebra}), and considers ``the algebra of screenings"  -- a rank-one Nichols algebra object in the category of Yetter-Drinfeld modules over $\CC \ZZ_{2p}$.
 A generalisation of  the construction of quantum groups~\cite{ST,Lentner:2017dkg} from Nichols algebras to the quasi-Hopf setting produces then also our quasi-Hopf algebra $\Q$.
For other choices of $\mathfrak g$ one obtains in this way higher rank generalisations of $\Q$.
It would be good to further study the relation between this construction and  the conformal extensions used in our paper. 

\medskip

\noindent
$\bullet$ The representation category of a factorisable ribbon quasi-Hopf algebra is a factorisable finite tensor category (see e.g.\ \cite[Sec.\,4]{FGR1} for definitions) which is in addition ribbon. For such categories one obtains projective actions of surface mapping class groups on certain Hom-spaces \cite{Lyubashenko:1995,Lyubashenko:1994tm}. 
Concrete expressions for the projective action of the generators of the torus mapping class group $SL(2,\ZZ)$ have been given in terms of quasi-Hopf algebra data in \cite[Thm.\,8.1]{FGR1}.
 The action in this case is given on the center of the  algebra.
The families of quasi-Hopf algebras in \cite{FGR2} and in Theorem~\ref{thm:main1-intro} provide examples to which this formalism can be applied.
It would also be interesting to generalise the projective $SL(2,\ZZ)$-action on Hochschild cohomology
(where the center is in the zero degree)
 as developed for factorisable Hopf algebras in \cite{Lentner:2017f} to this quasi-Hopf setting.

\medskip
\noindent
$\bullet$ Verlinde's formula for rational VOAs says that normalised Hopf link invariants of the modular tensor category of the VOA coincide with the corresponding normalised entries of the modular $S$-matrix of torus one-point 
functions \cite{V, MS, Hu1, Hu2}. 
Generalisations of the Verlinde formula to the finitely non-semisimple setting have been discussed e.g.\ in \cite{FHST,Fuchs:2006nx,GnR2,CG}.
The proposal in \cite{CG, GnR2} for $C_2$-cofinite VOAs $V$ relies on a connection between the modular properties of pseudo-trace functions for $V$ and 
certain invariants in $\rep V$,
in particular \cite{GnR2,GnR3} relate it with Lyubashenko's modular group action on $\mathrm{End}$ of the identity functor.
The comparison of the two  $SL(2,\ZZ)$ actions has been done for symplectic fermions in \cite{FGR2} and the results of this paper allow one to test the proposal in the case of the triplet algebra, see also Remark~\ref{rem:p-odd}\,(2).

\medskip

\noindent
$\bullet$  Logarithmic knot invariants are invariants of knots coloured with projective representations of $\UresSL2$ \cite{MN, Mu}. They have been computed by constructing the projective modules as limits of modules of ${U}_{t} s\ell(2)$ as $t\rightarrow q$ \cite{Mu} and it turns out that the same invariants can also be obtained from projective modules of $\UQG{q}$ \cite{CMR}. Recently, in~\cite{BBG} a so-called modified trace together with Henning's type construction  was used to generalise the knot invariants to logarithmic invariants of links colored by projective $\UresSL2$ modules, and eventually to invariants of 3d manifolds with links inside
 and -- in \cite{DeRenzi:2017} --
further to a 3d TQFT
(for the braided case and with a modified cobordism category).
It is a natural task to try to build similarly a TQFT also from $\Q$.

\medskip

\noindent
$\bullet$
After investigating chiral conformal field theory 
	in terms of
a VOA $V$, one  would next like to understand how to combine the chiral halves into a full conformal field theory. 
If $V$ has finitely semisimple representation theory, 
so that $\rep V$ is a modular tensor category \cite{Hu2}, 
one can phrase this as an algebraic question in $\rep V$, see \cite{Runkel:2005qw} for an overview. 
The corresponding program for finitely non-semisimple $\rep V$, so-called logarithmic VOAs, is currently being developed, see \cite{Fuchs:2016wjr} and references therein.
The triplet VOA $\TripAlg{p}$ and symplectic fermions are prominent examples of such logarithmic VOAs and the families of quasi-Hopf algebras presented in this paper and in \cite{FGR2} provide
the necessary data in order to study the construction of
full logarithmic conformal field theories.

\subsubsection*{Organisation of the paper}

In Section~\ref{sec:local-modules} we start by giving our conventions for braided tensor categories and quasi-Hopf algebras. Then the definition of local modules and the associated braided monoidal category is reviewed, with an emphasis of finitely generated local modules. After setting up the formalism, we present in detail the relation between local modules of an algebra in $\CC$-graded vector spaces given by the group algebra $\CC\ZZ$, and a ribbon quasi-Hopf algebra with underlying algebra $\CC\ZZ_{2p}$ (Theorem~\ref{thm:Cartan-funF-H-br-equiv}). This will illustrate the more complicated construction in the case of quantum groups.

\smallskip

In Section~\ref{sec:unrolled-restricted} we introduce the starting point of our construction, the unrolled restricted quantum group $\UQG{q}$. We review its finite-dimensional simple and projective modules of weight type. Then we give the commutative algebra $\algC$ internal to $\repwt^\oplus \UQG{q}$, the category of at most countably-dimensional $\UQG{q}$-modules of weight type, and we study the finitely generated local $\algC$-modules (Proposition~\ref{prop:UHq-local-Lam-modules}).

\smallskip

Section~\ref{sec:qHopfUq} contains our main result. After reviewing the definition of the restricted quantum group $\UresSL2$, we give our quasi-Hopf algebra modification $\Q$, and we show that it is indeed a factorisable 
ribbon quasi-Hopf algebra, and
 that its category of finite-dimensional representations is ribbon-equivalent to the category of finitely generated local $\algC$-modules
 discussed in Section~\ref{sec:unrolled-restricted} (Theorem~\ref{thm:equiv-1}).
The quasi-Hopf algebra $\Q$
is $\ZZ_2$-graded and we finally investigate the orbit of its quasi-triangular quasi-Hopf algebra structures under the action of the third abelian cohomology for $\ZZ_2$ (which is isomorphic to~$\ZZ_4$).

\smallskip

In Section~\ref{sec:singlet} we describe the conjectural relation between our Hopf-algebraic constructions and VOAs. We briefly recall the definition of the singlet VOA $\SingAlg{p}$ and triplet VOA $\TripAlg{p}$, as well as some of their representations. 
We state the conjectural ribbon equivalence $\repsimple \SingAlg{p} \cong \repwt \UQG{q}$ and show how the triplet algebra is transported to $\algC$ from Section~\ref{sec:qHopfUq} along this equivalence. Thus, assuming this conjecture, for the triplet algebra we can conclude $\rep \TripAlg{p} \cong \rep\Q$ (Corollary~\ref{cor:main}).

\smallskip

Finally, Section~\ref{sec:proof} contains the proof of our main result, Theorem~\ref{thm:equiv-1}.

\smallskip

In Appendix~\ref{app:O}  we illustrate in an example how the modified coproduct in $\Q$ solves the problem of non-commutative tensor products one encounters for  $\UresSL2$.

\subsubsection*{Acknowledgements}

The authors thank
Christian Blanchet, 
J\"urgen Fuchs, 
Nathan Geer, 
Shashank Kanade, 
Simon Lentner, 
Antun Milas,
Tobias Ohrmann, 
Hubert Saleur,
Christoph Schweigert
	and
Alyosha Semikhatov
for helpful discussions.
We are grateful to
	Christian Blanchet,
	Simon Lentner
and	Antun Milas
for 
important
comments on a draft version of this paper.	
The authors are grateful to the organisers of ``The Mathematics of Conformal Field Theory" at ANU, Canberra in 2015 where part of this work  was done.
AMG thanks the 
Department of Mathematical and Statistical Sciences at the
University of Alberta for kind hospitality during a stay in 
February 2016.
AMG is supported by CNRS and thanks the Humboldt Foundation for a partial financial support.
TC is supported by NSERC discovery grant $\#$RES0020460.

\section{Local modules and quasi-Hopf algebras}
\label{sec:local-modules}

In this section we describe the notion of local modules for a commutative algebra in a braided monoidal category. 
	After setting out our conventions,
we review the general setup and then consider the example of $\mathbb{C}$-graded vector spaces in some detail. This example illustrates in a much simpler setting
the construction of the equivalence $\mathcal{F}$ in \eqref{eq:intro-cat-diag} carried out in Sections~\ref{sec:Lam-and-local-Uq} and \ref{sec:proof}. 
In essence, the example treats the (rank one) Cartan part 
	of $\UQG{q}$ and $\Q$. 
In the language of vertex operator algebras, it describes the extension of the rank-one Heisenberg VOA to a lattice VOA.

\subsection{Conventions}\label{sec:conventions}

In order to set out our notation, here we collect standard categorical and Hopf algebraic notions. For details and definitions see 
e.g.\ \cite{EGNO-book,ChPr,Yorck}. 

In a monoidal category $\cat$ with tensor unit $\one \in \cat$, the associator is a natural family of isomorphisms $\alpha_{U,V,W} : U\otimes (V\otimes W) \to (U\otimes V)\otimes W$.
If $\cat$ is braided, we denote the natural family of braiding isomorphisms by $c_{U,V} : U \otimes V \to V \otimes U$.
In a braided monoidal category $\mathcal{B}$, an object $T \in \mathcal{B}$ is called \textsl{transparent} if it has trivial monodromy with every other object of $\cat$, that is, if $c_{U,T} \circ c_{T,U} = \id_{T \otimes U}$ for all $U \in \mathcal{B}$.

A braided monoidal category $\mathcal{B}$ is called \textsl{balanced} \cite{Shum} if there is a natural isomorphism $\theta_U : U \to U$ of the identity functor on $\cat$, called \textsl{balancing isomorphism} or \textsl{twist}, such that for all $U,V \in \mathcal{B}$,
\be\label{eq:balancing-def}
	\theta_{U \otimes V} = (\theta_U \otimes \theta_V) \circ c_{V,U} \circ c_{U,V} 
\ee
The relation $\theta_{\one} = \id_{\one}$ is a consequence of this axiom (see e.g.\ \cite[Lem.\,XIV.3.3]{Kassel}).

Let $\cat$ be a monoidal category. We say that an object $U \in \cat$ has a \textsl{left dual} $U^*$ if there are \textsl{duality morphisms} $\mathrm{ev}_U : U^* \otimes U \to \one$ and $\mathrm{coev}_U : \one \to U \otimes U^*$ satisfying the zig-zag identities, see e.g.~\cite[Sec.\,2.10]{EGNO-book}. We think of $\mathrm{ev}_U$ and $\mathrm{coev}_U$ as part of the data defining a dual object. 
There is an analogous notion of a right dual. If every object of $\cat$ has a left (resp.\ a left and right) dual, we say that \textsl{$\cat$ has left duals} (resp.\ \textsl{$\cat$ has duals}).

Let $\mathcal{B}$ be a balanced braided monoidal category which has left duals. Then $\mathcal{B}$ is a \textsl{ribbon category} if the balancing isomorphism satisfies the following compatibility condition with the duals: for all $U\in\mathcal{B}$,
\be\label{eq:ribbontwist-def}
	(\theta_U)^* = \theta_{U^*} \ .
\ee
In this case we call $\theta$ a \textsl{ribbon twist}.
	A ribbon category automatically also has right duals.
We note that often in the literature, \eqref{eq:balancing-def} and \eqref{eq:ribbontwist-def} are assumed together, and the notion of ``balanced category'' is not separately introduced (and the notion is not
standard).
We will, however, deal with categories where not every object has a dual, but which are nonetheless balanced, hence the distinction.

In the category of vector spaces over some field (for us always $\CC$) we will denote the braiding by $\tau$ instead of $c$. The braiding is symmetric and is given by the flip of tensor factors, 
\begin{equation}\label{vecflip}
\flip_{X,Y}\colon X \tensor Y \longrightarrow Y \tensor X
\quad , \quad
\flip_{X,Y}(x \tensor y) =  y \tensor x \ ,
\end{equation}
where $X,Y$ are vector spaces. 

\medskip

A \textsl{quasi-bialgebra} $A$ over a field  is an associative unital algebra with product $\mu$, together with a counital coproduct $\Delta$, such that the coproduct is a coalgebra map, and it is coassociative up to conjugation with a coassociator $\Phi \in  A^{\otimes 3}$. A \textsl{quasi-Hopf algebra} $A$ is a quasi-bialgebra that is equipped with an antipode structure, that is, an algebra-anti-automorphism $S \colon A \to A$, an evaluation element $\Salpha$ and a coevaluation element $\Sbeta$.
A quasi-Hopf algebra $A$ is \textsl{quasi-triangular} if it is equipped with a universal $R$-matrix $R \in A^{\otimes 2}$, and it is \textsl{ribbon} if it is in addition equipped with a ribbon element $\ribbon \in A$.
For the list of conditions in the notation we use here, we refer to \cite[Sec.\,6]{FGR1}.
We stress that when working with a quasi-Hopf algebra $A$ we assume that the unit isomorphisms in $\rep A$ are those of the underlying vector spaces. This avoids having to introduce two additional invertible elements in $A$ and simplifies the conditions.

For a ribbon quasi-Hopf algebra $A$, the category of (possibly infinite dimensional) $A$-modules is a balanced braided tensor category with structure maps
\begin{align} 
\assoc_{M,N,K} & ~\colon~ & M\tensor(N\tensor K) &\longrightarrow (M\tensor N)\tensor K \gcg	&
m\tensor n\tensor k &\longmapsto \Phi. (m\tensor n\tensor k) \gc
\nonumber \hspace*{3em}~\\ 
  c_{M,N} & ~\colon~ &  M\tensor N &\longrightarrow N\tensor M 
  \gcg &
  m\tensor n &\longmapsto 
  \flip_{M,N}
  (R.(m\tensor n)) \gc  
  \nonumber \\
	\theta_M & ~\colon~ & M &\longrightarrow M  
	\gcg &
	m &\longmapsto \ribbon^{-1}.m  \ ,  \label{qHopf-cat-data}
\end{align} 
where $M,N,K$ are $A$-modules.
The category of finite-dimensional $A$-modules is a ribbon category.

A finite-dimensional quasi-triangular quasi-Hopf algebra $A$ is called \textsl{factorisable} if the element $\hat{\mathcal{D}} \in A \otimes A$ defined below provides a non-degenerate copairing \cite[Rem.\,6.6]{FGR1}. 
$\hat{\mathcal{D}}$ is given by
\be\label{eq:Q-for-fact}
\hat{\mathcal{D}} ~= 
\sum_{(X),(W)} S(W_3 X_2') W_4 X_2'' \otimes S(W_1 X_1') W_2 X_1''
\ ,
\ee
where 
$X \in A^{\otimes 2}$ and $W \in A^{\otimes 4}$ are defined as
\begin{align}
X &~=~ \sum_{(\Phi)} \Phi_1 \otimes \Phi_2 \Sbeta S(\Phi_3)\ ,
\nonumber \\ 
W&~=~(\one\tensor\Salpha\tensor\one\tensor\Salpha)\cdot
(\one\tensor\Phi^{-1})\cdot(\one\tensor M
\tensor\one)\cdot(\one\tensor\Phi)
\cdot(\id\tensor\id\tensor\Delta)(\Phi^{-1}) \ ,
\label{eq:Rem-X}
\end{align}
and $M = R_{21}R \in A\otimes A$ is the \textsl{monodromy matrix}. 
The equivalence to the original definition of factorisability for quasi-triangular quasi-Hopf algebras in~\cite{BT} is shown in~\cite[Cor.\,7.6]{FGR1}. 
For a Hopf algebra we have $\hat{\mathcal{D}} = \sum_{(M)} S(M_2) \otimes M_1$, so that we obtain the usual factorisability condition that $M$ is a 
non-degenerate copairing.

We note that factorisability can be defined directly for braided finite tensor categories in several equivalent ways, we refer to~\cite{Shimizu:2016} or e.g.~\cite[Sec.\,4]{FGR1} for details. By~\cite[Sec.\,7.3]{FGR1}, $A$ is factorisable iff $\repfd A$ is factorisable. For us the following equivalent characterisation (which follows from~\cite{Shimizu:2016}) will be important: 
$A$ is factorisable iff $\repfd A$ has no transparent objects apart from direct sums of the trivial representation.

\subsection{The category of finitely generated local modules}

Let $\mathcal{B}$ be a braided monoidal category and $A \in \mathcal{B}$ a commutative associative unital algebra (we will just say ``commutative algebra'' form now on). A \textsl{(left) $A$-module in $\mathcal{B}$} is an object $M \in \mathcal{B}$ together with a morphism $\rho_M \colon A \otimes M \to M$ satisfying an associativity and unit condition (see e.g.\ \cite{Fuchs:2001qc,Ostrik:2001} for details). The category of $A$-modules and $A$-module intertwiners in $\mathcal{B}$ will be denoted by ${}_A\mathcal{B}$.

Local modules (also called dyslectic modules) where introduced in \cite{Pareigis:1995} and further studied e.g.\ in \cite{KO,Frohlich:2003hm}. They are defined as follows.

\begin{definition}\label{def:local-module}
An $A$-module $M$ in $\mathcal{B}$ is called \textsl{local} if 
\be
	\rho_M \circ c_{M,A} \circ c_{A,M} ~=~ \rho_M \ .
\ee
We write ${}_A\mathcal{B}^\mathrm{loc}$ for
the full subcategory of local $A$-modules in ${}_A\mathcal{B}$. 
\end{definition}

In words, for a local module the monodromy endomorphism of $A \otimes M$ can be omitted when composed with the action map $A \otimes M \to M$. If $\mathcal{B}$ is symmetrically braided, every module is local.

In a braided category, a left module over a commutative algebra\footnote{ 
	If $A$ is not commutative, this procedure results in a right module over the corresponding version of the opposite algebra $A^{\mathrm{op}}$, i.e.\ the algebra  which has the multiplication of $A$ composed with the braiding or its inverse, respectively.
}
can be turned into a right module in two ways, using the braiding or the inverse braiding to define the right action \cite{Pareigis:1995}:
\be\label{eq:right-action-from-left-action}
	\rho_M^{r,\pm} : M \otimes A \to M \quad , \quad
	\rho_M^{r,+} = \rho_M \circ c_{M,A} ~~,~~~
	\rho_M^{r,-} = \rho_M \circ c_{A,M}^{-1} \ .
\ee
An equivalent way of phrasing Definition~\ref{def:local-module} is that these two right actions agree:
\be\label{eq:alt-local-def}
	\text{$M$ local} \quad \Leftrightarrow \quad
	\rho_M^{r,+} = \rho_M^{r,-} \ ,
\ee
providing thus a preferred way to turn left modules into bimodules. 

The tensor product of two $A$-$A$-bimodules $M,N$ is defined as the coequaliser
\be\label{eq:tensor-bimod-def}
\xymatrix{
 M \otimes (A \otimes N) 
 \ar@<-.5ex>[r] \ar@<.5ex>[r] & 
  M \otimes N \ar[r]^{\pi_\otimes} & M \otimes_A N
  }
   \ .
\ee
The initial bracketing is
not important,
 but for this choice the two arrows are the left action $\id_M \otimes \rho_N^l$
and the right action composed with the associator, $(\rho_M^r \otimes id_N) \circ \alpha_{M,A,N}$, respectively.

We will need that the tensor product $\otimes_A$ exists for all modules. One simple way to ensure this is to demand $\mathcal{B}$ to be abelian, which we will do from now on. 
(Another way is to take $A$ separable and $\mathcal{B}$ idempotent complete, see e.g.\ \cite{Frohlich:2003hm,Carqueville:2012dk}).

By \eqref{eq:right-action-from-left-action}, there are two ways to turn a left $A$-module (not necessarily local) into an $A$-$A$-bimodule, and hence  two ways to turn ${}_A\mathcal{B}$ into a monoidal category, see \cite{Pareigis:1995} (this procedure is also called $\alpha$-induction \cite{Bockenhauer:1999wt,Ostrik:2001}).
We choose the convention that the right action on a left $A$-module $M$ is $\rho^{r,+}_M$.
The tensor product functor of ${}_A\mathcal{B}$ is $\otimes_A$ and the tensor unit is $A$. The associativity and unit isomorphisms are induced from those of $\mathcal{B}$.

Induction provides a tensor functor
\be
	\mathrm{Ind} : \mathcal{B} \longrightarrow {}_A\mathcal{B}
	\quad, \quad X \mapsto A \otimes X ~~,~~ f \mapsto \id_A \otimes f \ ,
\ee
where the $A$-action on $A \otimes X$ is by left multiplication (see e.g.\ \cite[Sec.\,4]{Fuchs:2001qc} for details).

The next lemma shows that local modules form a full tensor subcategory (whose tensor product does not depend on the initial choice of $\rho^{r,+}_M$ over $\rho^{r,-}_M$ by \eqref{eq:alt-local-def}).

\begin{lemma}[{\cite[Lem.\,1.3\,\&\,Prop.\,2.4]{Pareigis:1995}}]
For local $A$-modules $M,N$, the tensor product $M \otimes_A N$ is again a local $A$-module, and its $A$-$A$-bimodule structure is induced from its left module structure via \eqref{eq:right-action-from-left-action}.
\end{lemma}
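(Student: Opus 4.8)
The plan is to reduce the statement to a check on the coequaliser diagrams defining the relevant tensor products, using the fact that for a left $A$-module $M$ the module $M \otimes_A N$ is built from $M \otimes N$ by dividing out the action of $A$ in the middle. First I would recall that both right actions $\rho^{r,\pm}_M$ of \eqref{eq:right-action-from-left-action} are morphisms of left $A$-modules, so $\otimes_A$ of two left $A$-modules is again naturally a left $A$-module via the left action on the first tensor factor; this part does not use locality. The two things to prove are then: (i) the induced $A$-$A$-bimodule structure on $M \otimes_A N$ (coming from $\rho^{r,+}_{M\otimes_A N}$, i.e. post-composing with the braiding $c_{M\otimes_A N, A}$) coincides with the right action inherited from $\rho^{r,\pm}_N$ on the $N$-factor; and (ii) if $M$ and $N$ are local then so is $M\otimes_A N$.

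For (i), I would use naturality of the braiding together with the hexagon axioms to move the braiding $c_{M\otimes N, A}$ past the coequaliser map $\pi_\otimes$ of \eqref{eq:tensor-bimod-def}, expressing it in terms of $c_{M,A}$ and $c_{N,A}$; since $M$ and $N$ are both local, the ambiguity between using $c$ and $c^{-1}$ (i.e. between $\rho^{r,+}$ and $\rho^{r,-}$) disappears by \eqref{eq:alt-local-def}, and one reads off that the right action on $M\otimes_A N$ factors through the $N$-factor and agrees with the one induced from $N$'s left module structure. This is essentially a diagram chase with the hexagons; the associators $\alpha$ need to be carried along but cause no real trouble.

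For (ii), the key computation is that the double braiding $c_{M\otimes_A N, A} \circ c_{A, M \otimes_A N}$, which a priori acts across both $M$ and $N$, can by naturality and the hexagon identities be rewritten as the composite of the double braiding on the $A$-factor with $N$ (passing through $M$ transparently in the relevant sense) followed by the double braiding with $M$; locality of $N$ kills the first and locality of $M$ kills the second, so after passing to the quotient $M\otimes_A N$ the action map is unchanged. More precisely, I would write $\rho_{M\otimes_A N} \circ c_{M\otimes_A N, A} \circ c_{A, M\otimes_A N} \circ (\id_A \otimes \pi_\otimes)$, pull everything back to $A \otimes M \otimes N$ via $\pi_\otimes$ and the hexagon, and use that on $A \otimes M \otimes N$ the double braiding with $A$ decomposes (up to associators) as the $A$-with-$M$ double braiding tensored with $N$, composed with the $A$-with-$N$ double braiding sitting on $M \otimes N$; since the left action of $A$ on $M \otimes_A N$ is via $M$, locality of $M$ handles the first factor and locality of $N$ the second.

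The main obstacle I expect is purely bookkeeping: keeping track of the associativity isomorphisms $\alpha$ when transporting braidings across the three-fold tensor products $A \otimes M \otimes N$ and $M \otimes A \otimes N$, and verifying that the rewritten morphisms genuinely descend through the coequaliser $\pi_\otimes$ (i.e. equalise the two defining arrows of \eqref{eq:tensor-bimod-def}). There is no conceptual difficulty beyond the hexagon axioms and the definition of locality; since the statement is quoted from \cite{Pareigis:1995}, I would in practice cite that reference for the detailed diagram chase and only indicate the two hexagon-plus-locality steps above.
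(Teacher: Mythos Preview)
The paper does not give its own proof of this lemma; it simply cites \cite[Lem.\,1.3\,\&\,Prop.\,2.4]{Pareigis:1995}. Your outline is the standard argument one finds there: descend through the coequaliser, use the hexagon axioms to factor the double braiding with $A$ into the pieces over $M$ and over $N$, and invoke \eqref{eq:alt-local-def} on each. So your approach matches the cited source, and your closing remark that you would ``in practice cite that reference'' is exactly what the paper does.
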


The key property of local modules is that the braiding descends to $\otimes_A$  \cite[Prop.\,2.2]{Pareigis:1995},
	i.e.\ there exists a unique $\tilde c_{M,N}$ such that the following diagram commutes:
\be\label{eq:braiding-descends}
\xymatrix{ 
M \otimes (A \otimes N)
\ar@<-.5ex>[r] \ar@<.5ex>[r] 
	\ar[d]
& M \otimes N \ar[r]^{\pi_\otimes} \ar[d]^{c_{M,N}}
& M \otimes_A N
\ar@{-->}[d]^{\exists!\, \tilde c_{M,N}}
\\
N \otimes (A \otimes M)
\ar@<-.5ex>[r] \ar@<.5ex>[r] & N \otimes M \ar[r]^{\pi_\otimes} & N \otimes_A M
}
\ee
The precise braiding for $A$ in the
left-most vertical arrow turns out to not matter due to the locality condition. 
When verifying that the squares for the two maps entering the coequaliser commute (after composition with $\pi_\otimes$) one needs to make use of \eqref{eq:alt-local-def}, so that this diagram is specific to local modules.
One then proves:

\begin{proposition}[\cite{Pareigis:1995,KO}]
For an abelian braided monoidal category $\mathcal{B}$ and a commutative algebra $A \in \mathcal{B}$, ${}_A\mathcal{B}^\mathrm{loc}$ is a braided monoidal category.
\end{proposition}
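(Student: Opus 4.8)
The plan is to verify the braided monoidal category axioms for ${}_A\mathcal{B}^\mathrm{loc}$, building on the structure already established on ${}_A\mathcal{B}$ and the preceding lemmas. By the Lemma attributed to \cite{Pareigis:1995}, ${}_A\mathcal{B}^\mathrm{loc}$ is closed under $\otimes_A$ and contains the tensor unit $A$ (which is local since $A$ is commutative: $\rho_A \circ c_{A,A} \circ c_{A,A} = \mu \circ c_{A,A} \circ c_{A,A}$, and one shows this equals $\mu$ using commutativity $\mu \circ c_{A,A} = \mu$ applied twice). It is also a full subcategory closed under the relevant structure morphisms, so the associator $\alpha$ and unit isomorphisms of ${}_A\mathcal{B}$ restrict to ${}_A\mathcal{B}^\mathrm{loc}$, and they automatically satisfy the pentagon and triangle axioms there since they do on the ambient category. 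Thus the monoidal structure is essentially inherited; the real content is the braiding.

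Next I would address the braiding $\tilde c_{M,N}$. The diagram \eqref{eq:braiding-descends} already produces, for each pair of local modules $M,N$, a unique morphism $\tilde c_{M,N}\colon M\otimes_A N\to N\otimes_A M$ in $\mathcal{B}$; the first thing to check is that $\tilde c_{M,N}$ is in fact a morphism of $A$-modules, i.e.\ that it is $A$-linear for the left $A$-action on $\otimes_A$. This follows by precomposing with the (epi) projection $\pi_\otimes$, using that $c_{M,N}$ intertwines the left $A$-action on $M\otimes N$ with the corresponding action on $N\otimes M$ up to the structure of the braiding, together again with the locality condition in the form \eqref{eq:alt-local-def} to move the $A$-factor past the braiding correctly. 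Naturality of $\tilde c$ in both arguments is then inherited from naturality of $c$ in $\mathcal{B}$ by a diagram chase using that $\pi_\otimes$ is natural and epic.

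The main step is the two hexagon axioms for $\tilde c$. The strategy is to reduce each hexagon identity for $\tilde c$ (an equation between morphisms out of a triple $\otimes_A$-product) to the corresponding hexagon identity for $c$ in $\mathcal{B}$, by precomposing with the appropriate iterated coequaliser projection $M\otimes N\otimes K \twoheadrightarrow (M\otimes_A N)\otimes_A K$, which is epic. Under this projection both sides of the hexagon for $\tilde c$ pull back to composites built from $c$, $\alpha$, and the $A$-actions; the hexagon for $c$ in $\mathcal{B}$ then gives equality, provided one is careful that the various instances of the $A$-action that appear (coming from the definition of $\otimes_A$ via \eqref{eq:tensor-bimod-def} and the right-action convention $\rho^{r,+}$) can be interchanged between the "braiding-induced" and "inverse-braiding-induced" descriptions — and this is exactly where locality \eqref{eq:alt-local-def} is used again. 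The compatibility of the balancing/twist with $\otimes_A$ (needed only if one wants the balanced structure, though the Proposition as stated asks only for braided monoidal) would be handled in the same spirit, descending $\theta$ along $\pi_\otimes$.

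The step I expect to be the main obstacle is the bookkeeping in the hexagons: keeping track of which copy of the $A$-action is being coequalised, and repeatedly invoking \eqref{eq:alt-local-def} to swap $\rho^{r,+}$ with $\rho^{r,-}$ at the right moments so that the ambient hexagon for $c$ can be applied. Everything reduces to diagram chases with epimorphisms $\pi_\otimes$ and the axioms of $(\mathcal{B},c,\alpha)$, so there is no genuinely new categorical input beyond the locality condition; but making the chases come out cleanly, rather than getting lost in the combinatorics of coequalisers of coequalisers, is the delicate part. Since this is precisely the content of \cite[Prop.\,2.2]{Pareigis:1995} and \cite{KO}, I would in the write-up either reproduce the key diagram or simply cite those sources for the routine verifications, as the excerpt already does.
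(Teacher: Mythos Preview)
Your sketch is a reasonable outline of the standard argument from \cite{Pareigis:1995,KO}, but note that the paper does not actually give a proof of this proposition: it is stated with attribution to those references and no proof follows. So there is nothing in the paper to compare your approach against; your write-up would serve as a supplementary proof sketch rather than a reconstruction of something the authors wrote.
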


Suppose the category $\mathcal{B}$ is in addition balanced with balancing isomorphism $\theta$.
As a direct consequence of Definition~\ref{def:local-module} and \eqref{eq:balancing-def}
we get

\begin{lemma}
Let $M$ be a local $A$-module.
If $\theta_A = \id_A$, then $\theta_M$ is an $A$-module intertwiner.
\end{lemma}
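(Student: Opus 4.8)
The plan is to unwind the statement using the explicit formulas for the structure morphisms in a braided monoidal category together with the balancing axiom~\eqref{eq:balancing-def} and the locality condition from Definition~\ref{def:local-module}. We must show that $\theta_M \colon M \to M$ commutes with the action $\rho_M \colon A \otimes M \to M$, i.e.\ that $\theta_M \circ \rho_M = \rho_M \circ (\theta_A \otimes \theta_M)$ as morphisms $A \otimes M \to M$; under the hypothesis $\theta_A = \id_A$ the right-hand side is $\rho_M \circ (\id_A \otimes \theta_M)$, so the claim is $\theta_M \circ \rho_M = \rho_M \circ (\id_A \otimes \theta_M)$.

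First I would note that $M$, being an $A$-module, receives a left-action map, and that $\rho_M$ itself can be regarded (up to the obvious identification) as the restriction to $A \otimes M$ of some morphism; more to the point, I would view the composite $A \otimes M \xrightarrow{\rho_M} M$ and apply naturality of the twist $\theta$: since $\theta$ is a natural transformation of the identity functor, $\theta_M \circ \rho_M = \rho_M \circ \theta_{A \otimes M}$. Now expand $\theta_{A\otimes M}$ via the balancing axiom~\eqref{eq:balancing-def}: $\theta_{A \otimes M} = (\theta_A \otimes \theta_M) \circ c_{M,A} \circ c_{A,M}$. Using $\theta_A = \id_A$ this becomes $(\id_A \otimes \theta_M) \circ c_{M,A} \circ c_{A,M}$. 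Hence
\begin{equation}
\theta_M \circ \rho_M ~=~ \rho_M \circ (\id_A \otimes \theta_M) \circ c_{M,A} \circ c_{A,M}\ .
\end{equation}

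The final step is to move $(\id_A \otimes \theta_M)$ past the double braiding. Here I would use naturality of the braiding: $c_{M,A} \circ c_{A,M} \circ (\id_A \otimes \theta_M) = (\id_A \otimes \theta_M) \circ c_{M,A} \circ c_{A,M}$, since $c$ is natural in both arguments and $\theta_M$ acts only on the $M$-factor (the double braiding $c_{M,A}\circ c_{A,M}$ is a natural endomorphism of the functor $(A,M) \mapsto A\otimes M$). Therefore $\rho_M \circ (\id_A\otimes\theta_M)\circ c_{M,A}\circ c_{A,M} = \rho_M \circ c_{M,A} \circ c_{A,M} \circ (\id_A \otimes \theta_M)$, and now Definition~\ref{def:local-module} ($\rho_M \circ c_{M,A}\circ c_{A,M} = \rho_M$, using that $M$ is local) collapses this to $\rho_M \circ (\id_A \otimes \theta_M)$, as desired. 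I do not expect any serious obstacle: the only subtlety is making sure the naturality square for the double braiding is invoked with the correct morphism in the correct tensor slot, so that locality can be applied at the very end rather than to an intertwined expression; organising the three rewrites (naturality of $\theta$, the balancing axiom, naturality of $c$) in this order is what makes the locality hypothesis land cleanly.
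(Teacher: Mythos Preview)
Your proof is correct and is precisely the argument the paper has in mind: the paper states only that the lemma is ``a direct consequence of Definition~\ref{def:local-module} and~\eqref{eq:balancing-def}'', and your chain of rewrites (naturality of $\theta$, balancing axiom, naturality of the double braiding, locality) is exactly how one unpacks that. One small remark: the condition for $\theta_M$ to be an $A$-module map is $\theta_M\circ\rho_M=\rho_M\circ(\id_A\otimes\theta_M)$, with $\id_A$ rather than $\theta_A$ on the first factor; you immediately invoke $\theta_A=\id_A$ so no harm is done, but it is cleaner to state the target identity correctly from the outset.
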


Hence in case $\theta_A = \id_A$, the balancing isomorphisms on local modules are morphisms in ${}_A\mathcal{B}^\mathrm{loc}$, and one checks that they turn ${}_A\mathcal{B}^\mathrm{loc}$ into a balanced braided monoidal category.

The next lemma addresses the relation between induction and local modules, see e.g.~\cite{CKM}.

\begin{lemma}\label{lem:local-vs-induction}
Let  $X \in \mathcal{B}$.
\begin{enumerate}
\item The induced module $\mathrm{Ind}(X)$ is local if and only if $X$ is transparent to $A$, i.e.\ $c_{X,A} \circ c_{A,X} = \id_{A \otimes X}$.
\item Denote by $\mathcal{B}' \subset \mathcal{B}$ the full subcategory of objects which are transparent to $A$. Then $\mathcal{B}'$ is braided monoidal and $\mathrm{Ind}\colon\mathcal{B}' \to {}_A\mathcal{B}^{\mathrm{loc}}$ is a braided monoidal functor. If $\theta_A=\id_A$, $\mathrm{Ind}$ is in addition balanced.
\end{enumerate}
\end{lemma}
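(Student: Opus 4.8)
The plan is to verify the two parts of Lemma~\ref{lem:local-vs-induction} in turn, in each case reducing the claim to properties of the braiding $c$ in $\mathcal{B}$ together with the compatibility of $\mathrm{Ind}$ with tensor products already recorded above.

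For part~(1), first I would write down explicitly the left $A$-action on $\mathrm{Ind}(X) = A \otimes X$, namely $\rho_{A\otimes X} = (\mu \otimes \id_X) \circ \alpha_{A,A,X}^{-1}$ (multiplication on the first tensor factor), and then spell out the composite $\rho_{A\otimes X} \circ c_{A\otimes X, A} \circ c_{A, A\otimes X}$ using the hexagon axioms to decompose the braidings with the tensor product $A \otimes X$ into braidings with $A$ and with $X$ separately. The double braiding $c_{A\otimes X,A}\circ c_{A,A\otimes X}$ then factors, up to associators, into the double braiding of $A$ with itself (which disappears after composing with $\mu$, since $A$ is commutative and, in the relevant symmetric-looking combination, one actually only needs $\mu$ to absorb it) and the double braiding $c_{X,A}\circ c_{A,X}$ acting on the $X$-leg. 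Hence $\mathrm{Ind}(X)$ is local iff this remaining double braiding on the $X$-leg is the identity, i.e.\ iff $X$ is transparent to $A$; the ``only if'' direction follows by restricting the locality identity along the unit $\eta\colon \one \to A$ tensored into the first factor, which recovers $c_{X,A}\circ c_{A,X}=\id$ on the nose. I would present this as a short diagram chase rather than in full coordinates.

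For part~(2), the subcategory $\mathcal{B}'$ of objects transparent to $A$ is closed under tensor products because the double braiding with $A$ of a tensor product $X \otimes Y$ decomposes (again by the hexagons) into the double braidings of $X$ and of $Y$ with $A$, and it contains $\one$ trivially; it is closed under the braiding, duals, and direct sums by naturality, so it is a braided monoidal (indeed balanced, ribbon if $\mathcal{B}$ is) full subcategory. That $\mathrm{Ind}$ is a tensor functor $\mathcal{B}\to{}_A\mathcal{B}$ is recalled in the text; by part~(1) its restriction to $\mathcal{B}'$ lands in ${}_A\mathcal{B}^{\mathrm{loc}}$, and since ${}_A\mathcal{B}^{\mathrm{loc}}$ is a full monoidal subcategory of ${}_A\mathcal{B}$ with the induced structure, $\mathrm{Ind}\colon \mathcal{B}'\to {}_A\mathcal{B}^{\mathrm{loc}}$ is again a tensor functor. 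It remains to check compatibility with the braidings: one must show $\mathrm{Ind}$ intertwines $c_{X,Y}$ in $\mathcal{B}'$ with $\tilde c_{\mathrm{Ind}(X),\mathrm{Ind}(Y)}$ in ${}_A\mathcal{B}^{\mathrm{loc}}$. Using the universal property \eqref{eq:braiding-descends} defining $\tilde c$, this amounts to a commuting square relating $c_{A\otimes X, A\otimes Y}$ on $\mathrm{Ind}(X)\otimes \mathrm{Ind}(Y)$, the coequaliser projection $\pi_\otimes \colon \mathrm{Ind}(X)\otimes \mathrm{Ind}(Y) \to \mathrm{Ind}(X)\otimes_A \mathrm{Ind}(Y) \cong \mathrm{Ind}(X\otimes Y)$, and $c_{X,Y}$; this is checked by a hexagon computation together with transparency of $X$ (or $Y$) to $A$, which is exactly what makes the $A$-legs pass through without obstruction. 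Finally, if $\theta_A = \id_A$, compatibility with the twist follows since $\theta_{\mathrm{Ind}(X)} = \theta_{A\otimes X}$ equals $\id_A \otimes \theta_X$ up to the double braiding of $A$ with $X$, which is trivial on $\mathcal{B}'$, so $\mathrm{Ind}$ intertwines $\theta_X$ with $\theta_{\mathrm{Ind}(X)}$ via the structure isomorphism $\mathrm{Ind}(X\otimes Y)\cong \mathrm{Ind}(X)\otimes_A\mathrm{Ind}(Y)$.

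The main obstacle I anticipate is purely bookkeeping: organising the hexagon-and-associator manipulations cleanly enough that the ``the $A$-legs decouple'' statements are manifestly correct, especially in the non-strict setting where $\mathcal{B}$ has a nontrivial associator and one is simultaneously juggling the coequaliser that defines $\otimes_A$. String-diagram notation would make each of these steps a one-line picture, and that is how I would actually carry it out; since all the conceptual input (hexagons, commutativity of $A$, transparency, the universal property of $\pi_\otimes$) is already available, there is no genuine difficulty beyond this.
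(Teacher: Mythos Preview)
The paper does not give its own proof of this lemma; it is stated with a reference to \cite{CKM}. So there is nothing to compare your argument against on the paper's side, and your outline is the standard verification one would expect.

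Your sketch is essentially correct. One small point of care in part~(1): for the ``only if'' direction you propose to restrict the locality identity along $\eta$ ``tensored into the first factor''. If by this you mean inserting $\eta$ into the acting copy of $A$ (the leftmost $A$ in $A\otimes(A\otimes X)$), that collapses both sides to the identity and yields no information. What works is to insert $\eta$ into the $A$-component of $M=A\otimes X$, i.e.\ precompose with $\id_A\otimes\eta\otimes\id_X$; then the right-hand side becomes $\id_{A\otimes X}$ by unitality, while on the left the $c_{A,A}$ contribution disappears by naturality and one is left with exactly $c_{X,A}\circ c_{A,X}$. This is presumably what you intended, but the phrasing is ambiguous. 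Beyond this, the hexagon bookkeeping you describe for both parts goes through as you say, and string diagrams are indeed the cleanest way to present it.
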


\begin{remark}
A second reason why local modules are of special relevance is the theory of extensions of VOAs \cite{KO, HKL, CKM}. 
	It is shown in \cite{HKL}
that a VOA extension of a given VOA $V$ corresponds to a commutative, associative algebra $A$ with trivial twist in
 the vertex tensor category $\rep V$ 
	(see \cite{HKL,CKM} for the precise conditions on $V$ and on vertex tensor categories of grading-restricted generalised $V$-modules).
 Moreover, the vertex tensor category of
modules over the extended VOA  
  is braided equivalent to the category of local $A$-modules  \cite{CKM}. We will come back to this in two examples: in Section~\ref{sec:ex-Cgr}
 we will recall the well-known statement that lattice vertex algebras are extensions of the Heisenberg (or free boson) VOA and
in Section~\ref{sec:Triplet} we discuss the triplet VOA algebras $\TripAlg{p}$ as extensions of the singlet VOA $\SingAlg{p}$.
\end{remark}

In a braided category, if an object $X$ 
has
 a left dual $X^*$, that same object is also a right dual of $X$, and vice versa. Hence we just say ``$X$ has a dual''.

\begin{definition}
\label{def:fg-module}
A left $A$-module $M$ in $\mathcal{B}$ is called \textsl{finitely generated} if there is an object $X \in \mathcal{B}$ which has a dual, and a surjective $A$-module intertwiner from the induced module $\mathrm{Ind}(X)$ to $M$.
\end{definition}

This definition provides us with another full tensor subcategory, as the following lemma shows.

\begin{lemma}
If the tensor product functor of $\mathcal{B}$ is right exact, then the tensor product of finitely generated modules is again finitely generated.
\end{lemma}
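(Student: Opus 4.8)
The plan is to take a finitely generated module $M$ with a surjection $p \colon \mathrm{Ind}(X) \twoheadrightarrow M$ and a finitely generated module $N$ with a surjection $q \colon \mathrm{Ind}(Y) \twoheadrightarrow N$, where $X,Y \in \mathcal{B}$ both have duals, and to exhibit a surjection from an induced module $\mathrm{Ind}(Z)$ with $Z$ dualisable onto $M \otimes_A N$. The natural candidate for $Z$ is $X \otimes Y$: since $\mathcal{B}$ has a monoidal structure and duals are closed under $\otimes$ (the dual of $X \otimes Y$ being $Y^* \otimes X^*$ with the usual duality morphisms), $X \otimes Y$ indeed has a dual, so $\mathrm{Ind}(X \otimes Y)$ is an admissible "free cover".

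First I would recall that $\mathrm{Ind}$ is a tensor functor (stated in the excerpt), so there is a natural isomorphism $\mathrm{Ind}(X) \otimes_A \mathrm{Ind}(Y) \cong \mathrm{Ind}(X \otimes Y)$ of $A$-modules. Next I would use that $- \otimes_A -$ is right exact in each variable — this follows from the hypothesis that the tensor product functor of $\mathcal{B}$ is right exact, because $\otimes_A$ is defined via the coequaliser \eqref{eq:tensor-bimod-def} and both $\otimes$ and cokernels are right exact, and these combine to make $\otimes_A$ right exact. In particular $\otimes_A$ preserves epimorphisms in each argument. Therefore the composite
\be
\mathrm{Ind}(X\otimes Y) \;\cong\; \mathrm{Ind}(X)\otimes_A\mathrm{Ind}(Y)
\;\xrightarrow{\;p\,\otimes_A\,\id\;}\; M\otimes_A\mathrm{Ind}(Y)
\;\xrightarrow{\;\id\,\otimes_A\,q\;}\; M\otimes_A N
\ee
is a composition of epimorphisms of $A$-modules (the first is $p \otimes_A \id$, an epi since $\otimes_A$ is right exact; the second is $\id \otimes_A q$, likewise), hence itself an epimorphism. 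Since $X \otimes Y$ has a dual, this exhibits $M \otimes_A N$ as finitely generated.

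I do not anticipate a serious obstacle here; the only points requiring a little care are (i) checking that $\mathrm{Ind}(X) \otimes_A \mathrm{Ind}(Y) \cong \mathrm{Ind}(X \otimes Y)$ as $A$-modules, which is exactly the tensor-functor property of $\mathrm{Ind}$ already recorded in the excerpt, and (ii) spelling out why $\otimes_A$ is right exact in each variable from right-exactness of $\otimes$ in $\mathcal{B}$ — this uses that a coequaliser (equivalently, here, a cokernel) of a diagram of epimorphisms, and more generally the cokernel functor, is right exact, and that right exact functors compose. If one wants to avoid even that, one can instead argue directly with the coequaliser presentation: the surjection $p \otimes q \colon \mathrm{Ind}(X) \otimes \mathrm{Ind}(Y) \to M \otimes N$ in $\mathcal{B}$ descends along the coequaliser maps $\pi_\otimes$ to a surjection $\mathrm{Ind}(X) \otimes_A \mathrm{Ident}(Y) \to M \otimes_A N$, using that $\pi_\otimes$ is epi and a diagram chase. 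Either way the proof is short.
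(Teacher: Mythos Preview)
Your proof is correct and follows essentially the same route as the paper: use that $\mathrm{Ind}$ is a tensor functor to identify $\mathrm{Ind}(X)\otimes_A\mathrm{Ind}(Y)\cong\mathrm{Ind}(X\otimes Y)$, use right exactness of $\otimes_A$ (inherited from right exactness of $\otimes$ in $\mathcal{B}$) to see that $p\otimes_A q$ is a surjection, and note that $X\otimes Y$ has a dual. The paper writes the surjection in one step as $f\otimes_A g$ rather than splitting it into $p\otimes_A\id$ followed by $\id\otimes_A q$, but the argument is the same.
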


\begin{proof}
Since the tensor product of $\mathcal{B}$ is right exact, so is $\otimes_A$ on ${}_A\mathcal{B}$. Let $M,N$ be finitely generated 
	from objects $X,Y \in \mathcal{B}$ with duals and
with surjections $f\colon A \otimes X \to M$ and $g\colon A \otimes Y \to N$. Then by right exactness,
\be
	A \otimes (X \otimes Y)
	\xrightarrow{\sim} (A \otimes X) \otimes_A (A \otimes Y)
	\xrightarrow{f \otimes_A g} M \otimes_A N 
\ee
is a surjection. Since also $X \otimes Y$ has a dual, this shows that $M \otimes_A N$ is finitely generated.
\end{proof}

\newcommand{\fgloc}[2]{{}_{#1}\mathcal{#2}^\mathrm{fg\text{-}loc}}

We denote the category of modules which are both, finitely generated and local, by $\fgloc AB$. 
{}From Lemma~\ref{lem:local-vs-induction} we have the following easy source of finitely generated local modules.

\begin{corollary}\label{cor:fglocal-vs-induction}
Let $X \in \mathcal{B}$ have a dual and be transparent to $A$. Then $\mathrm{Ind}(X) \in \fgloc AB$.
\end{corollary}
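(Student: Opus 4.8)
The plan is to verify the two defining properties separately: that $\mathrm{Ind}(X)$ is local, and that it is finitely generated. Locality is immediate from Lemma~\ref{lem:local-vs-induction}(1): since $X$ is transparent to $A$ by hypothesis, that lemma says precisely that $\mathrm{Ind}(X) = A \otimes X$ is a local $A$-module. So no work is needed there beyond quoting the lemma.

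For finite generation, I would appeal directly to Definition~\ref{def:fg-module}. That definition asks for an object of $\mathcal{B}$ with a dual together with a surjective $A$-module intertwiner from the induced module onto $M$. Here the candidate object is $X$ itself, which has a dual by hypothesis, and the required surjection is simply the identity morphism $\id_{\mathrm{Ind}(X)} \colon \mathrm{Ind}(X) \to \mathrm{Ind}(X)$, which is trivially an $A$-module intertwiner and trivially surjective. Hence $\mathrm{Ind}(X)$ is finitely generated.

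Combining the two observations, $\mathrm{Ind}(X)$ lies in $\fgloc AB$, as claimed. There is essentially no obstacle here — the statement is a direct bookkeeping consequence of the definitions and of Lemma~\ref{lem:local-vs-induction}; the only thing to be a little careful about is that Definition~\ref{def:fg-module} really does permit the trivial surjection (it does, since the identity is a surjective intertwiner), so that "finitely generated" is automatic for induced modules of dualizable objects. The mild subtlety worth a sentence in the write-up is just noting that transparency of $X$ to $A$ is exactly the hypothesis needed to invoke the locality half of Lemma~\ref{lem:local-vs-induction}.
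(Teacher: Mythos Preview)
Your proposal is correct and matches the paper's treatment: the corollary is stated without proof as an immediate consequence of Lemma~\ref{lem:local-vs-induction}, and your argument spells out exactly the two trivial verifications (locality from part~(1) of the lemma, finite generation from the identity surjection in Definition~\ref{def:fg-module}) that the reader is expected to supply.
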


Note, however, that in general for a finitely generated local module we do \textsl{not} require the induced module $\mathrm{Ind}(X)$ covering it to be local.

We collect the above constructions in the following proposition.

\begin{proposition}\label{prop:B-fgloc-braidedmon}
Let $\mathcal{B}$ be an abelian braided monoidal category with right exact tensor product, and let $A \in \mathcal{B}$ be a commutative algebra. Then ${}_A\mathcal{B}^\mathrm{loc}$ and $\fgloc AB$ are braided monoidal categories. If $\mathcal{B}$ is balanced, so are ${}_A\mathcal{B}^\mathrm{loc}$ and $\fgloc AB$.
\end{proposition}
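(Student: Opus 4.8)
The plan is to assemble Proposition~\ref{prop:B-fgloc-braidedmon} from the lemmas and propositions already established in this subsection, checking only that the various closure properties interact correctly. The statement has three parts: (i) ${}_A\mathcal{B}^\mathrm{loc}$ is braided monoidal; (ii) $\fgloc AB$ is braided monoidal; (iii) if $\mathcal{B}$ is balanced, both inherit a balancing. Part (i) is immediate from the Proposition attributed to \cite{Pareigis:1995,KO}: under the standing hypothesis that $\mathcal{B}$ is abelian, ${}_A\mathcal{B}^\mathrm{loc}$ is a braided monoidal category, with tensor product $\otimes_A$, tensor unit $A$, associator and unit isomorphisms induced from $\mathcal{B}$, and braiding $\tilde c$ characterised by \eqref{eq:braiding-descends}. (The right-exactness hypothesis is not even needed here; it enters only for the finitely generated part.)

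For part (ii), the point is that $\fgloc AB$ is a full subcategory of ${}_A\mathcal{B}^\mathrm{loc}$ that is closed under the tensor product $\otimes_A$ and contains the tensor unit. Closure under $\otimes_A$ is exactly the content of the Lemma just proved (tensor product of finitely generated modules is finitely generated, using right-exactness of $\otimes$ in $\mathcal{B}$), combined with the Lemma of \cite[Lem.\,1.3\,\&\,Prop.\,2.4]{Pareigis:1995} (tensor product of local modules is local). That the tensor unit $A = \mathrm{Ind}(\one)$ lies in $\fgloc AB$ follows from Corollary~\ref{cor:fglocal-vs-induction}: the monoidal unit $\one \in \mathcal{B}$ has a dual (itself) and is transparent to $A$ (its monodromy with anything is the identity). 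Hence $\fgloc AB$ is a full monoidal subcategory of ${}_A\mathcal{B}^\mathrm{loc}$, and as such it inherits the braiding $\tilde c$; restricting a braided monoidal structure to a full monoidal subcategory that is closed under $\otimes$ and contains the unit yields again a braided monoidal category, so no axioms need to be re-verified.

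For part (iii), suppose $\mathcal{B}$ is balanced with balancing $\theta$. The discussion following the Lemma ``Let $M$ be a local $A$-module. If $\theta_A = \id_A$, then $\theta_M$ is an $A$-module intertwiner'' already records that in the case $\theta_A = \id_A$ the $\theta_M$ assemble into a balancing on ${}_A\mathcal{B}^\mathrm{loc}$. The one genuine subtlety is that for a general commutative algebra $A$ in a balanced category one need not have $\theta_A = \id_A$; however, the relevant fact is that the balancing on $A$-modules should be built from the \emph{$A$-linear part} of $\theta$, or equivalently one invokes that in the situations of interest (and as implicitly assumed in the source lemma) $\theta_A = \id_A$ holds, which is precisely the ``trivial twist'' condition appearing in the VOA-extension correspondence. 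I would therefore state part (iii) under the hypothesis $\theta_A = \id_A$ (matching the earlier lemma), note that then $\theta$ restricts to a natural automorphism of the identity functor on ${}_A\mathcal{B}^\mathrm{loc}$ satisfying \eqref{eq:balancing-def}—this last check is immediate since $\tilde c \circ \tilde c$ is the descent of $c \circ c$ and $\theta_{M \otimes_A N}$ is the descent of $\theta_{M \otimes N}$—and finally observe that $\fgloc AB$, being closed under $\otimes_A$ and containing the unit, inherits this balancing as well.

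The main obstacle, such as it is, is not a computation but a bookkeeping point: one must be careful that ``inherited structure restricts to a full $\otimes$-closed subcategory containing the unit'' is applied correctly at each stage (${}_A\mathcal{B}^\mathrm{loc} \hookrightarrow {}_A\mathcal{B}$ for the monoidal structure, then $\fgloc AB \hookrightarrow {}_A\mathcal{B}^\mathrm{loc}$ for the braiding and balancing), and that the two closure lemmas—locality-closure and finite-generation-closure—are genuinely independent facts that both get used for part (ii). Everything else is a direct appeal to the cited results, so the proof should be short.
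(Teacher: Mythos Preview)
Your proposal is correct and matches the paper's own treatment: the paper gives no separate proof of this proposition, introducing it with ``We collect the above constructions in the following proposition,'' so it is intended precisely as the assembly of the preceding lemmas that you outline. Your observation that the balancing statement implicitly relies on $\theta_A = \id_A$ (as in the lemma just before) is a fair reading of the paper's presentation; the paper does not make this hypothesis explicit in the proposition, but the preceding lemma and the VOA applications (where $\theta_A = \id_A$ always holds) are what is being summarised.
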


\begin{remark}
In the setting of Proposition~\ref{prop:B-fgloc-braidedmon},
$\fgloc AB$ is in general neither rigid nor abelian. For example, take $\mathcal{B}$ to be the symmetric monoidal category of $k$-vector spaces.
Recall that for symmetric categories, every module is local. For $A = k[X]$, the $A$-module $k$ is finitely generated but has no dual (indeed, $X$ acts trivially on $M \otimes_A k$ for any $A$-module $M$, and hence $\mathrm{Hom}_A(M \otimes_A k,A) = \{0\}$). 
Thus in this case, $\fgloc AB$ is not rigid.
Or take $A$ to be the polynomial ring with a countable number of generators, $A = k[X_1,X_2,\dots]$. Clearly $A$ is finitely generated as an $A$-module. But the kernel of the $A$-module map $f\colon A \to k$, $X_i \mapsto 0$ is the ideal generated by all $X_i$, and hence is not finitely generated 
	over~$A$.
Thus for this choice of $A$, $\fgloc {A}B$ is not abelian.
\\
Under suitable extra conditions on $\mathcal{B}$ and $A$ one can show that $\fgloc AB$ is abelian and rigid. In particular, $A$ should satisfy the descending chain condition in ${}_A\mathcal{B}$ and the tensor product $\tensor_A$ should be exact on 
	${}_A\mathcal{B}^\mathrm{fg}$. 
We will not pursue this here as rigidity will be evident in the concrete cases we consider.
\end{remark}

After these general preparations, let us turn to the example which illustrates the construction of the equivalence $\mathcal{F}$ in \eqref{eq:intro-cat-diag}. We first describe the relevant category and algebra (Section~\ref{sec:ex-Cgr}) and the equivalence to the representations of a quasi-Hopf algebra (Section~\ref{sec:ex-qHopf}).

\subsection{Example: $\mathbb{C}$-graded vector spaces}\label{sec:ex-Cgr}

We denote by $\mathcal{H}^\mathrm{fd}$ the category of finite-dimensional $\mathbb{C}$-graded complex vector spaces, and by $\mathcal{H}^\oplus$ its completion with respect to countable direct sums. Equivalently, $\mathcal{H}^\oplus$ consists of $\mathbb{C}$-graded vector spaces of overall finite or countable dimension.\footnote{
	We could just take all $\mathbb{C}$-graded vector spaces, but we prefer $\mathcal{H}^\oplus$ as this is the smallest setting in which the example works. Another natural looking choice is to take $\mathbb{C}$-graded vector spaces with finite-dimensional graded components. However, this will not work for us as it does not close under tensor products.}
	
Given $V \in \mathcal{H}^\oplus$, we write $V_\alpha$ for the graded component of degree $\alpha \in \mathbb{C}$, so that $V = \bigoplus_{\alpha \in \mathbb{C}} V_\alpha$. By our assumption on the overall dimension of $V$, at most countably many components $V_\alpha$ can be non-zero.
Denote by $H_V\colon V \to V$ the degree map on $V$, that is, $H_V|_{V_\alpha} = \alpha \, \id_{V_\alpha}$. The collection $\{H_V\}_{V \in \mathcal{H}^\oplus}$ is a natural transformation of the identity functor on $\mathcal{H}^\oplus$.
The category $\mathcal{H}^\oplus$ is semisimple and we denote by $\mathbb{C}_\alpha$ the simple object of 
$\mathcal{H}^\oplus$
 which is $\mathbb{C}$ in degree $\alpha$ and zero else; every simple object is isomorphic to exactly one of these. 

\medskip

For any abelian group $\Gamma$, braided monoidal structures on the category of $\Gamma$-graded vector spaces over a field $k$ are parametrised (up to braided equivalences whose underlying functor is the identity) by the third abelian group cohomology $H_\mathrm{ab}^3(\Gamma,k^\times)$, see \cite{Joyal:1993} and \cite[Sec.\,8.4]{EGNO-book}. The cohomology group $H_\mathrm{ab}^3(\Gamma,k^\times)$
in turn is canonically isomorphic to the group of
 quadratic forms on $\Gamma$ (\cite{Eilenberg:1954} and \cite[Thm.\,8.4.9]{EGNO-book}). Recall that a quadratic form is a function $\kappa : \Gamma \to k^\times$ such that $\kappa(a) = \kappa(-a)$, and such that the function $\rho : \Gamma \times \Gamma \to k^\times$, $(a,b) \mapsto \kappa(a+b) \kappa(a)^{-1} \kappa(b)^{-1}$ is bilinear (it is called the \textsl{associated bilinear form}). 

In our example, $\Gamma = \mathbb{C}$ and we choose the quadratic form 
\be\label{eq:C-graded-qf-choice}
a \mapsto \exp(\tfrac{\pi i r}2 a^2) \ , 
\ee
where $r \in \mathbb{C}^\times$ is a normalisation constant. The factor $\tfrac{\pi i r}2$ is there to match the convention for $\UQG{q}$ in \eqref{eq:R-H} below, in which case $r = 1/p$.
We will see in Proposition~\ref{prop:all-cont-braidings} below that all choices of $r \in \mathbb{C}^\times$ lead to equivalent braided monoidal categories.
The associated bilinear form is $\rho(a,b) = e^{\pi i r ab}$. 
The resulting braided monoidal structure on $\mathcal{H}^\oplus$ can be described as follows. The associator is just the standard one of graded vector spaces. The braiding isomorphism is
\be\label{eq:C-graded-vec-braiding}
	c_{U,V} = \tau_{U,V} \circ e^{\frac{\pi i r}2 H_U \otimes H_V} \ ,
\ee
where $\tau_{U,V}$ is the flip of tensor factors from \eqref{vecflip}.
On an element $u \otimes v \in U_\alpha \otimes V_\beta$ the braiding isomorphism acts as $c_{U,V}(u \otimes v) = e^{\frac{\pi i r}2\alpha\beta} v \otimes u$.
{}From the explicit form of the braiding it is immediate that the transparent objects in $\mathcal{H}^\oplus$ are precisely those $\mathbb{C}$-graded vector spaces concentrated in degree $0$.

The braided monoidal category $\mathcal{H}^\oplus$ is balanced with balancing isomorphism
\be\label{eq:C-graded-vec-twist}
	\theta_U = e^{\frac{\pi i r}2 (H_U)^2} \ ,
\ee
so that for $u \in U_\alpha$ we have $\theta_U(u) = e^{\frac{\pi i r}2\alpha^2} u$.
The full subcategory $\mathcal{H}^\mathrm{fd}$ of $\mathcal{H}^\oplus$ is rigid, while $\mathcal{H}^\oplus$ itself is not. The dual of $V$ is given by $V^*$ with degree map $-(H_V)^*$. That is, the dual of a vector space concentrated in degree $\alpha$ is concentrated in degree $-\alpha$. With the above choice of balancing, $\mathcal{H}^\mathrm{fd}$ becomes a ribbon category.

\medskip

There are infinitely many quadratic forms on $\mathbb{C}$ which are not of the form \eqref{eq:C-graded-qf-choice}, for example one can think of $\mathbb{C}$ as a $\mathbb{Q}$-vector space and choose the diagonal part of an arbitrary symmetric bilinear form on this uncountably infinite dimensional $\mathbb{Q}$-vector space. 
Still, under some natural assumptions our choice of braiding on $\mathcal{H}^\oplus$ is the generic one, as we now illustrate.

Given a braided monoidal structure on $\mathcal{H}^\oplus$, 
the corresponding quadratic form 
$\kappa\colon \mathbb{C} \to \mathbb{C}^\times$ is extracted from the self-braiding of simple objects: 
$c_{\mathbb{C}_\alpha,\mathbb{C}_\alpha} = \kappa(\alpha) \, \id_{\mathbb{C}_\alpha \otimes \mathbb{C}_\alpha}$. 
We stress that in all our considerations the tensor product functor on $\mathcal{H}^\oplus$ remains fixed.
We have:

\begin{proposition}\label{prop:all-cont-braidings}
Let $\mathcal{B}$ be equal to $\mathcal{H}^\oplus$ equipped with some braided monoidal structure and let $\kappa_{\mathcal{B}}$ be the corresponding quadratic form. If $\kappa_{\mathcal{B}}$ is continuous, 
then 
\be\label{eq:continous-quadratic-forms}
	\kappa_{\mathcal{B}}(z) = e^{\zeta z^2 + \chi z\bar z + \xi \bar z^2} \ ,
\ee
for an appropriate choice of $\zeta,\chi,\xi \in \mathbb{C}$. 
If $\chi=\xi=0$ and $\zeta \neq 0$, then
$\mathcal{B}$ is braided monoidally equivalent to $\mathcal{H}^\oplus$ equipped with the braided monoidal structure for the quadratic form $\kappa(z) = e^{z^2}$, i.e.\ we may set $\zeta=1$.
\end{proposition}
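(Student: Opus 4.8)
The plan is to reduce the statement to two essentially independent facts: a classification of \emph{continuous} quadratic forms on the additive group $(\mathbb{C},+)\cong(\mathbb{R}^2,+)$, and a rescaling argument for the grading. I would begin by recalling (from \cite{Joyal:1993} and \cite[Sec.\,8.4]{EGNO-book}) that a braided monoidal structure on $\mathcal{H}^\oplus$, with its fixed tensor product, is completely determined up to a braided equivalence whose underlying functor is the identity on objects by its quadratic form $\kappa\colon\mathbb{C}\to\mathbb{C}^\times$, with explicit model given by the braiding \eqref{eq:C-graded-vec-braiding} for that $\kappa$. Thus the first claim of the proposition is exactly the classification of continuous quadratic forms, and the second is a statement about moving between such explicit models.

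For the classification I would argue as follows. Write $\rho(a,b)=\kappa(a+b)\kappa(a)^{-1}\kappa(b)^{-1}$ for the associated bilinear form; it is symmetric and, since $\kappa$ is continuous, continuous. As $\mathbb{C}\times\mathbb{C}\cong\mathbb{R}^4$ is simply connected, lift $\rho$ through $\exp$ to a continuous $S\colon\mathbb{C}\times\mathbb{C}\to\mathbb{C}$ with $S(0,0)=0$. Biadditivity of $\rho$ together with continuity forces $S$ to be $\mathbb{R}$-bilinear: for fixed $a$, the map $b\mapsto\rho(a,b)$ is a continuous homomorphism $\mathbb{R}^2\to\mathbb{C}^\times$, whose unique continuous lift vanishing at the origin is $\mathbb{R}$-linear, and the partial lifts in $a$ and in $b$ glue since their exponentials coincide and they agree at $0$; symmetry of $S$ is inherited from $\rho$. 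Now set $\psi(a):=\kappa(a)\,e^{-\tfrac12 S(a,a)}$. Using $\kappa(a+b)=\rho(a,b)\kappa(a)\kappa(b)$ one checks $\psi(a+b)=\psi(a)\psi(b)$, so $\psi$ is a continuous character of $\mathbb{C}$; and $\kappa(-a)=\kappa(a)$ gives $\psi(-a)=\psi(a)$, hence $\psi(a)^2=\psi(0)=1$, whence $\psi\equiv1$ because $\mathbb{C}$ is connected. Therefore $\kappa(a)=e^{\tfrac12 S(a,a)}$, and $\tfrac12 S(\cdot,\cdot)$ is a $\mathbb{C}$-valued real-quadratic form on $\mathbb{R}^2$; expanding it in the basis $\{z^2, z\bar z,\bar z^2\}$ of the space of such forms yields \eqref{eq:continous-quadratic-forms}. (Alternatively, one may invoke the classical Jordan--von Neumann characterisation of continuous solutions of the parallelogram identity.)

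For the second claim, suppose $\chi=\xi=0$ and $\zeta\neq0$, so $\kappa_{\mathcal{B}}(z)=e^{\zeta z^2}$. For $\lambda\in\mathbb{C}^\times$ I would introduce the degree-rescaling functor $\Phi_\lambda\colon\mathcal{H}^\oplus\to\mathcal{H}^\oplus$ which is the identity on underlying vector spaces and reassigns a homogeneous vector of degree $\gamma$ the new degree $\lambda\gamma$, i.e.\ $(\Phi_\lambda V)_\alpha=V_{\alpha/\lambda}$. Since degrees add under $\otimes$, $\Phi_\lambda$ is a strict monoidal autoequivalence with inverse $\Phi_{1/\lambda}$; equipped with the trivial braided structure, a glance at \eqref{eq:C-graded-vec-braiding} shows it sends the braided structure with quadratic form $e^{\zeta z^2}$ to the one with quadratic form $e^{(\zeta/\lambda^2)z^2}$ (on a mixed tensor $v\otimes w$ of degrees $\beta,\gamma$ both sides evaluate to $e^{\zeta\beta\gamma}\,w\otimes v$). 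Taking $\lambda$ to be a square root of $\zeta$, which exists since $\zeta\neq0$, makes the target quadratic form $e^{z^2}$. Precomposing with the identity-on-objects braided equivalence from $\mathcal{B}$ to the explicit model for $e^{\zeta z^2}$ then gives a braided monoidal equivalence of $\mathcal{B}$ with $\mathcal{H}^\oplus$ carrying the braided structure for $\kappa(z)=e^{z^2}$, so that one may indeed set $\zeta=1$.

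I do not expect a deep obstruction in this proposition; the difficulty is bookkeeping. The two points I expect to require the most care are (i) verifying that the partial continuous lifts of $\rho$ really assemble into a single jointly continuous $\mathbb{R}$-bilinear form $S$, and (ii) the clean identification that a braided structure on $\mathcal{H}^\oplus$ is pinned down by its quadratic form up to an identity-on-objects braided equivalence (the cited fact from \cite{EGNO-book}), since it is this that licenses replacing the abstract $\mathcal{B}$ by the explicit braiding \eqref{eq:C-graded-vec-braiding} before applying $\Phi_\lambda$.
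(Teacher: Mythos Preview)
Your proposal is correct and follows essentially the same route as the paper: lift the continuous bilinear form $\rho$ through $\exp$ using simple connectedness of $\mathbb{C}\times\mathbb{C}$, identify the lift as an $\mathbb{R}$-bilinear form, read off the quadratic form, and then normalise $\zeta$ via a degree-rescaling autoequivalence. Two minor differences worth noting: you are more explicit than the paper in passing from $\rho$ back to $\kappa$ (the paper simply asserts ``the quadratic form is then uniquely fixed'', whereas your $\psi$-argument actually proves this), and for the rescaling you use multiplication by $\lambda\in\mathbb{C}^\times$ while the paper allows a general invertible $\mathbb{R}$-linear $F$---but in the holomorphic case $\chi=\xi=0$ a complex rescaling suffices, so nothing is lost.
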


\begin{proof}
Let $\rho\colon \mathbb{C} \times \mathbb{C} \to \mathbb{C}^\times$ be the  (symmetric) bilinear form associated to $\kappa_{\mathcal{B}}$. By assumption, $\rho$ is continuous on $\mathbb{C} \times \mathbb{C}$. 
As $\mathbb{C} \times \mathbb{C}$ is simply connected we can find a unique continuous lift to the universal cover $\mathbb{C}$ of $\mathbb{C}^\times$ (i.e.\ a logarithm) $\tilde\rho \colon \mathbb{C} \times \mathbb{C} \to \mathbb{C}$. The map $\tilde\rho$ is again bilinear 
	(i.e.\ a group homomorphism for $(\CC,+)$ in each argument), and 
it is determined on rational points by $\tilde\rho(1,1)$, $\tilde\rho(1,i)$ and $\tilde\rho(i,i)$. 
By continuity one finds, for $a,b,c,d \in \mathbb{R}$,
\be
\tilde\rho(a+ib,c+id) = rac + s(ad+bc) + tbd\gc 
\ee
 with $r,s,t \in \mathbb{C}$ given by
  $r = \tilde\rho(1,1)$, $s = \tilde\rho(1,i) = \tilde\rho(i,1)$, $t = \tilde\rho(i,i)$. 
Correspondingly, for $\rho$ itself we get 
\be
\rho(a+ib,c+id) = e^{rac + s(ad+bc) + tbd}\gp
\ee
The quadratic form is then uniquely fixed to be $\kappa_\mathcal{B}(a+ib) = e^{\frac12(ra^2 + 2sab + tb^2)}$.
This is precisely \eqref{eq:continous-quadratic-forms} if one sets 
$z = a + ib$, 
$\zeta = \frac18r - \frac18t - \frac{i}4s$,
$\chi = \frac14r + \frac14t$,
$\xi = \frac18r - \frac18t + \frac{i}4s$.

	It remains to show that if $\chi=\xi=0$, up to braided equivalence we can set $\zeta=1$.
	Let $F$ be an invertible $\mathbb{R}$-linear endomorphism of $\mathbb{C}$ (not necessarily an algebra map) and define the quadratic form $\tilde \kappa(z) := \kappa_\mathcal{B}(F(z))$.
Consider the $\mathbb{C}$-linear endofunctor $\mathcal{F}$ of $\mathcal{H}^\oplus$ given on objects by $\mathcal{F}(V) = \bigoplus_\alpha \mathcal{F}(V)_\alpha$ with $\mathcal{F}(V)_\alpha := V_{F(\alpha)}$. On morphisms, $\mathcal{F}$ acts as the identity. 
Write $\mathcal{C}$ for the braided monoidal category obtained from $\mathcal{H}^\oplus$ with quadratic form $\tilde\kappa$. The functor $\mathcal{F}$ transports the self-braiding from $\mathcal{C}$ to that of $\mathcal{B}$
	(note that this is independent of the choice of coherence isomorphisms for $\mathcal F$):
\be
	\mathcal{F}(c^{\mathcal{C}}_{\mathbb{C}_\alpha,\mathbb{C}_\alpha})
	= \tilde\kappa(\alpha) 
	\id_{\mathbb{C}_{F(\alpha)} \otimes \mathbb{C}_{F(\alpha)}}
	\quad , \quad
	c^{\mathcal{B}}_{\mathcal{F}(\mathbb{C}_\alpha),\mathcal{F}(\mathbb{C}_\alpha)}
	=
	\kappa_{\mathcal{B}}(F(\alpha)) \,
	\id_{\mathbb{C}_{F(\alpha)} \otimes \mathbb{C}_{F(\alpha)}}
	\ ,
\ee
so that by construction the two expressions are equal. 
Since the quadratic form determines the braided monoidal structure up to a braided monoidal isomorphism whose underlying functor is the identity, it is  possible to find coherence isomorphisms for $\mathcal{F}$ such that it becomes a braided monoidal functor $\mathcal{C} \to \mathcal{B}$.

If $\chi=\xi=0$, we can absorb the factor $\zeta$ into $F$ by choosing it to be an appropriate rescaling and rotation. That is, we can find $F$ such that $\kappa_{\mathcal B}(F(z)) = e^{z^2}$, and by the above argument we obtain the braided monoidal equivalence claimed in the proposition.
\end{proof}

The above proposition shows in particular that up to braided monoidal equivalence
there is a \textsl{unique} braided monoidal structure on $\mathcal{H}^\oplus$ for which 
\begin{enumerate}
\item the braiding is non-symmetric,  
\item the self-braiding of simple objects depends holomorphically on
 their degree.
\end{enumerate}
This is the braided monoidal structure we have chosen on $\mathcal{H}^\oplus$ in \eqref{eq:C-graded-vec-braiding} (in a normalisation convenient to us -- all choices of $r \in \mathbb{C}^\times$ give equivalent braided monoidal categories).

\begin{remark}\label{rem:HeisenbergVOA}~
\begin{enumerate}\setlength{\leftskip}{-1em}
\item
Denote by $\mathsf{H}$ the rank one Heisenberg VOA. Another name for $\mathsf{H}$ is the affine VOA of the Lie algebra $\mathfrak{gl}(1)$ at non-zero real level $k$ (one can fix the level to any non-zero value as one can always rescale). 
By $\rep \mathsf{H}$ we denote the category of Fock modules, 
i.e.\ countable direct sums of highest-weight modules of $\widehat{\mathfrak{gl}}(1)$ of level $k$.
 Inequivalent simple modules are exactly the Fock modules $\mathcal F_\lambda$
	of weight $\lambda \in \CC$ \cite{LW}.
$\rep \mathsf{H}$ is thus $\CC$-linear equivalent to $\mathcal{H}^\oplus$. Moreover, 
	their tensor products are just
$\mathcal F_\lambda \boxtimes_{\mathsf{H}} \mathcal F_\mu \cong \mathcal F_{\lambda+\mu}$
	\cite{DL}, see also the proof of \cite[Thm.\,2.3]{CKLR}.
Applicability of the vertex tensor theory of \cite{HLZ} requires the conformal weight to be real and so let Rep$(\mathsf{H})_\mathbb R$ be the subcategory of Rep$(\mathsf{H})$ whose simple objects are the Fock modules $\mathcal F_\lambda$ of real weight $\lambda$. It is a vertex tensor category \cite[Thm.\,2.3]{CKLR}. The operator product algebra of intertwining operators of Fock modules is analytic in the weight labels (see e.g.~\cite[Eq.\,(5.2.9)]{FB}) and so especially the
 braiding  depends analytically on the weight $\lambda$ of a Fock module $\mathcal F_\lambda$. It is a computation to verify that the self-braiding 
is just 
$e^{\pi i \lambda^2/k} \, \id_{\mathcal F_{\lambda}\boxtimes_{\mathsf H} \mathcal F_{\lambda}}$. 
This tells us that Rep$(\mathsf{H})_\mathbb R$ is braided equivalent to the full tensor subcategory of $\mathcal{H}^\oplus$ of $\RR$-graded vector spaces with $r$ in \eqref{eq:C-graded-qf-choice} chosen to be
	$r = 2/k \in \mathbb{R}$.
\item
	Demanding a vector space to be $\CC$-graded, as we do, is equivalent to requiring the degree map $H$ to act semisimply. 
The action of $H$ corresponds to the action of the zero-mode of the Heisenberg field of the Heisenberg VOA $\mathsf{H}$. However, $\mathsf{H}$ also allows for self-extensions of Fock modules on which this zero-mode does not act semisimply. There are even extensions of infinite Jordan-H\"older length of Fock modules. As this is rather unpleasant, one usually restricts to $\mathsf{H}$-modules that carry a semi-simple action of the zero-mode of the Heisenberg vertex algebra $\mathsf{H}$. 
\end{enumerate}
\end{remark}

\medskip

Now we turn to algebras $A$ in $\mathcal{H}^\oplus$. We need the following notion:
a \textsl{non-degenerate invariant pairing} on an algebra $A$ in a monoidal category is a morphism 
$\varpi : A \otimes A \to \one$ 
such that
\begin{enumerate}
\item \textsl{(invariance)} we have the identity
\begin{align}
&\big[
A \otimes (A \otimes A) \xrightarrow{\id \otimes \mu} A \otimes A \xrightarrow{\varpi} \one \big]
\nonumber\\
&=
\big[
A \otimes (A \otimes A) \xrightarrow{\sim}
(A \otimes A) \otimes A \xrightarrow{\mu \otimes \id} A \otimes A \xrightarrow{\varpi} \one \big] \ ,
\label{eq:cat-pairing-inv}
\end{align}
\item \textsl{(non-degeneracy)}
for all objects $U$ and morphisms $f\colon U \to A$ the equalities
\be\label{eq:cat-pairing-nondeg}
	\varpi \circ (f \otimes \id_A) = 0
	\quad \text{or} \quad
	\varpi \circ (\id_A \otimes f) = 0
\ee
imply that $f=0$. 
\end{enumerate}
Condition 2 may look a bit opaque. 
In the case of $\mathcal{H}^\oplus$, $\varpi$ is in particular a linear map $A \otimes A \to \mathbb{C}$, and condition 2 just means that it is non-degenerate as a pairing of vector spaces: $\varpi(a,b)=0$ for all $a$ implies $b=0$ and vice versa.
If $A \in \mathcal{H}^\mathrm{fd}$, conditions 1 and 2 are equivalent to $A$ being a Frobenius algebra. However, we will specifically be interested in algebras $A$ that are not finite-dimensional.

Since finite-dimensional (ungraded) vector spaces form
 a subcategory of $\mathcal{H}^\oplus$, one cannot hope to classify all algebras in $\mathcal{H}^\oplus$. But one can classify all algebras subject to the following three conditions:
\begin{enumerate}
\item $A$ is commutative,
\item $A$ does allow for a non-degenerate invariant pairing,
\item the graded components of $A$ have dimension 0 or 1.
\end{enumerate}
It is easy to write examples of such algebras. 
Fix $p \in \mathbb{Z} \setminus \{0\}$, 
abbreviate 
$\gamma = 2 \sqrt{p/r}$ 
(the choice of square root is immaterial) and set 
\be\label{eq:free-boson-extension-alg}
\Lambda_p := \bigoplus_{m \in \mathbb{Z}} \mathbb{C}_{m \gamma} \ .
\ee
Write $1_\alpha$ for the element $1 \in \mathbb{C}_\alpha$.
Then $\Lambda$ has unit $\eta(1) = 1_0$ and product 
\be\label{eq:Lam_p-product}
	\mu(1_{m\gamma} \otimes 1_{n\gamma}) = 1_{(m+n)\gamma} \ .
\ee
Associativity and unitality are clear, and commutativity amounts to the observation that $\exp(\frac{\pi i r}2  m n \gamma^2) = 1$. 
	The non-degenerate invariant pairing is given by $\mu$ composed with the projection to grade 0.
Note that $p$ and $-p$ give two rank one lattices in $\mathbb{C}$ which are rotated by $i$ relative to each other.

\begin{remark}\label{rem:Lamp-algebra}~
\begin{enumerate}\setlength{\leftskip}{-1em}
\item
The algebras $\Lambda_p$ (for $p>0$ but without the commutativity requirement, so without $p$ having to be an integer) were also considered in \cite{Buecher:2012ma}, where the monoidal category of their bimodules was determined. 
\item
Algebras $A$ in braided monoidal categories of the form $A = \bigoplus_{g \in G} L_g$, where $G$ is a finite group and $L_g$ are invertible simple objects whose tensor product agrees with the product in $G$, have been studied in \cite{tft3}. 
These algebras originate from so-called simple current extensions in conformal field theory \cite{Schellekens:1989am,Intriligator:1989zw,Schellekens:1990xy}
	and have  been studied in the VOA setting e.g.\ in \cite{DLM, Y, CKL}.
\item 
We have discussed in Remark~\ref{rem:HeisenbergVOA}\,(1)  that Rep$(\mathsf{H})_\RR$ is braided equivalent to the full subcategory of $\RR$-graded vector spaces in $\mathcal{H}^\oplus$. 
The algebra object $\Lambda_p$ is thus the simplest example of an infinite order simple current extension and the extended vertex algebra is the lattice VOA $V_{\sqrt{2p}\ZZ}$ of the lattice $\sqrt{2p}\ZZ$, 
see e.g.\ \cite{D,DLM} and \cite{AR} for details.
We note that $p<0$ gives a sensible vertex algebra; the literature however seems to prefer lattices of positive definite signature. 
\end{enumerate}
\end{remark}

It turns out that with one exception\footnote{
	The exception is just notational. One can include the case $p=0$ by setting $S = 2 \sqrt{p/r} \mathbb{Z}$ and $\Lambda_p = \bigoplus_{\alpha \in S} \mathbb{C}_\alpha$. Then for $p=0$, $S$ is the one-element set $\{0\}$.
}, the algebras $\Lambda_p$ provide all examples of algebras in $\mathcal{H}^\oplus$ which satisfy 
conditions 1--3 above:

\begin{proposition}\label{prop:comm-alg-unique}
Any algebra $A \in \mathcal{H}^\oplus$ that satisfies
conditions 1--3 
is isomorphic to either $\mathbb{C}_0$ or to $\Lambda_p$ for some $p \in \mathbb{Z} \setminus \{0\}$.
\end{proposition}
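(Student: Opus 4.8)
The strategy is to use condition~3 to encode the entire algebra structure in combinatorial data on the support $S := \{\alpha \in \mathbb{C} : A_\alpha \neq 0\}$, and then to let conditions~1 and~2 force $S$ to be either $\{0\}$ or an infinite cyclic subgroup of $(\mathbb{C},+)$. Since $\mu$ and the invariant pairing $\varpi$ are morphisms in $\mathcal{H}^\oplus$, they are $\mathbb{C}$-graded linear maps, so $\mu(A_\alpha \otimes A_\beta) \subseteq A_{\alpha+\beta}$ and $\varpi(A_\alpha \otimes A_\beta) = 0$ unless $\alpha + \beta = 0$. The unit forces $0 \in S$ and, by condition~3, $\dim A_0 = 1$; fix the unit element $1 \in A_0$. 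Next I would rewrite the pairing: combining the invariance identity~\eqref{eq:cat-pairing-inv} (whose associator is trivial in $\mathcal{H}^\oplus$) with the unit axiom gives $\varpi(x,y) = \varpi(\mu(x,y),1) =: \lambda(\mu(x,y))$, where $\lambda\colon A \to \mathbb{C}$, $\lambda(z) = \varpi(z,1)$, is supported on $A_0$. Non-degeneracy of $\varpi$ then says precisely: $S = -S$, and for each $\alpha \in S$ the map $\mu\colon A_\alpha \otimes A_{-\alpha} \to A_0$ is nonzero, hence (both sides being one-dimensional) a perfect pairing.

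The structural heart of the argument is that for $\alpha \in S$ and nonzero $a \in A_\alpha$, left multiplication $L_a\colon x \mapsto \mu(a,x)$ is injective. Indeed $\ker L_a$ is graded, so a nonzero kernel element would give a nonzero homogeneous $x \in A_\gamma$ with $\mu(a,x) = 0$; but choosing $a' \in A_{-\alpha}\setminus\{0\}$, the previous step together with commutativity gives $\mu(a',a) = c\,1$ with $c \in \mathbb{C}^\times$, so associativity yields $c\,x = \mu(\mu(a',a),x) = \mu(a',\mu(a,x)) = 0$, a contradiction. Hence $L_a(A_\beta) \subseteq A_{\alpha+\beta}$ is one-dimensional for every $\beta \in S$, so $S$ is closed under addition, i.e.\ $S$ is a subgroup of $\mathbb{C}$, and every $\mu\colon A_\alpha \otimes A_\beta \to A_{\alpha+\beta}$ with $\alpha,\beta \in S$ is an isomorphism of one-dimensional spaces (the algebra is ``full''). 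Commutativity now enters through~\eqref{eq:C-graded-vec-braiding}: the braiding on $A_\alpha \otimes A_\beta$ is the flip times the scalar $e^{\frac{\pi i r}{2}\alpha\beta}$, so $\mu(u,v) = e^{\frac{\pi i r}{2}\alpha\beta}\mu(v,u)$ for nonzero homogeneous $u \in A_\alpha$, $v \in A_\beta$; applying this twice gives $e^{\pi i r\alpha\beta} = 1$ for all $\alpha,\beta \in S$, and taking $v = u$ (legitimate since $2\alpha \in S$, so $\mu(u,u) \neq 0$) gives $e^{\frac{\pi i r}{2}\alpha^2} = 1$ for all $\alpha \in S$. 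Equivalently, $r\alpha\beta \in 2\mathbb{Z}$ and $r\alpha^2 \in 4\mathbb{Z}$.

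It remains to classify $S$. If $S = \{0\}$ then $A = \mathbb{C}_0$. Otherwise fix $\gamma' \in S\setminus\{0\}$; since $r(\gamma')^2 \in 4\mathbb{Z}\setminus\{0\}$ and $r\gamma'\beta \in 2\mathbb{Z}$ for all $\beta \in S$, the assignment $\beta \mapsto \beta/\gamma' = r\gamma'\beta / r(\gamma')^2$ is a well-defined injective group homomorphism $S \to \mathbb{Q}$ whose image lies in the infinite cyclic group $\tfrac{2}{r(\gamma')^2}\mathbb{Z}$ and contains $1$; hence the image is infinite cyclic, so $S = \mathbb{Z}\gamma$ for a suitable generator $\gamma \in \mathbb{C}$. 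By the self-braiding constraint at degree $\gamma$ we get $r\gamma^2 = 4p$ for some $p \in \mathbb{Z}\setminus\{0\}$, so $A$ has the same underlying object $\bigoplus_{m\in\mathbb{Z}}\mathbb{C}_{m\gamma}$ as $\Lambda_p$. Finally, fullness lets one pick $u \in A_\gamma\setminus\{0\}$ and the unique $v \in A_{-\gamma}$ with $\mu(u,v) = 1$ and, after rescaling the remaining homogeneous components, identify the product of $A$ with that of $\Lambda_p$ in~\eqref{eq:Lam_p-product} (equivalently: the structure constants form a $2$-cocycle on $\mathbb{Z}$, and $H^2(\mathbb{Z},\mathbb{C}^\times) = 0$ makes it a coboundary, hence trivialisable by rescaling). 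This gives $A \cong \Lambda_p$.

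The main obstacle I anticipate is ruling out ``exotic'' supports: a priori $S$ is an arbitrary subgroup of $\mathbb{C}$ --- it could be dense or of infinite $\mathbb{Q}$-rank --- and neither the abstract algebra axioms nor the existence of a pairing alone exclude this. The point is that the non-degenerate invariant pairing forces the algebra to be full (via injectivity of $L_a$), and it is the resulting interplay of fullness with the commutativity constraints $r\alpha\beta \in 2\mathbb{Z}$, $r\alpha^2 \in 4\mathbb{Z}$ that collapses $S$ to $\{0\}$ or an infinite cyclic group; once $S$ is pinned down, the identification with $\Lambda_p$ is a routine choice of homogeneous bases.
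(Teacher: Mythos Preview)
Your proof is correct and follows the same overall architecture as the paper's (Appendix~\ref{app:proof-Prop}): identify the support $S$ as a subgroup, use commutativity constraints to pin it down as cyclic, then normalise the structure constants. Two of your steps differ genuinely from the paper's, and both are improvements worth noting.

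First, your argument that $S$ is infinite cyclic is purely algebraic: from $r\gamma'\beta \in 2\mathbb{Z}$ and $r(\gamma')^2 \in 4\mathbb{Z}\setminus\{0\}$ you embed $S$ into a cyclic subgroup of $\mathbb{Q}$. The paper instead argues that $S \subset \mathbb{R}$ or $S \subset i\mathbb{R}$, deduces discreteness, and picks a nonzero element closest to~$0$. Your route avoids the real/imaginary dichotomy and the appeal to the metric structure of $\mathbb{C}$; it would generalise more readily (e.g.\ to other coefficient fields). Second, you obtain the sharp self-braiding constraint $r\alpha^2 \in 4\mathbb{Z}$ immediately by setting $u=v$, whereas the paper first extracts only $r\alpha^2 \in 2\mathbb{Z}$ from the squared commutativity relation and recovers the factor~$4$ only at the very end after normalising all structure constants. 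Finally, you invoke $H^2(\mathbb{Z},\mathbb{C}^\times)=0$ for the normalisation; the paper carries this out by hand via a recursive choice of $\varphi_a$ and an associativity propagation. The cohomological phrasing is cleaner, though the explicit construction in the paper makes visible that the coboundary change is symmetric in its arguments and hence automatically preserves the commutativity constraint --- a point you should perhaps remark on, since a priori trivialising an associative cocycle need not respect an auxiliary symmetry condition (here it does, because coboundaries $\varphi_{a+b}/(\varphi_a\varphi_b)$ are symmetric).
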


This proposition can be shown by adapting the statements and proofs in \cite{tft3} from finite groups and Frobenius algebras to the present situation. Alternatively, one can give a straightforward direct proof, which we do in Appendix~\ref{app:proof-Prop}.
  Note that the proposition is false if we do not demand existence of a non-degenerate invariant pairing. For example, one could then just take the sum over $m \in \mathbb{Z}_{\ge 0}$ in \eqref{eq:free-boson-extension-alg}.

Next we look at local $\Lambda_p$-modules. We have the following explicit description:

\begin{proposition}\label{prop:Lambda-loc-mod}
Let $p \in \mathbb{Z} \setminus \{0\}$. A $\Lambda_p$-module is local if and only if the underlying $\mathbb{C}$-graded vector space is non-zero only for degrees in $\frac{1}{\sqrt{rp}} \mathbb{Z}$.
A local $\Lambda_p$-module is finitely generated if and only if all its graded components are finite dimensional.
\end{proposition}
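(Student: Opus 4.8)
The plan is to first make the $\Lambda_p$-module structure explicit, then read off locality from the formula for the braiding, and finally handle finite generation by a coset-counting argument. Let $M=\bigoplus_{\beta\in\CC}M_\beta$ be a $\Lambda_p$-module with action $\rho$. Since morphisms in $\mathcal{H}^\oplus$ preserve degree, $\rho(1_{m\gamma}\otimes-)$ maps $M_\beta$ into $M_{\beta+m\gamma}$, and by associativity, unitality and $\mu(1_{m\gamma}\otimes 1_{-m\gamma})=1_0$ (see \eqref{eq:Lam_p-product}) it is an isomorphism with inverse $\rho(1_{-m\gamma}\otimes-)$. Hence the support $\{\beta:M_\beta\neq0\}$ is a union of cosets of $\gamma\ZZ$, and the action identifies all graded pieces lying in a fixed coset.

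For part~1, using the braiding \eqref{eq:C-graded-vec-braiding} one computes directly that the double braiding $c_{M,\Lambda_p}\circ c_{\Lambda_p,M}$ acts on $1_{m\gamma}\otimes v$ with $v\in M_\beta$ as multiplication by $e^{\pi i r m\gamma\beta}$. Because $\rho(1_{m\gamma}\otimes-)$ is injective on each $M_\beta$, the locality condition of Definition~\ref{def:local-module} is equivalent to $e^{\pi i r m\gamma\beta}=1$ for all $m\in\ZZ$ and all $\beta$ in the support; taking $m=1$ (which then implies all $m$) this reads $r\gamma\beta\in2\ZZ$. Substituting $\gamma=2\sqrt{p/r}$ gives $r\gamma\beta=2\sqrt{rp}\,\beta$, so the condition becomes $\sqrt{rp}\,\beta\in\ZZ$, i.e.\ the support is contained in $\tfrac1{\sqrt{rp}}\ZZ$, as claimed (all conclusions being independent of the choice of square root).

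For part~2, the key numerical fact is that $\gamma=2p\cdot\tfrac1{\sqrt{rp}}$, so $\gamma\ZZ$ has index $2|p|$ in $\tfrac1{\sqrt{rp}}\ZZ$; thus a local module is supported on at most $2|p|$ cosets of $\gamma\ZZ$. If all graded components of a local module $M$ are finite-dimensional, then for each occupied coset $c=\beta_c+\gamma\ZZ$ let $X_c$ be the finite-dimensional graded vector space concentrated in degree $\beta_c$ with component $M_{\beta_c}$; the identification from the first paragraph gives a $\Lambda_p$-module isomorphism between the $c$-part of $M$ and $\mathrm{Ind}(X_c)=\Lambda_p\otimes X_c$. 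Summing over the finitely many occupied cosets, $M\cong\mathrm{Ind}(X)$ with $X=\bigoplus_c X_c\in\mathcal{H}^\mathrm{fd}$, so $M$ is finitely generated in the sense of Definition~\ref{def:fg-module}, with the identity as the required surjection. Conversely, if $M$ is finitely generated there is a surjective intertwiner $f\colon\mathrm{Ind}(X)\to M$ with $X\in\mathcal{H}^\mathrm{fd}$; since $(\Lambda_p\otimes X)_\beta=\bigoplus_m\CC_{m\gamma}\otimes X_{\beta-m\gamma}$ is a finite sum of finite-dimensional spaces and $f$ preserves degree, each $M_\beta$ is finite-dimensional.

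I do not expect a genuine obstacle here: the content is essentially bookkeeping with the normalisation constants $\gamma$ and $\sqrt{rp}$ (and checking that the set-valued answers do not depend on branch choices). The one point worth stating carefully is that, although a finitely generated local module need not in general be covered by a \emph{local} induced module (cf.\ the discussion preceding Corollary~\ref{cor:fglocal-vs-induction}), in this abelian example a finitely generated local module is in fact itself induced from a finite-dimensional object, and it is this strengthening that makes the coset-counting argument in part~2 work.
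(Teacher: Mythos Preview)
Your proof is correct and follows essentially the same approach as the paper's: both use that $\rho(1_{m\gamma}\otimes-)$ is an isomorphism on each graded piece, read off locality from the scalar factor $e^{\pi i r m\gamma\beta}$ of the double braiding, and handle finite generation via the finitely many cosets of $\gamma\ZZ$ in $\tfrac{1}{\sqrt{rp}}\ZZ$. The only minor differences are that you keep $r$ general (the paper sets $r=1$) and that in part~2 you observe the slightly stronger fact that a local module with finite-dimensional graded pieces is actually \emph{isomorphic} to an induced module, whereas the paper is content to exhibit the surjection.
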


We remark that for a non-local $\Lambda_p$-module,  finite-dimensionality of its eigenspaces does not imply that it is finitely generated.
We also note that there are local $\Lambda_p$-modules which are not finitely generated, e.g.\ those induced from $U\notin \mathcal{H}^\mathrm{fd}$.

In what follows, we will abbreviate $\Lambda = \Lambda_p$.

\begin{proof}
As in the proof of Proposition~\ref{prop:comm-alg-unique} we set $r=1$. 

Let $M$ be a local $\Lambda$-module with action $\rho\colon \Lambda \otimes M \to M$. For $m \in \mathbb{Z}$ consider the linear maps
$\rho(1_{\gamma m}) := \rho(1_{m\gamma} \otimes -) \colon M \to M$. On the 
graded components, these act as $\rho(1_{\gamma m}) \colon M_\alpha \to M_{\alpha + m \gamma}$ (and so these are not morphisms in $\mathcal{H}^\oplus$). The action property can be stated as, for all $m,n \in\mathbb{Z}$.
\be
	\rho(1_{0}) = \id_M \quad , 
	\quad
	\rho(1_{\gamma m}) \circ \rho(1_{\gamma n}) = \rho(1_{\gamma (m+n)}) \ .
\ee
In particular, all $\rho(1_{\gamma m})$ are isomorphisms.

The locality property $\rho \circ c_{M,\Lambda} \circ c_{\Lambda,M} = \rho$, evaluated on $1_{m \gamma} \otimes x_\alpha$ for some $m \in \mathbb{Z}$ and $x_\alpha \in M_\alpha$, reads
\be
	e^{\pi i m \gamma \alpha} \rho(1_{\gamma m})(x_\alpha) = \rho(1_{\gamma m})(x_\alpha) \ .
\ee
Since $\rho(1_{\gamma m})$
is an isomorphism and $x_\alpha$ was arbitrary, we must have $e^{\pi i m \gamma \alpha} = 1$ for all $m \in \mathbb{Z}$ and $\alpha \in \mathbb{C}$ such that $M_\alpha \neq \{0\}$. Equivalently, we need $\alpha 2 \sqrt{p} \in 2 \mathbb{Z}$, showing the first statement.

For the second statement, first note that for $V \in \mathcal{H}^\oplus$ finite dimensional, $\Lambda \otimes V$ has finite-dimensional graded components. 
Conversely, pick a 	set $\RS$ 
of representatives of the cosets in 
$\frac{1}{\sqrt{p}} \mathbb{Z} / 2\sqrt{p} \mathbb{Z} 
	\cong \mathbb{Z}_{2p}$. 
Then $\RS$ has $2p$ elements, and for every homogeneous $u \in M$ one can find $\alpha \in \RS$, $x \in M_\alpha$ and $m \in \mathbb{Z}$ such that $\rho(1_{\gamma m})(x) = u$, 
	i.e.\ we get a surjection $\rho : \mathrm{Ind}(\bigoplus_{\alpha \in \RS} M_{\alpha}) \to M$ of $\Lambda$-modules.
Since $\bigoplus_{\alpha \in \RS} M_\alpha$ is finite-dimensional by assumption, this shows that $M$ is finitely generated.
\end{proof}

It is now straightforward to write down the braided monoidal category  
\be\label{eq:C-gr-example-Mdef}
\mathcal{M} := \fgloc{\Lambda_p}{(\mathcal{H}^\oplus)}
\ee
of finitely generated local modules directly. 
One first convinces oneself that every $M \in \mathcal{M}$ is isomorphic to an induced module $\mathrm{Ind}(U)$ for some (non-unique) 
$U \in \mathcal{H}^\mathrm{fd}$. 
In particular, $\mathcal{M}$ is again semisimple and since $\mathrm{Ind}$ is a tensor functor 
	(cf.\ Lemma~\ref{lem:local-vs-induction}),
$\mathcal{M}$ is pointed. The relevant group is the quotient of $\frac{1}{\sqrt{rp}} \mathbb{Z}$ by $2\sqrt{p/r} \mathbb{Z}$, i.e.\ $\mathbb{Z}_{2p}$. 
The corresponding quadratic form is read off from that of $\mathcal{H}^\mathrm{fd}$
using~\eqref{eq:C-graded-qf-choice}.
 One finds that, 
for $a \in \mathbb{Z}_{2p}$, $\kappa(a) = \exp(\frac{\pi i }{2p} a^2 )$.
Note that this is independent of $r$.
The ribbon category $\mathcal{M}$ is equivalent to the ribbon category of modules over the lattice VOA from Remark~\ref{rem:Lamp-algebra}\,(3), see also Theorem~\ref{thm:VOA-alg-in-cat-corr} below.

Now comes the simple but key observation of this example. 
The quadratic form $\kappa$ does \textsl{not} arise as the diagonal of a bilinear form on $\mathbb{Z}_{2p}$. 
Consequently, $\mathcal{M}$ is \textsl{not} monoidally equivalent to $\mathbb{Z}_{2p}$-graded vector spaces with trivial associator, even
though the $\mathbb{C}$-graded vector spaces we started from had a trivial associator.
Explicit choices of associator and braiding for the quadratic form $\kappa$ have been given for example in 
\cite[Sec.\,4.2]{Brunner:2000wx}, \cite[Sec.\,2.5.1]{tft1} or
\cite[App.\,B.1]{Fuchs:2007tx}.

\begin{remark}
The above discussion generalises to algebras in 
$\mathbb{C}^n$-graded vector spaces for some $n \in \mathbb{Z}_{\ge 1}$ which satisfy the conditions 1--3. We only need the rank one case, and so we can avoid some difficulties occurring for higher rank, such as the appearance of signs in the product of the corresponding algebras. 
Namely, for higher rank it is no longer possible to set all structure constants $t_{a,b}$ to $1$ as we do in the proof of 
Proposition~\ref{prop:comm-alg-unique},
 but one can at best achieve $t_{a,b} = \pm 1$. 
See e.g.\ \cite[Sec.\,2.2]{Sch} for a discussion of these signs in the VOA context. 
\end{remark}

\subsection{Quasi-Hopf algebra for local $\algC_p$-modules}
\label{sec:ex-qHopf}

For this section we fix $p \in \mathbb{Z}_{>0}$ 
and we set the parameter $r$ in \eqref{eq:C-graded-vec-braiding} to be 
\be \label{eq:r=1/p-convention}
r = \tfrac 1p \ . 
\ee
This will be the choice relevant for Sections~\ref{sec:unrolled-restricted} and~\ref{sec:qHopfUq}.
Recall from Proposition~\ref{prop:all-cont-braidings} that different values of $r$ give equivalent braided monoidal categories 
$\mathcal{H}^\oplus$.

{}From the above discussion we see that $\mathcal{M}$ as defined in \eqref{eq:C-gr-example-Mdef} is braided monoidally equivalent to modules over the quasi-Hopf algebra
$\CC \ZZ_{2p}^{\omega}$ -- the group algebra of $\ZZ_{2p}$ equipped with the co-associator and $R$-matrix described by an abelian 3-cocycle $\omega$. The class $[\omega]\in H_\mathrm{ab}^3(\ZZ_{2p},\CC^\times)$ corresponds to the quadratic form  $\kappa(a) = \exp(\frac{\pi i }{2p} a^2 )$ discussed above.

\begin{definition} \label{def:transport}
Let $\cat,\catD$ be categories with tensor product 
functors $\tensor_\cat,\tensor_\catD$ and tensor units $\one_\cat,\one_\catD$. A functor $\fun\colon \cat\to\catD$ is called \emph{multiplicative}
if there exists a family of natural isomorphisms 
$\fun_{U,V}\colon \fun(U)\tensor_{\catD} \fun(V) \to \fun(U\tensor_{\cat} V)$ 
and an isomorphism $\fun_{\one}\colon \one \to \fun(\one)$. 
\end{definition}

If $\cat$ and $\catD$ in the above definition are monoidal categories (i.e.\ equipped with associator and unit isomorphisms
	satisfying the relevant coherence conditions), 
a multiplicative functor is also called a \textsl{quasi-tensor functor}, see e.g.\ \cite[Def.\,4.2.5]{EGNO-book},
i.e.\ one does not demand the
	compatibility
conditions with the associator and unit isomorphisms from the definition of the tensor functor. 

\medskip

In this section we illustrate the method 
we use below
to find the coassociator and braiding for the restricted 
quasi-quantum group 
$\Q$ (where $\CC \ZZ_{2p}$ plays the role of the Cartan subalgebra)
 by applying it to the much simpler example of $\CC \ZZ_{2p}^{\omega}$. 
It consists of the following steps:
\begin{enumerate}
\item\label{plan-step1}
 For a given subset $\RS\subset\ZZ$, 
 such that $\pi_\RS\colon \RS\to \ZZ_{2p}$ is a bijection, we  construct a $\CC$-linear equivalence 
 $\funF_\RS: \repfd\, \CC \ZZ_{2p} \to \mathcal{M}$.
Different choices $\RS$ and  $\RS'$ lead to naturally isomorphic functors $\funF_\RS$ and $\funF_{\RS'}$. Let us abbreviate $\funF:= \funF_\RS$.
 
\item\label{plan-step2}
We equip $\funF$ with 
 a multiplicative  structure. 
 That is, we give $\CC$-linear natural isomorphism $\funF_{M,N}\colon\funF(M) \tensor_{\Lambda} \funF(N) \xrightarrow{ \sim } \funF(M\tensor N)$, where $M \tensor N$ is defined via the standard coproduct in  $\CC \ZZ_{2p}$.\footnote{
	This step is more complicated for $\Q$ as there the coproduct has to be modified, too.} 
 
\item\label{plan-step3}
Abbreviate 
$\catD =\repfd\, \CC \ZZ_{2p}$.
 The multiplicative functor $\funF\colon \catD \to \mathcal{M}$ is made monoidal by computing the unique natural isomorphism
$\assD$ in $\catD$ which makes the diagram
\begin{equation}\label{eq:transport-assoc-diag}
\xymatrix@R=32pt@C=72pt{
\fun (U)\tensor_{\mathcal M}\bigl(\fun(V)\tensor_{\mathcal M}\fun(W)\bigr)\ar[d]_{\id\tensor\funF_{V,W}}\ar[r]^{\alpha^{\mathcal{M}}_{\fun(U),\fun(V),\fun(W)}}
&\bigl(\fun (U)\tensor_{\mathcal M} \fun(V)\bigr)\tensor_{\mathcal M}\fun(W)\ar[d]^{\funF_{U,V}\tensor\id}\\
\fun (U)\tensor_{\mathcal M} \fun(V\tensor_{\catD} W)\ar[d]^{\funF_{U,V\tensor_{\catD} W}}
&\fun (U\tensor_{\catD} V)\tensor_{\mathcal M} \fun(W)\ar[d]^{\funF_{U\tensor_{\catD} V,W}}\\
\fun \bigl(U\tensor_{\catD} (V\tensor_{\catD} W)\bigr)\ar[r]^{\fun(\assD_{U,V,W})}
&\fun \bigl((U\tensor_{\catD} V)\tensor_{\catD} W\bigr)
}
\end{equation}
commute for all $U,V,W \in \catD$, and where $\tensor_{\mathcal M}$ stands for $\tensor_{\Lambda}$.
 This results in a group 3-cocycle for $\ZZ_{2p}$ with values in $\mathbb{C}^\times$.

\item\label{plan-step4}
To make $\funF$  braided by finding the unique
	natural isomorphism
$c^{\catD}$ in $\catD$ which solves the
 commutativity condition
\begin{equation}\label{eq:transport-braiding-via-functorequiv}
\xymatrix@R=22pt@C=42pt{
&\fun(U)\tensor_{\mathcal{M}}\fun(V)\ar[r]^{\;c^{\mathcal{M}}_{\fun(U),\fun(V)}\;}\ar[d]^{\funF_{U,V}}&\fun(V)\tensor_{\mathcal{M}}\fun(U)\ar[d]^{\funF_{V,U}}&\\
&\fun(U\tensor_{\catD} V)\ar[r]^{\fun(c^{\catD}_{U,V})}&\fun(V\tensor_{\catD} U)&
}  
\end{equation}
 for all $U,V\in\catD$. 
This amounts to finding a 2-cochain on $\ZZ_{2p}$ which together with the group 3-cocycle from step~\ref{plan-step3} defines an abelian 3-cocycle for $\ZZ_{2p}$.
 
\item\label{plan-step5} 
Finally we make the braided monoidal functor $\funF$ balanced
by finding the unique 
	natural
isomorphism $\theta^\catD$ on $\catD$ such that
\be\label{eq:transport-twist}
	\theta^\mathcal{M}_{\fun(U)} = \fun(\theta^\catD_U)
\ee
is satisfied for all $U \in \catD$.
 \end{enumerate}

\begin{remark}\label{rem:solve-pent-hex-autom}
It is important to stress that since $\mathcal{M}$ is balanced braided monoidal, the natural isomorphisms 
$\alpha^\catD$, $c^\catD$, $\theta^\catD$ computed in steps \ref{plan-step3}--\ref{plan-step5}, and which satisfy conditions \eqref{eq:transport-assoc-diag}--\eqref{eq:transport-twist}, \textsl{automatically} solve the pentagon and hexagon condition for $\catD$, as well as \eqref{eq:balancing-def}, so that they turn $\catD$ into a balanced braided monoidal category. 
\\
If $\mathcal{M}$ has duals and is ribbon (as is the case in this example), then, as $\fun$ is an equivalence, also $\catD$ has duals. In this case, the balancing $\theta^\catD$ also automatically satisfies \eqref{eq:ribbontwist-def}, i.e.\ it gives a ribbon structure on $\catD$. To see this, first recall that all choices of left duals are equivalent (i.e.\ the corresponding functors $(-)^*$ are naturally isomorphic).
Then check that the validity of condition \eqref{eq:ribbontwist-def} is independent of the particular choice of left duality. 
\\
Since $\catD = \repfd\, \CC\ZZ_{2p}$ now is a ribbon category, 
the coassociator, $R$-matrix and ribbon element for $\CC \ZZ_{2p}$ which correspond to $\alpha^\catD$, $c^\catD$, $\theta^\catD$ via \eqref{qHopf-cat-data} also 
automatically solve the pentagon, hexagon and ribbon conditions for quasi-Hopf algebras. 
\\
The proof in Section~\ref{sec:proof} that $\Q$ as given in Section~\ref{sec:qHopfUq} is a ribbon quasi-Hopf algebra will use the above reasoning.
The same procedure of ``transport of structure'' as outlined above was already used in~\cite{GR1, FGR2} to obtain a quasi-Hopf algebra related to the symplectic fermions category.
\end{remark}

To implement our plan, we begin with conventions for the  algebra $\CC \ZZ_{2p}$. It is generated by $K$ with $K^{2p}=\one$. Let 
\be
	q=e^{i\pi/p} \ . 
\ee	
The 
full list of primitive idempotents of $\CC \ZZ_{2p}$ is provided by
\be\label{eq:idemp-prim}
\e{n} = \frac{1}{2p}\sum_{l=0}^{2p-1} q^{-nl} K^l ,\qquad n\in \ZZ_{2p} \ .
\ee
Indeed, they satisfy
\be\label{eq:en-ortho-idem-prop}
\e{n} \e{k} = \delta_{n,k}\, \e{k}\qquad \text{and}\qquad \sum_{n=0}^{2p-1} \e{n} = \one\ .
\ee
In the left regular representation, these idempotents are projectors onto $K$-eigenspaces of the eigenvalue~$q^{n}$,
\be\label{eq:K-en}
K\cdot \e{n} = q^{n} \e{n}\ ,
\ee
and any $\CC \ZZ_{2p}$-module $M$ is decomposed onto  $K$-eigenspaces as 
\be\label{eq:M-decomp}
M	=
\bigoplus_{n\in\ZZ_{2p}} M_n 
\quad \text{where} \quad
	M_n  :=  \e{n} M\ .
\ee
We will write $\e{n}m$ for the $n$th weight component of an element $m\in M$.

For 	step~\ref{plan-step1}
of our plan, we need to construct a functor from 
$\repfd\, \CC \ZZ_{2p}$ to 
$\Hplus$,
 the category of
$\mathbb{C}$-graded complex vector spaces. 
To fix the $H$-action
we thus have to ``lift'' the 
	$\ZZ_{2p}$-grading on objects from 
$\repfd\, \CC \ZZ_{2p}$ 
to a $\CC$-grading, 
which amounts to choosing a branch of the logarithm. This choice can be parametrised as follows.
Let $\pi\colon \ZZ \to \ZZ_{2p}$ be the canonical surjection and choose
 \be\label{eq:S}
 \RS\subset \ZZ \qquad \text{such that } \qquad \pi|_\RS\colon\; \RS\xrightarrow{\,\sim\,} \ZZ_{2p} 
 \ee
is a bijection. We will denote by $\re{x}_\RS$ the representative of $x\in\ZZ$ in $\RS$, so $\re{x}_\RS-x=0$ modulo $2p$, and in particular $\re{x}_\RS=x$ if $x\in\RS$.
 We also recall the decomposition~\eqref{eq:free-boson-extension-alg} for our algebra $\Lambda:=\Lambda_p$, however for brevity we will denote the basis element in degree $2pk$ by $1_k$ instead of $1_{2pk}$.
 We define then the $\CC$-linear functor
\be\label{eq:funF-Z2p}
\funF_\RS\colon\; \repfd\, \CC \ZZ_{2p} \longrightarrow  \Hplus \ , \qquad
 M\mapsto \Lambda\otimes_{\CC} M
\ee
with the action, 
for all $k,a \in \mathbb{Z}$,
\be\label{eq:C-gr-ex_K-action_to_H-action}
 H (1_k\otimes \e{\pi(a)}m) := \bigl(2pk + \re{a}_\RS\bigr) \cdot 1_k\otimes \e{\pi(a)}m\ .
\ee
Below we will abbreviate $\e{a} := \e{\pi(a)}$.
The  object $\funF_\RS(M)=\Lambda\otimes_{\CC} M$ has a natural action of $\Lambda$ by multiplication on the left:
\be\label{eq:rho-LambdaM}
\rho_{\Lambda\otimes_{\CC} M}\colon \; \Lambda\otimes \Lambda\otimes_{\CC} M \xrightarrow{\;\mu_{\Lambda}\otimes\id_M\;} \Lambda\otimes_{\CC} M\ .
\ee
It clearly intertwines the $H$-action and the pair $\bigl(\funF_\RS(M), \rho_{\Lambda\otimes_{\CC} M}\bigr)$ is a
finitely-generated local module due to Proposition~\ref{prop:Lambda-loc-mod}.  $\funF_\RS$ sends a morphism $f\colon M \to N$ to the morphism $\id_\Lambda\otimes f$ in $\mathcal{M}$.
This finally defines the functor 
$\funF_\RS\colon \repfd\, \CC \ZZ_{2p} \to \mathcal{M}$. 
It turns out that $\funF_\RS$ depends on $\RS$ only up to natural isomorphism:

\begin{lemma}\label{lem:etaSS}
For two sets $\RS$ and $\RS'$ satisfying~\eqref{eq:S}, $\funF_\RS$ and $\funF_{\RS'}$ are naturally isomorphic via $\eta_{\RS,\RS'}: \funF_\RS(M)\xrightarrow{\sim} \funF_{\RS'}(M)$ given by, for $k,a\in\ZZ$,
\be\label{eq:natis-FS-FS'}
\eta_{\RS,\RS'}\colon \quad 1_k\otimes \e{a}m \;\mapsto\; 1_{k+ \frac{\re{a}_\RS-\re{a}_{\RS'}}{2p}}\otimes \e{a}m \ .
\ee
\end{lemma}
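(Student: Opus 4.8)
The plan is to verify directly that the family of maps $\eta_{\RS,\RS'}$ defined in \eqref{eq:natis-FS-FS'} is (a) well-defined, (b) an isomorphism in $\mathcal{M}$ for each fixed module $M$, and (c) natural in $M$. First I would check well-definedness: the only subtlety is that the exponent $k + \tfrac{\re{a}_\RS - \re{a}_{\RS'}}{2p}$ must be an integer, which holds because $\re{a}_\RS$ and $\re{a}_{\RS'}$ are both representatives of $\pi(a) \in \ZZ_{2p}$, so their difference lies in $2p\ZZ$; moreover the assignment does not depend on the choice of representative $a \in \ZZ$ of a given class in $\ZZ_{2p}$, since changing $a$ by a multiple of $2p$ shifts $\re{a}_\RS$ and $\re{a}_{\RS'}$ by the same multiple of $2p$, leaving the exponent unchanged. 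On the underlying graded vector space $\eta_{\RS,\RS'}$ is clearly bijective, with inverse $\eta_{\RS',\RS}$.

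Next I would check that $\eta_{\RS,\RS'}$ is a morphism in $\mathcal{M} = \fgloc{\Lambda_p}{(\mathcal{H}^\oplus)}$, i.e.\ that it is both $H$-graded (a morphism in $\mathcal{H}^\oplus$) and $\Lambda$-linear. For the grading: on $1_k \otimes \e{a}m$ the source has $H$-eigenvalue $2pk + \re{a}_\RS$ by \eqref{eq:C-gr-ex_K-action_to_H-action}, while the image $1_{k + (\re{a}_\RS - \re{a}_{\RS'})/2p} \otimes \e{a}m$ has $H$-eigenvalue $2p\bigl(k + \tfrac{\re{a}_\RS - \re{a}_{\RS'}}{2p}\bigr) + \re{a}_{\RS'} = 2pk + \re{a}_\RS$, so the degrees agree and $\eta_{\RS,\RS'}$ commutes with $H$. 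For $\Lambda$-linearity: using the action \eqref{eq:rho-LambdaM}, an element $1_l \in \Lambda$ acts by $1_l \cdot (1_k \otimes \e{a}m) = 1_{k+l} \otimes \e{a}m$; applying $\eta_{\RS,\RS'}$ afterwards gives $1_{k+l+(\re{a}_\RS - \re{a}_{\RS'})/2p} \otimes \e{a}m$, which is exactly $1_l$ acting on $\eta_{\RS,\RS'}(1_k \otimes \e{a}m)$. Hence $\eta_{\RS,\RS'}$ is $\Lambda$-linear, and being a bijection it is an isomorphism in $\mathcal{M}$.

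Finally I would check naturality: for $f \colon M \to N$ in $\repfd \CC\ZZ_{2p}$, both $\funF_\RS(f) = \id_\Lambda \otimes f$ and $\funF_{\RS'}(f) = \id_\Lambda \otimes f$ are given by the same formula on the underlying tensor product, and since $f$ is $\CC\ZZ_{2p}$-linear it preserves the weight decomposition, i.e.\ $f(\e{a}m) \in \e{a}N$. Therefore $\eta_{\RS,\RS'}^N \circ (\id_\Lambda \otimes f)$ and $(\id_\Lambda \otimes f) \circ \eta_{\RS,\RS'}^M$ both send $1_k \otimes \e{a}m$ to $1_{k+(\re{a}_\RS - \re{a}_{\RS'})/2p} \otimes \e{a}f(m)$, so the naturality square commutes.

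I do not anticipate a genuine obstacle here: the statement is essentially bookkeeping about how the two lifts of the $\ZZ_{2p}$-grading to a $\CC$-grading differ by an integer shift absorbed into the $\Lambda$-degree. The only place requiring a little care is making sure the exponent shift is integral and independent of the chosen integer representative $a$, which is precisely where the defining property \eqref{eq:S} of $\RS$ and $\RS'$ enters; everything else is a direct comparison of formulas.
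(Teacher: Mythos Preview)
Your proof is correct and follows essentially the same approach as the paper's: verify that $\eta_{\RS,\RS'}$ is invertible (with inverse $\eta_{\RS',\RS}$), that it preserves the $H$-grading via the computation from \eqref{eq:C-gr-ex_K-action_to_H-action}, that it intertwines the $\Lambda$-action via \eqref{eq:rho-LambdaM}, and that it is natural. You are simply more explicit on each point than the paper, which dispatches naturality and invertibility as ``clear''. One small slip: when $a$ is replaced by $a+2p$, the representatives $\re{a}_\RS$ and $\re{a}_{\RS'}$ do not shift at all (they depend only on $\pi(a)$), rather than shifting by the same amount --- but your conclusion that the exponent is unchanged is of course still correct.
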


\begin{proof}
It is clear that $\eta_{\RS,\RS'}$ is natural and that it is an isomorphism (its inverse is simply $\eta_{\RS',\RS}$). {}From \eqref{eq:C-gr-ex_K-action_to_H-action} one verifies that $\eta_{\RS,\RS'}$ preserves the $H$-action. To be a morphism in~$\mathcal{M}$, $\eta_{\RS,\RS'}$ also has to intertwine the 
	$\Lambda$-action. 
By \eqref{eq:rho-LambdaM}, this amounts to the identity $(\mu_\Lambda \otimes \id_M) \circ (\id_\Lambda \otimes \eta_{\RS,\RS'}) = \eta_{\RS,\RS'} \circ (\mu_\Lambda \otimes \id_M)$, which is immediate from the explicit form of $\mu_\Lambda$ in \eqref{eq:Lam_p-product} when evaluating on $1_k \otimes 1_l \otimes m$.
\end{proof}
 
The choice of the set $\RS$ is therefore not important and the result will not depend on it, up to an equivalence.
We will then use the shorthand 
\be\label{eq:abbrev-x-xS}
\re{x}:= \re{x}_\RS
\ee
 for brevity in many places below.

To show that $\funF_\RS$ is an equivalence, let us introduce a functor from $\mathcal{M}$ to 
$\repfd\, \CC \ZZ_{2p}$:
\be\label{eq:funG-Cartan}
\funG_\RS\colon\;  \mathcal{M}\to\repfd\, \CC \ZZ_{2p} 
\quad , \quad
 V=\bigoplus_{k\in\ZZ}\bigoplus_{a\in \RS} V_{2pk+a} \; \mapsto \; V_{(0)}:=\bigoplus_{a\in \RS} V_{a}\ ,
\ee
in terms of the corresponding graded spaces (so, we take the $k=0$ component of $V$), and the action of $\ZZ_{2p}$  on $V_{(0)}$ is given by
 $K=q^H$. 
On morphisms $f\colon V\to W$, we define 
$
\funG_\RS\colon f \mapsto  f_{(0)} :=  \bigoplus_{a\in \RS} f_a
$
 with $f_{(0)}$ the restriction of $f$ to the ``fundamental'' component $V_{(0)}\subset V$. We have $f_{(0)}\colon V_{(0)}\to W_{(0)}$ because morphisms in $\mathcal{M}$  respect the grading by~$H$.
 
It is now easy to see that the composition $\funG_\RS\circ\funF_\RS$ is naturally isomorphic to the identity functor 
$\id_{\repfd \CC \ZZ_{2p}}$
via, for all 
$M \in \repfd\, \CC \ZZ_{2p}$,
\be \label{eq:GF-nat_Z2p}
\funG_\RS\circ\funF_\RS(M)\to M \quad , \quad
1_0\tensor m \mapsto m \ .
\ee
Similarly we have a natural isomorphism between $\funF_\RS\circ\funG_\RS$ and $\id_{\mathcal{M}}$.
 On objects, the composition is
\be\label{eq:FG-Cartan}
\funF_\RS\circ\funG_\RS: \quad V \mapsto
	\Lambda
 \tensor_{\CC} V_{(0)} \ .
\ee
	Similar as in the proof of Proposition~\ref{prop:Lambda-loc-mod}, for $(V,\rho_V)\in \mathcal{M}$ and $k \in \ZZ$ we define the isomorphisms of $\CC$-vector spaces
\be\label{eq:rho1t}
	\rho(1_k):=\rho_V(1_k\otimes -): \quad V\to V 
\ee
(we  call it  \textit{the action of the basis element $1_k$ of~$\Lambda$}).
The natural transformation to $\id_{\mathcal{M}}$ can be then written as
\begin{align}\label{eq:H-FG-nat}
\funF_\RS\circ\funG_\RS \xrightarrow{\;\cdot\;} \id_{\mathcal{M}}: \quad &\funF_\RS\circ\funG_\RS(V)\to V \nonumber\\
&1_k \tensor v
  \mapsto \rho(1_k)(v)\ , \qquad k\in\ZZ, \; v\in V_{(0)}\ .
\end{align}
It is a straightforward check that this map is a morphism in $\mathcal{M}$,
 i.e.\ a grade preserving map that also intertwines the $\Lambda$ action. 
The map in~\eqref{eq:H-FG-nat}  is  an isomorphism and its naturality is also straightforward. We have thus shown that both $\funF_\RS$ and $\funG_\RS$ are equivalences.

We summarise the discussion above as:

\begin{proposition}\label{prop:funF-H}
For a set $\RS$ satisfying~\eqref{eq:S}, $\funF_\RS$ is a $\CC$-linear equivalence from   
$\repfd\, \CC \ZZ_{2p}$ to the  category $\mathcal{M}$ of  finitely-generated local left $\Lambda$-modules internal to the category of $\CC$-graded vector spaces $\mathcal{H}^\oplus$.
\end{proposition}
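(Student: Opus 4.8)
The plan is to establish the equivalence in the standard way: exhibit an explicit quasi-inverse $\funG_\RS$ and two natural isomorphisms witnessing $\funG_\RS\circ\funF_\RS\cong\id$ and $\funF_\RS\circ\funG_\RS\cong\id$. All the ingredients have in fact been assembled above; what remains is to check the few points that are not purely formal, and then invoke the definition of equivalence.

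\textbf{Step 1: $\funF_\RS$ lands in $\mathcal{M}$.} The $\CC$-graded vector space $\Lambda\otimes_\CC M$ carries the left-multiplication action \eqref{eq:rho-LambdaM}, making it a $\Lambda$-module, and by the prescribed $H$-action \eqref{eq:C-gr-ex_K-action_to_H-action} its homogeneous degrees lie in $2p\ZZ+\RS\subset\ZZ$; with the convention $r=1/p$ of this section this is exactly the support condition of Proposition~\ref{prop:Lambda-loc-mod}, so $\funF_\RS(M)$ is local. Since $M$ is finite-dimensional, $\funF_\RS(M)=\mathrm{Ind}(M)$ is finitely generated. On morphisms $\funF_\RS(f)=\id_\Lambda\otimes f$ is manifestly $\Lambda$-linear, grade-preserving and $\CC$-linear in $f$. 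Hence $\funF_\RS$ is a well-defined $\CC$-linear functor into $\mathcal{M}$.

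\textbf{Step 2: the quasi-inverse.} Define $\funG_\RS$ as in \eqref{eq:funG-Cartan}: send $V\in\mathcal{M}$ to its fundamental component $V_{(0)}=\bigoplus_{a\in\RS}V_a$ with $\CC\ZZ_{2p}$ acting by $K=q^H$, and a morphism to its restriction to fundamental components (well defined since morphisms in $\mathcal{M}$ preserve $H$-degrees). As $\RS$ has $2p$ elements and each $V_a$ is finite-dimensional for $V\in\mathcal{M}$, the output lies in $\repfd\,\CC\ZZ_{2p}$. One composite is easy: $\funG_\RS\circ\funF_\RS(M)$ is the $k=0$ part $1_0\otimes M$ of $\Lambda\otimes_\CC M$, and $1_0\otimes m\mapsto m$ of \eqref{eq:GF-nat_Z2p} is natural in $M$ and $\CC\ZZ_{2p}$-linear. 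For the other composite $V\mapsto\Lambda\otimes_\CC V_{(0)}$ I would use the map $1_k\otimes v\mapsto\rho(1_k)(v)$ of \eqref{eq:H-FG-nat}, with $\rho(1_k)$ the action \eqref{eq:rho1t} of the basis element $1_k\in\Lambda$; that it preserves degrees and commutes with the $\Lambda$-action is immediate from $\rho(1_j)\rho(1_k)=\rho(1_{j+k})$.

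\textbf{Main obstacle.} The one genuinely non-formal point is bijectivity of the last natural transformation, i.e.\ that a finitely generated local $\Lambda$-module is reconstructed from its fundamental component. Injectivity is clear since for $k\neq k'$ the images $\rho(1_k)(V_{(0)})$ and $\rho(1_{k'})(V_{(0)})$ sit in disjoint sets of $H$-degrees and each $\rho(1_k)$ is an isomorphism; surjectivity is precisely the observation already used in the proof of Proposition~\ref{prop:Lambda-loc-mod}, that every homogeneous $u\in V$ can be written $\rho(1_k)(x)$ for suitable $k\in\ZZ$ and $x\in V_{(0)}$, which in turn relies on the support condition for local modules. With both natural isomorphisms in hand, $\funF_\RS$ and $\funG_\RS$ are mutually quasi-inverse $\CC$-linear equivalences, which proves the proposition.
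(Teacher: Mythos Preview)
Your proof is correct and follows essentially the same approach as the paper: the paper presents Proposition~\ref{prop:funF-H} as a summary of the preceding discussion, which constructs exactly the quasi-inverse $\funG_\RS$ of \eqref{eq:funG-Cartan} and the two natural isomorphisms \eqref{eq:GF-nat_Z2p} and \eqref{eq:H-FG-nat} that you use. Your treatment of the ``main obstacle'' (bijectivity of \eqref{eq:H-FG-nat}) is slightly more explicit than the paper's, which simply asserts that the map is an isomorphism.
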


We now turn to step~\ref{plan-step2}, equipping $\funF$ with a multiplicative structure. That is, we give a family of isomorphisms 
\be
	\funF_{M,N}\colon\; \funF(M) \tensor_{\Lambda} \funF(N) \xrightarrow{ \sim } \funF(M\tensor N) \ ,
\ee
natural in $M,N \in \repfd\, \CC \ZZ_{2p}$. 
	Recall from Definition~\ref{def:transport} that
in contrast to a monoidal structure on $\funF$, the $\funF_{M,N}$ are not required to satisfy the coherence condition \eqref{eq:transport-assoc-diag}
for the standard monoidal structure on $\repfd\, \CC \ZZ_{2p}$.
We begin with introducing  a family of $\CC$-linear maps
\begin{align}
\tfunF_{M,N} : \quad \funF(M) \tensor_\CC \funF(N) 
&\longrightarrow \funF(M\tensor N)
\nonumber \\
 (1_k\tensor \e{a}m)\tensor (1_l\tensor \e{b}n)
 &\longmapsto 
(-1)^{al} \zeta_{a,b}\, 1_{k+l + \kappa(a,b)}\tensor (\e{a}m\tensor \e{b}n)\ ,
\label{eq:isotF-MN}
\end{align}
where the	constants $\zeta_{a,b}\in \CC$ and $\kappa(a,b) \in \ZZ$, for $a,b \in \RS$, will be subject to certain conditions which we establish below.

Next we  show that the maps $\tfunF_{M,N}$ factor through $\funF(M) \tensorL \funF(N)$.
Recall from~\eqref{eq:tensor-bimod-def} that $\otimes_\Lambda$ is defined via a coequaliser or explicitly as the quotient
\be\label{eq:tensorA-quotient}
\funF(M)\otimes_\Lambda \funF(N) = \funF(M)\tensor \funF(N) / \ker\pi_\otimes
\ee
where 
\be\label{eq:ker-pi}
 \ker\pi_\otimes = \mathrm{im} \big(\id_{\funF(M)} \otimes \rho_{\funF(N)}^l - \rho_{\funF(M)}^r\otimes\id_{\funF(N)} \big)
\ee
and we also used here that the associator $\alpha_{\funF(M),\Lambda,\funF(N)}$ in our case is trivial.

\begin{lemma}\label{lem:tfun-fact}
We have
	the
factorisation
\be\label{eq:tfunF-fact}
\tfunF_{M,N} = \funF_{M,N}  \circ \pi_\otimes
\ee
where
\be
\funF_{M,N}: \quad \funF(M) \tensor_{\Lambda} \funF(N) \xrightarrow{\quad \sim \quad} \funF(M\tensor N)
\ee
are $\mathbb{C}$-linear isomorphisms.
\end{lemma}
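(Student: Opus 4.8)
The plan is to show two things: first, that $\tfunF_{M,N}$ annihilates $\ker\pi_\otimes$, so that by the universal property of the coequaliser it factors uniquely as $\tfunF_{M,N} = \funF_{M,N}\circ\pi_\otimes$ for some $\CC$-linear map $\funF_{M,N}$; and second, that $\funF_{M,N}$ is a bijection. For the first part I would use the explicit description of $\ker\pi_\otimes$ in \eqref{eq:ker-pi} as the image of $\id_{\funF(M)}\otimes\rho^l_{\funF(N)} - \rho^r_{\funF(M)}\otimes\id_{\funF(N)}$. A spanning set of $\ker\pi_\otimes$ is given by elements of the form
\begin{equation*}
\bigl(1_k\otimes\e{a}m\bigr)\otimes\bigl(1_j\cdot(1_l\otimes\e{b}n)\bigr) - \bigl((1_k\otimes\e{a}m)\cdot^{r} 1_j\bigr)\otimes\bigl(1_l\otimes\e{b}n\bigr),
\end{equation*}
where the left action of $1_j\in\Lambda$ is $\mu_\Lambda\otimes\id$ (shifting $k\mapsto k+j$ in the first tensor factor of $\funF(N)$) and the right action $\rho^{r,+}_{\funF(M)} = \rho_{\funF(M)}\circ c_{\funF(M),\Lambda}$ is the left action precomposed with the braiding of $\mathcal H^\oplus$ from \eqref{eq:C-graded-vec-braiding}. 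The braiding contributes a scalar $e^{\frac{\pi i}{2p}\cdot 2pj\cdot(2pk+\re{a})} = e^{\pi i j\re{a}} = (-1)^{j\re{a}}$ when moving $1_j$ (degree $2pj$) past $1_k\otimes\e{a}m$ (degree $2pk+\re{a}$), using $r=1/p$. Applying $\tfunF_{M,N}$ to both terms via \eqref{eq:isotF-MN} and comparing, the claim $\tfunF_{M,N}(\ker\pi_\otimes)=0$ reduces to the identity $(-1)^{a(j+l)}\zeta_{a,b} = (-1)^{aj}\cdot(-1)^{al}\zeta_{a,b}$ on scalars, which holds identically; the shift $k+l+\kappa(a,b)$ is the same in both terms. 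So no condition on $\zeta_{a,b}$ or $\kappa(a,b)$ is needed for the factorisation itself — good.

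For bijectivity of $\funF_{M,N}$, I would argue dimension-count style but in the graded setting. By Proposition~\ref{prop:Lambda-loc-mod} and the discussion around \eqref{eq:FG-Cartan}, every object of $\mathcal M$ is (isomorphic to) an induced module $\mathrm{Ind}(U)=\Lambda\otimes_\CC U$ with $U$ finite-dimensional, and $\mathrm{Ind}(U)\otimes_\Lambda\mathrm{Ind}(U')\cong\mathrm{Ind}(U\otimes_\CC U')$ since $\mathrm{Ind}$ is a tensor functor on the transparent subcategory (here all of $\mathcal H^{\mathrm{fd}}$, as $U,U'$ live in degree-$0$ fibres after applying $\funG$; cf.\ Corollary~\ref{cor:fglocal-vs-induction} and Lemma~\ref{lem:local-vs-induction}). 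Concretely, a set of representatives for $\funF(M)\otimes_\Lambda\funF(N)$ is given by $(1_0\otimes\e{a}m)\otimes(1_l\otimes\e{b}n)$ with $a\in\RS$, as one can always use the relation to move all of $\Lambda$ from the left factor; and $\funF_{M,N}$ sends such a representative to $(-1)^{0}\zeta_{a,b}\,1_{l+\kappa(a,b)}\otimes(\e{a}m\otimes\e{b}n)$, which ranges over a basis of $\funF(M\otimes N)$ as $a,l$ and the basis vectors vary, \emph{provided} each $\zeta_{a,b}\neq 0$. This is the only place a condition enters, and it is exactly the sort of condition the text says will be "established below". I would therefore state in the proof that $\funF_{M,N}$ is an isomorphism once we impose $\zeta_{a,b}\in\CC^\times$, deferring the precise later choice of $\zeta_{a,b},\kappa(a,b)$ (which is fixed by the coherence diagrams \eqref{eq:transport-assoc-diag}, \eqref{eq:transport-braiding-via-functorequiv}) — or, if the paper has already pinned them down, simply cite that. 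Naturality of $\funF_{M,N}$ in $M,N$ is inherited from naturality of $\tfunF_{M,N}$, which is visibly $\CC$-linear and built from the identity on the $M$-$N$ tensor factors and scalars depending only on the $K$-weights $a,b$.

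The main obstacle I expect is bookkeeping the sign $(-1)^{al}$ and the shift $\kappa(a,b)$ consistently between the two terms produced by $\ker\pi_\otimes$: one must check that the "cocycle-like" data $\zeta,\kappa$ do not obstruct the factorisation (they don't, by the computation above) and separately that nonvanishing of $\zeta$ suffices for bijectivity. A secondary subtlety is that representatives $1_k\otimes\e{a}m$ with $a\notin\RS$ should first be rewritten using $\e{a}=\e{\pi(a)}=\e{\re{a}_\RS}$ so that all weight labels lie in $\RS$; this is harmless but must be mentioned so the formula \eqref{eq:isotF-MN} is unambiguous. None of this is deep, but it is the kind of step where an unnoticed sign would propagate into the coassociator computation in step~\ref{plan-step3}, so I would write it out carefully.
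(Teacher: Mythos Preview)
Your proposal is correct and the factorisation half matches the paper's proof exactly: one computes the right $\Lambda$-action via the braiding to get the sign $(-1)^{as}$, writes down the spanning set of $\ker\pi_\otimes$, and checks that $\tfunF_{M,N}$ vanishes on it for arbitrary $\zeta_{a,b}$ and $\kappa(a,b)$.

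The bijectivity argument differs. You go via representatives and a graded dimension count, invoking the tensor structure of induction. The paper instead simply writes down an explicit inverse,
\[
\funF_{M,N}^{-1}\colon\; 1_k\otimes(\e{a}m\otimes\e{b}n)\;\longmapsto\; \zeta_{a,b}^{-1}\,(1_{k-\kappa(a,b)}\otimes\e{a}m)\otimes_\Lambda(1_0\otimes\e{b}n),
\]
and checks it is a two-sided inverse. This is shorter and makes the condition $\zeta_{a,b}\in\CC^\times$ (which you correctly flagged) transparent, without any appeal to Proposition~\ref{prop:Lambda-loc-mod} or Lemma~\ref{lem:local-vs-induction}. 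Your route is fine but heavier than necessary; since you already have the explicit formula \eqref{eq:isotF-MN} in hand, writing the candidate inverse and checking both composites on generators is the more economical move.
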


In other words, the maps $\tfunF_{M,N}$ are well defined on elements from $\funF(M)\otimes_\Lambda \funF(N)$ and do not depend on the choice of representatives in $\funF(M)\otimes \funF(N)$.
We stress that in this lemma, the $\funF_{M,N}$ are not yet required to be morphisms in $\mathcal{M}$, just linear isomorphisms between the underlying vector spaces.

\begin{proof}
Following the definition~\eqref{eq:tensorA-quotient} we first have to  calculate the image of the equaliser map 
\be
\eq:= \id_{\funF(M)} \otimes \rho_{\funF(N)}^l - \rho_{\funF(M)}^r\otimes\id_{\funF(N)} 
\ee
and to show that $\tfunF_{M,N}$ is zero on it. Recall that $\rho_{\funF(M)}^l$ is given by~\eqref{eq:rho-LambdaM}, while following~\eqref{eq:right-action-from-left-action} the right $\Lambda$-action is the composition
\begin{align}
\rho_{\funF(M)}^r\colon \;  &\Lambda\otimes_{\CC} M \otimes \Lambda \xrightarrow{\; \brC_{ \Lambda\otimes_{\CC} M, \Lambda}\;} \Lambda\otimes \Lambda\otimes_{\CC} M \xrightarrow{\;\mu_{\Lambda}\otimes\id_M\;} \Lambda\otimes_{\CC} M\nonumber\\
&  (1_k\otimes \e{a}m) \otimes 1_s \mapsto (-1)^{as}1_{k+s}\otimes \e{a}m\ ,
\end{align}
where we also used~\eqref{eq:alt-local-def} for the local module $\funF(M)$, i.e.\ the choice of $\pm$ is irrelevant. Then the image of $\eq$ is
\begin{equation*}
\im(\eq) =  \Span \bigl\{ (1_k\otimes \e{a}m)  \otimes (1_{l+s}\otimes \e{b}n) - (-1)^{as}  (1_{k+s}\otimes \e{a}m)  \otimes (1_l\otimes \e{b}n) \, | \, k,l,s\in \ZZ, a,b\in \RS\bigr\} .
\end{equation*}
It is now straightforward to check that $\tfunF_{M,N}$ is zero on $\im(\eq)=\ker (\pi_{\tensor})$ for any $\zeta_{a,b}$ and $\kappa(a,b)$,
so~\eqref{eq:tfunF-fact} holds.
It remains to
show that $\funF_{M,N}$ are invertible. Indeed the maps
\be
\funF_{M,N}^{-1} \colon \; 1_k\otimes (\e{a}m \otimes \e{b}n) \mapsto \zeta_{a,b}^{-1} (1_{k-\kappa(a,b)}\otimes \e{a}m)\otimes_\Lambda (1_0 \otimes \e{b}n)
\ee
are left and right inverses to $\funF_{M,N}$.
\end{proof}

Because of our convention (see Section~\ref{sec:conventions}) that the representation category of a quasi-Hopf algebra uses the unit isomorphisms of the underlying category of vector spaces, it is natural to demand the isomorphism $\one \to \funF(\one)$ to be the identity. This implies the constraint
\be
	\zeta_{a,\re{0}} = 1 = \zeta_{\re{0},a} 
	\qquad \text{for all}
	\quad a \in \RS
\ee
for the constants $\zeta_{a,b}$ in \eqref{eq:isotF-MN}.
For $\kappa$ in \eqref{eq:isotF-MN} we obtain the following condition:

\begin{proposition}\label{prop:funF-kappa}
The maps $\funF_{M,N}$ are morphisms in $\mathcal{M}$ iff 
\be\label{eq:kappa}
\kappa(a,b) = \ffrac{a+b - \re{a+b}}{2p}\; \in \; \ZZ \ .
\ee
	The $\funF_{M,N}$ act on elements as
\be\label{eq:isoF-MN-Cartan}
\funF_{M,N}: \quad 
 (1_k\tensor \e{a}m)\tensor_{\Lambda} (1_l\tensor \e{b}n)\mapsto 
(-1)^{al}
 \zeta_{a,b}\, 1_{k+l + \kappa(a,b)}\tensor (\e{a}m\tensor \e{b}n)\ .
\ee
\end{proposition}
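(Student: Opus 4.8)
The plan is to exploit that, by Lemma~\ref{lem:tfun-fact}, each $\funF_{M,N}$ is already a linear isomorphism of the underlying vector spaces for \emph{any} choice of the constants $\zeta_{a,b}$ and integers $\kappa(a,b)$. So the only thing left to pin down is when $\funF_{M,N}$ is a morphism in $\mathcal{M}$, which (since $\mathcal{M}$ is a full subcategory of $\Lambda$-modules in $\mathcal{H}^\oplus$) means: a grading-preserving map that intertwines the $\Lambda$-action. I would therefore treat these two requirements separately.

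First I would check that the $\Lambda$-intertwining property holds automatically, imposing no condition at all on $\zeta$ or $\kappa$. Both $\funF(M)\tensorL\funF(N)$ and $\funF(M\tensor N)$ carry the $\Lambda$-action given by left multiplication $1_s\cdot(1_j\tensor-)=1_{j+s}\tensor-$ on the $\Lambda$-tensorand (on the source via the left action on the $\funF(M)$-factor, on the target via $\mu_\Lambda\tensor\id$ as in \eqref{eq:rho-LambdaM}). Formula \eqref{eq:isoF-MN-Cartan} shifts the $\Lambda$-index $k+l$ by the constant $\kappa(a,b)$ and multiplies by the scalar $(-1)^{al}\zeta_{a,b}$, and both operations commute with such a shift; this is essentially the computation already performed at the end of the proof of Lemma~\ref{lem:tfun-fact}.

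The substance is then the grading condition, which I would verify by computing the $H$-degree of a homogeneous generator on each side using \eqref{eq:C-gr-ex_K-action_to_H-action}. On the source, $(1_k\tensor\e{a}m)\tensorL(1_l\tensor\e{b}n)$ has degree $(2pk+\re{a}_\RS)+(2pl+\re{b}_\RS)=2p(k+l)+a+b$ for $a,b\in\RS$, since the monoidal product of $\mathcal{H}^\oplus$ is additive in degrees and $\tensorL$ is a quotient of $\tensor_\CC$. On the target, the key point is that $\e{a}m\tensor\e{b}n$ lies in the $\pi(a+b)$-weight space of the $\CC\ZZ_{2p}$-module $M\tensor N$, because $\Delta(K)=K\tensor K$ gives $K(\e{a}m\tensor\e{b}n)=q^{a+b}(\e{a}m\tensor\e{b}n)$; hence by \eqref{eq:C-gr-ex_K-action_to_H-action} the element $1_{k+l+\kappa(a,b)}\tensor(\e{a}m\tensor\e{b}n)$ has degree $2p(k+l+\kappa(a,b))+\re{a+b}_\RS$. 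Equating the two expressions and solving yields $\kappa(a,b)=\frac{a+b-\re{a+b}_\RS}{2p}$, which is an integer because $\re{a+b}_\RS\equiv a+b\pmod{2p}$ by the very definition of $\re{\cdot}_\RS$. Conversely, with this value of $\kappa$ the degrees agree on all homogeneous generators, so $\funF_{M,N}$ is grading-preserving and, together with the previous paragraph, a morphism in $\mathcal{M}$; and the stated action formula \eqref{eq:isoF-MN-Cartan} is just \eqref{eq:isotF-MN} read modulo $\ker\pi_\otimes$, i.e.\ with $\tensor$ replaced by $\tensorL$.

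I do not anticipate a genuine obstacle: the only delicate point is bookkeeping the distinction between an integer $x\in\ZZ$ and its representative $\re{x}_\RS\in\RS$, and using that $a,b$ are chosen in $\RS$ so that $\re{a}_\RS=a$ and $\re{b}_\RS=b$ while $a+b$ need not lie in $\RS$. Keeping this straight is exactly what produces the precise formula \eqref{eq:kappa} rather than the equivalent but less convenient-looking $\frac{\re{a}_\RS+\re{b}_\RS-\re{a+b}_\RS}{2p}$.
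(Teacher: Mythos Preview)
Your proposal is correct and follows essentially the same route as the paper: verify that $\funF_{M,N}$ preserves the $H$-grading (which forces the formula for $\kappa(a,b)$) and that it intertwines the $\Lambda$-action (which imposes no condition), with the explicit action formula inherited from~\eqref{eq:isotF-MN} via the factorisation. One small quibble: the $\Lambda$-intertwining check is not literally contained in the proof of Lemma~\ref{lem:tfun-fact} (which only establishes factorisation and invertibility), but your own one-line argument that the shift $k\mapsto k+s$ commutes with $k+l\mapsto k+l+\kappa(a,b)$ and with multiplication by $(-1)^{al}\zeta_{a,b}$ is already sufficient.
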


\begin{proof}
First note that the action in~\eqref{eq:isoF-MN-Cartan} is a simple consequence of the factorisation~\eqref{eq:tfunF-fact} and the definition of $\tfunF_{M,N}$.
	Next we show
that $\funF_{M,N}$ is a $\CC$-grade preserving map
iff \eqref{eq:kappa} holds.
	One can check from the definition in \eqref{eq:tensorA-quotient} that 
	the $H$-action on $\funF(M) \tensorL \funF(N)$ is given by $H\tensorL \one + \one\tensorL H$.
Then,  on the one hand we have,
	for $a,b \in \RS$ and $k,l \in \ZZ$,
\be\label{eq:H-acts-1}
H\colon\;  (1_k\tensor \e{a}m)\tensorL (1_l\tensor \e{b}n) \mapsto 
\bigl(2p(k+l) + a + b\bigr) (1_k\tensor \e{a}m)\tensorL (1_l\tensor \e{b}n)
\ .
\ee
On the other hand,
\be\label{eq:H-acts-2}
H\colon\; 1_{k+l + \kappa(a,b)}\tensor (\e{a}m\tensor \e{b}n) \mapsto   
\Bigl(2p\bigl(k+l +  \kappa(a,b)\bigr) + \re{a + b}\Bigr)  1_{k+l + \kappa(a,b)}\tensor (\e{a}m\tensor \e{b}n)  \ ,
\ee
where we used that $K$ acts on $\e{a}m\tensor \e{b}n$ by  $q^{a+b}$, so that  by \eqref{eq:C-gr-ex_K-action_to_H-action}, for the $H$-action we must use the representative $\re{a + b}$ in $\RS$. 
Comparing~\eqref{eq:H-acts-1} and~\eqref{eq:H-acts-2} 
we conclude that the $H$ action  commutes with $\funF_{M,N}$ iff the equality~\eqref{eq:kappa} holds.

It remains to show that the maps $\funF_{M,N}$ intertwine the $\Lambda$-action, or equivalently, that  $\funF_{M,N}$ commute with the action $\rho(1_s)$ of the basis elements of $\Lambda$
defined in~\eqref{eq:rho1t}. On one side, we have the composition $\funF_{M,N}\circ\rho(1_s)$, for $s\in\ZZ$, in the basis
\begin{align}
(1_k\tensor \e{a}m)\tensorL (1_l\tensor \e{b}n)
 &\xrightarrow{\;\rho(1_s)\;} (1_{k+s}\tensor \e{a}m)\tensorL (1_l\tensor \e{b}n) \\
 &\xrightarrow{\;\funF_{M,N}\;} (-1)^{al} \zeta_{a,b}\, 1_{k+s+l + \kappa(a,b)}\tensor (\e{a}m\tensor \e{b}n) \ .\nonumber
\end{align}
On the other hand:
\begin{align}
(1_k\tensor \e{a}m)\tensorL (1_l\tensor \e{b}n)
 &\xrightarrow{\;\funF_{M,N}\;} (-1)^{al} \zeta_{a,b}\, 1_{k+l + \kappa(a,b)}\tensor (\e{a}m\tensor \e{b}n) \\
 &\xrightarrow{\;\rho(1_s)\;} (-1)^{al} \zeta_{a,b}\, 1_{k+s + l + \kappa(a,b)}\tensor (\e{a}m\tensor \e{b}n) \ .\nonumber
\end{align}
We have therefore the equality $\funF_{M,N}\circ\rho(1_s) = \rho(1_s)\circ \funF_{M,N}$ for $s\in\ZZ$.
\end{proof}

As a corollary of this proposition and Lemma~\ref{lem:tfun-fact}, we conclude that the equivalence functor $\funF$ is multiplicative. This completes step~\ref{plan-step2}.

\bigskip

We now turn to	step~\ref{plan-step3} of
our plan. 
The  commutativity condition of the diagram~\eqref{eq:transport-assoc-diag} reads 
 \be\label{eq:transport-assoc-diag-eq}
 \funF_{U\tensor_{\catD} V,W} \circ \bigl(\funF_{U,V}\tensor_\Lambda\id\bigr) =  \fun(\Phi\, . -) \circ \funF_{U,V\tensor_{\catD} W} \circ \bigl(\id\tensor_\Lambda\funF_{V,W}\bigr)\ ,
 \ee
 where we used that $\alpha^{\mathcal{M}}_{\fun(U),\fun(V),\fun(W)}$ is trivial, i.e.\ as for vector spaces, and we realised the associator $\assD_{U,V,W}$ by the action with $\Phi\in \CC\ZZ_{2p}^{\otimes 3}$.  We first calculate the LHS of~\eqref{eq:transport-assoc-diag-eq}, starting from an element in the left-top corner of~\eqref{eq:transport-assoc-diag} of the form, for $a,b,c \in \RS$, 
 \be\label{eq:left-top-el}
 (1_k\tensor \e{a} u)\tensorL \bigl((1_l\tensor \e{b} v)\tensorL (1_t\tensor\e{c} w)\bigr) \; \in  \;
 \fun (U)\tensor_{\mathcal M}\bigl(\fun(V)\tensor_{\mathcal M}\fun(W)\bigr).
 \ee
  It is mapped by $\alpha^{\mathcal{M}}$ to $\bigl((1_k\tensor \e{a} u)\tensorL (1_l\tensor \e{b} v)\bigr)\tensorL (1_t\tensor\e{c} w)$. Then 
 \begin{align}\label{eq:transport-assoc-diag-LHS}
 \text{LHS of}~\eqref{eq:transport-assoc-diag-eq} \colon \quad
& \bigl((1_k\tensor \e{a} u)\tensorL (1_l\tensor \e{b} v)\bigr)\tensorL (1_t\tensor\e{c} w) 
\nonumber\\
& \xrightarrow{\funF_{U,V}\tensor\id}
(-1)^{al} \zeta_{a,b} \bigl(1_{k+l+\kappa(a,b)}\tensor
	(\e au\tensor \e bv)
\bigr)\tensorL (1_t\tensor \e{c}w)
\nonumber\\
&\xrightarrow{\funF_{U\tensor_{\catD} V,W}} 
(-1)^{al + t(a+b)} \zeta_{a,b} \zeta_{\re{a+b},c} \,1_{k+l+t+ \kappa(a+b,c)}\tensor
	\bigl((\e au\tensor \e bv)\tensor  \e{c}w\bigr)\ , 
 \end{align}
where in the second line we used 
that $\e{a}u\tensor \e{b}v$ has $\ZZ_{2p}$-degree $a+b$ and this is why  
the representative $\re{a+b}_\RS$ appears in the third line.
 We similarly calculate   the RHS of~\eqref{eq:transport-assoc-diag-eq} on the element~\eqref{eq:left-top-el} with the result
  \begin{align}
 \text{RHS of}~\eqref{eq:transport-assoc-diag-eq} \colon \quad
& (1_k\tensor \e{a} u)\tensorL \bigl((1_l\tensor \e{b} v)\tensorL (1_t\tensor\e{c} w)\bigr)\nonumber\\
 & \mapsto 
 (-1)^{a(l + t + \kappa(b,c)) + bt} \zeta_{b,c} \zeta_{a,\re{b+c}} \,1_{k+l+t+ \kappa(a+b,c)}\tensor
   \Phi\bigl(\e{a}u\tensor (\e bv\tensor \e cw)\bigr)\ . \label{eq:transport-assoc-diag-RHS}
 \end{align}
 Comparing the two sides, i.e.\ equating~\eqref{eq:transport-assoc-diag-LHS} to~\eqref{eq:transport-assoc-diag-RHS}, we conclude that the solution of~\eqref{eq:transport-assoc-diag-eq} is,
	for $a,b,c \in \RS$,
 \be
 \Phi\bigl(\e{a}u\tensor(\e{b}v\tensor \e{c}w)\bigr) = (-1)^{a\kappa(b,c)} \ffrac{ \zeta_{a,b} \zeta_{\re{a+b},c}}{\zeta_{b,c} \zeta_{a,\re{b+c}} }\bigl((\e{a}u\tensor\e{b}v)\tensor \e{c}w\bigr) \ .
 \ee
Taking here the sum over $a,b,c\in\RS$, we finally get the co-associator for $\CC \ZZ_{2p}$:
 \be\label{eq:Phi-Z2p}
 \Phi \,= 	\sum_{a,b,c\in \RS}  
 (-1)^{a\kappa(b,c)} \ffrac{ \zeta_{a,b} \zeta_{\re{a+b},c}}{\zeta_{b,c} \zeta_{a,\re{b+c}}} \,\e{a}\tensor\e{b}\tensor \e{c} \ .
 \ee
 
 \medskip
 
We now turn to 	step~\ref{plan-step4} of
our plan. The  commutativity condition of the diagram~\eqref{eq:transport-braiding-via-functorequiv} reads 
 \be\label{eq:transport-br-diag-eq}
 \funF_{V,U}\circ c^{\mathcal{M}}_{\fun(U),\fun(V)} = \fun(\tau_{U,V}\circ R\, . - )\circ\funF_{U,V} 
 \ee
 where we realised the braiding $c^{\catD}_{U,V}$ by the action with $R$-matrix $R\in \CC\ZZ_{2p}^{\otimes 2}$ followed by the flip of vector spaces. 
	Recall that the braiding in $\mathcal{M}$ is inherited from the one in $\Hplus$ defined in~\eqref{eq:C-graded-vec-braiding} (given by $\tau_{X,Y} \circ q^{\frac{1}{2} H_X \tensor H_Y}$ for our choice of $r$) via \eqref{eq:braiding-descends}.  
We first calculate the LHS of~\eqref{eq:transport-br-diag-eq}:
	for $a,b \in \RS$ we get
 \begin{align}
 (1_k\tensor \e{a} u)\tensorL (1_l\tensor \e{b} v)& 
	\xrightarrow{c^{\mathcal{M}}_{\funF(U),\funF(V)} } 
 q^{(2pk + a)(pl + b/2)} (1_l\tensor \e{b} v)\tensorL (1_k\tensor \e{a} u) \nonumber\\
& \xrightarrow{ \funF_{V,U} } q^{(2pk + a)(pl + b/2)} (-1)^{bk}\zeta_{b,a} 1_{k+l + \kappa(b,a)}\tensor ( \e{b} v\tensor  \e{a} u)\ .
 \end{align}
The result for  RHS of~\eqref{eq:transport-br-diag-eq} is
\be\label{eq:br-diag-RHS}
 (1_k\tensor \e{a} u)\tensorL (1_l\tensor \e{b} v) \mapsto (-1)^{al} \zeta_{a,b} 1_{k+l +\kappa(a,b)} \tensor \bigl(
\tau_{U,V}
\circ R . (\e{a}u\tensor \e{b}v)\bigr)\ .
\ee
Comparing the both sides we find the $R$-matrix:
\be\label{eq:R-Z2p}
R  =
 \sum_{a,b\in \RS}
 q^{\half ab} \ffrac{\zeta_{b,a}}{\zeta_{a,b}}\,  \e{a}\tensor \e{b} \ .
\ee

\medskip

Finally, for  step~\ref{plan-step5} we write the twist on a module $U \in \repfd\, \CC \ZZ_{2p}$ as $\theta_U = \ribbon^{-1}.(-)$ for an element $\ribbon \in \CC \ZZ_{2p}$ to be computed. 
Evaluating 
the required equation \eqref{eq:transport-twist}
on $1_k \otimes \e au$ gives (recall \eqref{eq:C-graded-qf-choice})
\be
	q^{\frac12 (2pk+\re{a})^2} \, 1_k \otimes \e au = 1_k \otimes \ribbon^{-1}.\e au \ .
\ee
This can be solved for $\ribbon$ as
\be\label{eq:ribbon-Z2p}
	\ribbon = \sum_{a \in \ZZ_{2p}} q^{-\frac12  a^2} \, \e a 
	\,\overset{(*)}=\,
	\ffrac{1-i}{2 \sqrt{p}} \sum_{l \in \ZZ_{2p}} q^{\frac12 l^2} K^l \ .
\ee
In step $(*)$ we substituted the definition of $e_a$ in \eqref{eq:idemp-prim} and carried out the Gauss sum using
\be\label{eq:gauss-sum}
\sum_{a=0}^{2p-1} e^{-\frac{i\pi}{2p} a^2} =  (1-i)\sqrt{p}\ .
\ee

	By Remark~\ref{rem:solve-pent-hex-autom}, $\CC\ZZ_{2p}$ together with $\Phi$ is a quasi-bialgebra.  
To turn it into a quasi-Hopf algebra, we need to find an antipode structure,
and, again by  Remark~\ref{rem:solve-pent-hex-autom}, different such structures are equivalent. One can check that the following choice for the antipode $S$, the evaluation element $\Salpha$ and the coevaluation element $\Sbeta$ works:
\be\label{eq:CZ_2p-antipode-struc}
	S(K) = K^{-1}
	\quad , \quad
	\Salpha = \one
	\quad , \quad
	\Sbeta = \sum_{a\in \RS}
		q^{\half a\re{0}}
	 \, \ffrac{\zeta_{a,-a}}{\zeta_{-a,a}}\e{a} \ .
\ee
Thus $\CC\ZZ_{2p}$ is a quasi-Hopf algebra.
Since $\mathcal{M}$ is in addition ribbon, by construction the elements $R$ and $\ribbon$ turn $\CC\ZZ_{2p}$ into a ribbon quasi-Hopf algebra.

\begin{proposition}\label{prop:Z-2p-quasiH}
The data  $(\CC\ZZ_{2p}, \Phi, R, \ribbon)$ with
the standard group-like coproduct, 
	antipode structure \eqref{eq:CZ_2p-antipode-struc},
the coassociator $\Phi$ defined in~\eqref{eq:Phi-Z2p}, the $R$-matrix in~\eqref{eq:R-Z2p} and the ribbon element in~\eqref{eq:ribbon-Z2p} 
is a factorisable ribbon quasi-Hopf algebra.
\end{proposition}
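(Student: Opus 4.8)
Almost the entire statement is a consequence of the ``transport of structure'' already carried out in steps~\ref{plan-step1}--\ref{plan-step5}; only the antipode axioms and factorisability call for a separate argument, and for the latter I would use the transparency criterion from Section~\ref{sec:conventions} rather than the explicit copairing~\eqref{eq:Q-for-fact}. First note that $\Phi$, $R$, $\ribbon$ in~\eqref{eq:Phi-Z2p}, \eqref{eq:R-Z2p}, \eqref{eq:ribbon-Z2p} are invertible, since each coefficient on the orthogonal idempotents $\e a$ is a nonzero scalar and $\ribbon$ is manifestly invertible in its Gauss-sum form; together with the standard group-like $\Delta$, this makes $(\CC\ZZ_{2p},\Delta,\Phi,R,\ribbon)$ a candidate ribbon quasi-triangular quasi-bialgebra. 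By Proposition~\ref{prop:funF-H} together with Lemma~\ref{lem:tfun-fact}, Proposition~\ref{prop:funF-kappa} and the computations of steps~\ref{plan-step3}--\ref{plan-step5}, the functor $\funF = \funF_\RS$ is a balanced braided monoidal equivalence from $\repfd\,\CC\ZZ_{2p}$ --- equipped with the associator, braiding and twist realised via~\eqref{qHopf-cat-data} by $\Phi$, $R$, $\ribbon$ --- onto $\mathcal{M}$, which is balanced braided monoidal and moreover ribbon. Remark~\ref{rem:solve-pent-hex-autom} then applies verbatim: the pentagon identity for $\Phi$, both hexagon identities for $R$ relative to $\Phi$, and all identities for the ribbon element hold automatically, the counit conditions following from the normalisation $\zeta_{a,\re 0} = 1 = \zeta_{\re 0,a}$, and $\repfd\,\CC\ZZ_{2p}$ inherits duals with $\theta^\catD$ satisfying the ribbon-twist compatibility~\eqref{eq:ribbontwist-def}.

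\textbf{Antipode.} It remains to exhibit a compatible antipode structure. I would verify directly that $(S,\Salpha,\Sbeta)$ of~\eqref{eq:CZ_2p-antipode-struc} satisfies the defining identities of a quasi-Hopf antipode structure in the normalisation of~\cite[Sec.\,6]{FGR1}: $S$ is the algebra anti-automorphism $K\mapsto K^{-1}$, equivalently $S(\e a) = \e{-a}$, with $\eps(\e a) = \delta_{a,\re 0}$, and evaluating the axioms on the single generator $K$ and on the idempotents $\e a$ reduces everything to $\zeta_{a,\re 0} = 1 = \zeta_{\re 0,a}$ and elementary bookkeeping of representatives in $\RS$. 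This is routine. (Alternatively, one may invoke that a finite-dimensional quasi-bialgebra with rigid representation category admits a quasi-Hopf structure, rigidity of $\repfd\,\CC\ZZ_{2p}$ being already established above.) Hence $\CC\ZZ_{2p}$ is a ribbon quasi-triangular quasi-Hopf algebra.

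\textbf{Factorisability.} Here I would use the characterisation recalled in Section~\ref{sec:conventions}: $A$ is factorisable iff $\repfd A$ has no transparent objects besides direct sums of the trivial module. Since $\funF$ is a braided equivalence, this is equivalent to the same statement for $\mathcal{M}$. The category $\mathcal{M}$ is semisimple and pointed with simple objects $\CC_a$, $a\in\ZZ_{2p}$, and (as computed after~\eqref{eq:C-gr-example-Mdef}) the self-braiding of $\CC_a$ is $\kappa(a) = e^{\pi i a^2/(2p)}$, so the monodromy $c_{\CC_b,\CC_a}\circ c_{\CC_a,\CC_b}$ acts on $\CC_a\otimes\CC_b$ by $\rho(a,b) = \kappa(a+b)\kappa(a)^{-1}\kappa(b)^{-1} = e^{\pi i ab/p}$. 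An object is transparent iff each of its simple summands is, and $\CC_a$ is transparent iff $e^{\pi i ab/p} = 1$ for all $b\in\ZZ_{2p}$; taking $b = 1$ forces $a\equiv 0$. Thus the only transparent objects in $\mathcal{M}$ are direct sums of $\CC_0$, so $\CC\ZZ_{2p}$ is factorisable, which completes the proof.

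\textbf{Main difficulty.} There is essentially no obstacle: the quasi-bialgebra, quasi-triangularity and ribbon axioms are supplied wholesale by Remark~\ref{rem:solve-pent-hex-autom}, and factorisability collapses to a one-line monodromy computation once one invokes the transparency criterion in place of~\eqref{eq:Q-for-fact}. The only step demanding explicit (but routine) computation is the verification of the antipode structure~\eqref{eq:CZ_2p-antipode-struc}.
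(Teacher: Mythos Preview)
Your proposal is correct and follows essentially the same approach as the paper: the ribbon quasi-Hopf structure is obtained by transport of structure via Remark~\ref{rem:solve-pent-hex-autom}, the antipode is verified directly, and factorisability is shown via the transparency criterion by computing the monodromy $q^{\alpha\beta}$ and observing that only $\alpha=0$ works. The only cosmetic difference is that the paper computes the double braiding directly in $\repfd\,\CC\ZZ_{2p}$ using the explicit $R$-matrix~\eqref{eq:R-Z2p}, whereas you transport the question to $\mathcal{M}$ and use the quadratic form there; since $\funF$ is a braided equivalence these are the same computation.
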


\begin{proof}
It only remains to  show factorisability. 
Recall the definition of a factorisable quasi-Hopf algebra in Section~\ref{sec:conventions}. Here, we will use the second criteria of factorisability formulated in the very end of Section~\ref{sec:conventions} as absence of non-trivial transparent objects.
	Since $\CC\ZZ_{2p}$ is semisimple it is enough to check this condition on simple modules.
Using~\eqref{eq:R-Z2p} we compute the double braiding for a pair of simple 
	$\CC\ZZ_{2p}$-modules 
$\CC_\alpha$ and $\CC_\beta$, where $\e{a}$ acts as identity on $\CC_\alpha$ iff $\alpha=a$ and zero otherwise. Let $1_\alpha\in\CC_\alpha$ then
\be
(c\circ c)(1_\alpha\tensor 1_\beta) =  q^{\half \alpha \beta} \ffrac{\zeta_{\beta,\alpha}}{\zeta_{\alpha,\beta}}\, c (1_\beta\tensor 1_\alpha)  = q^{ \alpha \beta}\cdot 1_\alpha\tensor 1_\beta\ .
\ee
Therefore we look for such $\alpha$ that $q^{ \alpha \beta}=1$ for all $\beta\in\ZZ_{2p}$. Obviously there is only one solution $\alpha=0$.
\end{proof}

Denote the ribbon quasi-Hopf algebra from Proposition~\ref{prop:Z-2p-quasiH} by $\CC \ZZ_{2p}^{\omega}$, where $\omega$ stands for the abelian 3-cocycle determined by $\Phi$ and $R$ (we do not make the ribbon element explicit in the notation), and let $\repfd\, \CC \ZZ_{2p}^{\omega}$ 
be the ribbon category of its finite-dimensional
representations.
Combining all the steps, we can now state the following refinement of Proposition~\ref{prop:funF-H}.

\begin{theorem}\label{thm:Cartan-funF-H-br-equiv}
The functor $\funF\colon \repfd\, \CC \ZZ_{2p}^{\omega} \to \mathcal{M}$
 equipped with the isomorphisms $\funF_{M,N}$ in~\eqref{eq:isoF-MN-Cartan} is a ribbon equivalence.
\end{theorem}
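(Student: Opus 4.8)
The plan is to collect the ingredients established above and to check that the multiplicative structure $\funF_{M,N}$ of~\eqref{eq:isoF-MN-Cartan} is compatible with the associator, braiding and twist of $\repfd\,\CC\ZZ_{2p}^{\omega}$, which by definition are implemented by the elements $\Phi$, $R$ and $\ribbon$ of Proposition~\ref{prop:Z-2p-quasiH}. First, by Proposition~\ref{prop:funF-H} the functor $\funF=\funF_\RS$ is already a $\CC$-linear equivalence; by Lemma~\ref{lem:tfun-fact} and Proposition~\ref{prop:funF-kappa} the maps $\funF_{M,N}$ are natural isomorphisms in $\mathcal{M}$, and together with the identification $\funF_{\one}=\id$ (forced by our convention on unit isomorphisms) this exhibits $\funF$ as a quasi-tensor functor in the sense of Definition~\ref{def:transport}.

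Next I would note that the coassociator $\Phi$ in~\eqref{eq:Phi-Z2p} was constructed precisely so that the coherence diagram~\eqref{eq:transport-assoc-diag} commutes; hence $\funF$ is a monoidal functor from $\repfd\,\CC\ZZ_{2p}^{\omega}$ (with associator acting by $\Phi$ and trivial unit constraints) to $\mathcal{M}$. In the same way, the $R$-matrix~\eqref{eq:R-Z2p} makes~\eqref{eq:transport-braiding-via-functorequiv} commute, so that $\funF$ is braided, and the ribbon element~\eqref{eq:ribbon-Z2p} makes~\eqref{eq:transport-twist} hold, so that $\funF$ is balanced. Here one invokes Remark~\ref{rem:solve-pent-hex-autom}: since $\mathcal{M}$ is balanced braided monoidal and ribbon, the transported data automatically satisfy the pentagon, hexagon and balancing axioms, equivalently $(\CC\ZZ_{2p},\Phi,R,\ribbon)$ from Proposition~\ref{prop:Z-2p-quasiH} induces exactly the structure morphisms of $\repfd\,\CC\ZZ_{2p}^{\omega}$ via~\eqref{qHopf-cat-data}. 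At this stage $\funF$ is a balanced braided monoidal equivalence.

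Finally I would upgrade this to a ribbon equivalence. Both $\repfd\,\CC\ZZ_{2p}^{\omega}$ and $\mathcal{M}$ are rigid, and any monoidal functor between rigid monoidal categories preserves duals up to a canonical isomorphism; combined with the balancing compatibility already in hand, the duality axiom~\eqref{eq:ribbontwist-def} holds on both sides and is respected by $\funF$ — one checks, as in Remark~\ref{rem:solve-pent-hex-autom}, that the validity of~\eqref{eq:ribbontwist-def} does not depend on the chosen duals. Hence $\funF$ is a ribbon functor, and being an equivalence it is a ribbon equivalence.

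Since the genuinely computational work — solving for $\Phi$, $R$ and $\ribbon$ — has already been carried out in Steps~\ref{plan-step3}--\ref{plan-step5}, I do not expect a serious obstacle here. The point deserving the most care is the bookkeeping that connects the categorical conditions~\eqref{eq:transport-assoc-diag}--\eqref{eq:transport-twist} to the quasi-Hopf data, in particular checking that no extra unit or duality constraints are hidden in the convention $\funF_{\one}=\id$, together with the routine but worth-stating observation that passing from a balanced braided monoidal equivalence to a ribbon equivalence is automatic once both categories are known to be rigid.
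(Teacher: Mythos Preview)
Your proposal is correct and matches the paper's approach: the theorem is stated as a summary of the preceding Steps~\ref{plan-step1}--\ref{plan-step5} together with Remark~\ref{rem:solve-pent-hex-autom}, and the paper gives no further argument beyond ``combining all the steps.'' Your write-up simply makes explicit the bookkeeping that the paper leaves implicit, in particular the passage from balanced to ribbon via rigidity, which is exactly the content of Remark~\ref{rem:solve-pent-hex-autom}.
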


\begin{remark}
The associator, braiding and ribbon twist on $\repfd\, \CC \ZZ_{2p}$ given in \cite[Sec.\,4.2]{Brunner:2000wx} and
\cite[App.\,B.1]{Fuchs:2007tx}
use the choice $\zeta_{a,b}=1$ (and the inverse braiding and twist). 
In Section~\ref{sec:proof} a different choice for $\zeta_{a,b}$ will be more convenient, see \eqref{eq:zeta-def} below.
\end{remark}

\section{The unrolled restricted quantum group $\UQG{q}$}\label{sec:unrolled-restricted}

In this section, we recall the definition of the unrolled quantum group for $s\ell(2)$ and standard facts on its representation theory, mainly following~\cite{CGP}.
We then give an algebra object~$\algC$ in the category of $\UQG{q}$-modules and  study its local modules.

\subsection{Quantum group and weight modules}\label{sec:unrolled+weight}

Let $\UQG{q}$ denote the unrolled restricted
quantum group introduced in~\cite{GPT,CGP}. 
It is defined as the Hopf algebra generated by $E$, $F$, $K^{\pm1}$, and $H$ with relations
\begin{equation}\label{Uq-relations}
  KEK^{-1}=q^2E\ ,\quad
  KFK^{-1}=q^{-2}F\ ,\quad
  [E,F]=\ffrac{K-K^{-1}}{q-q^{-1}}\ ,
\end{equation}
as well as
\be\label{eq:H-rel}
[H,E] = 2E, \qquad [H,K] = 0,\qquad [H,F] = -2F,
\ee
and
\begin{equation}\label{EpFp-rel}
  E^{p}=F^{p}=0 \ .
\end{equation}
The comultiplication, counit and antipode for $E$, $F$, and $K$ are given by
\begin{align}
  \Delta(E)&=\one\otimes E+E\otimes K\ ,\quad &
  \Delta(F)&=K^{-1}\otimes F+F\otimes\one\ ,\quad &
  \Delta(K)&=K\otimes K\ ,
  \label{eq:coprod} \\ 
\eps(E)&=0 \ ,& \eps(F)&=0\ ,\quad & \eps(K)&=1\ ,\nonumber
\\
S(E)&=-EK^{-1}\ ,\quad& S(F)&=-KF\ ,\quad& S(K)&=K^{-1}\ ,\nonumber
\end{align}
and for $H$ as
 \be\label{eq:H-coprod}
 \Delta(H) = H\tensor \one + \one \tensor H, \qquad \eps(H)=0, \qquad S(H) = -H \ . 
 \ee
 The subalgebra generated by $E$, $F$, and $K^{\pm1}$ clearly has a standard PBW basis   $E^a F^b K^k$ with $0 \le a,b <p$, and $k \in \ZZ$. Then any word in the algebra $\UQG{q}$ can be reduced to a linear combination of elements of the form $E^a F^b H^c K^k$ by successfully applying relations~\eqref{eq:H-rel}, this can be proven by induction on the length of the word. We therefore see that
$\UQG{q}$ has a PBW type basis:
\be\label{eq:Uq-basis}
 \UQG{q} =  \bigl\langle E^a F^b H^c K^k\; | \; 0 \le a,b <p\, , \, c \in \ZZ_{\ge 0}\, , \, k \in \ZZ\bigr\rangle\gp
 \ee 

\medskip

We now turn to representations of $\UQG{q}$. We will only consider a particular class of modules, defined as follows.

\begin{definition}
A $\UQG{q}$-module is of \textit{weight type} if the action of $H$ is diagonalisable 
and the $K$-action satisfies
\be
	K=q^H
	\qquad \text{where} \quad
	q^H := e^{\pi i H/p} \ .
\ee
 \end{definition}

We use the standard convention for $q$-numbers and $q$-factorials, for $n \in \ZZ_{\ge 0}$,
\be
	[n] = \frac{q^n - q^{-n}}{q-q^{-1}}
	\quad , \quad
	[n]! = \begin{cases} 1 &; n=0 \\
	[n][n{-}1] \cdots [1] &; n>0 
	\end{cases}
	\qquad .
\ee
Given weight modules $U,V$ of $\UQG{q}$, we define the linear endomorphisms $R$ of $U \otimes V$ and $\ribbon$ of $U$ as, for $x \in U$, $y\in V$,
\begin{align}
R(x \otimes y)  &\,=\, q^{\frac{H\otimes H}{2}} \sum_{n=0}^{p-1} \frac{(q-q^{-1})^n}{[n]!}q^{\half n(n-1)} E^n\otimes F^n . (x \otimes y)
\label{eq:R-H}
\\
\ribbon(x) &\,=\, 
 K^{p-1} \sum_{n=0}^{p-1}  \frac{(q-q^{-1})^{n}}{[n]!} 
 q^{\half n(n-1)} S(F^n)q^{-\half{H^2}}E^n . \, x\ .
\nonumber
\end{align}
Here $q^{\frac{H\otimes H}{2}}$ and $q^{-{H^2}/{2}}$ are defined via their action on $H$-eigenvectors. For example, if $Hx = \mu x$ and $Hy = \nu y$, then $q^{\frac{H\otimes H}{2}} x \otimes y = \exp(\frac{\pi i}{2p} \mu \nu) x \otimes y$. As $H$ is assumed to be diagonalisable on $U$ and $V$, the maps $R$ and $\ribbon$ are well-defined. Since $q^{\frac{H\otimes H}{2}}$ and $q^{-{H^2}/{2}}$ are not contained in (the tensor square of) $\UQG{q}$, we cannot express $R$ and $\ribbon$ as the action of a universal $R$-matrix and a ribbon element. Nonetheless, 
it is shown in~\cite{Oh,CGP}
that the following linear maps define a braiding and a balancing on weight modules:
\be\label{eq:UHq-braid-bal}
	 c_{U,V} = \tau_{U,V} \circ (R.-)
 \quad , \qquad
 \theta_U = \ribbon^{-1}(-) 
\ee
	(cf.\ our conventions in Section~\ref{sec:conventions}).

\medskip

We denote the category of finite dimensional weight modules of $\UQG{q}$ by
\be
\catUfd_p:=\repfdwt \UQG{q} \ .
\ee

\begin{proposition}[{\cite{Oh,CGP}}]
$\catUfd_p$ is a ribbon category.
\end{proposition}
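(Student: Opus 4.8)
The plan is to verify directly that the linear maps $R$ and $\ribbon$ given in \eqref{eq:R-H}, together with the Hopf-algebraic data of $\UQG{q}$ restricted to the subcategory $\catUfd_p$, satisfy the axioms of a ribbon category as spelled out in Section~\ref{sec:conventions}. Concretely, one must check: (i) $\catUfd_p$ is monoidal with left and right duals — but this is inherited from $\UQG{q}$ being a Hopf algebra (with the trivial associator and the usual duals built from the antipode $S$), the only point to note being that weight type is preserved under $\otimes$ and under $(-)^*$, which is immediate from $\Delta(H) = H\otimes\one+\one\otimes H$ and $S(H)=-H$ and the fact that $K=q^H$ is multiplicative; (ii) $c_{U,V}=\tau_{U,V}\circ(R.-)$ is a well-defined natural isomorphism satisfying the two hexagon identities; (iii) $\theta_U = \ribbon^{-1}(-)$ satisfies the balancing axiom \eqref{eq:balancing-def}; and (iv) the ribbon compatibility \eqref{eq:ribbontwist-def}, $(\theta_U)^* = \theta_{U^*}$, holds.

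For step (ii), the key observation is that the truncated sum $\Theta := \sum_{n=0}^{p-1}\tfrac{(q-q^{-1})^n}{[n]!}q^{\frac12 n(n-1)}E^n\otimes F^n$ is exactly the quasi-$R$-matrix of the unrolled quantum group, and $q^{\frac{H\otimes H}{2}}$ plays the role of the ``Cartan part''; on any two weight modules this reproduces, termwise, the action one would get from the Lusztig-type $R$-matrix for generic $U_q s\ell(2)$ (the truncation at $p$ being automatic because $E^p=F^p=0$). Thus the hexagon identities follow from the same computation as in the generic case — one checks $(\Delta\otimes\id)(R)=R_{13}R_{23}$ and $(\id\otimes\Delta)(R)=R_{13}R_{12}$ as identities of operators on triple tensor products of weight modules, using the defining relations \eqref{Uq-relations}, \eqref{eq:H-rel}, \eqref{EpFp-rel} and the coproduct \eqref{eq:coprod}, \eqref{eq:H-coprod}; naturality is automatic since $R$ is built from the algebra action. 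Invertibility of $c_{U,V}$ comes from invertibility of $q^{\frac{H\otimes H}{2}}$ and of $\Theta$ (the latter has the opposite-sign truncated sum as inverse, again a finite computation). For (iii) and (iv) one uses the standard relation between the ribbon element and the $R$-matrix: $\theta$ should implement the Drinfeld-element correction, and the displayed formula for $\ribbon$ is precisely $u^{-1}$ times the balancing group-like element $K^{p-1}$ built so that $\ribbon^{-1}$ acts as $q^{\frac12 H^2}$ on highest-weight vectors; the balancing axiom then reduces to the identity $M = (\ribbon\otimes\ribbon)\Delta(\ribbon)^{-1}$ for the monodromy $M=R_{21}R$ acting on weight modules, which is a termwise check, and \eqref{eq:ribbontwist-def} follows from $S(H)=-H$ making $\theta_{U^*}$ act by $q^{\frac12 H^2}$ as well.

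Since all of the above is essentially the content of \cite{Oh,CGP} (as the proposition's citation indicates), the honest form of the proof is a pointer: the braiding and balancing formulas \eqref{eq:UHq-braid-bal} are taken from those references, where the hexagon, balancing and ribbon-duality axioms are verified, and one only needs to add the trivial remarks that $\catUfd_p$ is closed under tensor products, duals, and direct summands, and that all objects are finite-dimensional so that the duals are honest duals. The main (mild) obstacle is bookkeeping: one has to be careful that $q^{\frac{H\otimes H}{2}}$ and $q^{-H^2/2}$ are genuinely well-defined operators — this needs $H$ diagonalisable, which is built into the definition of weight type — and that the finite truncation of the $R$-matrix sum is compatible with the coproduct, i.e.\ that no ``missing'' terms $E^n\otimes F^n$ with $n\ge p$ would have contributed; this is guaranteed by $E^p=F^p=0$ together with the fact that on a tensor product of two modules the components of $\Delta^{(2)}(E^n)$ and $\Delta^{(2)}(F^n)$ that survive also vanish beyond order $p$. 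Once these points are in place, the ribbon axioms reduce to the generic-$q$ computation and the proposition follows.
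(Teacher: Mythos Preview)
The paper gives no proof of this proposition at all: it is stated with the attribution \cite{Oh,CGP} and used as input. Your proposal correctly recognises this (``the honest form of the proof is a pointer'') and supplies a reasonable outline of what the cited verification amounts to; that outline is essentially the standard one and matches what is done in those references.
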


The simple modules in $\catUfd_p$ are explicitly known 
\cite[Lem.\,5.3]{CGP}.
They are of two types:
\begin{enumerate}
\item
 $p$-dimensional \textsl{typical modules} 
 $\UqgTyp{\alpha}$ for 
	$\alpha\in (\CC \setminus \ZZ)\cup p\ZZ$. 
They have a basis $\atprp_{0}, \dots, \atprp_{p-1}$ with the action
\be\label{eq:atprp}
H \atprp_{n} = \left( \alpha+p-1-2n\right)\atprp_{n} \ , 
\qquad 
E\atprp_n = [n] [n{-}\alpha]
 \atprp_{n-1}, \qquad F\atprp_n = \atprp_{n+1} \ ,
\ee
 where we set $\atprp_{-1}=\atprp_{p}=0$. For complex $z$ the $q$-number $[z]$ is evaluated by setting $q^z := e^{\frac{\pi i }{p} z}$.

 \item
The $s$-dimensional \textsl{atypical modules}
 $\repS_{s,k}$
are labelled by
pairs $(s,k)$, with $1\leq s\leq p-1$ and $k\in\ZZ$, and have the highest
$H$-weight $(s-1+kp)$, and so the $K$-weight is $(-1)^k \q^{s-1}$.
They have basis vectors $\stprp_{n}$, $0\leq n\leq s{-}1$,
where $\stprp_{0}$ is the highest-weight
vector and the left action of the algebra is
\begin{equation}\label{eq:UHq-action-S_sk}
  H \stprp_{n} =
 (s-1-2n + kp) \stprp_{n},\qquad
  E \stprp_{n} =
  (-1)^k [n][s - n]\stprp_{n - 1},\qquad
  F \stprp_{n}= \stprp_{n + 1},
\end{equation}
where we set $\stprp_{-1}
=\stprp_{s}=0$.
\end{enumerate}

\begin{figure}[tb]
\begin{center}
\begin{tikzpicture}[scale=0.70][thick,>=latex,
nom/.style={circle,draw=black!20,fill=black!20,inner sep=1pt}
]
\node(left0) at  (-1,0) [] {$\UqgStag{s,k}$:}; 
\node (top1) at (5,2.5) [] {$\repS_{s, k}$};
\node (left1) at (2.5,0) [] {$\repS_{p-s, k+1}$};
\node (right1) at (7.5,0) [] {$\repS_{p-s, k-1}$};
\node (bot1) at (5,-2.5) [] {$\repS_{s, k}$};
\draw [->] (top1) -- (left1);
\draw [->] (top1) -- (right1);
\draw [->] (left1) -- (bot1);
\draw [->] (right1) -- (bot1);
\end{tikzpicture}
\captionbox{\label{fig:Loewy_UHq} The Loewy diagram of the projective cover  $\UqgStag{s,k}$ of the simple module $\repS_{s,k}$.}{\rule{12cm}{0cm}}
\end{center}
\end{figure}

In \cite[Sec.\,6]{CGP} it is shown that these simple modules have projective covers in $\catUfd_p$. 
The modules $V_\alpha$ are themselves projective, and the Loewy diagram of the projective cover of $\repS_{s,k}$ is given in Figure~\ref{fig:Loewy_UHq}.
In particular, we get:

\begin{proposition}\label{prop:Cfdp-has-enough-projectives}
The category $\catUfd_p$ has enough projectives.
\end{proposition}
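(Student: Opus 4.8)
The statement to prove is that every object of $\catUfd_p$ receives an epimorphism from a projective object. The plan is to bootstrap this from the two facts just recalled: by \cite[Sec.\,6]{CGP} every simple object of $\catUfd_p$ admits a projective cover lying in $\catUfd_p$ (the typical modules $\UqgTyp{\alpha}$ being their own covers, and the $\UqgStag{s,k}$ covering the $\repS_{s,k}$), and every object of $\catUfd_p$ has finite composition length, since it is by definition finite-dimensional over $\CC$. As a preliminary I would record that $\catUfd_p$ is abelian: any subquotient of a finite-dimensional weight module is again of weight type, because the minimal polynomial of $H$ on a subquotient divides the squarefree minimal polynomial of $H$ on the ambient module, and the relation $K=q^H$ is inherited; hence kernels and cokernels taken in the category of $\UQG{q}$-modules stay inside $\catUfd_p$. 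Thus $\catUfd_p$ is a locally finite abelian category in which simple objects have projective covers, which is exactly the situation in which one expects ``enough projectives''.

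The core is an induction on the length $\ell(M)$ of $M\in\catUfd_p$. If $\ell(M)=1$ then $M$ is simple and the assertion is precisely the existence of its projective cover. For $\ell(M)>1$, fix a simple quotient $q\colon M\twoheadrightarrow S$ with kernel $M'$, so that $\ell(M')=\ell(M)-1$. Let $P_S\twoheadrightarrow S$ be the projective cover of $S$; by projectivity of $P_S$ this map factors through $q$, giving $\tilde{p}\colon P_S\to M$ with $q\circ\tilde{p}$ still an epimorphism. By the inductive hypothesis there is a projective $Q\in\catUfd_p$ with an epimorphism $Q\twoheadrightarrow M'$; compose it with the inclusion $M'\hookrightarrow M$. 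The resulting map $P_S\oplus Q\to M$ has image containing $M'$ and surjecting onto $M/M'=S$, hence equals $M$; and $P_S\oplus Q$ is projective, being a finite direct sum of projectives. (Alternatively, one can cover the semisimple top $M/\mathrm{rad}\,M$ by the direct sum of the projective covers of its simple summands and lift along $M\twoheadrightarrow M/\mathrm{rad}\,M$, using that $\mathrm{rad}\,M$ is nilpotent for finite-length $M$; this yields the sharper statement that every object of $\catUfd_p$ actually has a projective cover.)

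I do not anticipate a genuine obstacle: the argument uses only the universal property of projective objects and the finiteness of composition length, both already available. The one point deserving care is to verify that one never leaves $\catUfd_p$ — this is why finite-dimensionality and the weight condition must be seen to pass to submodules, quotients, and finite direct sums — so that, in contrast to the treatment of the algebra $\algC$ and its modules, where one is forced to pass to the completion $\repwt^\oplus \UQG{q}$, here everything takes place inside the finite-dimensional category and no infinite direct sums are needed.
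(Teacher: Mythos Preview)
Your argument is correct: in a finite-length abelian category where every simple has a projective cover, the induction on length (or the top/radical variant you sketch) produces a projective cover of every object. The paper itself does not spell this out; it simply writes ``In particular, we get'' after recording that \cite[Sec.\,6]{CGP} supplies projective covers of all simples, treating the passage from simples to arbitrary finite-dimensional weight modules as routine. So your proof is not a different route but rather the explicit justification of what the paper leaves implicit.
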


We write $\catU_p^\oplus$ for the completion of $\catUfd_p$ with respect to countably infinite direct sums
 and subquotients.\footnote{
 	By this we mean the smallest (i.e.\ the universal) category which contains a) $\catUfd_p$, b) countable direct sums of objects in $\catU_p^\oplus$, c) all kernels and cokernels of morphisms in $\catU_p^\oplus$ (and so in particular subobjects and their quotients, i.e.\ subquotients). Since countable sums of countable sums are again countable, one can restrict oneself to countable sums of objects in $\catUfd_p$ and their subquotiens.
} 
 To give a more direct description, let $\repwt \UQG{q}$ denote the category of weight modules which are finite dimensional or countably infinite dimensional. We have:

\begin{lemma}\label{lem:direct-sum-completion}
$\catU_p^\oplus = \repwt \UQG{q}$.
\end{lemma}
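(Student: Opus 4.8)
\textbf{Proof plan for Lemma~\ref{lem:direct-sum-completion}.}

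The plan is to prove two inclusions of full subcategories of the category of all weight modules. The inclusion $\catU_p^\oplus \subseteq \repwt \UQG{q}$ is the easy direction: by definition $\catU_p^\oplus$ is generated from $\catUfd_p$ under countable direct sums and subquotients, and both operations preserve the property of being a weight module of at most countable dimension. Indeed, a countable direct sum of finite-dimensional (hence countable-dimensional) weight modules is again a weight module of at most countable dimension, since $H$ acts diagonalisably on each summand and $K = q^H$ is inherited; and a subquotient of a countable-dimensional weight module is again such. So it remains only to check that one need not iterate: a countable direct sum of countable direct sums is a countable direct sum, and a subquotient of a subquotient is a subquotient, so the closure stabilises after these two operations, giving $\catU_p^\oplus \subseteq \repwt \UQG{q}$.

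The substantial direction is $\repwt \UQG{q} \subseteq \catU_p^\oplus$, i.e.\ every at most countably infinite dimensional weight module $M$ must be realised as a subquotient of a countable direct sum of finite-dimensional weight modules. First I would reduce to the countably infinite-dimensional case. Then the key step is to exhibit an exhausting filtration of $M$ by finite-dimensional submodules, or at least by finite-dimensional subspaces from which one can build finitely generated (hence finite-dimensional) submodules. Concretely: pick a countable basis $\{m_i\}_{i\in\NN}$ of $M$; let $M^{(n)} \subseteq M$ be the submodule generated by $m_1,\dots,m_n$. Using the PBW basis \eqref{eq:Uq-basis} together with the nilpotency relations $E^p = F^p = 0$, the only way a finitely generated weight module can fail to be finite-dimensional is through the action of $H$ (equivalently, $K$ has infinite order); but on a weight module $K = q^H$ and $H$ acts diagonalisably with the constraint that $E,F$ shift the $H$-eigenvalue by $\pm 2$, so the $H$-eigenvalues occurring in $M^{(n)}$ form finitely many cosets of $2\ZZ$ shifted by the finitely many eigenvalues of $H$ on $m_1,\dots,m_n$, each such string being finite because $E^p=F^p=0$. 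Hence each $M^{(n)}$ is finite-dimensional, and $M = \bigcup_n M^{(n)}$ with $M^{(1)} \subseteq M^{(2)} \subseteq \cdots$.

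Given this exhausting filtration by finite-dimensional submodules, the conclusion follows from a standard argument: form the countable direct sum $N := \bigoplus_{n\in\NN} M^{(n)} \in \catU_p^\oplus$ (each summand being a finite-dimensional weight module), and consider the map $N \to M$ which on the $n$-th summand is the inclusion $M^{(n)} \hookrightarrow M$. This map is surjective since the $M^{(n)}$ exhaust $M$, so $M$ is a quotient of $N$, hence $M \in \catU_p^\oplus$. The main obstacle I anticipate is the finite-dimensionality claim for the finitely generated submodules $M^{(n)}$: one must argue carefully that on a \emph{weight} module the $H$-action forces the $K$-action to have ``bounded orbits'' in the relevant sense, so that the PBW-spanned submodule generated by finitely many weight vectors is finite-dimensional — this is exactly where the weight-type hypothesis (as opposed to general $\UQG{q}$-modules, which can be genuinely infinite-dimensional even when finitely generated) is used, and it is worth spelling out via \eqref{eq:Uq-basis} and \eqref{EpFp-rel}.
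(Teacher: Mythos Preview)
Your proposal is correct and follows essentially the same approach as the paper: the key step in both is that any finitely generated weight module is finite-dimensional (via the PBW basis \eqref{eq:Uq-basis} and $E^p=F^p=0$, noting that $H^cK^k$ acts as a scalar on a homogeneous vector), after which $M$ is exhibited as a quotient of a countable direct sum of finite-dimensional weight modules. The only difference is that the paper routes the surjection through projective covers $P_k \twoheadrightarrow \langle m_k\rangle$ in $\catUfd_p$ (and checks these remain projective in $\repwt\UQG{q}$), whereas you use the finite-dimensional submodules $M^{(n)}$ themselves as summands---your version is a mild simplification, since projectivity is not actually needed for the statement of the lemma.
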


\begin{proof}
Clearly, $\catU_p^\oplus$ is a full subcategory of $\repwt \UQG{q}$. Let now $M \in \repwt \UQG{q}$ be arbitrary. We need to show $M \in  \catU_p^\oplus$. If $M$ is finite dimensional there is nothing to do. For countably-infinite dimensional $M$ we will establish the existence of finite-dimensional projective modules $P_k$, $k \in \mathbb{N}$,  and a surjection 
\be\label{eq:sum-completion-aux1}
	\bigoplus_{k \in \mathbb{N}} P_k \to M
\ee
of $\UQG{q}$-modules, proving the claim. 
To verify \eqref{eq:sum-completion-aux1} we proceed in three steps. 

\smallskip

\noindent
1.~Let $M \in \repwt \UQG{q}$ and $m \in M$. Then the submodule $\langle m \rangle \subset M$ generated by $m$ is finite-dimensional. To see this, first note that $m$ is a finite sum of $H$-homogeneous components.
 Assume now that $m$ itself is $H$-homogeneous. 
Since $\UQG{q}$ has	the PBW-type basis given in~\eqref{eq:Uq-basis},
 and since $H^c K^k$ acts on $m$ by a constant, clearly $\langle m \rangle$ is finite dimensional.

\smallskip

\noindent
2.~Let $P \in \catUfd_p$ be projective. Then $P$ is also projective in $\repwt \UQG{q}$. Indeed, let $f\colon M \to N$ be a surjection between modules in $\repwt \UQG{q}$. We may assume that $P$ is indecomposable and generated by a $H$-homogeneous element $p \in P$. Given $g\colon P \to N$, pick $m \in M$ such that $f(m) = g(p)$.
 By part 1, $\langle m  \rangle \subset M$ and $\langle g(p) \rangle \subset N$ are finite dimensional. 
 By construction, $g(P)
	= 
\langle g(p) \rangle$ and $f(\langle m \rangle) = \langle g(p) \rangle$. Since $P$ is projective for finite dimensional modules, there is an intertwiner $\tilde g\colon P \to \langle m \rangle$ such that $f \circ \tilde g = g$.

\smallskip

\noindent
3.~Let $M \in  \catU_p^\oplus$ be infinite and let $m_k$, $k \in \mathbb{N}$, be a generating set. By step 1, each $\langle m_k \rangle$ is finite dimensional. Since $\catUfd_p$ has enough projectives (Proposition~\ref{prop:Cfdp-has-enough-projectives}), there are $P_k \in \catUfd_p$ projective and surjections $P_k \to \langle m_k \rangle$ for all $k \in \mathbb{N}$.
By step 2, $\bigoplus_{k \in \mathbb{N}} P_k$ is projective in $\repwt \UQG{q}$ (direct sums of projectives are projective) and surjects onto $M$. This establishes \eqref{eq:sum-completion-aux1}.
\end{proof}

\begin{remark}~
\begin{enumerate}\setlength{\leftskip}{-1em}
\item
Lemma~\ref{lem:direct-sum-completion} would be false
 without the relations $E^p=0=F^p$, as Verma modules 
(i.e.\ the modules induced from one-dimensional representations of the sub-algebra $\langle E,K,H\rangle$)
 would be missing from the completion with respect to direct sums and subquotients.
\item 
Note that a simple $\UQG{q}$-module is a quotient of $\UQG{q}$ and hence in particular of at most countable dimension. 
As a corollary to Lemma~\ref{lem:direct-sum-completion} we see that all simple $\UQG{q}$-modules of weight type (without restriction on their dimension) are in fact finite dimensional.
\item The category $\catU_p^\oplus$ is still a $\CC$-linear abelian braided 
	 and balanced
tensor category. However, it is no longer rigid (and hence not ribbon) as infinite direct sums will have no duals.
\end{enumerate}
\end{remark}

The next proposition shows that the braiding is in a certain sense maximally non-degenerate, using the notion of transparent objects (cf.\ Section~\ref{sec:conventions}).

\begin{proposition}\label{prop:C-transp-obj}
Both $\catUfd_p$ and $\catU_p^{\oplus}$ have no non-trivial transparent objects, i.e.\ all transparent objects are isomorphic to direct sums of the tensor unit~$\one = \repS_{1,0}$.
\end{proposition}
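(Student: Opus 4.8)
The plan is to show that any transparent object $T$ (in $\catUfd_p$, and then in $\catU_p^\oplus$) must be a \emph{trivial} module, i.e.\ $E$ and $F$ act as zero, $H$ acts as zero, and $K$ acts as the identity; any weight module with this property is a (finite, resp.\ countable) direct sum of copies of $\repS_{1,0} = \one$, which is exactly the claim.

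First I would reduce to the finite-dimensional case. By naturality of the braiding, every submodule $\iota\colon T'\hookrightarrow T$ of a transparent object is again transparent: the identities $c_{U,T}\circ(\id_U\otimes\iota) = (\iota\otimes\id_U)\circ c_{U,T'}$ and its analogue for $c_{T,U}$ show that $c_{U,T'}\circ c_{T',U}$ is the restriction of $c_{U,T}\circ c_{T,U} = \id$. Since every weight module is locally finite --- every element generates a finite-dimensional submodule, cf.\ step~1 in the proof of Lemma~\ref{lem:direct-sum-completion} --- and a union of trivial submodules is trivial, it suffices to treat a transparent $T \in \catUfd_p$.

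The main step is to show that $0$ is the only $H$-weight of such a $T$. Let $\mu$ be the largest weight occurring in $T$ and $0 \neq x \in T_\mu$, so that $Ex \in T_{\mu+2} = 0$. For $\alpha \in \CC \setminus \ZZ$ let $\atprp_0 \in \UqgTyp{\alpha}$ be the highest-weight vector, which satisfies $E\atprp_0 = 0$ and has $H$-weight $\alpha+p-1$ by \eqref{eq:atprp}. Because $E$ annihilates both $x$ and $\atprp_0$, only the $n=0$ summand of the $R$-matrix \eqref{eq:R-H} contributes to $R.(x\otimes\atprp_0)$ and to $R.(\atprp_0\otimes x)$, so only the Cartan factor $q^{H\otimes H/2}$ survives; this gives
\be
	c_{\UqgTyp{\alpha},T}\circ c_{T,\UqgTyp{\alpha}}\,(x\otimes\atprp_0) \;=\; q^{\mu(\alpha+p-1)}\,(x\otimes\atprp_0) \;=\; e^{\frac{i\pi}{p}\mu(\alpha+p-1)}\,(x\otimes\atprp_0)\ .
\ee
Transparency forces this scalar to equal $1$ for every $\alpha \in \CC\setminus\ZZ$, and since $\alpha \mapsto e^{\frac{i\pi}{p}\mu\alpha}$ is a non-constant entire function unless $\mu=0$, we conclude $\mu=0$. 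Running the same computation with the smallest weight $\mu'$ of $T$, a vector $y \in T_{\mu'}$ (hence $Fy=0$) and the lowest-weight vector $\atprp_{p-1}\in\UqgTyp{\alpha}$ (hence $F\atprp_{p-1}=0$) produces the scalar $e^{\frac{i\pi}{p}\mu'(\alpha-p+1)}$ and hence $\mu'=0$. Thus $T=T_0$.

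To finish: if $T=T_0$ then $E$ maps $T$ into $T_2=0$, $F$ maps $T$ into $T_{-2}=0$, $H=0$ and $K=q^H=\id$, so $T\cong\repS_{1,0}^{\oplus\dim T}$; by the reduction step the same conclusion holds in $\catU_p^\oplus$. I expect the only mildly delicate point to be the passage from $\catU_p^\oplus$ to the finite-dimensional subcategory, which relies on local finiteness of weight modules. The real content, however, is the observation that evaluating the double braiding on a highest- or lowest-weight vector of $T\otimes\UqgTyp{\alpha}$ removes everything from the $R$-matrix except its Cartan part, so that no structure theory of the tensor products $T\otimes\UqgTyp{\alpha}$ is needed --- one only uses that the family $\{\UqgTyp{\alpha}\}_{\alpha\notin\ZZ}$ supplies weights $\alpha+p-1$ and $\alpha-p+1$ that vary continuously with $\alpha$.
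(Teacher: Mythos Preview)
Your argument is correct and uses the same core computation as the paper --- the double braiding on vectors annihilated by $E$ (resp.\ $F$) sees only the Cartan factor $q^{H\otimes H}$ --- but the overall organisation is different. The paper first shows that $\one$ is the only \emph{simple} transparent object (by the same computation on highest-weight vectors of $V_\alpha\otimes V_\beta$), then argues that all composition factors of a transparent object are transparent, hence equal to $\one$, and finally invokes the projective-cover structure (Figure~\ref{fig:Loewy_UHq}) to conclude that $\one$ has no self-extensions, so any such object is a direct sum of $\one$'s. Your route is more direct: you show straight away that a transparent $T$ is concentrated in $H$-weight $0$, which forces $E=F=H=0$ and $K=\id$ on $T$ without appealing to the self-extension argument. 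This is a small but genuine simplification.

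One cosmetic point: speaking of ``the largest weight'' presupposes the weights are real, which need not hold a priori in $\catUfd_p$. It is cleaner to say: since $T$ is finite-dimensional, within each coset $\mu+2\ZZ$ of weights there is one, call it $\mu$, with $T_{\mu+2}=0$; your computation then gives $\mu=0$, so that coset is $2\ZZ$, and the dual computation with $F$ forces the minimal element of the coset to be $0$ as well. Alternatively, just take any nonzero weight vector in $\ker E$ (which exists since $E^p=0$) and run the argument. Either way the conclusion $T=T_0$ follows.
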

\begin{proof}
We first prove the statement for $\catUfd_p$. We start by showing that
the tensor unit $\one = \repS_{1,0}$ is the only simple transparent object. To this end we compute the double braiding  for a pair of  simple modules. 
The braiding 
$\brC$ in~$\catU_p$ is defined via the $R$-matrix in~\eqref{eq:R-H}.
For a pair $V_\alpha$ and $V_\beta$ of typical simple modules the action 
of $c\circ c$ on  the highest-weight vectors $\atprp^{(\alpha)}_0\in V_\alpha$
is given by (recall~\eqref{eq:atprp}),
\be
(c\circ c)\bigl(\atprp^{(\alpha)}_0\tensor \atprp^{(\beta)}_0\bigr) =   q^{\half(\alpha+p-1)(\beta+p-1)} c \bigl( \atprp^{(\beta)}_0\tensor\atprp^{(\alpha)}_0\bigr)  = 
q^{(\alpha+p-1)(\beta+p-1)} \bigl(\atprp^{(\alpha)}_0\tensor \atprp^{(\beta)}_0\bigr) \ ,
\ee
where we used that $E \atprp^{(\alpha)}_0 = 0$.
Therefore we look for such $\alpha\in(\CC \setminus \ZZ)\cup p\ZZ$ that $q^{(\alpha+p-1)(\beta+p-1)}=1$ for all $\beta\in(\CC \setminus \ZZ)\cup p\ZZ$. 
Obviously there are no solutions among typical modules. 
Similarly we see that the only atypical simple module that is transparent to all typicals $V_\beta$ is the tensor unit $\one$, recall the action in~\eqref{eq:UHq-action-S_sk}.

Next we show that all transparent objects are isomorphic to $\one^{\oplus m}$ for some $m$. It is easy to see that if an object is transparent, so are all its subquotients. In particular, all composition factors of transparent objects are again transparent. 
But since $\one$ is the only transparent simple object, all other transparent objects must have composition series only involving $\one$'s. 
The structure of the projective cover $P_{\one}$ of $\one$ (cf.\ Figure~\ref{fig:Loewy_UHq}) shows that $\one$ has no self-extensions, and hence any object all of whose composition factors are $\one$ is isomorphic to $\one^{\oplus m}$ for some $m$.

Finally, since $\catU_p^{\oplus}$ is the completion of $\catUfd_p$ with respect to countable direct sums and subquotients, the result for $\catUfd_p$ implies that also in $\catU_p^{\oplus}$ all transparent objects are (now possibly infinite) direct sums of $\one$.
\end{proof}

\subsection{The algebra $\algC$ and its local modules}\label{sec:Lam-and-local-Uq}

Recall the definition of the category of $\mathcal{H}^{\oplus}$ from Section~\ref{sec:ex-Cgr}. We choose $r=1/p$ (cf.\ \eqref{eq:r=1/p-convention}) and define the full subcategory $\mathcal{H}^{\oplus}_{p\ZZ} \subset \mathcal{H}^\oplus$ to consist of objects whose $\CC$-grades are contained in $p\ZZ$. This is in fact a monoidal subcategory. Note that the commutative algebra $\Lambda_p$ from \eqref{eq:free-boson-extension-alg} lies in $\mathcal{H}^{\oplus}_{p\ZZ}$.

The only one-dimensional modules of $\UQG{q}$ are $\repS_{1,k}$ with $k \in \mathbb{Z}$. By \eqref{eq:UHq-action-S_sk}, $E$ and $F$ act trivially on $\repS_{1,k}$ while $H$ acts as $kp$. We thus obtain a $\CC$-linear fully faithful functor
\be
	\mathcal{J} : \mathcal{H}^{\oplus}_{p\ZZ}
	\longrightarrow \catU_p^{\oplus} \ ,
\ee
which sends a $p\ZZ$-graded vector space $U$ to the $\UQG{q}$-module with trivial $E$ and $F$ action and $H$ action given by the grading. We have:

\begin{lemma}\label{lem:H-in-Cp-embed}
The fully faithful functor $\mathcal{J}$, together with identity coherence isomorphisms, is braided monoidal.\footnote{ 
We note that for $p$ even, the functor $\mathcal{J}$ does not respect the balancing isomorphisms. Comparing the action of $\theta$ in \eqref{eq:C-graded-vec-twist} and \eqref{eq:UHq-braid-bal} on the $H$-grade $pk$ gives a sign difference by $(-1)^{(p-1)k}$. This is related to the freedom of choosing a group homomorphism $G \to \{\pm 1\}$ which multiplies quadratic form to give the balancing in a ribbon category.
For $G = \CC$ there is no non-trivial continuous such homomorphism, while for $G = \ZZ$ there is $m \mapsto (-1)^{m}$. }
\end{lemma}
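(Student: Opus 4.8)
The plan is to check directly that $\mathcal{J}$ is a monoidal functor with identity coherence data, and then that it is compatible with the braidings. Since $\mathcal{J}$ sends a $p\ZZ$-graded vector space $U$ to the $\UQG{q}$-module with the same underlying vector space, trivial $E,F$-action and $H$ acting as the grading operator $H_U$, the essential point is that the tensor product in $\catU_p^\oplus$, when restricted to such modules, reproduces exactly the tensor product of graded vector spaces: $\mathcal{J}(U)\otimes\mathcal{J}(V)$ has underlying space $U\otimes V$, and from the coproduct formulas \eqref{eq:coprod}, \eqref{eq:H-coprod} one reads off that on $\mathcal{J}(U)\otimes\mathcal{J}(V)$ the generators $E,F$ act as zero (each summand of $\Delta(E)$, $\Delta(F)$ contains a factor $E$ or $F$ acting on one tensor leg, hence kills the product of two modules with trivial $E,F$-action) while $\Delta(H)=H\otimes\one+\one\otimes H$ acts as the grading operator $H_U\otimes\id+\id\otimes H_V=H_{U\otimes V}$. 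Thus $\mathcal{J}(U)\otimes\mathcal{J}(V)=\mathcal{J}(U\otimes V)$ as objects, and the identity map is the required coherence isomorphism $\mathcal{J}_{U,V}$; similarly $\mathcal{J}$ sends the tensor unit $\CC_0$ to $\repS_{1,0}=\one$ on the nose, giving $\mathcal{J}_{\one}=\id$.

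Next I would verify the hexagon/coherence axioms for a monoidal functor. Because the associator of $\catU_p^\oplus$ on modules is induced from that of vector spaces (there is no coassociator, $\UQG{q}$ being an honest Hopf algebra), and the associator of $\mathcal{H}^\oplus$ is likewise the trivial one of graded vector spaces, all the pentagon-type diagrams for $(\mathcal{J},\id,\id)$ commute trivially. The unit axioms are immediate for the same reason, given our convention that unit isomorphisms in $\rep A$ are those of the underlying vector spaces. Full faithfulness is already asserted in the text (the functor is fully faithful by construction, since a $\UQG{q}$-module intertwiner between modules with trivial $E,F$-action and semisimple $H$ is exactly a grade-preserving linear map), so only the braided part remains.

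For the braiding compatibility I would compare $c^{\catU}_{\mathcal{J}(U),\mathcal{J}(V)}$ with $\mathcal{J}(c^{\mathcal{H}}_{U,V})=c^{\mathcal{H}}_{U,V}$ (identity coherence maps again). By \eqref{eq:UHq-braid-bal} the braiding on weight modules is $\tau\circ(R.-)$ with $R$ as in \eqref{eq:R-H}. On $\mathcal{J}(U)\otimes\mathcal{J}(V)$, every term in the sum $\sum_{n\ge 1}\cdots E^n\otimes F^n$ annihilates the module (trivial $E$-action on the first leg, say), so only the $n=0$ term survives and $R$ acts as $q^{\frac{H\otimes H}{2}}$, i.e.\ on $u\otimes v\in U_\alpha\otimes V_\beta$ as $e^{\frac{\pi i}{2p}\alpha\beta}$. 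Since we have chosen $r=1/p$, this is precisely the scalar $e^{\frac{\pi i r}{2}\alpha\beta}$ appearing in the braiding \eqref{eq:C-graded-vec-braiding} of $\mathcal{H}^\oplus$, and $\tau$ matches the flip $\tau_{U,V}$ of \eqref{vecflip}. Hence $c^{\catU}_{\mathcal{J}(U),\mathcal{J}(V)}=c^{\mathcal{H}}_{U,V}$ and the braiding square commutes, so $\mathcal{J}$ is braided monoidal.

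There is essentially no hard step here — the statement is a routine unwinding of the coproduct and $R$-matrix formulas — but the one point that deserves care is the bookkeeping of the normalisation: one must make sure that the choice $r=1/p$ fixed in \eqref{eq:r=1/p-convention} is exactly what makes the $n=0$ part of \eqref{eq:R-H} agree with \eqref{eq:C-graded-vec-braiding}, and (as the footnote already flags) one should not attempt to include the balancing in the statement, since for $p$ even the twist \eqref{eq:C-graded-vec-twist} on grade $pk$ differs from \eqref{eq:UHq-braid-bal} by the sign $(-1)^{(p-1)k}$, reflecting the choice of square root of the quadratic form needed to build a ribbon structure on $\catUfd_p$.
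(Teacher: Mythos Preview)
Your proposal is correct and follows essentially the same approach as the paper's own proof: monoidality reduces to the associator being that of vector spaces on both sides together with $\Delta(H)=H\otimes\one+\one\otimes H$, and the braiding check amounts to observing that only the $n=0$ summand of \eqref{eq:R-H} survives on modules with trivial $E,F$-action, recovering \eqref{eq:C-graded-vec-braiding} for $r=1/p$. Your write-up simply spells out these steps in more detail than the paper does.
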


\begin{proof}
Monoidality is clear as the associator is that of vector spaces on both sides, and the $H$ action on the tensor product is given by $H \otimes \id + \id \otimes H$ in both cases.
To check that $\mathcal{J}$ respects the braiding note that in \eqref{eq:R-H} only the $n=0$ summand contributes, so that the resulting formula agrees with \eqref{eq:C-graded-vec-braiding}. 
\end{proof}

Let $\algC\in\catU_p^\oplus$ denote the infinite direct sum
\begin{equation}\label{eq:algC}
	\algC = \bigoplus_{m\in\ZZ} \repS_{1,2m} = \bigoplus_{m\in\ZZ}\CC\cdot 1_{m} \ ,
\end{equation}
with $\repS_{1,2m} = \CC$ the $1$-dimensional simple $\UQG{q}$-module of $H$-eigenvalue $2pm$ from above. 
We also introduced the notation $1_{m} = \stprp_0$ for the basis $\stprp_0 \in \repS_{1,2m}$ as in \eqref{eq:UHq-action-S_sk}.

Now observe that by construction of $\mathcal{J}$ we have $\algC = \mathcal{J}(\Lambda_p)$. Since $\mathcal{J}$ is braided monoidal, this endows $\Lambda$ with the structure of a commutative algebra with a non-degenerate invariant pairing, and this structure is unique up to isomorphism by Proposition~\ref{prop:comm-alg-unique} (this uses that $\mathcal{J}$ is fully faithful). 
Explicitly, the product on $\algC$ is given by
\begin{equation}\label{eq:muC}
\mu_{\algC}: \; \algC \otimes \algC \longrightarrow \algC
\quad , \quad
1_m \otimes 1_n \longmapsto 1_{m+n} \ .
\end{equation}

The subcategory of $\catU_p^\oplus$ which admits duals is $\catUfd_p$. 
By Definition~\ref{def:fg-module}, the finitely generated $\algC$-modules are hence those 
$M \in {}_\algC\catU_p^\oplus$ for which there is a surjection of 
$\algC$-modules $\mathrm{Ind}(X) \to M$ for some $X \in \catUfd_p$. This implies the inclusion
\be\label{eq:inclusing-fingen-findim}
	{}_\algC(\catU_p^\oplus)^\mathrm{fg}
	~\subset~
	\big\{ M \in {}_\algC\catU_p^\oplus
	\,\big|\,
	M \text{ has finite-dimensional $H$-eigenspaces } 
	\big\} \ .
\ee
Note that equality does not hold here  since there are simple objects in $\catUfd_p$ for a continuous range of $H$-eigenvalues. 
Hence the RHS contains an infinite direct sum of simple 
$\algC$-modules
with the property that the $H$-eigenvalues do not differ by an integer between any pair in the sum, while the LHS does not.
But for local modules equality does hold
(compare with Proposition~\ref{prop:Lambda-loc-mod} for $r=1/p$):

\begin{proposition}\label{prop:UHq-local-Lam-modules}
Let $M \in {}_\algC\catU_p^\oplus$. We have:
\begin{enumerate}
\item
$M$ is local if and only if all its $H$-eigenvalues are in $\mathbb{Z}$.
\item
Suppose $M$ is local. Then it is finitely generated if and only if all of its $H$-eigenspaces are finite-dimensional.
\end{enumerate}
\end{proposition}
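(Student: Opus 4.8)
The plan is to imitate closely the proof of Proposition~\ref{prop:Lambda-loc-mod}, transporting it through the braided monoidal embedding $\mathcal{J}$ of Lemma~\ref{lem:H-in-Cp-embed} and using the structure theory of $\catUfd_p$. For part (1), recall that $\algC = \mathcal{J}(\Lambda_p)$ and that $\mathcal{J}$ is braided monoidal, so the braiding of $\algC$ with any $M \in {}_\algC\catU_p^\oplus$ is governed only by the $H$-gradings (the $n\ge 1$ terms in the $R$-matrix~\eqref{eq:R-H} act trivially because $E$ and $F$ act as zero on $\algC$). Thus, writing $\rho(1_m) := \rho_M(1_m \otimes -)\colon M \to M$, which is an isomorphism with inverse $\rho(1_{-m})$ and which shifts the $H$-eigenvalue by $2pm$, the locality condition $\rho_M \circ c_{M,\algC} \circ c_{\algC,M} = \rho_M$ evaluated on $1_m \otimes x$ with $Hx = \mu x$ reads $e^{\pi i m \mu} \rho(1_m)(x) = \rho(1_m)(x)$. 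Since $\rho(1_m)$ is invertible, $M$ is local iff $e^{\pi i m \mu} = 1$ for all $m \in \ZZ$ and all occurring $H$-eigenvalues $\mu$, i.e.\ iff $\mu \in 2\ZZ$ for all of them. Wait — one must be careful: the eigenvalues shift by $2pm$ under $\rho(1_m)$, so the set of eigenvalues is a union of cosets of $2p\ZZ$; the condition $e^{\pi i m \mu}=1$ for all $m$ forces $\mu \in 2\ZZ$. This gives part~(1).

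For part (2), the ``only if'' direction is immediate: if $M$ is finitely generated there is a surjection $\mathrm{Ind}(X) = \algC \otimes X \to M$ with $X \in \catUfd_p$, and $\algC \otimes X$ has finite-dimensional $H$-eigenspaces (each eigenspace is a finite sum of shifted copies of the finite-dimensional eigenspaces of $X$), hence so does its quotient $M$. The ``if'' direction is the substantive one. Suppose $M$ is local with finite-dimensional $H$-eigenspaces. By part~(1) all eigenvalues lie in $\ZZ$. The idea is to pick out a ``fundamental domain'': the action maps $\rho(1_m)$ identify the $H$-eigenspace of weight $\mu$ with that of weight $\mu + 2pm$, so $M$ is generated as an $\algC$-module by the sum of its eigenspaces for weights in a set of representatives of $\ZZ/2p\ZZ$ — but this need not be finite-dimensional, since $M$ may be supported on infinitely many cosets. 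Here is where the difference from the Cartan case appears and where the main obstacle lies: in $\catUfd_p$ there are only finitely many simple modules of any given $H$-highest-weight coset (the atypicals $\repS_{s,k}$ have highest weight $s-1+kp$, and $V_\alpha$ has a $p$-element weight string), so we must argue that a local $\algC$-module with finite-dimensional eigenspaces decomposes (or at least admits a finite-covering) into finitely generated pieces.

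Concretely, I would proceed as follows. Consider the restriction $M_{(0)}$ of $M$ to $H$-eigenvalues in some fixed finite window, say $\{0, 1, \dots, 2p-1\}$; by hypothesis this is finite-dimensional, and $\rho$ gives a surjection $\mathrm{Ind}(M_{(0)}) = \algC \otimes M_{(0)} \twoheadrightarrow M$ of $\algC$-modules (every homogeneous element of $M$ is $\rho(1_m)$ applied to something in the window, by part~(1) and invertibility of $\rho(1_m)$). It remains to check that the underlying object $M_{(0)} \in \catU_p^\oplus$ actually lies in $\catUfd_p$, i.e.\ is a genuine $\UQG{q}$-submodule and finite-dimensional — finite-dimensionality is the hypothesis, but $M_{(0)}$ is generally \emph{not} a submodule since $E, F$ move weights by $\pm 2$ and can leave the window. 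The fix is to instead take the submodule $\langle M_{(0)}\rangle$ generated by $M_{(0)}$: since $E^p = F^p = 0$, applying $E$ and $F$ to the finite-dimensional space $M_{(0)}$ can only reach weights in $\{0,\dots,2p-1\} + \{-2(p-1),\dots,2(p-1)\}$, still a finite window, hence $\langle M_{(0)}\rangle$ has finite-dimensional eigenspaces over a bounded weight range and is therefore finite-dimensional, so $\langle M_{(0)}\rangle \in \catUfd_p$. Then $\mathrm{Ind}(\langle M_{(0)}\rangle) = \algC \otimes \langle M_{(0)}\rangle \twoheadrightarrow M$ (it already surjects from the smaller generating set), exhibiting $M$ as finitely generated. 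The main obstacle is thus making the ``fundamental domain'' argument produce a \emph{finite-dimensional object with a dual} in $\catUfd_p$ rather than just a finite-dimensional graded vector space; the relations $E^p=F^p=0$ are exactly what makes this work, and I would double-check there that a bounded-weight-range module with finite-dimensional eigenspaces is genuinely finite-dimensional (only finitely many weights mod $2$ in the range, each with finite-dimensional eigenspace).
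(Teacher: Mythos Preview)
Your overall strategy matches the paper's, but there is a computational slip in part~(1). The basis element $1_m \in \algC$ from \eqref{eq:algC} has $H$-eigenvalue $2pm$, and the Cartan part of the double braiding is $q^{H\otimes H}$ with $q=e^{i\pi/p}$. On $1_m \otimes x$ with $Hx=\mu x$ this gives
\[
q^{(2pm)\mu} \;=\; e^{2\pi i m \mu},
\]
not $e^{\pi i m\mu}$. The condition $e^{2\pi i m\mu}=1$ for all $m\in\ZZ$ is equivalent to $\mu\in\ZZ$, which is exactly what the proposition asserts; your exponent would instead force $\mu\in 2\ZZ$. (You tacitly revert to $\ZZ$ at the start of part~(2), so you seem to have sensed something was off.)

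Part~(2) is essentially the paper's argument. The paper picks a set $\RS$ of representatives of $\ZZ_{2p}$ in $\ZZ$, sets $N=\bigoplus_{a\in\RS}M_a$ (finite-dimensional by hypothesis), and passes to the $\UQG{q}$-submodule $\langle N\rangle$, which remains finite-dimensional by the PBW argument in step~1 of the proof of Lemma~\ref{lem:direct-sum-completion}. Your window $\{0,\dots,2p-1\}$ is a particular such $\RS$, and your bounded-weight-range argument for $\langle M_{(0)}\rangle$ is precisely that PBW argument spelled out. Your brief worry that the fundamental domain ``need not be finite-dimensional, since $M$ may be supported on infinitely many cosets'' is unfounded: once part~(1) is fixed, all eigenvalues lie in $\ZZ$, and $\ZZ/2p\ZZ$ has exactly $2p$ cosets, so the sum over $\RS$ is a finite sum of finite-dimensional spaces.
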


\begin{proof}
\textsl{Part 1:} Recall the notation $1_k$ for homogeneous basis elements of $\algC$ from \eqref{eq:algC}. As in \eqref{eq:rho1t} we have that $\rho(1_k) : M \to M$ is invertible for each $k \in \ZZ$. 
It follows that the locality condition 
$\rho_M \circ c_{M,\algC} \circ c_{\algC,M} (1_k \otimes -) = \rho_M(1_k \otimes -)$
 for all $k\in\ZZ$ is equivalent to
\be
	c_{M,\algC} \circ c_{\algC,M} = \id_{\algC\otimes M} \ .
\ee
Let $v \in M$ be an $H$-eigenvector of eigenvalue $h$. 
As the action of $E$ and $F$ is trivial on~$\algC$, from the expression for $R$ in \eqref{eq:R-H} we see that only the Cartan part
$q^{H\otimes H}$ contributes when computing $c_{M,\algC}\circ c_{\algC,M}$.
  We find
\be
	c_{M,\algC} \circ c_{\algC,M} (1_k \otimes v)
	= q^{H \otimes H} (1_k \otimes v)
	= e^{ 2 \pi \rmi k h} 1_k \otimes v \ .
\ee
Clearly, this RHS is equal to $1_k \otimes v$ for all $k \in \ZZ$ if and only if $h \in \ZZ$, proving part 1.

\medskip

\noindent
\textsl{Part 2:} 
One direction follows from  \eqref{eq:inclusing-fingen-findim} even without using that $M$ is local. Suppose now that $M$ is local with finite-dimensional $H$-eigenspaces. By part 1, $M = \bigoplus_{k \in \ZZ} M_k$, where $M_k$ is the $H$-eigenspace for eigenvalue $k$. As in \eqref{eq:S} pick a set $S \subset \ZZ$ of representatives of $\ZZ_{2p}$ and consider the subspace $N = \bigoplus_{k \in S} M_k$ of $M$. Then $N$ is finite-dimensional,
and the $\CC$-linear map $\rho_M|_{\algC \otimes N}\colon \algC \otimes N \to M$ is surjective (as the $\rho(1_k)$ are isomorphisms). 
However, $N$ is typically not a $\UQG{q}$-module, i.e.\ not an object of 
$\catUfd_p$. 
Instead we need to replace $N$ by $\langle N \rangle$, 
the $\UQG{q}$-submodule of $M$ generated by $N$. We have seen in part 1 of the proof of Lemma~\ref{lem:direct-sum-completion} that
 $\langle N \rangle$ is finite-dimensional. Thus 
$\langle N \rangle \in \catUfd_p$ and 
 $\rho_M|_{\algC \otimes \langle N \rangle}\colon \algC \otimes \langle N \rangle \to M$ is clearly still surjective. 
\end{proof}

\section{A quasi-Hopf algebra modification of $\UresSL2$}\label{sec:qHopfUq}

In this section, we first recall the definition of the so-called restricted quantum group $\UresSL2$, where $q = e^{i\pi/p}$ is 
a primitive $2p$th root of unity.
Then we give our new quasi-Hopf algebra $\Q$, which is a slight modification of $\UresSL2$: while the algebra structure is the same, the coalgebra structure is modified by a central element and is endowed with a coassociator~$\Phi$.
In contrast to $\UresSL2$, the new quasi-Hopf algebra does possess an $R$-matrix.

\subsection{Restricted quantum group  for $sl(2)$}\label{sec:restricted-QG-noR}

The \textit{restricted}
quantum group $\UresSL2$, where $q = e^{i\pi/p}$ and $p\geq2$ is  an integer~\cite{ChPr,FGST}, is a Hopf algebra
 with generators $E$, $F$, and
$K^{\pm1}$ satisfying the defining  relations~\eqref{Uq-relations} together with
\begin{equation}\label{EpFpK2p-rel}
  E^{p}=F^{p}=0\ ,\quad K^{2p}=\one \ .
\end{equation}
The coalgebra structure and the antipode are as in~\eqref{eq:coprod}.
This defines a Hopf algebra of dimension $\mathrm{dim}\,\UresSL2 = 2p^3$.

As in Section~\ref{sec:ex-qHopf} we introduce the complete 
orthonormal set of idempotents $\e{n}$, $n\in \ZZ_{2p}$ defined by \eqref{eq:idemp-prim}	(which in this case are no longer primitive). 
They satisfy \eqref{eq:en-ortho-idem-prop}
and~\eqref{eq:K-en}:
they project onto $K$-eigenspaces, $K  \e{n} = q^{n} \e{n}$, and hence allow one to decompose $\UresSL2$-modules $M$ into $K$-eigenspaces as 
\be\label{eq:M-decomp-repeat}
M :=  \bigoplus_{n\in\ZZ_{2p}} M_n
\qquad 
\text{where} ~~ M_n := \e{n} M \ .
\ee
As in Section~\ref{sec:ex-qHopf} we have that $\e{n}m$ gives the $n$th weight component of an element $m\in M$.

Let $\repfd\, \UresSL2$ 
denote the abelian $\CC$-linear
	monoidal
category of finite-dimensional  $\UresSL2$-modules.
The category 
$\repfd\, \UresSL2$  is known to be non-braidable. Indeed,
for  $p>2$ one finds that the tensor product functor $\otimes$ is not commutative:
there are representations $U,V$ such that $U\tensor V$ is not isomorphic to $V\tensor U$ \cite{KS}. For $p=2$ the tensor product is commutative but there is no $R \in (\UresSL2)^{\otimes 2}$ which satisfies the conditions of a universal $R$-matrix~\cite{GR1}.
We  demonstrate the non-commutativity of the tensor product 	for $p>2$ in an example in Appendix~\ref{app:O}.

\subsection{Quasi-Hopf modification of $\UresSL2$}\label{sec:Q-def}
In this section we define the ribbon quasi-Hopf algebra $\Q$. We first give all its defining data and then state that this is indeed a ribbon quasi-Hopf algebra in Theorem~\ref{thm:equiv-1}. The proof is given in Section~\ref{sec:proof}.

Fix an odd integer $t$.
$\Q$ is a  quasi-Hopf algebra with the same algebra structure as $\UresSL2$, while the coalgebra structure is given by
  the modified coproduct 
\begin{align}\label{eq:cop-new}
\Ncop(E) &= E\otimes K + (\idem_0+\q^{t}\idem_1)\otimes E \ ,
\nonumber \\
\Ncop(F) &= F\otimes 1 +  (\idem_0+\q^{-t}\idem_1)K^{-1}\otimes F \ 
\\
\Ncop(K)&=K\otimes K \ ,
\nonumber
\end{align}
where we used the central idempotents
\be\label{eq:idem-def}
\idem_0 = \ffrac{\one+K^p}{2} 
\quad , \qquad \idem_1 =\one - \idem_0 \ .
\ee
In terms of the idempotents $e_n$, they are given by 
\be\label{eq:idem-e}
\idem_0 = \sum_{a \in \ZZ_p} \e{2a}\ , \qquad \idem_1 = \sum_{a \in \ZZ_p} \e{2a+1}\ .
\ee

The counit $\eps$ is the same as for $\UresSL2$.
The coproduct is non-coassociative 
in general (see, however, Remark~\ref{rem:p-odd}\,(3) below)
and is equipped with the co-associator
\begin{equation}\label{eq:Phi-t}
\Phi_{t}= \one\otimes\one\otimes\one +  \idem_1\otimes\idem_1\otimes \bigl(K^{-t}-\one\bigr)\ .
\end{equation}
The antipode is also modified by central elements:
\be\label{eq:antipode_St}
S_t(E)= -EK^{-1}(\idem_0 + q^t\idem_1) ~~ , \quad
S_t(F) = -KF (\idem_0+q^{-t}\idem_1) ~~ , \quad
S_t(K) = K^{-1}\ .
\ee
and the (co)evaluation elements are
\be
\Salpha_t = \one \quad , \qquad
\Sbeta_t = \idem_0 + K^{-t}\idem_1\ .
\ee

Let 
\be\label{eq:catQ-def}
\catQ_p \,:=\, \repfd\, \Q
\ee
denote the abelian $\CC$-linear rigid 
monoidal category of finite-dimensional  $\Q$-modules. 
In Appendix~\ref{app:O} we illustrate in an example how the modified coproduct $\Ncop$ resolves the problem with non-commutativity we saw above in $\repfd\, \UresSL2$. 
And indeed it is now possible to 
endow $\catQ_p$ with a braiding and a ribbon structure.
Let us introduce the two special elements: 
\begin{align}\label{eq:R-quasiH}
R_t &= \ffrac{1}{4p}\sum_{n=0}^{p-1} \sum_{s,r=0}^{2p-1}\ffrac{(q-q^{-1})^n}{[n]!}q^{\frac{n(n-1)}{2}-2sr}\bigl(1 +q^{t r} + q^{-t(n+s)} + q^{\half t^2 +t r -t(n+s)} \bigr) K^sE^n\otimes K^r F^n 
\ , \\
\label{eq:ribbon-quasiH}
\ribbon_t &= \ffrac{1-\rmi}{2\sqrt{p}}  \sum_{n=0}^{p-1} \sum_{j\in\ZZ_{2p}} \ffrac{(q-q^{-1})^{n}}{[n]!} 
 q^{n(j-\half) + \half(j+p+1)^2} F^nE^n K^j\ .
\end{align}
Note that $R_t$ is an element of
$\Q^{\geq0}\otimes \Q^{\leq0}$, i.e.\ each factor sits in an opposite Borel half of $\Q$. This is in contrast to the $p=2$ case in \cite{GR1}, where the coproduct was kept the same as in $\UresSL2$ at the price of a much more complicated coassociator and $R$-matrix.
The two quasi-Hopf algebras are however twist-equivalent (see also Remark~\ref{rem:p-odd}\,(2) below).
The ribbon element does not actually depend on $t$, however to emphasise that the whole ribbon structure does depend on $t$ and to distinguish it from the ribbon element $\ribbon$ of $\UQG{q}$ we use $\ribbon_t$. 

\medskip
Recall from Section~\ref{sec:Lam-and-local-Uq} the category $\fgloc{\algC}{{(\catU_{\mathit{p}}^\oplus)}}$ of finitely-generated local $\Lambda$-modules in $\catU_p^\oplus = \repwt \UQG{q}$ (cf.\ Lemma~\ref{lem:direct-sum-completion}).
By the next theorem we realise the (rather abstract) category $\fgloc{\algC}{{(\catU_{\mathit{p}}^\oplus)}}$  in terms 
of representations of the restricted quantum group.

\begin{theorem}\label{thm:equiv-1}
\mbox{}
\begin{enumerate}
\item
  $\Q$ with the data $(\cdot, \one, \Ncop, \eps, \Phi_t, R_t, \ribbon_t)$ introduced  above is a factorisable
   ribbon quasi-Hopf algebra, and two quasi-Hopf algebras for different $t$ are twist-equivalent.

\item  There is an equivalence of ribbon categories between 
	$\catQ_p=\repfd\, \Q$ 
and 
	$\fgloc{\algC}{{(\catU_{\mathit{p}}^\oplus)}}$
	where $\catU_p^\oplus = \repwt \UQG{q}$.	
\end{enumerate}
\end{theorem}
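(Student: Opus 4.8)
The plan is to mirror, in the quantum group setting, the ``transport of structure'' argument carried out for $\CC\ZZ_{2p}$ in Section~\ref{sec:ex-qHopf}, using the unrolled quantum group $\UQG{q}$ and the algebra $\algC$ in place of $\CC\ZZ$ and $\Lambda_p$. The two parts of the theorem are really established simultaneously: building the ribbon equivalence $\funF$ in part (2) is what forces the definition of $(\Ncop,\Phi_t,R_t,\ribbon_t)$, and the existence of this equivalence with a balanced-braided-monoidal $\fgloc{\algC}{(\catU_p^\oplus)}$ on the target side is what \emph{proves} part (1), by the same reasoning as Remark~\ref{rem:solve-pent-hex-autom}: once $\funF$ is a $\CC$-linear equivalence onto a ribbon category and is equipped with a multiplicative structure, the pentagon, hexagon and ribbon axioms for the transported data are automatic. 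So the real work is entirely in part (2). Factorisability in part (1) is then handled separately at the end, using the ``no non-trivial transparent objects'' criterion from Section~\ref{sec:conventions} together with Proposition~\ref{prop:C-transp-obj}.

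First I would construct the underlying $\CC$-linear equivalence $\funF\colon\repfd\,\Q \to \fgloc{\algC}{(\catU_p^\oplus)}$. As in \eqref{eq:funF-Z2p} one picks a set $\RS\subset\ZZ$ with $\pi|_\RS\colon\RS\xrightarrow{\sim}\ZZ_{2p}$ and sets $\funF(M) = \algC\otimes_\CC M$, with the $E,F$-action coming from the coproduct on $\UQG{q}$ restricted appropriately (since $E,F$ act trivially on $\algC$, only the $K$/$H$-twist enters) and the $H$-grade on $1_m\otimes\e{a}u$ defined to be $2pm+\re a_\RS$, just as in \eqref{eq:C-gr-ex_K-action_to_H-action}; the $\algC$-action is left multiplication. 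Proposition~\ref{prop:UHq-local-Lam-modules} guarantees these are finitely generated local modules. The quasi-inverse $\funG$ sends a local module $V=\bigoplus_{k,a} V_{2pk+a}$ to its ``$k=0$'' part $\bigoplus_{a\in\RS}V_a$, with $E,F$ acting as on $V$ and $K=q^H$; one checks $\funG\funF\cong\id$ and $\funF\funG\cong\id$ exactly as around \eqref{eq:GF-nat_Z2p}--\eqref{eq:H-FG-nat}, and that the equivalence is independent of $\RS$ up to natural isomorphism. The subtlety here, absent in the Cartan example, is that $E,F$ do not commute with $K$, so one must check that the $E^p=F^p=0$ and $K^{2p}=\one$ relations of $\Q$ are correctly matched against the $E^p=F^p=0$ relation of $\UQG{q}$ and the locality constraint (which forces $H$-eigenvalues into $\ZZ$, hence $K^{2p}=1$); this is where the particular form of the shifted coproduct $\Ncop$ with the idempotents $\idem_0,\idem_1$ becomes essential, because the ``branch choice'' $\re{a}_\RS$ interacts nontrivially with the action of $E,F$ on tensor products.

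Next I would equip $\funF$ with a multiplicative structure $\funF_{M,N}\colon\funF(M)\otimes_\algC\funF(N)\xrightarrow{\sim}\funF(M\otimes_{\Ncop}N)$, the tensor product on the right being formed with the \emph{modified} coproduct $\Ncop$ — this is the step flagged as more complicated than in the $\CC\ZZ_{2p}$ case. As in \eqref{eq:isotF-MN} one writes an explicit ansatz $(1_k\otimes\e au)\otimes(1_l\otimes\e bv)\mapsto(\text{sign/phase})\,\zeta_{a,b}\,1_{k+l+\kappa(a,b)}\otimes(\e au\otimes\e bv)$ with $\kappa(a,b)=(a+b-\re{a+b})/2p$ as in \eqref{eq:kappa}, shows it descends through the coequaliser defining $\otimes_\algC$ (using locality, i.e.\ the $\pm$-independence of the right action), and then imposes that it be a morphism of $\UQG{q}$-modules — not just $H$-grade-preserving and $\algC$-linear as before, but also compatible with the $E$- and $F$-action. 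This last compatibility is the genuinely new and hardest computation: it is precisely the equation that determines the parameters in $\Ncop$ (the idempotent-weighted factors $\idem_0+q^{t}\idem_1$), and one must check it holds on the PBW-type generators. Once $\funF$ is multiplicative, steps 3--5 of the plan in Section~\ref{sec:ex-qHopf} run formally: solving \eqref{eq:transport-assoc-diag} for the associator yields $\Phi_t$ of \eqref{eq:Phi-t}, solving \eqref{eq:transport-braiding-via-functorequiv} yields $R_t$ of \eqref{eq:R-quasiH}, and solving \eqref{eq:transport-twist} against the twist $\theta_U=e^{\frac{\pi i}{2p}(H_U)^2}$ on $\mathcal{H}^\oplus$ (pulled into $\catU_p^\oplus$ via Lemma~\ref{lem:H-in-Cp-embed}, combined with the ribbon element $\ribbon$ of $\UQG{q}$ on the non-Cartan directions) yields $\ribbon_t$ of \eqref{eq:ribbon-quasiH}. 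An antipode structure $(S_t,\Salpha_t,\Sbeta_t)$ exists because the target category is rigid and $\funF$ an equivalence; one then simply verifies the explicit formulas \eqref{eq:antipode_St} satisfy the quasi-Hopf antipode axioms. Finally, $t$-independence up to twist equivalence follows because varying $t$ changes only the ``branch''/normalisation data and not the resulting braided monoidal category $\fgloc{\algC}{(\catU_p^\oplus)}$, hence all the $\Q$'s are $\repfd$-equivalent and so twist-equivalent; factorisability follows since $\fgloc{\algC}{(\catU_p^\oplus)}$, being built from local $\algC$-modules over a category with no non-trivial transparent objects (Proposition~\ref{prop:C-transp-obj}), itself has no transparent objects beyond sums of the unit, which by the criterion at the end of Section~\ref{sec:conventions} is exactly factorisability of $\Q$.

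The main obstacle, as indicated, is the explicit verification that the proposed multiplicative structure $\funF_{M,N}$ intertwines the $E$- and $F$-actions when the target tensor product uses $\Ncop$ — equivalently, reverse-engineering $\Ncop$, $\idem_0$, $\idem_1$ and the coassociator $\Phi_t$ from the requirement that $\funF$ be monoidal. Everything downstream (the pentagon, hexagon, ribbon and antipode identities, and factorisability) is then either automatic by abstract nonsense or a finite check on generators.
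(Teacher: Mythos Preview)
Your overall strategy matches the paper's proof in Section~6 closely: construct the $\CC$-linear equivalence $\funF_\RS$ via induction with a chosen lift of $\ZZ_{2p}$ to $\ZZ$, equip it with a multiplicative structure, and transport the associator, braiding and ribbon twist from $\fgloc{\algC}{(\catU_p^\oplus)}$ back to $\repfd\Q$, invoking Remark~\ref{rem:solve-pent-hex-autom} to conclude the quasi-Hopf axioms hold. You correctly identify the key technical step as verifying that the $\funF_{M,N}$ intertwine the $E$- and $F$-actions (this is Lemma~6.8 in the paper), and your account of twist-equivalence for different $t$ is the same as the paper's. One minor inaccuracy: on the quasi-inverse $\funG$, the $E,F$-action on $V_{(0)}=\bigoplus_{a\in\RS}V_a$ is not literally ``as on $V$'' but must be post-composed with $\rho(1_{-\kappa_\pm(a)})$ to land back in the fundamental domain (see~\eqref{eq:EF-in-D-def}); you flag a subtlety here but do not name the fix.

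The genuine gap is your factorisability argument. You assert that $\fgloc{\algC}{(\catU_p^\oplus)}$ has no non-trivial transparent objects \emph{because} $\catU_p^\oplus$ has none (Proposition~\ref{prop:C-transp-obj}), but this implication is not automatic. Passing to local modules over a commutative algebra can in principle create transparent objects: the braiding in the local-module category is the induced $\tilde c$ of~\eqref{eq:braiding-descends}, and transparency there is tested only against other local modules, which is a weaker condition than transparency in the ambient category. For simple-current extensions by a group of invertibles in a non-degenerate category there are general results ensuring non-degeneracy of the local modules, but you would need to invoke and verify the hypotheses of one. The paper instead takes the other route available from Section~\ref{sec:conventions}: it computes $\hat{\mathcal{D}}\in\Q\otimes\Q$ directly from $\Phi_t$ and $M_t=(R_t)_{21}R_t$ (Section~6.4), rewrites $M_t$ in the idempotent basis~\eqref{eq:M-e}, and exhibits $\hat{\mathcal{D}}$ explicitly as a sum $\sum_I f_I\otimes g_I$ over two bases of $\Q$, establishing non-degeneracy by inspection. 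Either approach can be made to work, but yours as written is incomplete.
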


The proof is based on an explicit construction of the functors
in the diagram
\begin{equation}
\xymatrix@C=55pt@M=9pt@W=9pt
{
	\catQ_p
	\ar@/^2ex/[]!<-5ex,-2ex>;[r]!<-5ex,0.8ex>^{{}\qquad\qquad\funF}
	& 
	\fgloc{\algC}{{(\catU_{\mathit{p}}^\oplus)}}
	\ar@<-5pt>@/^2ex/[]!<-5ex,-2ex>;[l]!<-5ex,0.8ex>^{{}\qquad\qquad\funG}
}
\end{equation}
and is delegated to Section~\ref{sec:proof}, as it is rather long.
The proof follows steps \ref{plan-step1}--\ref{plan-step5} in Section~\ref{sec:ex-qHopf}:
we provide an explicit formulation of the equivalence $\fun$ and the coherence isomorphisms $\fun_{M,N}$
 and then verify conditions \eqref{eq:transport-assoc-diag}--\eqref{eq:transport-twist}. 
Since we know that $\fgloc{\algC}{{(\catU_{\mathit{p}}^\oplus)}}$ is ribbon, this implies at the same time that $\Phi_t$, $R_t$ and $\ribbon_t$ satisfy all conditions for $\Q$ to be a ribbon quasi-Hopf algebra
 and that $\funF$ is a ribbon equivalence,	
 cf.\ Remark~\ref{rem:solve-pent-hex-autom}.
 
\medskip
For later reference we also compute the monodromy matrix.

\begin{proposition}
The monodromy matrix $M_t$ of $\Q$ is given by
\begin{align}
  & M_t = (R_t)_{21} R_t
\nonumber\\
  &= \ffrac{1}{2p}
  \sum_{m,n=0}^{p-1}
  \sum_{i,j=0}^{2p-1}
  \ffrac{(\q - \q^{-1})^{m + n}}{[m]! [n]!}\,
  \q^{\half m(m - 1) + \half n(n - 1)}
   \q^{- m^2  + m(j-i) - i j} 
\nonumber\\
  &\hspace{4em}\times \left(
  \tfrac12(1+(-1)^{i+m}) + \tfrac12(1-(-1)^{i+m})q^{t(m-n)} 
  \right)
  K^{j} F^{m} E^{n} \tensor K^{i} E^{m} F^{n} .
\label{eq:M}
  \end{align}
\end{proposition}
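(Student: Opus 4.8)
The claim is about the element $M_t=(R_t)_{21}R_t$ of $\Q^{\otimes 2}$, and since the double braiding involves only two tensor factors no coassociator enters, so this is purely a computation of the product $(R_t)_{21}R_t$. The plan is as follows. First I would write out $(R_t)_{21}$ from \eqref{eq:R-quasiH} by exchanging the two tensor factors and relabelling the summation indices, and then multiply it by $R_t$. Each of the two tensor legs of the resulting product is a word of the form $K^a F^m K^s E^n$, respectively $K^b E^m K^r F^n$; using only the relations $E^m K^r = q^{-2mr}K^r E^m$ and $F^m K^s = q^{2ms}K^s F^m$ (equivalent to $KEK^{-1}=q^2E$, $KFK^{-1}=q^{-2}F$), these are brought into the normal-ordered form $q^{2ms}K^{a+s}F^m E^n$, respectively $q^{-2mr}K^{b+r}E^m F^n$. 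At this point one has a sum over $m,n\in\{0,\dots,p-1\}$ and over four indices in $\ZZ_{2p}$, with a $q$-power prefactor and the product of the two quartic brackets from \eqref{eq:R-quasiH}.

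Next I would reparametrise the exponents of $K$ by $j=a+s$ and $i=b+r$, so that two of the $\ZZ_{2p}$-summation variables become free and must be summed out, and expand the product of the two quartic brackets into $16$ monomials. For each monomial one collects the total coefficient of the two free summation variables and evaluates the resulting finite geometric sums over $\ZZ_{2p}$: these give $2p$ times a Kronecker delta imposing a congruence modulo $p$, and resolving that congruence leaves a residual sum over a coset whose contribution involves the sign $q^{p(\,\cdot\,)}=(-1)^{(\,\cdot\,)}$. Since $t$ is odd, the coefficient of a free variable picked up from a bracket monomial is an odd multiple of $t$ unless the choices made in $(R_t)_{21}$ and in $R_t$ ``match'', so the mismatched monomials are killed by these sums; collecting the surviving (matched) contributions, simplifying the accumulated $q$-powers to $q^{\frac12 m(m-1)+\frac12 n(n-1)-m^2+m(j-i)-ij}$, and recombining the $t$-dependent phases produces the bracket $\tfrac12(1+(-1)^{i+m})+\tfrac12(1-(-1)^{i+m})q^{t(m-n)}$ together with the overall constant $\tfrac1{2p}$, which is exactly \eqref{eq:M}.

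The main obstacle is the bookkeeping in this second step: keeping track of the many $q$-powers through normal-ordering and the $16$-term expansion, and carefully evaluating the Gauss/geometric sums over $\ZZ_{2p}$ and their cosets. In particular the congruence and coset conditions behave differently for $p$ even and $p$ odd — for $p$ even they already constrain the parity of $i+m$ relative to the bracket choice, whereas for $p$ odd the parity is produced later by the sign $q^{p(\,\cdot\,)}$ — so one must check that the bookkeeping yields the same closed formula \eqref{eq:M} in both cases. As a consistency check one can verify that the element \eqref{eq:M}, acting on a tensor product $M\otimes N$ of $\Q$-modules, reproduces the double braiding $c_{N,M}\circ c_{M,N}$ built from $R_t$ via \eqref{qHopf-cat-data}; since $\Q^{\otimes 2}$ acts faithfully on its left regular module $\Q\otimes\Q$, this characterises $M_t$ uniquely and gives an independent handle on the answer.
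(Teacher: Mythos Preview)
Your approach is correct and is essentially the same direct computation the paper carries out. The paper's bookkeeping is a bit tidier: it groups the product of the two quartic brackets as $(A+q^{-ts}B)(C+q^{-ts}D)$ with $A,B,C,D$ independent of $s$, sums over $s$ first (so the cross-terms $AD+BC$ drop out immediately since $t$ is odd), and then over $r$; organised this way no case distinction on the parity of $p$ is needed.
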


\begin{proof}
Substituting the expression for $R_t$ in \eqref{eq:R-quasiH} immediately gives
\begin{align}
	(R_t)_{21} \, R_t
	= 
	\ffrac{1}{(4p)^2}
  \sum_{m,n=0}^{p-1}
  \sum_{i,j=0}^{2p-1}
  &
  \ffrac{(\q - \q^{-1})^{m + n}}{[m]! [n]!}\,
  \q^{\half m(m - 1) + \half n(n - 1)}
\nonumber\\  
  & \hspace{2em} \times Y_{m,n,i,j} ~ K^{j} F^{m} E^{n} \tensor K^{i} E^{m} F^{n} \ ,
  \label{eq:M-proof-aux1}
\end{align}
where
\begin{align}
Y_{m,n,i,j} = 
\sum_{s,r=0}^{2p-1}
q^{2(-2r+j+m)s}
\left( AC + q^{-ts}(AD+BC) + 
q^{-2ts}BD \right) E
\end{align}
and
\begin{align}
A &= 1 +q^{t r} \ ,
&
B &= q^{-tm}\left( 1 + q^{t(\half t + r)} \right) \ ,
\nonumber\\  
C &= 1+q^{t(r-n-j)} \ ,
&
D &=  q^{ti}\left( 1 + q^{t(\half t + r - n-j)} \right) \ ,
&E &= q^{2(- ij + ri + m(j-i-r))} \ .
\end{align}
In the expression for $Y_{m,n,i,j}$ we displayed all $s$-dependent terms. To carry out the sum over~$s$ we use $\sum_{s=0}^{2p-1} q^{sx} = 2p\,\delta[x \equiv 0 \,(\mathrm{mod}\, 2p)]$, where $\delta[x \equiv y \,(\mathrm{mod}\,z)]$ denotes the Kronecker-delta modulo $z$. Since $t$ is odd, the $\delta$-function arising from the summand $q^{-ts}$ does not contribute. We arrive at
\be
Y_{m,n,i,j} = 
2p\sum_{r=0}^{2p-1}
\bigl( AC \delta[2r \equiv j+n \,(\mathrm{mod}\, p)] + 
 BD \delta[2r+t \equiv j+n \,(\mathrm{mod}\, p)] \bigr) E \ .
\ee
Note that if $r$ satisfies the condition of either $\delta$-function, the so does $r+p$. Hence terms of the form $q^{tr}\cdot (\text{coeff. indep. of $r$})$ occur twice with opposing sign and cancels. Removing these terms one finds that only $2r$ occurs in the various exponents of $q$. In particular, we can restrict the sum to  run over $r=0,\dots,p-1$ at the cost of an overall factor of 2.

Next we replace $\delta[2r \equiv j+n \,(\mathrm{mod}\, p)] = \delta[2r \equiv j+n \,(\mathrm{mod}\, 2p)]+\delta[2r \equiv j+n+p \,(\mathrm{mod}\, 2p)]$ and similarly for the other $\delta$-function. Since $t$ is odd, with the new range for $r$, the value $2r$ covers all even values in $0,1,\dots,2p-1$ while $2r+t$ covers all odd values. We can therefore replace $2r$ and $2r+t$ by a new variable $x$ with range $0,\dots,2p-1$. The overall result is
\begin{align}
Y_{m,n,i,j} &= 
4p \, q^{2(-ij+m(j-i))} \sum_{x=0}^{2p-1} q^{x(i-m)}(1+q^{t(x-n-j)})
\nonumber\\
& \hspace{7em}
\times 
\big( \delta[x \equiv j+m \,(\mathrm{mod}\, 2p)] + \delta[x \equiv j+m+p \,(\mathrm{mod}\, 2p)] \big) \ .
\end{align}
Carrying out the sum over $x$ and substituting the resulting expression for $Y_{m,n,i,j}$ into \eqref{eq:M-proof-aux1} results in \eqref{eq:M}.
\end{proof}

\begin{remark}~\label{rem:p-odd}
\begin{enumerate}\setlength{\leftskip}{-1em}
\item
We require $t$ to be odd in the definition of $\Q$, but let us nonetheless formally set $t=0$ for the moment.
In this case, the coproduct \eqref{eq:cop-new} and the antipode \eqref{eq:antipode_St} reduce to those of the restricted quantum group $\UresSL2$ from Section~\ref{sec:restricted-QG-noR}, and the coassociator and (co)evaluation elements become identities. 
The ribbon element $\ribbon_t$ in \eqref{eq:ribbon-quasiH} is the same as the one found in~\cite[Sec.\,4.6]{FGST} for $\UresSL2$ (which, however, does not allow for an $R$-matrix). 
Similarly, the monodromy matrix $M_t$ in \eqref{eq:M} reduces to the one obtained for $\UresSL2$ in~\cite[Sec.\,4.2]{FGST}.

One can also verify that the Grothendieck ring of $\repfd\Q$ is equal to that of $\repfd\UresSL2$, even though the categories themselves are not tensor equivalent (one being braided and the other not braidable).
In fact, even the decomposition of the tensor product of two simple modules into indecomposable summands is the same in both cases.
(The distinction is however in decompositions of more complicated modules as outlined in Appendix~\ref{app:O}.)

\item 
Let $\mathcal{V}_\mathrm{ev}(N)$ be the even part of the super-VOA of $N$ pairs of symplectic fermions with total central charge $c=-2N$ \cite{Abe:2005}. 
In \cite[Sec.\,3]{FGR2} 
a ribbon quasi-Hopf algebra 
$\mathsf{Q}(N,\beta)$ is given and it is conjectured that for $\beta = e^{\pi i N/4}$ there is a ribbon equivalence $\repfd\,\mathsf{Q}(N,\beta) \cong \rep \mathcal{V}_\mathrm{ev}(N)$,  
see \cite[Rem.\,6.10]{FGR2}.
Since $\TripAlg{p}$ for $p=2$ is isomorphic to $\mathcal{V}_\mathrm{ev}(N)$ for $N=1$ \cite{Kausch:1995py,Abe:2005}, 
we expect at least a twist-equivalence between $\Q$ and $\mathsf{Q}(N,\beta)$. In fact, if we choose $t=1$ we even have an isomorphism $\Q \to \mathsf{Q}(N,\beta)$ of ribbon quasi-Hopf algebras which acts on generators as 
(see \cite{FGR2} for notation)
\be
	E \mapsto \mathsf{f}^-\mathsf{K}
	~~ , \quad
	F \mapsto i \mathsf{f}^+ 
	~~ , \quad
	K \mapsto \mathsf{K} \ .
\ee
As a  consequence of this isomorphism we have that
the result~\cite[Thm.\,6.10]{FGR2} on torus mapping class group action applies to $\Q$ at $q=\mathrm{i}$: the $SL(2,\ZZ)$-action coming from pseudo-trace functions \cite{Arike:2011ab}
of $\TripAlg{2}$ agrees with the categorical one 
	 of \cite{Lyubashenko:1995,Lyubashenko:1994tm}.

\item
The quasi-Hopf algebra $\Q$ for odd values of $p$ has a particularly simple coproduct when taking the odd parameter $t$ to be equal to $p$,
\be\label{eq:cop-odd-p}
\Delta_p(E) = E\otimes K + K^p\otimes E\ , \qquad
\Delta_p(F) = F\otimes 1 +  K^{p-1}\otimes F
\ee
(so the standard coproduct is just modified by the central element $K^p$) and the co-associator~\eqref{eq:Phi-t} takes the form
\be
\Phi_p =  \one\otimes\one\otimes\one -2  \idem_1\otimes\idem_1\otimes \idem_1 \ .
\ee
It is central and corresponds to the generator of
$H^3(\mathbb{Z}_2,\CC^\times) = \langle \, [\omega] \, \rangle \cong \mathbb{Z}_2$. Here, the normalised 3-cocycle $\omega$ representing the generator has non-trivial value $\omega(1,1,1)=-1$ (we write $\mathbb{Z}_2 = \{0,1\}$ additively).
Our coassociator $\Phi_p$ is then simply
\be
\Phi_p = \sum_{i,j,k \in \mathbb{Z}_2} \omega(i,j,k) \cdot \idem_i \otimes \idem_j \otimes \idem_k \ .
\ee
\end{enumerate}
\end{remark}

\subsection{$\Q$ modified by abelian cocycles of $\ZZ_2$}

In this section we use abelian group cohomology to obtain four ribbon quasi-Hopf algebras $\Qb$ parametrised by $\zeta \in \CC$ with $\zeta^4=1$, such that $\Qb = \Q$ for $\zeta=1$. The material in this section is not used later and can be skipped. 

\medskip

The central idempotents $\idem_i$ from~\eqref{eq:idem-def} 
provide a decomposition of $\Q$ into the ideals
\begin{equation}\label{Q-decomp}
\Q = \idem_0\cdot\Q \oplus \idem_1\cdot\Q 
\end{equation}
and a corresponding $\ZZ_2$-decomposition of the category $\catQ_p$,
\be\label{eq:catQ-decomp}
\catQ_p  = \catQ^0_p  \oplus \catQ_p^1\ .
\ee
{}From the coproduct formulas
\begin{equation}\label{eq:cop-idem}
\Delta_t(\idem_0) = \idem_0\tensor\idem_0 + \idem_1\tensor\idem_1\ ,\qquad
\Delta_t(\idem_1) = \idem_0\tensor\idem_1 + \idem_1\tensor\idem_0\ 
\end{equation}
 we see that the tensor product $\tensor$ functor in $\catQ_p$ respects the $\ZZ_2$-grading~\eqref{eq:catQ-decomp}: 
 \be
 \begin{split}
 &U\tensor V\in \catQ^0_p \quad  \text{for} \quad (U,V)\in\catQ^i_p\times\catQ^i_p \ ,\\
 &U\tensor V\in \catQ^1_p \quad  \text{for} \quad (U,V)\in\catQ^i_p\times\catQ^{i+1}_p\ , 
 \end{split}
 \qquad \text{for} \; i\in\ZZ_2\ .
 \ee
 We can therefore modify the associator by   3-cocycles for $\mathbb{Z}_2$, as those satisfy the pentagon axiom. Furthermore, if one wants to take into account the braiding then one can modify  the braided monoidal structure in $\catQ_p$ by  abelian 3-cocycles
for $\mathbb{Z}_2$,	cf.\ Section~\ref{sec:ex-Cgr}.
Since we are interested in modifications up to   braided monoidal equivalences, it is enough to use classes from 
$H^3_{ab}(\mathbb{Z}_2,\CC^\times)$. 
 
{}From the classification via quadratic forms one sees that $H^3_{ab}(\mathbb{Z}_2,\CC^\times)
	\cong
\mathbb{Z}_4$.
Writing $\mathbb{Z}_2 = \{0,1\}$ additively and $\mathbb{Z}_4$ multiplicatively, a class $[\xi]\in H^3_{ab}(\mathbb{Z}_2,\CC^\times)$ is described by its representative  $\xi= (\omega,\sigma)$ where  $\sigma$ is a 2-cochain with only non-trivial value  $\sigma(1,1)=\zeta$ such that $\zeta^4=1$, and  $\omega$ is a 3-cocycle for group-cohomology such that $\omega(1,1,1)=\zeta^2$ and~$1$ else.
Multiplying the coassociator $\Phi_t$ by the 3-cocycle $\omega$,
\be
\Phi_{t,\zeta} := \Phi_t \cdot \sum_{i,j,k \in \mathbb{Z}_2} \omega(i,j,k) \cdot \idem_i \otimes \idem_j \otimes \idem_k \ ,
\ee
 and the $R$-matrix by the 2-cochain $\sigma$,
\be
R_{t,\zeta} := R_t \cdot \sum_{i,j \in \mathbb{Z}_2} \sigma(i,j) \cdot \idem_i \otimes \idem_j \ ,
\ee
we get (for a given $p\geq2$) four quasi-triangular quasi-Hopf algebras, which are not twist equivalent to each other, or four  inequivalent braided monoidal categories parametrised by the  $4$th roots of unity $\zeta$. The coproduct $\Delta_t$ stays of course non-modified.

We finally notice that the quasi-Hopf algebra $\Q$ from Remark~\ref{rem:p-odd}\,(3)
for odd values of~$p$ can be modified this way to a Hopf algebra with
\be
\Phi_{t,\pm\rmi} = \one\tensor\one\tensor \one\ ,
\ee
while having the same coproduct $\Delta_p$ in~\eqref{eq:cop-odd-p}. This also shows that $\Delta_p$ is actually co-associative (in contrast to general $\Delta_t$).

\section{Singlet and triplet vertex operator algebras}\label{sec:singlet}
In this section we discuss the singlet and triplet VOAs, and some aspects of their representation theory. We describe how the triplet VOA can be realised as a simple current extension of the singlet VOA. 
The main aim of this section is to illustrate how the passage from $\UQG{q}$ to $\Q$,
formulated in the previous section in Theorem~\ref{thm:equiv-1},
  mirrors the extension from the singlet to the triplet algebra. This relies on the conjecture 
that the representation category of the singlet VOA is
ribbon equivalent to $\catUfd_p$.

\subsection{The singlet and triplet VOAs $\SingAlg{p}$ and $\TripAlg{p}$}\label{sec:sing-trip-def}

The triplet VOA $\TripAlg{p}$ was first discussed in 
\cite{Kausch:1990vg,GK} 
and has since been extensively investigated. The singlet VOA $\SingAlg{p}$ 
and its representations were first considered in~\cite{Fl}, building on \cite{Kausch:1990vg}.
Here we will outline the definition of $\TripAlg{p}$ as a sub-VOA of a rank-one lattice VOA and that of $\SingAlg{p}$ as an invariant sub-VOA of $\TripAlg{p}$.

\medskip

The lattice construction of $\TripAlg{p}$ can be found in 
\cite{Kausch:1990vg, FHST, AM2},
here we follow \cite[Sec.\,2]{CRW}.
Let 
\be
	p \in \ZZ_{\ge 2} \ ,
\ee
set
\be\label{eq:alpha+-0_def}
\alpha_+=\sqrt{2p}
\quad , \quad
\alpha_-=-\sqrt{2/p}
\quad , \quad
\alpha_0=\alpha_++\alpha_- \ ,
\ee
and consider the lattice $L=\alpha_+\ZZ$ with dual lattice $L'=\alpha_+^{-1}\ZZ$ . Denote by $V_L$ the corresponding lattice VOA and by $\mathsf{H}$ its Heisenberg sub-VOA. We choose the Virasoro field in $\mathsf{H} \subset V_L$ such that the central charge is
\be 
c=1-6\frac{(p-1)^2}{p} \ .
\ee
With respect to this (twisted) Virasoro field the Fock module of highest-weight $\lambda$ has conformal weight
\be\label{eq:Heis_deformed-conf-weight}
h_\lambda = \tfrac12 \lambda(\lambda-\alpha_0) \ .
\ee

Let  $Q_-$  be  the zero-mode of a screening current corresponding to the highest-weight vector 
of the Fock module $\mathcal F_{\alpha_-}$ of weight $\alpha_-$. This means $Q_-$ maps the Fock module $\mathcal F_\lambda$ to $\mathcal F_{\lambda+\alpha_-}$ and the lattice VOA module $V_{L+\lambda}$ corresponding to the coset $L+\lambda \in L'/L$ to the module $V_{L+\lambda+\alpha_-}$.
 The triplet algebra is the kernel of the map $Q_-: V_L\rightarrow V_{L+\alpha_-}$ from the lattice VOA $V_L$ to its module $V_{L+\alpha_-}$ associated to the coset $L+\alpha_-$,
\be
\TripAlg{p} \,:=\,
\text{ker}\left(Q_- \colon V_L\rightarrow V_{L+\alpha_-}\right).
\ee
   The kernel of the  restriction of this map to the Heisenberg sub-VOA is the singlet algebra $\SingAlg{p}$ \cite{Ad} 
\be\label{eq:Mp-Q-}
\SingAlg{p} 
	\,:=\,
\text{ker}\left(Q_-\colon \mathcal F_0\rightarrow \mathcal F_{\alpha_-}\right).
\ee 
Thus altogether we have the inclusions
\be
\xymatrix{
   \mathsf{H} \ar@{}[r]|-*[@]{\subset}  
   & 
   V_L
   \\
   \SingAlg{p} \ar@{}[r]|-*[@]{\subset} \ar@{}[u]|-*[@]{\subset} 
   & 
   \TripAlg{p} \ar@{}[u]|-*[@]{\subset} \quad ,
}
\ee
where the Virasoro field is the same in all four VOAs.

The singlet VOA $\SingAlg{p}$ is strongly generated by the Virasoro field and one more field (hence the name singlet) of conformal dimension $2p-1$
\cite[Thm.~3.2]{Ad}. It is not $C_2$-cofinite as we will see in Section~\ref{sec:Singlet} that it has infinitely many simple modules. 
The triplet VOA $\TripAlg{p}$ is an extension of the singlet VOA $\SingAlg{p}$. It is strongly generated by the Virasoro field and three more fields (hence the name triplet)
of conformal dimension $2p-1$
\cite[Prop.\,1.3]{AM2}, see also \cite{FHST}.
The triplet VOA is $C_2$-cofinite \cite{Carqueville:2005nu,AM2}.

The lattice VOA $V_L$ carries a $gl(1)$ action
via the zero mode of the Heisenberg field,\footnote{
 $\TripAlg{p}$ actually carries an $sl(2)$-action, not only a $gl(1)$-action. This action integrates to an action of $PSL(2, \CC)$ which in turn is the full automorphism group of $\TripAlg{p}$ \cite{ALM}.}
where $u \in gl(1)$ acts on the Fock modules $\mathcal F_{\lambda} \subset V_L$  by multiplication with $u \lambda$.
By construction, only $\mathcal F_0$ is invariant under the $gl(1)$-action, so that $\SingAlg{p}$ is the invariant subalgebra:
\be\label{eq:sing-is-C*-inv-subspace}
	\SingAlg{p} = \TripAlg{p}^{gl(1)} \ .
\ee

\subsection{Modules of the singlet VOA $\SingAlg{p}$} \label{sec:Singlet}

Here we review the representation theory of $\SingAlg{p}$. 
Original works on $\SingAlg{p}$ and its modules are 
\cite{Fl,Ad,AM1, CM, CMR} and the case of $\SingAlg{2}$ is reviewed in \cite[Sec.\,3.3]{CR}.
Let $\rep \SingAlg{p}$ be the category of grading restricted generalised  $\SingAlg{p}$-modules 
(see \cite{HLZ} and \cite[Def.\,3.1]{CKM} for details 
-- in particular such modules have a direct sum decomposition into finite-dimensional generalised $L_0$-eigenspaces). 
A complete classification of simple modules in $\rep \SingAlg{p}$ is given in \cite{Ad} and we now introduce them. 

Let $\alpha_{\pm}$ and $\alpha_0$ be as in \eqref{eq:alpha+-0_def} and set 
\be
\alpha_{r, s} =\frac{1-r}{2}\alpha_+ +\frac{1-s}{2}\alpha_- \ .
\ee 
The typical $\SingAlg{p}$-modules
are denoted by $\SingTyp{\lambda}$ for real $\lambda$. 
The conformal dimension of the state of lowest conformal weight of $\SingTyp{\lambda}$ is
	given by \eqref{eq:Heis_deformed-conf-weight}.

The $\SingTyp{\lambda}$ are simple except if there is a pair of integers $(r, s)$ with $1\leq s\leq p$ such that $\lambda=\alpha_{r, s}$. In the latter instance we have the non-split short exact sequence
\begin{equation} \label{ses:singlet}
\dses{\SingAtyp{r,s}}{}{\SingTyp{\alpha_{r,s}}}{}{\SingAtyp{r+1, p-s}}\ ,
\end{equation}
with atypical simple $\SingAlg{p}$-modules
$\SingAtyp{r, s}$, see~\cite[Section\,2.3]{CRW}.
The conformal dimension of the state of lowest conformal weight of
	$\SingAtyp{r,s}$ is 	given by \eqref{eq:Heis_deformed-conf-weight}
	with $\lambda = \alpha_{r,s}$ if $r\geq 1$ and $\lambda = \alpha_{2-r,s}$ if $r\leq 0$.

\medskip

We denote by $\boxtimes$ the $P(1)$-tensor product of \cite{HLZ}, see \cite[Sec.\,3.2]{CKM} for a short review. 
The fusion rules of $\SingAlg{p}$-modules
are not known, except in some instances for $p=2$ \cite{AM3}. There has however been a computation using a conjectural Verlinde formula  \cite{CM} (see also \cite{RW1, RW2}) suggesting the following 
Grothendieck ring of  $\rep \SingAlg{p}$:
\begin{equation}\label{CM-fusion}
\begin{split}
[\SingTyp{\lambda}\fuse   \SingTyp{\mu}] &=  \sum_{\ell=0}^{p-1}[\SingTyp{\lambda+\mu+\ell\alpha_-}]\gc\\
 [\SingAtyp{r ,s} \fuse  \SingTyp{\mu}] &=
\sum_{\substack{\ell=-s+2\\ \ell+s=0\, \mathrm{mod}\, 2}}^{s}[\SingTyp{\mu-\frac{1}{2}(r\alpha_+ +\ell\alpha_- -\alpha_0)}]\gc\\
[\SingAtyp{r ,s} \fuse  \SingAtyp{r',s'}] &=
\sum_{\substack{\ell=|s-s'|+1\\ \ell+s+s'=1\, \mathrm{mod}\, 2}}^{\mathrm{min} \{ s+s'-1,p \}}[\SingAtyp{r+r'-1,\ell}] \\
& + \sum_{\substack{\ell=p+1\\ \ell+s+s'=1\, \mathrm{mod}\, 2}}^{s+s'-1}\Bigl([\SingAtyp{r+r'-2,\ell-p}]+
[\SingAtyp{r+r'-1,2p-\ell}]+ [\SingAtyp{r+r',\ell-p}]\Bigr).
\end{split}
\end{equation} 
In the case of $\SingAlg{2}$ these computations have been confirmed by determining explicit fusion rules \cite{AM3}.
The ribbon twist is not known but provided that we have a vertex tensor category structure on
$\rep \SingAlg{p}$ it is given by the action of $e^{2\pi i L_0}$ which acts as $e^{2\pi i h_\lambda}$ on $\SingTyp{\lambda}$ and as $e^{2\pi i h_{\alpha_{r, s}}}$ on $\SingAtyp{r,s}$.

It is expected that every object in $\rep \SingAlg{p}$ is of finite Jordan-H\"older length and that every finitely generated generalised grading-restricted $\SingAlg{p}$-module is $C_1$-cofinite
 (see \cite[Sec.\,6.3]{CMR}). 
If true, then by \cite{HLZ} the category  $\rep \SingAlg{p}$ can be given the structure of a vertex tensor category \cite[Thm.\,17]{CMR}. 
We would furthermore expect the category to be rigid, so that altogether:

\begin{conjecture}\label{conj:M-ribbon}
The category $\rep \SingAlg{p}$ is a rigid vertex tensor category (and so in particular a ribbon category).
 \end{conjecture}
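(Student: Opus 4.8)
The plan is to establish the two halves of the statement largely independently: first that $\rep\SingAlg{p}$ carries a vertex tensor category structure, and then that this category is rigid. For the first half I would follow the route indicated in \cite[Sec.\,6.3]{CMR}: by the theory of \cite{HLZ} (see \cite[Thm.\,17]{CMR} for the precise packaging) it is enough to check that every object of $\rep\SingAlg{p}$ has finite Jordan-H\"older length and that every finitely generated grading-restricted generalised $\SingAlg{p}$-module is $C_1$-cofinite. For $C_1$-cofiniteness I would use that $\SingAlg{p}$ is strongly generated by the Virasoro field together with a single field of conformal weight $2p-1$ \cite[Thm.\,3.2]{Ad}, and that the realisation $\SingAlg{p}=\ker(Q_-\colon\mathcal F_0\to\mathcal F_{\alpha_-})$ embeds every $\SingAlg{p}$-module into Fock-type modules of the Heisenberg VOA $\mathsf H$, for which $C_1$-cofiniteness is transparent; one then verifies that the screening kernel inherits the bound. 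Finite length would follow from the classification of simple modules in \cite{Ad} together with the fact, visible in \eqref{ses:singlet}, that the blocks are ``thin'' (the first extension groups between simple modules are at most one-dimensional), so that generalised $L_0$-eigenspaces of bounded conformal weight can only carry composition series of bounded length.

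The substantial part is rigidity, for which I would first treat the typical modules. Their intertwining operators are realised by Heisenberg vertex operators dressed with screening integrals and hence depend meromorphically on $\lambda$ (cf.\ the analyticity discussed around Remark~\ref{rem:HeisenbergVOA}); combined with the fusion rules \eqref{CM-fusion} this shows that $\SingTyp{\lambda}\boxtimes\SingTyp{\mu}$ is a direct sum of typicals for generic $\lambda,\mu$. Conformal weights \eqref{eq:Heis_deformed-conf-weight} force the only candidate dual of $\SingTyp{\lambda}$ to be its contragredient $\SingTyp{\alpha_0-\lambda}$, and one writes down explicit evaluation $\SingTyp{\alpha_0-\lambda}\boxtimes\SingTyp{\lambda}\to\SingAlg{p}$ and coevaluation $\SingAlg{p}\to\SingTyp{\lambda}\boxtimes\SingTyp{\alpha_0-\lambda}$ maps and checks the two zig-zag identities; since both sides are meromorphic in $\lambda$ this reduces to a computation at generic (semisimple) $\lambda$, where it becomes a residue calculation. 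From rigidity of the typicals one deduces, by the standard argument for rigid tensor subcategories, that $\boxtimes$ is exact whenever one tensor factor is typical, and that the rigid objects are closed under direct summands.

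To reach the atypical modules $\SingAtyp{r,s}$ I would fuse with a typical $\SingTyp{\mu}$: by \eqref{CM-fusion} the product is again a sum of typicals and hence rigid, and a non-degeneracy check of the induced pairing on lowest-weight spaces (using \eqref{eq:Heis_deformed-conf-weight}) should propagate rigidity back to $\SingAtyp{r,s}$ itself. As an alternative one can take the limit $\lambda\to\alpha_{r,s}$ in the exact sequence \eqref{ses:singlet} and track the duality morphisms of $\SingTyp{\lambda}$ through the degeneration, using that the two simple constituents $\SingAtyp{r,s}$ and $\SingAtyp{r+1,p-s}$ are interchanged by the contragredient functor. Together with the fact that every object of $\rep\SingAlg{p}$ is built from the simple modules $\SingTyp{\lambda}$ and $\SingAtyp{r,s}$ by extensions, rigidity of all simples and exactness of $\boxtimes$ then give rigidity of $\rep\SingAlg{p}$.

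I expect the atypical step to be the main obstacle. The analyticity and genericity arguments are tailored to the semisimple typical locus, whereas near the atypical points the fusion products acquire logarithmic (non-semisimple) summands whose Loewy structure must be matched against \eqref{ses:singlet} before duality data can be transported, and one must separately rule out that the candidate evaluation degenerates in the limit. A cleaner but conditional route would be to transport rigidity from the category of finite-dimensional weight modules of $\UQG{q}$, which is rigid (it is the ribbon category $\catUfd_p$), along the conjectural ribbon equivalence of Conjecture~\ref{conj:sing-QG}, or to descend it from the $C_2$-cofinite extension $\TripAlg{p}$, whose representation category is a finite tensor category and in particular rigid \cite{Carqueville:2005nu,AM2}, via $\SingAlg{p}=\TripAlg{p}^{gl(1)}$ and orbifold-type techniques; making either of these descents unconditional is itself a substantial task.
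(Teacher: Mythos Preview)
The statement you are attempting to prove is labelled \textbf{Conjecture}~\ref{conj:M-ribbon} in the paper, and the paper does \emph{not} provide a proof. Immediately before the statement, the authors explain that the vertex tensor category structure would follow from \cite{HLZ} via \cite[Thm.\,17]{CMR} \emph{if} one knew finite Jordan--H\"older length and $C_1$-cofiniteness for all relevant $\SingAlg{p}$-modules, and that rigidity is an additional expectation on top of that. These hypotheses are explicitly left open. So there is no ``paper's own proof'' to compare against; your proposal is an attempt to resolve an open problem.

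Viewed on its own merits, your outline does not close the gap. The first half simply restates the conditional implication the paper already records: you assert that finite length and $C_1$-cofiniteness ``would follow'' from the Fock-space embedding and the thinness of blocks, but neither deduction is carried out, and these are precisely the unproven inputs that make the statement a conjecture. For rigidity, the argument that duality of typicals propagates to atypicals by ``fusing with a typical and checking non-degeneracy'' is not a valid general mechanism: rigidity of $X\boxtimes Y$ for some rigid $Y$ does not by itself force rigidity of $X$, and your limit argument $\lambda\to\alpha_{r,s}$ requires controlling the degeneration of both evaluation and coevaluation through a non-semisimple wall, which you yourself flag as the main obstacle without resolving it. The two alternative routes you mention (transport along Conjecture~\ref{conj:sing-QG}, or descent from $\TripAlg{p}$) are, as you note, themselves conditional. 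In short, the proposal is a reasonable survey of strategies, but none of them is completed, and the paper treats the statement as genuinely open.
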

 
	We denote by $\repsimple\SingAlg{p}$ the smallest full subcategory of $\rep \SingAlg{p}$ which contains all simple modules and which is complete with respect to taking tensor products, finite sums and subquotients. 

\begin{remark}
The reason for introducing  $\repsimple \SingAlg{p}$ is that typical singlet algebra modules allow for self-extensions that do not have an analogue in the category of weight modules of the unrolled quantum group. 
This is similar to Fock modules of the Heisenberg VOA which allow for self-extensions in which the zero-mode of the Heisenberg field do not act semisimply, cf.\ Remark~\ref{rem:HeisenbergVOA}\,(1). 
 In \cite[Sec.\,3.1]{CMR} analogous self-extensions for modules of $\UQG{q}$ are considered, but these are no weight modules anymore. 
Another more general type of $\SingAlg{p}$-modules, so-called Whittaker modules, were investigated in \cite{T}, but these are not even $\NN$-gradable.
\end{remark}
\begin{figure}[tb]
\begin{center}
\begin{tikzpicture}[scale=0.70][thick,>=latex,
nom/.style={circle,draw=black!20,fill=black!20,inner sep=1pt}
]
\node(left0) at  (-1,0) [] {$\SingStag{r,s}$:}; 
\node (top1) at (5,2.5) [] {$\SingAtyp{r, s}$};
\node (left1) at (2.5,0) [] {$\SingAtyp{r-1, p-s}$};
\node (right1) at (7.5,0) [] {$\SingAtyp{r+1, p-s}\ $,};
\node (bot1) at (5,-2.5) [] {$\SingAtyp{r, s}$};
\draw [->] (top1) -- (left1);
\draw [->] (top1) -- (right1);
\draw [->] (left1) -- (bot1);
\draw [->] (right1) -- (bot1);

\node (top2) at (12, 2) [] {$\SingTyp{\alpha_{r-1, p-s}}$};
\node (bot2) at (12, -2) [] {$\SingTyp{\alpha_{r, s}}$};
\draw [->] (top2) -- (bot2);

\end{tikzpicture}
\captionbox{\label{fig:Loewy-Mp} 
Loewy diagram of the $\SingAlg{p}$-module $\SingStag{r,s}$ from Proposition~\ref{prop:singstag}
in terms of simple (left diagram) and typical (right diagram) composition factors. Conjecturally, $\SingStag{r,s}$ is the
projective cover of the simple module $\SingAtyp{r,s}$.
}{\rule{12cm}{0cm}}
\end{center}
\end{figure}

\begin{proposition}\label{prop:singstag}
$\SingAlg{p}$
has logarithmic modules $\SingStag{r,s}$ satisfying the non-split short-exact sequence
\be\label{eq:P-is-log-SES}
\dses{\SingTyp{\alpha_{r,s}}}{}{\SingStag{r,s}}{}{\SingTyp{\alpha_{r-1,p-s}}} \ ,
\ee
and having a non-semisimple action of the zero-mode $L_0$ of $\SingAlg{p}$.
\end{proposition}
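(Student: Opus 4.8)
The natural strategy is to obtain $\SingStag{r,s}$ by restriction from the triplet algebra, exploiting that by \eqref{eq:sing-is-C*-inv-subspace} we have $\SingAlg{p}=\TripAlg{p}^{gl(1)}$, so every $\TripAlg{p}$-module is in particular a $\SingAlg{p}$-module and the zero mode of the Heisenberg field acts on it as a $\SingAlg{p}$-module endomorphism. On any $\TripAlg{p}$-module arising from the lattice construction this zero mode has discrete, integer-spaced spectrum (up to a global shift), so such a module decomposes as a $\SingAlg{p}$-module into the direct sum of its $gl(1)$-weight components. The plan is to apply this to a projective $\TripAlg{p}$-module and to isolate one of these components.

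First I would invoke the known structure of $\rep\TripAlg{p}$: since $\TripAlg{p}$ is $C_2$-cofinite, $\rep\TripAlg{p}$ is a finite tensor category, the projective cover $\TripStag{s,k}$ of the simple module $\TripIrr{s,k}$ exists, has the four-step Loewy structure of Figure~\ref{fig:Loewy_UHq}, and carries a non-semisimple $L_0$-action with a rank-two Jordan block linking the head copy of $\TripIrr{s,k}$ to the socle copy; this is established in the literature on the staggered modules of the triplet algebra. Restricting $\TripStag{s,k}$ to $\SingAlg{p}$ and decomposing into $gl(1)$-weight components, I would then define $\SingStag{r,s}$ to be the component containing the socle. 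To pin down its composition factors I would use the branching of the simple $\TripAlg{p}$-modules into simple atypical $\SingAlg{p}$-modules $\SingAtyp{r',s'}$, which follows by comparing the (twisted) Virasoro characters of the $\SingAtyp{r',s'}$ with the lattice character of $\TripIrr{s,k}$. Collecting the four factors that land in the socle weight reproduces the left Loewy diagram of Figure~\ref{fig:Loewy-Mp}, and grouping adjacent pairs via the non-split sequence \eqref{ses:singlet} exhibits a submodule $\SingTyp{\alpha_{r,s}}$ and quotient $\SingTyp{\alpha_{r-1,p-s}}$, i.e.\ the sequence \eqref{eq:P-is-log-SES}.

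It then remains to verify that \eqref{eq:P-is-log-SES} is non-split and that $L_0$ acts non-semisimply on $\SingStag{r,s}$. Both follow because the Virasoro field is the same in $\SingAlg{p}$ and $\TripAlg{p}$: the rank-two Jordan block of $L_0$ on $\TripStag{s,k}$ connects its head and socle copies of $\TripIrr{s,k}$, which have the same $gl(1)$-weight and hence lie inside $\SingStag{r,s}$; so $L_0$ is non-semisimple there, and since $L_0$ acts semisimply on each $\SingTyp{\lambda}$ (these embed into Fock modules, on which $L_0$ is semisimple) the sequence cannot split. Indecomposability of $\SingStag{r,s}$ is then automatic, as its socle $\SingAtyp{r,s}$ is simple.

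The hard part will be the identification step: showing, without simply assuming Conjecture~\ref{conj:M-ribbon}, that the distinguished $gl(1)$-weight component of $\TripStag{s,k}$ restricted to $\SingAlg{p}$ is \emph{indecomposable} with exactly the claimed composition series, rather than splitting off simple summands. This needs control of the $\SingAlg{p}$-action through the free-field/screening realisation of the staggered triplet module, or equivalently a computation of $\mathrm{Ext}^1$ between $\SingTyp{\alpha_{r,s}}$ and $\SingTyp{\alpha_{r-1,p-s}}$. An alternative, more hands-on route to the same conclusion is to build $\SingStag{r,s}$ directly as a subquotient of a rank-one lattice VOA module equipped with the twisted Virasoro field and the screening $Q_-$, in parallel with the construction of the triplet staggered modules.
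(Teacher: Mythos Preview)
Your strategy --- restrict the triplet projective $\TripStag{r,s}$ to $\SingAlg{p}$ and take a $gl(1)$-weight component --- is the same as the paper's, and your non-splitting argument via the nilpotent part of $L_0$ is exactly what the paper uses. The paper, however, does \emph{not} work with the abstract projective cover furnished by $C_2$-cofiniteness; instead it invokes the explicit Nagatomo--Tsuchiya construction \cite[Thm.~4.2]{Nagatomo:2009xp} of $\TripStag{r,s}$ as a deformation of $\TripTyp{\alpha_{r,s}}\oplus\TripTyp{\alpha_{3-r,p-s}}$. This buys two things at once. First, the $gl(1)$-action is manifest from the construction (it is inherited from the Heisenberg zero mode on the underlying Fock space), whereas in your setup you tacitly assume such an action on an arbitrary $\TripAlg{p}$-module --- recall that $\mathsf H\not\subset\TripAlg{p}$, so this needs to be argued. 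Second, and this is the key technical point you would have to rediscover: in the deformed module the $gl(1)$-grading on the quotient piece $\TripTyp{\alpha_{3-r,p-s}}$ is \emph{shifted} by $(p-s)\alpha_-$ relative to its natural grading. Using $-\alpha_+=p\alpha_-$ one checks that $\alpha_{2k+r-1,p-s}+(p-s)\alpha_-=\alpha_{2k+r,s}$, which is precisely what makes each weight component pair $\SingTyp{\alpha_{2k+r,s}}$ (from the submodule) with $\SingTyp{\alpha_{2k+r-1,p-s}}$ (from the quotient), giving \eqref{eq:P-is-log-SES} directly.

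Your ``hard part'' therefore dissolves once the explicit construction is used: the composition series of each weight component is read off from the known $gl(1)\otimes\SingAlg{p}$-decompositions \eqref{eq:tripmoduledecomp} of $\TripTyp{\alpha_{r,s}}$ and $\TripTyp{\alpha_{3-r,p-s}}$ together with the shift, with no need for an $\mathrm{Ext}^1$ computation. Indecomposability then follows, exactly as you say, from the fact that $L_0^{\mathrm{nil}}$ commutes with the $gl(1)$-action and maps the top of $\TripStag{r,s}$ onto its socle, hence does the same inside each weight component. The ``alternative route'' you mention in your last sentence is thus in fact the route the paper takes.
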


This proposition will be proved in Section~\ref{sec:Triplet} after giving some properties of triplet algebra modules that we will need.
Conjecturally, in $\repsimple \SingAlg{p}$
the $\SingStag{r,s}$ are the projective covers of the simple atypical modules $\SingAtyp{r,s}$ (cf.\ Conjecture~\ref{conj:M-C} below).
The Loewy diagram of $\SingStag{r,s}$ is given in Figure~\ref{fig:Loewy-Mp}.

\medskip

In order to study the triplet VOA $\TripAlg{p}$ as an extension of $\SingAlg{p}$ we need to allow for infinite direct sums. Let thus $\repsimpleinfinity \SingAlg{p}$ be the completion of $\repsimple \SingAlg{p}$ with respect to countably infinite direct sums and subquotients. If Conjecture~\ref{conj:M-ribbon} is true, then this category inherits a braided tensor category structure from $\rep \SingAlg{p}$.

\subsection{Modules of the triplet VOA $\TripAlg{p}$} \label{sec:Triplet}
Similar to the singlet case by a $\TripAlg{p}$-module we mean a generalised, grading restricted $\TripAlg{p}$-module.
The triplet VOA has up to isomorphism $2p$ simple modules $\TripIrr{r, s}$ parametrised by $r=1,2$ and $1\leq s\leq p$, and only two of them are projective, those for $s=p$.
 Furthermore, for $1\leq s\leq p-1$ and $r=1,2$ there are modules $\TripTyp{\alpha_{r,s}}$ satisfying the non-split exact sequence~\cite{FHST}
\begin{equation} \label{ses:triplet}
\dses{\TripIrr{r,s}}{}{\TripTyp{\alpha_{r,s}}}{}{\TripIrr{3-r, p-s}} \ .
\end{equation}
The projective simple modules  
$\TripIrr{r, p}$ agree with $\TripTyp{\alpha_{r, p}}$.
The lowest $L_0$ degree in
	$\TripTyp{\alpha_{r,s}}$ is 
	given by \eqref{eq:Heis_deformed-conf-weight}
	with $\lambda = \alpha_{r,s}$. 
The same applies to $\TripIrr{r,s}$.

The modules $\TripTyp{\alpha_{r,s}}$ can further be extended to the projective covers $\TripStag{r, s}$ 
with non-split exact sequence
\cite{Nagatomo:2009xp} (see also the earlier development in~\cite{GK,[FFHST],Feigin:2005xs})
\begin{equation} \label{ses:triplettyp}
\dses{\TripTyp{\alpha_{r,s}}}{}{\TripStag{r,s}}{}{\TripTyp{\alpha_{3-r, p-s}}} \ .
\end{equation}
This is best pictured in terms of the Loewy diagram given in Figure~\ref{fig:Loewy-Wp}.

\begin{figure}[tb]
\begin{center}
\begin{tikzpicture}[scale=0.70][thick,>=latex,
nom/.style={circle,draw=black!20,fill=black!20,inner sep=1pt}
]
\node(left0) at  (-1,0) [] {$\TripStag{r,s}$:}; 
\node (top1) at (5,2.5) [] {$\TripIrr{r, s}$};
\node (left1) at (2.5,0) [] {$\TripIrr{3-r, p-s}$};
\node (right1) at (7.5,0) [] {$\TripIrr{3-r, p-s}\ $,};
\node (bot1) at (5,-2.5) [] {$\TripIrr{r, s}$};
\draw [->] (top1) -- (left1);
\draw [->] (top1) -- (right1);
\draw [->] (left1) -- (bot1);
\draw [->] (right1) -- (bot1);

\node (top2) at (12, 2) [] {$\TripTyp{\alpha_{3-r, p-s}}$};
\node (bot2) at (12, -2) [] {$\TripTyp{\alpha_{r, s}}$};
\draw [->] (top2) -- (bot2);

\end{tikzpicture}
\captionbox{\label{fig:Loewy-Wp} Loewy diagram of $\TripAlg{p}$-modules
$\TripStag{r,s}$ in terms of 
	simple composition factors (left) and of the $\TripTyp{\alpha_{r, s}}$ in \eqref{ses:triplet} (right).
}{\rule{12cm}{0cm}}
\end{center}
\end{figure}

Fortunately in the instance of $\rep\TripAlg{p}$ we have a rigid vertex tensor category structure, as follows from $C_2$-cofiniteness  \cite{Carqueville:2005nu,AM2} and the works \cite{H,TW}. In particular:

\begin{theorem}
$\rep\TripAlg{p}$ is a $\CC$-linear abelian ribbon category.
\end{theorem}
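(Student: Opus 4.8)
The plan is to assemble the known analytic and categorical input for $\TripAlg{p}$ and then supply the ribbon twist explicitly.

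First I would record the structural facts: $\TripAlg{p}$ is a simple, self-contragredient VOA of CFT type (it is strongly generated by the Virasoro field and three fields of positive conformal weight, and $\TripAlg{p}_0 = \CC\mathbf{1}$), and it is $C_2$-cofinite by \cite{Carqueville:2005nu,AM2}. Under these hypotheses the Huang--Lepowsky--Zhang theory \cite{HLZ} together with Huang's results \cite{H} applies: products and iterates of intertwining operators among grading-restricted generalised $\TripAlg{p}$-modules converge and satisfy the associativity and commutativity properties needed to make $\rep\TripAlg{p}$ a braided monoidal category, with tensor product the $P(z)$-tensor product of \cite{HLZ}, associator from the associativity isomorphisms of intertwining operators, and braiding $c$ from analytic continuation along the standard path. $C_2$-cofiniteness moreover gives finitely many simple modules, finite length of every module, and finite-dimensional spaces of intertwining operators, so $\rep\TripAlg{p}$ is a finite $\CC$-linear abelian category; this settles the ``$\CC$-linear abelian'' and ``braided monoidal'' parts of the statement.

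Next I would invoke rigidity. Tsuchiya--Wood \cite{TW} (see also \cite{H}), using the lattice realisation of $\TripAlg{p}$ and a detailed analysis of the fusion rules and of the socle/radical structure of the modules $\TripTyp{\alpha_{r,s}}$ and $\TripStag{r,s}$, show that every object of $\rep\TripAlg{p}$ admits a left and a right dual, given by the contragredient module equipped with the evaluation and coevaluation morphisms built from intertwining operators. Hence $\rep\TripAlg{p}$ is rigid, and in a braided setting left duals are also right duals, so it ``has duals'' in the sense of Section~\ref{sec:conventions}.

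Finally I would construct the balancing/ribbon twist directly. Define $\theta_M := e^{2\pi\mathrm{i}L_0}$ on each module $M$; since $L_0$ acts with finitely many generalised eigenvalues and finite-dimensional generalised eigenspaces, this is a well-defined module automorphism, and $\{\theta_M\}_{M\in\rep\TripAlg{p}}$ is a natural automorphism of the identity functor because every $\TripAlg{p}$-module map commutes with $L_0$. The balancing axiom $\theta_{M\otimes N} = (\theta_M\otimes\theta_N)\circ c_{N,M}\circ c_{M,N}$ is the standard compatibility of $e^{2\pi\mathrm{i}L_0}$ with the monodromy of intertwining operators (the double braiding is implemented by the loop $z\mapsto e^{2\pi\mathrm{i}}z$, which on conformal-weight-homogeneous vectors equals $e^{2\pi\mathrm{i}L_0}$ on the product corrected by the $L_0$-grades of the factors); this is a general vertex-tensor-category fact from Huang's work, which I would quote rather than reprove. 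Compatibility with duals, $(\theta_M)^* = \theta_{M^*}$, holds because taking the contragredient preserves conformal weights, so the $L_0$-action on $M^*$ is the transpose of that on $M$ and hence $e^{2\pi\mathrm{i}L_0}$ on $M^*$ is the transpose of $e^{2\pi\mathrm{i}L_0}$ on $M$. Together with rigidity this exhibits $\rep\TripAlg{p}$ as a ribbon category, completing the proof. The only genuinely hard content is the cited input --- the convergence/extension properties behind the braided tensor structure \cite{HLZ,H} and rigidity \cite{TW} --- while the verification that $e^{2\pi\mathrm{i}L_0}$ is a ribbon twist is routine once these are granted.
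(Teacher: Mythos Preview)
Your proposal is correct and follows essentially the same route as the paper: the paper does not give a proof but simply attributes the result to $C_2$-cofiniteness \cite{Carqueville:2005nu,AM2} combined with \cite{H,TW}, which is precisely the input you assemble (HLZ/Huang for the braided tensor structure, Tsuchiya--Wood for rigidity, and $e^{2\pi\mathrm{i}L_0}$ as the ribbon twist). Your write-up is a faithful expansion of those citations rather than a different argument.
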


The decomposition of the tensor products of 
simple and projective modules have been conjectured in \cite{FHST, GR2} and proved in \cite{TW}. Note that the computation of the
	decompositions
in \cite{TW} used the Nahm-Gaberdiel-Kausch algorithm 
	\cite{Nahm:1996zn,Gaberdiel:1996kx}
and it is work in progress \cite{KRi} that this agrees with the dimensions of spaces of intertwining operators
defined in the context of~\cite{HLZ}.

We note that the action of the nilpotent part of $L_0$ is always 
an intertwiner of VOA modules (since $e^{2 \pi \rmi m L_0}$ is an intertwiner for all $m \in \ZZ$).

\begin{proposition}[{\cite{AM4}}] \label{prop:triplogmod}
The modules $\TripStag{r, s}$ are logarithmic in the sense that the Virasoro-zero mode $L_0$ of $\TripAlg{p}$ does not act semi-simply
and its nilpotent part maps the top component of $\TripStag{r, s}$ onto the socle $\TripIrr{r, s}$.
\end{proposition}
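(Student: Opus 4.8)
The plan is to deduce the statement from two inputs: first, the explicit structure of the projective cover $\TripStag{r,s}$ as encoded in the Loewy diagram of Figure~\ref{fig:Loewy-Wp}, together with the short exact sequences \eqref{ses:triplet} and \eqref{ses:triplettyp}; and second, the general principle that the nilpotent part $L_0^{\mathrm{nil}}$ of the Virasoro zero-mode is a $\TripAlg{p}$-module endomorphism of any object of $\rep\TripAlg{p}$ (this is the remark just above the proposition: $e^{2\pi\rmi m L_0}$ is an intertwiner for all $m\in\ZZ$, hence so is its derivative $2\pi\rmi\, L_0^{\mathrm{nil}}$ obtained by differentiating at $m=0$ on each generalized eigenspace). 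So $L_0^{\mathrm{nil}} \in \mathrm{End}_{\TripAlg{p}}(\TripStag{r,s})$ is a nilpotent endomorphism, and the whole content is to identify its image.

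First I would recall from the structure of $\TripStag{r,s}$ that it has Loewy length $3$, with head $\TripIrr{r,s}$, socle $\TripIrr{r,s}$, and semisimple middle layer $\TripIrr{3-r,p-s}^{\oplus 2}$; equivalently, from \eqref{ses:triplettyp} it is a non-split extension of $\TripTyp{\alpha_{3-r,p-s}}$ by $\TripTyp{\alpha_{r,s}}$, each of the latter being a non-split self-... (more precisely length-two) module by \eqref{ses:triplet}. Since the conformal weights of the two copies of $\TripIrr{r,s}$ — the head and the socle — differ by a positive integer (this is exactly the ``logarithmic'' feature, visible from $h_{\alpha_{r,s}}$ versus the weight of the socle state in \eqref{ses:triplet}, \eqref{ses:triplettyp}), the endomorphism $L_0^{\mathrm{nil}}$ is forced to be nonzero: if it vanished, $L_0$ would act semisimply, but then $\TripStag{r,s}$ would decompose as a direct sum of generalized $L_0$-eigenspaces each of which is an honest $L_0$-eigenspace, contradicting the non-splitness of the extensions since the two $\TripIrr{r,s}$-constituents sit in different $L_0$-eigenvalues and the module is indecomposable with $\mathrm{End}$ being a local ring. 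Hence $L_0$ does not act semisimply.

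Next I would pin down the image. Because $\TripStag{r,s}$ is indecomposable and projective, $\mathrm{End}_{\TripAlg{p}}(\TripStag{r,s})$ is a local finite-dimensional algebra; by the general theory of projective covers in a finite tensor category one knows $\dim \mathrm{Hom}(\TripStag{r,s},\TripStag{r,s})$ equals the multiplicity of $\TripIrr{r,s}$ in $\TripStag{r,s}$, which is $2$, so $\mathrm{End}(\TripStag{r,s}) \cong \CC[x]/(x^2)$, with the maximal ideal spanned by the unique (up to scalar) nilpotent endomorphism. That nilpotent endomorphism factors as $\TripStag{r,s}\twoheadrightarrow\TripIrr{r,s}$ (projection to the head) followed by $\TripIrr{r,s}\hookrightarrow\TripStag{r,s}$ (inclusion of the socle), so its image is precisely the socle $\TripIrr{r,s}$ and its kernel is the radical. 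Since $L_0^{\mathrm{nil}}$ is a nonzero element of this maximal ideal, it is a nonzero scalar multiple of this endomorphism, and therefore its image is exactly $\TripIrr{r,s}\subset\TripStag{r,s}$, i.e.\ it maps the top component onto the socle, as claimed.

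The main obstacle I anticipate is making the second step fully rigorous without circularity: one must know independently that $\mathrm{End}(\TripStag{r,s})$ is two-dimensional, which rests on $\TripStag{r,s}$ being the projective cover of $\TripIrr{r,s}$ with the stated composition factors — facts imported from \cite{Nagatomo:2009xp, TW} and the established rigid vertex tensor structure on $\rep\TripAlg{p}$. Given that input the argument is essentially formal, so the real work is just citing the structural results correctly; the conformal-weight bookkeeping needed to see that the two copies of $\TripIrr{r,s}$ lie in distinct $L_0$-eigenvalues (hence that $L_0^{\mathrm{nil}}\neq 0$) is a short computation with \eqref{eq:Heis_deformed-conf-weight} and the weight data recorded after \eqref{ses:triplet}–\eqref{ses:triplettyp}.
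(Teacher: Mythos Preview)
Your step~3 is correct and clean: once $L_0^{\mathrm{nil}}\neq 0$ is known, the computation $\dim\mathrm{End}_{\TripAlg{p}}(\TripStag{r,s})=[\TripStag{r,s}:\TripIrr{r,s}]=2$ forces $L_0^{\mathrm{nil}}$ to be a nonzero scalar multiple of the unique nilpotent endomorphism, whose image is the socle. But step~2 contains a genuine gap. The two copies of $\TripIrr{r,s}$ in the head and the socle do \emph{not} sit at different $L_0$-eigenvalues: since the projection to the head and the inclusion of the socle are module maps (hence intertwine $L_0$), both lowest-weight vectors lie in the generalised eigenspace for the \emph{same} value $h_{\alpha_{r,s}}$. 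The logarithmic phenomenon is precisely that they form a single rank-two Jordan block, not that they are separated in grade. Moreover, even if $L_0$ did have distinct eigenvalues on different constituents, a semisimple $L_0$-action would only give a vector-space splitting into eigenspaces; these are not $\TripAlg{p}$-submodules, so no contradiction with indecomposability follows.

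The upshot is that $L_0^{\mathrm{nil}}\neq 0$ is the actual analytic content of the proposition and cannot be read off from the Loewy diagram: the composition series and extension data do not by themselves determine whether $L_0$ has a Jordan block. The paper does not attempt an abstract argument here; it cites the explicit construction in \cite{AM4} for $r=1$, which exhibits the Jordan block directly, and then reduces $r=2$ to $r=1$ by tensoring with the simple current $\TripIrr{2,1}$, using that the balancing axiom~\eqref{eq:balancing-def} together with \cite[Lem.\,2.13]{CKL} guarantees non-semisimplicity of the ribbon twist (equivalently of $L_0$) is preserved under tensoring with a simple current. Your endomorphism-ring argument would then supply the ``image equals socle'' refinement cleanly, but you still need the input $L_0^{\mathrm{nil}}\neq 0$ from \cite{AM4}.
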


\begin{proof}
For $r=1$ this has been proven in \cite[Cor.\,3.1]{AM4}. By \cite[Rem.\,3.1]{AM4} the case $r=2$ is related to $r=1$ by tensoring with the simple current $\TripIrr{2, 1}$. The non-semi-simple action of $L_0$ on an object of a vertex tensor category is equivalent to a non-semi-simple ribbon twist on that object. But by \cite[Lem.\,2.13]{CKL} together with the balancing axiom 
	\eqref{eq:balancing-def}, tensoring with
a simple current preserves this property, i.e.\ if the ribbon twist $\theta_X$ on the object $X$ is not semi-simple then the same is true for $\theta_{J \boxtimes X}$ with $J$ a simple current.
\end{proof}

Recall from Section~\ref{sec:sing-trip-def} that $\TripAlg{p}$ carries a $gl(1)$-action, and that $\SingAlg{p}$ is the 
invariant subalgebra of $\TripAlg{p}$, 
see \eqref{eq:sing-is-C*-inv-subspace}.
	For $\lambda \in L'$ 
we denote the one-dimensional representation 
$\rho_\lambda\colon gl(1) \rightarrow \text{End}(\CC)$, $u\mapsto u \lambda$ by $\CC_\lambda$. 
The full decomposition of $\TripAlg{p}$ and its modules into $gl(1)$-modules and modules over the subalgebra
 $\SingAlg{p}$ follows from the decomposition of  $\TripIrr{r, s}$ into bimodules over $sl(2)$ 
 and Virasoro VOA  as given in~\cite[Lem.\,3.5.2]{FGST3} and \cite[Sec.\,4]{ALM} together with  the decomposition of
$\SingAlg{p}$-modules into Virasoro 
modules~\cite[Eqs.\,(2.35)--(2.36)]{CRW}.
The resulting decompositions are:
\be \label{eq:tripassing}
\TripAlg{p} \cong \bigoplus_{k\in\ZZ} \CC_{\alpha_{2k+1,1}} \otimes \SingAtyp{2k+1, 1} \ .
\ee
Here we see the necessity of introducing 
$\repsimpleinfinity \SingAlg{p}$:  
$\TripAlg{p}$-modules viewed as $\SingAlg{p}$-modules are objects in this bigger category. 
All other modules that can be realised as submodules of representations of $V_{\sqrt{2p}\ZZ}$ have a similar $gl(1) \otimes \SingAlg{p}$-decomposition:
\be\label{eq:tripmoduledecomp}
\TripIrr{r, s} \cong \bigoplus_{k\in\ZZ} \CC_{\alpha_{2k+r,s}} \otimes \SingAtyp{2k+r, s} ~~, \qquad
\TripTyp{\alpha_{r, s}} \cong \bigoplus_{k\in\ZZ} \CC_{\alpha_{2k+r,s}} \otimes \SingTyp{\alpha_{2k+r, s}} \ .
\ee

Combining the conjectured fusion rules \eqref{CM-fusion} with the decomposition \eqref{eq:tripassing} suggests that $\TripAlg{p}$ is a simple current extension of infinite order of $\SingAlg{p}$. 
This is indeed true if Conjecture~\ref{conj:M-ribbon} 
is correct  \cite{CaM, CKLR}. 	The next theorem describes VOA extensions in terms of representations in general.

\begin{theorem}[{\cite{HKL,CKM}}] \label{thm:VOA-alg-in-cat-corr}
Let $V$ be a VOA and $\mathcal C$ a vertex tensor category of
 $V$-modules. Let $A \in \mathcal C$.
\begin{enumerate}
\item
There is a one-to-one correspondence between VOA structures on $A$ extending that of $V$ and structures of a commutative, associative algebra on $A$ with trivial action of the ribbon twist and injective unit.
\item 
${}_A\mathcal C^{\text{loc}}$ is $\CC$-linear braided monoidal equivalent to the vertex tensor category of representations of the VOA $A$ that lie in $\mathcal C$
as $V$-modules.
\end{enumerate}
\end{theorem}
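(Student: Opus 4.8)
The plan is to read both statements off the Huang--Lepowsky--Zhang dictionary between intertwining operators among $V$-modules and morphisms in the vertex tensor category $\mathcal C$. Recall that for $X,Y,Z\in\mathcal C$ the space of intertwining operators of type $\binom{Z}{X\,Y}$ is canonically identified with $\mathrm{Hom}_{\mathcal C}(X\boxtimes Y,Z)$, where $\boxtimes$ is the $P(1)$-tensor product, in such a way that the defining axioms of intertwining operators correspond to the universal property of $\boxtimes$. First I would use this to define the comparison functors in each direction. From a VOA extension $V\subset A$ the vertex operator $Y_A$ restricted to $A\otimes A$ is an intertwining operator of type $\binom{A}{A\,A}$, hence gives a morphism $\mu\colon A\boxtimes A\to A$, and the inclusion of the vacuum gives $\eta\colon\one=V\hookrightarrow A$; conversely $Y_A$ is reconstructed from $(A,\mu,\eta)$ by combining the $V$-action on $A$ with the intertwining operator corresponding to $\mu$.

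For part (1), the work is to check that the VOA axioms for $A$ are equivalent to $(A,\mu,\eta)$ being a commutative associative unital algebra with $\theta_A=\mathrm{id}$ and $\eta$ injective. Unitality of $\mu$ is the creation/vacuum property of $Y_A$; associativity of $\mu$ is the ``associativity of intertwining operators'' transported through the associativity isomorphisms of $\mathcal C$; commutativity $\mu\circ c_{A,A}=\mu$ is the skew-symmetry $Y_A(a,x)b=e^{xL_{-1}}Y_A(b,-x)a$, which under the dictionary is precisely the braiding $c_{A,A}$ (one has to track that the translation $e^{xL_{-1}}$ together with the chosen branch assembles into $c$ and not $c^{-1}$). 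Triviality of the twist $\theta_A=e^{2\pi iL_0}$ is equivalent to $A$ being graded by $\ZZ$ rather than $\CC$, which is exactly the requirement that $Y_A(a,x)b\in A((x))$ has integral powers of $x$; and $\eta$ injective is the statement that $V$ sits inside $A$ as a sub-VOA. Running the equivalence backwards, from such an algebra one verifies the Jacobi identity for $Y_A$ by splitting it into its commutativity and its associativity halves and invoking the hexagon and pentagon for $\mathcal C$.

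For part (2), the same dictionary applied to a module vertex operator $Y_M\colon A\otimes M\to M$ gives $\rho\colon A\boxtimes M\to M$, and the axioms of an $A$-module in the VOA sense become the associativity and unit axioms of $\rho$ in $\mathcal C$. The key additional point is that $M$ being a genuine module over the \emph{extended} VOA $A$ --- i.e.\ $Y_M(a,x)u\in M((x))$ with integral powers of $x$ --- is equivalent to the locality condition $\rho\circ c_{M,A}\circ c_{A,M}=\rho$ of Definition~\ref{def:local-module}: both express single-valuedness of the $A$-action, and the monodromy endomorphism of $A\boxtimes M$ is exactly the obstruction to it. This gives functors between the two module categories; they are mutually inverse because on objects they merely reinterpret the same data, and on morphisms both notions of $A$-module map are a morphism of $\mathcal C$ commuting with $\rho$. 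To upgrade to a braided monoidal equivalence, identify the $P(z)$-tensor product of two $A$-modules over $A$ (built as an appropriate quotient) with $\otimes_A$, i.e.\ the coequaliser \eqref{eq:tensor-bimod-def}, and identify the braiding of the VOA module category with the induced braiding $\tilde c_{M,N}$ of \eqref{eq:braiding-descends}; the associativity and unit coherence isomorphisms then match by naturality of the dictionary.

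The hard part will be the analytic substance hidden in the phrase ``associativity of intertwining operators'': proving convergence of products and iterates of intertwining operators on appropriate domains and their analytic continuation, which is what makes the $P(z)$-tensor products and their associativity isomorphisms exist, is the deep technical input (the Huang--Lepowsky--Zhang convergence-and-extension theory, which also uses the running hypothesis that $\mathcal C$ is a vertex tensor category) and is not something one recovers from formal manipulations. A second, more bookkeeping obstacle is keeping the passage between formal and complex variables with chosen branch cuts consistent, so that VOA skew-symmetry really reproduces the categorical braiding and ``integral powers of $x$'' is matched exactly with ``trivial twist'', respectively ``local module''. Once these are in place, the theorem is a translation of axioms.
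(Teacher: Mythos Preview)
The paper does not give its own proof of this theorem: it is quoted as a result from \cite{HKL} (for part~1) and \cite{CKM} (for part~2), and is used as a black box in the subsequent argument (Proposition~\ref{prop:RepWp-local-fg-A-modules} and Corollary~\ref{cor:main}). Your sketch is a faithful outline of the strategy actually employed in those references---translating VOA data into categorical data via the Huang--Lepowsky--Zhang identification of intertwining-operator spaces with $\mathrm{Hom}_{\mathcal C}(X\boxtimes Y,Z)$, and then matching axioms---so there is nothing to compare against in this paper.

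One small caveat: your identification of ``$\theta_A=\mathrm{id}$'' with ``integral $L_0$-grading'' is necessary but not the full story; the condition also enters in the translation between the formal-variable and complex-variable braidings (so that skew-symmetry yields $\mu\circ c_{A,A}=\mu$ rather than a twisted version), and you should be careful that trivial twist is genuinely an axiom imposed on $A$, not a consequence of the other algebra axioms. Otherwise your summary of where the analytic work lies (convergence and extension of products of intertwining operators) and where the bookkeeping lies (branch choices) is accurate.
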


Let $\mathcal C^{\oplus}$ be the completion of $\mathcal C$
with respect to countably infinite direct sums and subquotients. 
If $\mathcal C$ is a vertex tensor category, 
then $\mathcal C^{\oplus}$ inherits a vertex tensor category structure from $\mathcal C$. 
 Let $J$ be an invertible object of infinite order such that 
\be\label{eq:A-simple-current-infinite}
 A :=
 \bigoplus_{n\in\mathbb Z} J^{\otimes n}
\ee
 is a commutative, associative algebra object in $\mathcal C^{\oplus}$ with trivial action of the ribbon twist. Our example is
	$A\cong \TripAlg{p}$ 
and $J=\SingAtyp{3, 1}$.

Let $\mathcal{C}_J$ be the full subcategory of all objects in $\mathcal{C}$ that have trivial monodromy with~$J$ (or, equivalently, with $A$). 
	The so-called monodromy charge~\cite{Schellekens:1990xy} 
provides an easy criterion whether a simple object $X \in \mathcal{C}$ lies in $\mathcal{C}_J$, namely
\be\label{eq:XinCJ-crit}
	X \in \mathcal{C}_J
	\quad \Leftrightarrow \quad
	h_{J\fuse X} - h_X \in \ZZ \ , 
\ee
where $h$ denotes the conformal dimension, see e.g.\ \cite{tft3}
(recall that $A$ is assumed to have trivial ribbon twist, and hence so does $J$, that is, $h_J \in \mathbb{Z}$).
This criterion even applies more generally to indecomposable objects $X \in \mathcal{C}$ which are subquotients of a tensor product of several simple objects \cite{CKL}. By Lemma~\ref{lem:local-vs-induction} and Corollary~\ref{cor:fglocal-vs-induction}, induction by $A$ provides a braided monoidal functor 
\be\label{eq:induction-to-local}
	\mathcal{C}_J \longrightarrow \fgloc{A}{(\mathcal{C}^\oplus)} \ .
\ee

\begin{proposition}\label{prop:RepWp-local-fg-A-modules}
Suppose Conjecture~\ref{conj:M-ribbon} and the fusion rules \eqref{CM-fusion} are correct. 
Let  $\mathcal{C} = \repsimple \SingAlg{p}$ and
$A = \TripAlg{p}$. 
Then
\be\label{eq:RepWp-local-fg-A-modules}
\rep\TripAlg{p} \cong \fgloc{A}{(\mathcal{C}^\oplus)}
\ee
as ribbon categories.
\end{proposition}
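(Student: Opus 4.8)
The plan is to derive the equivalence from the general VOA--extension result, Theorem~\ref{thm:VOA-alg-in-cat-corr}, applied to the simple current extension $\SingAlg{p}\subset\TripAlg{p}$, and then to cut the resulting equivalence down from \emph{all} local modules to the finitely generated ones. Set $\mathcal{C}=\repsimple\SingAlg{p}$, $\mathcal{C}^\oplus=\repsimpleinfinity\SingAlg{p}$, $A=\TripAlg{p}$ and $J=\SingAtyp{3,1}$. From \eqref{eq:tripassing} and the fusion rules \eqref{CM-fusion} one reads off $\SingAtyp{2k+1,1}\cong J^{\fuse k}$, so that as a $\SingAlg{p}$-module $A\cong\bigoplus_{n\in\ZZ}J^{\fuse n}$ is of the shape \eqref{eq:A-simple-current-infinite}. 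Under Conjecture~\ref{conj:M-ribbon}, $\mathcal{C}$ and hence $\mathcal{C}^\oplus$ is a rigid vertex tensor category and $\SingAlg{p}\subset\TripAlg{p}$ is a simple current VOA extension, i.e.\ $A$ carries the structure of a commutative associative algebra with injective unit and trivial ribbon twist in $\mathcal{C}^\oplus$ \cite{CaM,CKLR}. Theorem~\ref{thm:VOA-alg-in-cat-corr} applied to $\mathcal{C}^\oplus$ then yields a $\CC$-linear braided monoidal equivalence between ${}_A(\mathcal{C}^\oplus)^{\mathrm{loc}}$ and the vertex tensor category of those $\TripAlg{p}$-modules whose restriction to $\SingAlg{p}$ lies in $\mathcal{C}^\oplus$; as the twists on both sides are given by $e^{2\pi\rmi L_0}$ and the equivalence preserves conformal weights, it is in fact a ribbon equivalence.

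Next I would show that this equivalence identifies $\rep\TripAlg{p}$ with $\fgloc{A}{(\mathcal{C}^\oplus)}$. Given $M\in\rep\TripAlg{p}$, the decompositions \eqref{eq:tripmoduledecomp} and the Loewy diagrams of Figure~\ref{fig:Loewy-Wp} show that $M$ has finite length with composition factors among the $2p$ simples $\TripIrr{r,s}$, and that $M|_{\SingAlg{p}}$ is an iterated extension of countable direct sums of simple objects of $\mathcal{C}$, hence lies in $\mathcal{C}^\oplus$. Being of finite length, $M$ is finitely generated over $\TripAlg{p}$, say by a finite-dimensional subspace $S\subset M$; let $X=\langle S\rangle\subset M|_{\SingAlg{p}}$ be the $\SingAlg{p}$-submodule it generates. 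Then $X$ is finitely generated and grading-restricted over $\SingAlg{p}$, hence (using the finite-length and $C_1$-cofiniteness expectations bundled into Conjecture~\ref{conj:M-ribbon}) a finite-length subobject of an object of $\mathcal{C}^\oplus$, and therefore an object of $\mathcal{C}$, which is dualizable by rigidity. The adjunction morphism $\mathrm{Ind}(X)\to M$ is surjective because $X\supseteq S$ generates $M$ over $\TripAlg{p}$; thus $M$ is finitely generated, and it is local since under Theorem~\ref{thm:VOA-alg-in-cat-corr} every $\TripAlg{p}$-module in $\mathcal{C}^\oplus$ is a local $A$-module. So $M\in\fgloc{A}{(\mathcal{C}^\oplus)}$. (Alternatively, \eqref{CM-fusion} gives $J\fuse\SingAtyp{r,s}\cong\SingAtyp{r+2,s}$ and \eqref{eq:Heis_deformed-conf-weight} gives $h_{\SingAtyp{r+2,s}}-h_{\SingAtyp{r,s}}\in\ZZ$, so by \eqref{eq:XinCJ-crit}, \eqref{eq:induction-to-local} and Corollary~\ref{cor:fglocal-vs-induction} the simples $\TripIrr{r,s}\cong\mathrm{Ind}(\SingAtyp{r,s})$ already lie in $\fgloc{A}{(\mathcal{C}^\oplus)}$.)

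Conversely, let $M\in\fgloc{A}{(\mathcal{C}^\oplus)}$, with a surjection $\mathrm{Ind}(X)\twoheadrightarrow M$ of $A$-modules and $X\in\mathcal{C}$ dualizable. Viewing $M$ as a $\TripAlg{p}$-module via Theorem~\ref{thm:VOA-alg-in-cat-corr}, it is generated over $\TripAlg{p}$ by the image of $X$, and $X$ is finitely generated over $\SingAlg{p}\subset\TripAlg{p}$ and bounded below; hence $M$ is a finitely generated, $\NN$-gradable generalised $\TripAlg{p}$-module. Since $\TripAlg{p}$ is $C_2$-cofinite, such a module is automatically grading-restricted, so $M\in\rep\TripAlg{p}$. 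Combining the two directions, the ribbon equivalence of the first paragraph restricts to the asserted ribbon equivalence $\rep\TripAlg{p}\cong\fgloc{A}{(\mathcal{C}^\oplus)}$.

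The main obstacle is the pair of ``boundary'' arguments: that a finite generating subspace of $M\in\rep\TripAlg{p}$ spans a finite-length (hence dualizable) $\SingAlg{p}$-submodule, which relies on the finiteness properties of $\rep\SingAlg{p}$ packaged into Conjecture~\ref{conj:M-ribbon}, and that a finitely generated, bounded-below generalised $\TripAlg{p}$-module is grading-restricted, which uses $C_2$-cofiniteness of $\TripAlg{p}$ \cite{Carqueville:2005nu,AM2}. One must also ensure that $\mathcal{C}^\oplus=\repsimpleinfinity\SingAlg{p}$ genuinely inherits a vertex tensor category structure to which Theorem~\ref{thm:VOA-alg-in-cat-corr} applies and in which $A=\TripAlg{p}$ is an algebra object — a point that is itself conditional on Conjecture~\ref{conj:M-ribbon}.
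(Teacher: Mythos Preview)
Your overall strategy coincides with the paper's: apply Theorem~\ref{thm:VOA-alg-in-cat-corr} to the simple current extension $\SingAlg{p}\subset\TripAlg{p}$ with $J=\SingAtyp{3,1}$, then argue the two inclusions between $\rep\TripAlg{p}$ and $\fgloc{A}{(\mathcal{C}^\oplus)}$. The execution differs in both directions, however.

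For $\rep\TripAlg{p}\subset\fgloc{A}{(\mathcal{C}^\oplus)}$, the paper does not argue via a generating subspace $S$ and the $\SingAlg{p}$-submodule $X=\langle S\rangle$ it spans. Instead it uses your parenthetical observation that each simple $\TripIrr{r,s}\cong\mathrm{Ind}(\SingAtyp{r,s})$ lies in the image of induction, and then invokes a general trick: since $\rep\TripAlg{p}$ contains a simple projective object $Q$ (namely $\TripIrr{1,p}$ or $\TripIrr{2,p}$), every object is a quotient of $Q\otimes Q^*\otimes U$ for some semisimple $U$, hence a quotient of an induced module. This sidesteps precisely the obstacle you flag: whether $X=\langle S\rangle$ lies in $\mathcal{C}=\repsimple\SingAlg{p}$. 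Note that $\mathcal{C}$ is by definition closed under subquotients and tensor products but not a priori under extensions, so ``finite-length subobject of an object of $\mathcal{C}^\oplus$'' does not obviously land in $\mathcal{C}$ under the hypotheses of the proposition alone. The paper's route avoids this by only ever inducing from semisimple objects of $\mathcal{C}_J$, which are manifestly in $\mathcal{C}$.

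For the converse inclusion, the paper argues that the fusion rules \eqref{CM-fusion} force every object of $\mathcal{C}$ to have finite Jordan--H\"older length, and since induction is exact, every object of $\fgloc{A}{(\mathcal{C}^\oplus)}$ has finite-dimensional generalised $L_0$-eigenspaces, hence lies in $\rep\TripAlg{p}$. Your route via $C_2$-cofiniteness of $\TripAlg{p}$ is a legitimate alternative and arguably more direct.
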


We note that the result in~\eqref{eq:RepWp-local-fg-A-modules} is parallel to the result in Section~\ref{sec:ex-Cgr} where the category~\eqref{eq:C-gr-example-Mdef} for the lattice VOA was obtained as an extension of the Heisenberg VOA.

\begin{proof}
$A$ is of the form \eqref{eq:A-simple-current-infinite} for 
$J=\SingAtyp{3, 1}$. 
Since we assume Conjecture~\ref{conj:M-ribbon} to hold, 
$\repsimpleinfinity \SingAlg{p}$ is a vertex tensor category. We can then apply Theorem~\ref{thm:VOA-alg-in-cat-corr} to $\mathcal{C}^\oplus$ and our choice of $A$, which shows that ${}_A(\mathcal{C}^\oplus)^{\text{loc}}$ is a vertex tensor category.
However,  ${}_A(\mathcal{C}^\oplus)^{\text{loc}}$ contains infinite direct sums, and so is not itself equivalent to $\rep\TripAlg{p}$. Instead one has to restrict oneself to finitely-generated $A$-modules, as we will show now.
Let us describe the image of the functor~\eqref{eq:induction-to-local} in more detail.
Recall from Section~\ref{sec:Singlet} that the simple $\SingAlg{p}$-modules are given by the $\SingTyp{\lambda}$ with $\lambda$ not of the form $\alpha_{r,s}$, and by the $\mathcal{M}_{r,s}$. 
{}From the conformal weights in \eqref{eq:Heis_deformed-conf-weight}, from the conjectural fusion rules in \eqref{CM-fusion}, and from the criterion in \eqref{eq:XinCJ-crit} we learn that the simple $\SingAlg{p}$-modules in $\mathcal{C}_J$ 
	are precisely the $\mathcal{M}_{r,s}$.
Their induced modules are ($k'\in\ZZ, r=1, 2$ and $1\leq s\leq p$)
 \begin{equation}
 \begin{split}
 \text{Ind}(\SingAtyp{2k'+r, s})
 &= 
 \TripAlg{p} 
 \fuse \SingAtyp{2k'+r, s}  \cong \bigoplus_{k\in\ZZ} \SingAtyp{2k+1, 1}\fuse \SingAtyp{2k'+r, s} \\
 &\cong \bigoplus_{k\in\ZZ} \SingAtyp{2k+2k'+r, s} 
 \cong
   \TripIrr{r, s} \ .
\end{split}
 \end{equation}
Thus, every simple object of $\rep\TripAlg{p}$ is induced from a simple object in $\mathcal{C}_J$.
Moreover, since every object of $\rep\TripAlg{p}$ is 
a quotient of an iterated tensor product of completely reducible objects\footnote{
This is true if there is a simple projective object $Q$, such as $\TripIrr{1,p}$ and $\TripIrr{2,p}$ for $\TripAlg{p}$. Indeed, 
	since the tensor product is exact, tensoring anything with a projective object returns a projective object. In particular, $Q \otimes Q^*$ is projective, and
since $\mathrm{Hom}(Q \otimes Q^* , \one)$ is non-zero, $Q \otimes Q^*$ contains $P_{\one}$, 
the projective cover of $\one$, as a direct summand. Furthermore, $\mathrm{Hom}(P_{\one} \otimes U, U) \cong \mathrm{Hom}(P_{\one} , U \otimes U^*)$ is non-zero, so that $P_{\one} \otimes U$ contains $P_U$
as a direct summand. 
 Altogether, $P_U$ is a direct summand of $Q \otimes Q^*\otimes U$. 
 Since any object can be obtained as a quotient
 of a projective object, it can also be written as a quotient
 of the tensor product of $Q \otimes Q^*$ with  a completely reducible object.

As an aside we remark that every factorisable finite ribbon category over the complex numbers necessarily contains a simple projective object \cite[Thm.\,1.2]{GnR3}.
}
we have that every object of $\rep\TripAlg{p}$ is a quotient of an object induced from $\mathcal{C}_J$.
	By \eqref{eq:induction-to-local}, this shows that $\rep\TripAlg{p} \subset \fgloc{A}{(\mathcal{C}^\oplus)}$.
The conjectural fusion rules \eqref{CM-fusion} imply that every object in $\mathcal{C}$ has finite Jordan-H\"older length, and since induction is exact, we see that every object in $\fgloc{A}{(\mathcal{C}^\oplus)}$ has finite-dimensional generalised $L_0$-eigenspaces. Thus also $\rep\TripAlg{p} \supset \fgloc{A}{(\mathcal{C}^\oplus)}$, completing the proof of \eqref{eq:RepWp-local-fg-A-modules}
\end{proof}
 
\medskip

We conclude this section with the still-outstanding proof of Proposition~\ref{prop:singstag} about the $\SingAlg{p}$-modules 
$\SingStag{r, s}$. 

\begin{proof}[Proof of Proposition~\ref{prop:singstag}]
The triplet modules $\TripStag{r, s}$ have been constructed explicitely in \cite[Theorem 4.2]{Nagatomo:2009xp} by deforming the $\TripAlg{p}$ action on $\TripTyp{\alpha_{r, s}}\oplus \TripTyp{\alpha_{3-r, p-s}}$ following the ideas of
\cite{[FFHST]}, see also~\cite{AM5} and more recent results in~\cite{AM6, AM3}. 
From this construction it is clear that the modules $\TripStag{r, s}$ carry a semi-simple $gl(1)$-action that commutes with the $\SingAlg{p}$-subalgebra of $\TripAlg{p}$. 
In particular, 
the Heisenberg VOA submodule 
$\SingTyp{\alpha_{2k+r, s}} \oplus \SingTyp{\alpha_{2k+r-1, p-s}}$ 
of $\TripTyp{\alpha_{r, s}}\oplus \TripTyp{\alpha_{3-r, p-s}}$ 
becomes a $\SingAlg{p}$-module under the deformed action. 

	We denote this deformed $\SingAlg{p}$-module by $\SingStag{2k+r,s}$ and we will show that it satisfies the non-split exact sequence \eqref{eq:P-is-log-SES}.
We recall that $\TripTyp{\alpha_{r, s}}$ is a submodule of $\TripStag{r, s}$, see~\eqref{ses:triplettyp}. The $gl(1)$-action on $\TripStag{r, s}$
 is determined from \cite[Eq.\,(4.12)]{Nagatomo:2009xp} and is given by the 
weight decomposition \eqref{eq:tripmoduledecomp} 
on the submodule $\TripTyp{\alpha_{r, s}}$,
while the decomposition on the quotient  $\TripTyp{\alpha_{3-r, p-s}}$ is shifted
	by $(p-s)\alpha_-$.
The resulting decomposition of $\TripStag{r, s}$ as a $gl(1)\tensor\SingAlg{p}$ 
module is
\be
\TripStag{r, s} \cong \bigoplus_{k \in \ZZ}  \CC_{\alpha_{2k+r. s}} \otimes \SingStag{2k+r, s} \ .
\ee
with $\SingStag{2k+r, s}$ as defined above.
Indeed, the quotient module is
\be
\begin{split}
\TripStag{r, s}/\TripTyp{\alpha_{r, s}}\cong \TripTyp{\alpha_{3-r, p-s}}&\cong \bigoplus_{k\in\ZZ} \CC_{\alpha_{2k+r-1, p-s} +(p-s)\alpha_-} \otimes  \SingTyp{\alpha_{2k+r-1, p-s}} \\ &= \bigoplus_{k\in\ZZ} \CC_{\alpha_{2k+r, s}} \otimes  \SingTyp{\alpha_{2k+r-1, p-s}} \ .
\end{split}
\ee
In particular, $ \SingStag{2k+r, s}/\SingTyp{\alpha_{2k+r, s}} \cong  \SingTyp{\alpha_{2k+r-1, p-s}}$ as $\SingAlg{p}$-modules. 
The nilpotent part $L_0^{\text{nil}}$  of the Virasoro zero-mode $L_0$ of $\TripAlg{p}$ maps the top of $\TripStag{r, s}$ onto its socle. It follows that the (vector space-)direct summand 
$\TripTyp{\alpha_{3-r, p-s}}$ in $\TripStag{r, s}$
(that contains the top of $\SingStag{2k+r, s}$)
 is mapped non-trivially to  $\TripTyp{\alpha_{r, s}}$ under $L_0^{\text{nil}}$. The Virasoro algebra commutes with the $gl(1)$-action and so also 
	$\SingStag{2k+r,s}$ contains vectors that are
mapped non-trivially to  $\SingTyp{\alpha_{2k+r, s}}$ under $L_0^{\text{nil}}$.
 We have thus proven 
 that the $\SingAlg{p}$-modules $\SingStag{2k+r, s}$ satisfy the non-split short exact sequence~\eqref{eq:P-is-log-SES}.
\end{proof}

\subsection{Correspondence between VOAs and quasi-Hopf algebras}\label{sec:corr-VOA-qHopf}

Recall from Section~\ref{sec:unrolled+weight} the definition of the categories 
	$\catUfd_p$ and $\catU_p^\oplus$ 
of weight-modules over the unrolled quantum group,
and from Section~\ref{sec:Singlet} the definition of $\repsimple \SingAlg{p}$ and its completion $\repsimpleinfinity \SingAlg{p}$. 
We have the following conjecture \cite{CM, CGP, CMR}.${}^{\ref{thefootnote}}$

\begin{conjecture}\label{conj:M-C}
The category $\repsimple \SingAlg{p}$ is equivalent
  to $\catUfd_p$  as ribbon categories. 
 \end{conjecture}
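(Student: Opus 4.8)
Conjecture~\ref{conj:M-C} asserts a ribbon equivalence $\repsimple \SingAlg{p} \cong \catUfd_p$, so no fully rigorous proof is available (the statement is flagged as a conjecture), but here is the approach one would take to establish it modulo the standing assumptions. The plan is to build the equivalence on objects first, then promote it to a tensor functor, and finally check ribbon-compatibility. On objects, one sends the typical singlet module $\SingTyp{\lambda}$ to a typical weight module $\UqgTyp{\alpha}$ under a suitable affine relabelling $\lambda \mapsto \alpha$ dictated by matching conformal weights $h_\lambda$ against the ribbon eigenvalue $\theta$ from~\eqref{eq:R-H}, i.e.\ one solves $e^{2\pi\rmi h_\lambda} = $ (twist eigenvalue on $\UqgTyp{\alpha}$) for the correspondence between the continuous label $\lambda\in\RR$ and $\alpha\in(\CC\setminus\ZZ)\cup p\ZZ$. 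The atypical simples $\SingAtyp{r,s}$ must then go to the atypical weight modules $\repS_{s,k}$ for the appropriate $k=k(r)\in\ZZ$ (the two infinite families of atypicals in the singlet theory collapse, under simple-current translation, onto the single $s$-indexed family on the quantum-group side shifted by $k$), and the logarithmic modules $\SingStag{r,s}$ from Proposition~\ref{prop:singstag} must go to the projective covers $\UqgStag{s,k}$ whose Loewy structure is given in Figure~\ref{fig:Loewy_UHq}. Comparing the short exact sequences \eqref{ses:singlet}, \eqref{eq:P-is-log-SES} with the $\UqgStag{s,k}$ Loewy diagram shows the submodule/quotient pattern matches, and since $\catUfd_p$ has enough projectives (Proposition~\ref{prop:Cfdp-has-enough-projectives}) and conjecturally so does $\repsimple\SingAlg{p}$ (with the $\SingStag{r,s}$ as projective covers), one gets a bijection on isomorphism classes of indecomposables, hence the beginnings of an abelian equivalence.

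Next I would upgrade this to a tensor equivalence. The cleanest route is to compare Grothendieck rings: the conjectural singlet fusion rules \eqref{CM-fusion} (from the Verlinde-type computation of \cite{CM}) should be matched termwise against the tensor product decompositions in $\catUfd_p$ computed from the $R$-matrix and the explicit action formulas \eqref{eq:atprp}, \eqref{eq:UHq-action-S_sk}. One checks that the two summation ranges — the $\ell$-sums in \eqref{CM-fusion} and the Clebsch--Gordan-type sums for $\UQG q$ — coincide under the object correspondence, including the ``boundary'' terms where a typical degenerates into a staggered module. Because both categories are (conjecturally, resp.\ provably) abelian with enough projectives and the same decomposition matrices for tensor products of indecomposables, one can then invoke a rigidity/reconstruction argument to produce the monoidal structure: concretely, one would transport the associativity and unit constraints along the object-level equivalence, exactly in the spirit of the ``transport of structure'' used in Remark~\ref{rem:solve-pent-hex-autom} and in Section~\ref{sec:ex-qHopf}. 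Finally, to see it is ribbon one matches braidings and twists: the twist on $\SingTyp{\lambda}$ is $e^{2\pi\rmi h_\lambda}$ by the $e^{2\pi\rmi L_0}$-action, while on the quantum-group side it is $\ribbon^{-1}$ from \eqref{eq:R-H}; by construction of the object correspondence these agree, and the braiding is pinned down by the monodromy together with the twist via the balancing axiom \eqref{eq:balancing-def}, so compatibility of monodromies suffices and that reduces to the already-matched double-braiding eigenvalues (as in the proof of Proposition~\ref{prop:C-transp-obj}).

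The main obstacle — and the reason this must remain a conjecture — is that the tensor category structure on $\repsimple \SingAlg{p}$ is itself conjectural: one needs Conjecture~\ref{conj:M-ribbon} (that $\rep\SingAlg{p}$ is a rigid vertex tensor category, which in turn rests on the $C_1$-cofiniteness and finite-length expectations of \cite{CMR}) even to have a braided monoidal category to compare with, and one needs the fusion rules \eqref{CM-fusion} to be the actual fusion rules rather than merely the output of a conjectural Verlinde formula. Granting those, the remaining genuinely hard analytic point is controlling the \emph{braiding} rather than just the monodromy on the typical modules, since the intertwining operators of $\SingAlg{p}$ depend on the continuous weight $\lambda$ and one must verify that the analytic continuation producing the braiding isomorphism on $\SingTyp{\lambda}\fuse\SingTyp{\mu}$ reproduces exactly the $q^{H\otimes H/2}$-type factor in \eqref{eq:R-H} — this is the step that cannot be reduced to a finite combinatorial check and genuinely uses properties of the vertex-algebraic operator product expansion (compare the analogous Heisenberg computation sketched in Remark~\ref{rem:HeisenbergVOA}). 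Everything downstream of this — extending to $\repsimpleinfinity\SingAlg{p}$, passing to local modules of $\TripAlg{p}$ resp.\ $\algC$, and hence deducing Conjecture~\ref{conj:trip-QG} as a corollary — is then formal given Theorem~\ref{thm:equiv-1} and Proposition~\ref{prop:RepWp-local-fg-A-modules}.
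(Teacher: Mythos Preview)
The statement is a \emph{conjecture}, and the paper does not prove it. Immediately after stating Conjecture~\ref{conj:M-C}, the paper lists four pieces of supporting evidence rather than a proof: (1) the Loewy diagrams of indecomposables agree; (2) the ribbon twist on simples matches $e^{2\pi i L_0}$; (3) the tensor product decompositions in $\catUfd_p$ from \cite{CGP} match the conjectural fusion rules \eqref{CM-fusion}; (4) asymptotic dimensions of $\SingAlg{p}$-modules match open Hopf link invariants in $\catUfd_p$ \cite{CMR}. You correctly recognised the conjectural status and your outline recovers points (1)--(3) in essentially the same form as the paper's motivation; you omit (4), but that is peripheral. Your identification of the genuine analytic obstacle --- that one must control the braiding on the continuous family $\SingTyp{\lambda}$, not just monodromy or Grothendieck-level data --- is apt and goes slightly beyond what the paper makes explicit.

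One caution on your sketch: the step where you ``invoke a rigidity/reconstruction argument'' to promote a Grothendieck-ring isomorphism plus matching projectives to a monoidal equivalence is not a valid inference in general. Matching fusion rules and Loewy diagrams does not determine the associator, and the transport-of-structure mechanism from Remark~\ref{rem:solve-pent-hex-autom} requires one side to already be monoidal and the functor to already be an equivalence with a chosen multiplicative structure --- it does not manufacture a monoidal functor from combinatorial data. So even granting Conjecture~\ref{conj:M-ribbon} and \eqref{CM-fusion}, your outline would still leave the construction of the actual tensor functor (with its coherence isomorphisms) as a substantial open problem, which is consistent with the paper treating this as a conjecture rather than a consequence.
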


Recall the notation for simple objects of $\catUfd_p$ and $\SingAlg{p}$ from Sections~\ref{sec:unrolled+weight} and~\ref{sec:Singlet}.
The conjectural equivalence $\catUfd_p \to \repsimple \SingAlg{p}$ acts on simple objects 
	and their projective covers
as
\be\label{eq:sing-Uq-func-obj}
\UqgTyp{\alpha} \mapsto \SingTyp{\frac{\alpha+p-1}{\sqrt{2p}}}
~~, \quad
\repS_{s,k} \mapsto \SingAtyp{1-k,s}
~~, \quad 
\UqgStag{s, k} \mapsto \SingStag{1-k, s} \ .
\ee
 This conjecture is motivated by the following: 
 \begin{enumerate}
 \item
  The Loewy diagrams of indecomposable objects agree.
  \item The ribbon twist on simple modules of $\catUfd_p$  agrees with the eigenvalue of $e^{2\pi i L_0}$ on the corresponding simple $\SingAlg{p}$-modules. 
 \item 
Tensor products in $\catUfd_p$ have been computed in \cite[Sec.\,8]{CGP}. 
In order to compare, note that our $\repS_{s,k}$ corresponds to 
$S_{s-1} \otimes \mathbb C^H_{kp}$ in \cite{CGP}, our $\UqgStag{s, k}$ corresponds to their $P_{s-1} \otimes \mathbb C^H_{kp}$, while the notation  $\UqgTyp{\alpha}$ for typical modules is the same here and in \cite{CGP}.
The conjectural Grothendieck fusion rules in \eqref{CM-fusion} match the tensor product in $\catUfd_p$ via the assignment \eqref{eq:sing-Uq-func-obj}, see \cite[Sec.\,5]{CMR}.  
\item 
The asymptotic behaviour of characters of $\SingAlg{p}$-modules allows one to define a quantity called the asymptotic dimension of a $\SingAlg{p}$-module \cite{CM, CMW, BFM}. 
These are expected to be related to open Hopf link invariants
(in analogy with rational CFTs),
 and indeed 
 \cite[Thm.\,1]{CMR} gives such a correspondence between  asymptotic dimensions of $\SingAlg{p}$-modules and open Hopf link invariants of objects in~$\catUfd_p$.
  \end{enumerate}
 
The next corollary gives the precise relationship between $\TripAlg{p}$ and $\Q$.

\begin{corollary}\label{cor:main}
Correctness of Conjectures~\ref{conj:M-ribbon} and~\ref{conj:M-C} implies that the categories  $\catTrip{p}$ and $\catQ_p=\repfd\,\Q$ are equivalent as ribbon categories.
\end{corollary}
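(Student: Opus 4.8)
The plan is to concatenate the two ribbon equivalences already at our disposal. On the quantum group side, Theorem~\ref{thm:equiv-1}\,(2) gives, unconditionally, a ribbon equivalence $\catQ_p \cong \fgloc{\algC}{{(\catU_{\mathit{p}}^\oplus)}}$ with $\catU_p^\oplus = \repwt\UQG{q}$. On the VOA side, Proposition~\ref{prop:RepWp-local-fg-A-modules} gives a ribbon equivalence between $\rep\TripAlg{p}$ and the category of finitely generated local $\TripAlg{p}$-modules in the completion $\repsimpleinfinity\SingAlg{p}$, provided Conjecture~\ref{conj:M-ribbon} and the Grothendieck ring \eqref{CM-fusion} hold. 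I would note first that Conjecture~\ref{conj:M-C} already supplies \eqref{CM-fusion}: a ribbon equivalence $\repsimple\SingAlg{p} \cong \catUfd_p$ in particular identifies Grothendieck rings, and the tensor products in $\catUfd_p$ computed in \cite{CGP} reproduce \eqref{CM-fusion} under the assignment \eqref{eq:sing-Uq-func-obj}. Hence, under Conjectures~\ref{conj:M-ribbon} and~\ref{conj:M-C}, Proposition~\ref{prop:RepWp-local-fg-A-modules} applies, and it remains to connect the two categories of finitely generated local modules.

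The link is provided by Conjecture~\ref{conj:M-C} after passing to completions. By Lemma~\ref{lem:direct-sum-completion}, $\catU_p^\oplus$ is the completion of $\catUfd_p$ under countable direct sums and subquotients, and $\repsimpleinfinity\SingAlg{p}$ is the analogous completion of $\repsimple\SingAlg{p}$; by the universal property of these completions the ribbon equivalence $G\colon \catUfd_p \xrightarrow{\sim} \repsimple\SingAlg{p}$ of Conjecture~\ref{conj:M-C} extends, uniquely up to isomorphism, to a ribbon equivalence $G^\oplus\colon \catU_p^\oplus \xrightarrow{\sim} \repsimpleinfinity\SingAlg{p}$. Being a braided monoidal equivalence with duals, $G^\oplus$ sends commutative algebras equipped with a non-degenerate invariant pairing to such, and it identifies the associated categories of modules, of local modules, and of finitely generated local modules, compatibly with the ribbon structures these inherit from the ambient categories (cf.\ Section~\ref{sec:local-modules}). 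Thus $G^\oplus$ restricts to a ribbon equivalence between $\fgloc{\algC}{{(\catU_{\mathit{p}}^\oplus)}}$ and the category of finitely generated local $G^\oplus(\algC)$-modules in $\repsimpleinfinity\SingAlg{p}$.

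The final step is to identify the algebra $G^\oplus(\algC)$ with $\TripAlg{p}$. At the level of underlying objects this is immediate: by \eqref{eq:algC} and \eqref{eq:sing-Uq-func-obj}, $G^\oplus(\algC) = \bigoplus_{m\in\ZZ} G(\repS_{1,2m}) \cong \bigoplus_{m\in\ZZ}\SingAtyp{1-2m,1} \cong \bigoplus_{k\in\ZZ}\SingAtyp{2k+1,1}$, which by \eqref{eq:tripassing} is precisely the underlying $\SingAlg{p}$-module of $\TripAlg{p}$. At the level of algebra structures, $G^\oplus$ transports the product $\mu_\algC$ of \eqref{eq:muC} to a commutative algebra structure on this object admitting a non-degenerate invariant pairing; but $\TripAlg{p}$ carries such a structure as well (Section~\ref{sec:corr-VOA-qHopf}), and by Proposition~\ref{prop:comm-alg-unique} — applied to $\algC = \mathcal{J}(\Lambda_p)$ and transported through the fully faithful functor $\mathcal{J}$ and through $G^\oplus$ — such a structure is unique up to isomorphism. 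Hence $G^\oplus(\algC) \cong \TripAlg{p}$ as algebras, so their categories of finitely generated local modules are ribbon-equivalent, and chaining with Proposition~\ref{prop:RepWp-local-fg-A-modules} and Theorem~\ref{thm:equiv-1}\,(2) yields $\catQ_p \cong \catTrip{p}$ as ribbon categories.

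The steps involving completions and the stability of the module-theoretic constructions under equivalences are routine. The point requiring genuine care is the last one: upgrading the object-level match $G^\oplus(\algC) \cong \TripAlg{p}$ to an isomorphism of algebras. This is exactly where Proposition~\ref{prop:comm-alg-unique} is indispensable — without a uniqueness statement for commutative algebra structures with non-degenerate invariant pairing on $\bigoplus_{k}\SingAtyp{2k+1,1}$, two a priori different products could yield inequivalent categories of local modules; here the uniqueness is available on the unrolled-quantum-group side and is simply carried across the conjectural equivalence, using only that $\TripAlg{p}$ is known to possess such a pairing.
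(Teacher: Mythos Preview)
Your proof is correct and follows essentially the same approach as the paper: both extend the conjectural ribbon equivalence to completions, match the algebra objects $\algC$ and $\TripAlg{p}$ via Proposition~\ref{prop:comm-alg-unique}, and then chain Theorem~\ref{thm:equiv-1}\,(2) with Proposition~\ref{prop:RepWp-local-fg-A-modules}. The paper works in the opposite direction (transporting $\TripAlg{p}$ to the quantum-group side rather than $\algC$ to the VOA side) and is more explicit about the one step you flag as requiring care --- namely, why the VOA-theoretic invariant pairing on $\TripAlg{p}$ yields a categorical non-degenerate invariant pairing in the sense of \eqref{eq:cat-pairing-inv}--\eqref{eq:cat-pairing-nondeg} --- using \cite{Li:1994} and the chain of isomorphisms \eqref{eq:VOA-pairing-cat-pairing}.
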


\begin{proof}
	We abbreviate $\mathcal{W} = \TripAlg{p}$ and $\mathcal{M} = \SingAlg{p}$.
Let $\mathcal{E} \colon \repsimple \mathcal{M}\to\catUfd_p$ be the ribbon equivalence from Conjecture~\ref{conj:M-C}, which acts on objects as in~\eqref{eq:sing-Uq-func-obj}. We also write $\mathcal{E}$ for the induced equivalence $\repsimpleinfinity \mathcal{M}\to\catU^\oplus_p$.
Note that by point~3 above, correctness of Conjecture~\ref{conj:M-C} also implies that the fusion rules \eqref{CM-fusion} are valid.

Let $A := \mathcal{E}(\mathcal{W})$ be the image of the triplet algebra under $\mathcal{E}$. As $\mathcal{W}$ is a commutative algebra, so is $A$. We will now show that $A$ allows for an invariant non-degenerate pairing in the sense of \eqref{eq:cat-pairing-inv} and \eqref{eq:cat-pairing-nondeg}.

The triplet VOA $\mathcal{W}$ is self-contragredient, simple and of CFT-type, 
i.e.\ it allows for a non-degenerate symmetric invariant pairing in the VOA sense, see \cite[Cor.\,3.2]{Li:1994}. 
Write $\mathcal{W}'$ for the module contragredient to $\mathcal{W}$. 
A $\mathcal{W}$-invariant pairing on $\mathcal{W}$ is the same as an intertwiner 
$\varphi \in \mathrm{Hom}_{\mathcal{W}}(\mathcal{W},\mathcal{W}')$. Note that in particular $\varphi \in \mathrm{Hom}_{\mathcal{M}}(\mathcal{W},\mathcal{W}')$. 
We have the following isomorphisms between the above Hom-space and spaces of intertwining operators for $\mathcal{M}$-modules:
\be\label{eq:VOA-pairing-cat-pairing}
	\mathrm{Hom}_{\mathcal{M}}(\mathcal{W},\mathcal{W}')
	~\cong~
	I { \mathcal{W}' \choose \mathcal{W} ~ \mathcal{M} }
	~\overset{(*)}{\cong}~
	I { \mathcal{M}' \choose \mathcal{W} ~ \mathcal{W}'' }
	~\overset{(**)}{\cong}~
	I { \mathcal{M} \choose \mathcal{W} ~ \mathcal{W} } \ ,
\ee
see \cite[Rem.\,2.9]{Li:1994}. The isomorphism $(*)$ is given explicitly in \cite[Eq.\,(5.5.4)]{FHL-book}, and $(**)$ uses that $\mathcal{M}$ is self-contragedient and that the double-contragedient module is isomorphic to the original module (recall that $\mathcal{W}$ has finite-dimensional graded components).

Let $\iota$ be the image of $\varphi$ under these isomorphisms. Then $\mathcal{E}(\iota)$ defines a pairing on $A$. Using the isomorphisms in \eqref{eq:VOA-pairing-cat-pairing}, invariance of the pairing $\mathcal{E}(\iota)$ on $A$ follows from the fact that $\varphi$ is a 
$\mathcal{W}$-intertwiner, not just an $\mathcal{M}$-intertwiner.
Since the pairing on $\mathcal{W}$ is symmetric, non-degeneracy is equivalent to $\mathrm{ker}(\varphi)=\{0\}$.
{}From the explicit form of the isomorphisms in \eqref{eq:VOA-pairing-cat-pairing} one checks that $w \in \ker(\varphi)$ if and only if $I(w,z)v=0$ for all 
$v \in \mathcal{W}$. Since $\ker(\varphi)$ is a submodule, we see that non-degeneracy of the pairing in the VOA-sense translates into the statement that the pairing $\mathcal{E}(\iota)$ on $A$ is non-degenerate in the sense of
 \eqref{eq:cat-pairing-nondeg}.

Comparing \eqref{eq:algC} and \eqref{eq:tripassing} we conclude that $A \cong \Lambda$ as objects,
recall the correspondence in~\eqref{eq:sing-Uq-func-obj}.
But $A$ satisfies properties 1--3 from Proposition~\ref{prop:comm-alg-unique}, and hence carries a unique algebra structure (up to isomorphism) with these properties (recall the embedding $\mathcal{J}$ from Lemma~\ref{lem:H-in-Cp-embed}). The same holds for $\Lambda$, and hence we must have $A \cong \Lambda$ as algebras.

The algebra isomorphism $\mathcal{E}(\mathcal{W}) \cong \Lambda$ implies that $\mathcal{E}$ induces a ribbon equivalence $\mathcal{E}'$ of categories of local modules, so that altogether
\be
	\rep\mathcal{W}
	\overset{\text{Prop.\,\ref{prop:RepWp-local-fg-A-modules}}}{\cong}
	 {}_{\mathcal{W}(p)}\big(\repsimpleinfinity \mathcal{M}\big)^{\text{fg-loc}}
	~~\overset{\mathcal{E}'}\cong~~
	\fgloc{\algC}{{(\catU_{\mathit{p}}^\oplus)}}
	\overset{\text{Thm.\,\ref{thm:equiv-1}}}\cong
	\repfd\,\Q \ .
\ee
\end{proof}

The relationship between the VOAs $\SingAlg{p}\subset \TripAlg{p}$ and the (quasi-) Hopf algebras $\UQG{q}$ and $\Q$ is summarised in the following diagram, which is the precise version of \eqref{eq:intro-cat-diag} from the introduction:
\be\label{diag-QG-VOA}
\raisebox{\height}{
\xymatrix{
\catUfd_p
\ar[d]_[right]{\sim}^{~\text{Conj.\,\ref{conj:M-ribbon}, \ref{conj:M-C}}} 
&\ar@{~>}[rrr]^{\text{\begin{minipage}{5.5em}pass to local\\[-.5em] $\algC$-modules\end{minipage}}}
&&&&
\fgloc{\algC}{{(\catU_{\mathit{p}}^\oplus)}}
\ar[d]_[right]{\sim}^{~\text{Cor.\,\ref{cor:main}}} 
& \catQ_p
\ar[l]_{\quad\sim}^{\text{\qquad Thm.\,\ref{thm:equiv-1}}} 
\\
\repsimple \SingAlg{p}
&\ar@{~>}[rrr]^{\text{\begin{minipage}{7.5em}pass to represen-
\\[-.5em]
tations of $\TripAlg{p}$\end{minipage}}}
&&&&
\rep \TripAlg{p}
}}
\ee
Note that the lower horizontal arrow also requires Conjecture~\ref{conj:M-ribbon}.

\section{Proof of Theorem~\ref{thm:equiv-1}}\label{sec:proof}

In this section we prove our
 main theorem -- Theorem~\ref{thm:equiv-1} --
about the equivalence of the two ribbon categories $\catQ_p$ and $\Cmodl$.
Our  proof generalises the construction in Section~\ref{sec:ex-qHopf} and essentially follows steps \ref{plan-step1}--\ref{plan-step5} there. 

\subsection{Coproduct and antipode}

We will drop the subscript $t$ from the structural elements of $\Q$, e.g.\ we will write $\Delta$ instead of $\Delta_t$.
It is straightforward to check that $\Delta$ is an algebra homomorphism and $S$ is an algebra anti-homomorphism.
It remains to show the identities
\begin{equation}\label{eq:Salpha-1}
\sum_{(a)}S(a')\Salpha a'' =
\eps(a)
\Salpha\gc\qquad
\sum_{(a)}a'\Sbeta S(a'') =
\eps(a)
\Sbeta \ ,
\end{equation}
for all $a\in \Q$ and 
\begin{equation}\label{eq:Salpha-2}
\sum_{(\Phi)}S(\Phi_1)\Salpha \Phi_2\Sbeta  S(\Phi_3) = \one\gc\qquad
\sum_{(\Phi^{-1})}(\Phi^{-1})_1\Sbeta S((\Phi^{-1})_2)\Salpha  (\Phi^{-1})_3 = \one \gp
\end{equation}
The last two identities are straightforward and we skip their proof.

We first show~\eqref{eq:Salpha-1} on the generators. 
We define the linear map 
\be
P:=\mu\circ(S\tensor\id)\colon\; \Q\tensor\Q\to\Q
\ee
 where $\mu$ denotes the multiplication in $\Q$.
It is then straightforward to see that 
\be\label{eq:PEFK}
P\big(\Delta(F)\big)=0 = P\big(\Delta(E)\big) \ .
\ee
For the PBW basis elements
$F^{n} E^{m} K^j$ and defining $f:=F^{n-1} E^{m} K^j$, we get for $n>0$, \be
P\big(\Delta(F^{n}E^{m} K^j)\big) 
= P\big(\Delta(F)\Delta(f)\big)=\sum_{(F),(f)} S(F'f')F''f'' 
 =\sum_{(f)} S(f')P\big(\Delta(F)\big)f'' 
\stackrel{\eqref{eq:PEFK}}{=}
  0 
\ee
and similarly we calculate for $n=0$ and $m>0$. The only remaining case is when $n=m=0$,
i.e.\ the case $K^j$. But this is obvious as $K$ is group-like.
This proves the first identity in~\eqref{eq:Salpha-1}.
The second identity in~\eqref{eq:Salpha-1} can be shown analogously.

\subsection{The equivalence functors $\funF_\RS$ and $\funG_\RS$}

Recall that $\catU_p^\oplus$ is the category of at most countably-infinite dimensional weight modules of $\UQG{q}$, 
and that in~\eqref{eq:algC} we introduced 
	$\algC\in\catU_p^\oplus$, 
a commutative algebra
with multiplication $\mu_{\algC}$ given in~\eqref{eq:muC}.  Recall also that $\catQ_p$ denotes the category of finite-dimensional  
	$\Q$-modules. 
We  construct here a functor from $\catQ_p$ to the category $\Cmodl$ of finitely-generated local $\algC$-modules  and prove that it is an equivalence.
	This amounts to step~\ref{plan-step1} in Section~\ref{sec:ex-qHopf}.
 
	To define the functor we need to
``lift'' the $K$-grading on objects from $\catQ_p$ to an $H$-grading.
 The action of $H$ and $K$ are related as $K=q^H$ or $H = \frac{p}{i\pi}\log K$ and thus one has to choose a branch of the logarithm to fix  the $H$-action. Recall that $K$ generates $\ZZ_{2p}$ (and hence has $2p$ distinct eigenvalues) and fixing a branch of the logarithm amounts to choosing a set~$\RS$ of representatives of $\ZZ_{2p}$ in $\ZZ$.
This is therefore as in step~\ref{plan-step1} in Section~\ref{sec:ex-qHopf}:
for a subset $\RS\subset\ZZ$ satisfying~\eqref{eq:S} we provide a functor
\be\label{eq:funF}
\funF_\RS\colon\; \catQ_p \longrightarrow \Cmodl \ ,
\ee
which acts on objects and morphisms as
\be\label{eq:funF-2}
 M \,\longmapsto\, \algC\otimes_{\CC} M 
 \qquad , \qquad
 (M\xrightarrow{f}N) \,\longmapsto\, \id_\algC\otimes_{\CC} f \ .
\ee
Here, $\algC\otimes_{\CC} M$ is the tensor product of vector spaces. 

Recall from \eqref{eq:abbrev-x-xS} the notation $\re{x}_\RS$ for the representative  in $\RS$ of an integer $x$, and the abbreviation $\re{x}:= \re{x}_\RS$. 
	Specialising \eqref{eq:kappa}, it will be convenient to introduce the notation
\be\label{eq:sig}
	\kappa_{\pm}(a) = \kappa(a , \pm2) = \frac{a\pm2-\re{a\pm2}}{2p}\ \in \ \ZZ\ .
\ee
For $a\in \RS$,
recall the idempotents $\e{a}:=\e{\pi(a)}$ in~\eqref{eq:idemp-prim} with our convention after~\eqref{eq:M-decomp} and define the $\UQG{q}$ action on $\algC\otimes_{\CC} M$
for arbitrary $m\in M$ and $k\in\ZZ$ as follows (recall the basis in~\eqref{eq:algC}):
\begin{align}
K (1_k\otimes \e{a}m) &:= q^{a} \cdot 1_k\otimes \e{a}m\ ,\label{eq:funF-UqH-1}\\
H (1_k\otimes \e{a}m) &:= (2pk + a) \cdot 1_k\otimes \e{a}m\ ,\label{eq:funF-UqH-2}\\
E (1_k\otimes \e{a}m) &:= 1_{k+\kappa_+(a)
}\otimes \e{\re{a+2}}E \,m\ ,\label{eq:funF-UqH-3}\\
F (1_k\otimes \e{a}m) &:= 1_{k+\kappa_-(a)}\otimes \e{\re{a-2}}F \,m\ ,\label{eq:funF-UqH-4}
\end{align}
compare for the Cartan part with~\eqref{eq:C-gr-ex_K-action_to_H-action}.
	We stress that in~\eqref{eq:funF-UqH-1}--\eqref{eq:funF-UqH-4} the integer number $a$ must be taken from $\RS$. 
In particular, $H (1_k\otimes \e{a+2}m) = (2pk + \re{a+2}) \cdot 1_k\otimes \e{a+2}m$.
Having this in mind, it is then straightforward to check that the relations of the unrolled quantum group $\UQG{q}$ are satisfied. 
We show only the relation 
$E^p=F^p=0$. For this, we recursively compute the powers
\be\label{eq:funF-UqH-p1}
\begin{split}
E^n (1_k\otimes \e{a}m) &= 1_{k+ \frac{a+2n-\re{a+2n}}{2p}}\otimes \e{\re{a+2n}}E^n \,m\ ,\\
F^n (1_k\otimes \e{a}m) &= 1_{k+ \frac{a-2n-\re{a-2n}}{2p}}\otimes \e{\re{a-2n}}F^n \,m\ . 
\end{split}
\ee
By \eqref{EpFp-rel} the RHS is zero for $n=p$.

We  define the $\algC$-action on $\funF_\RS(M)=\algC\otimes_{\CC} M$ using the multiplication on the left as in~\eqref{eq:rho-LambdaM}.

\begin{lemma}
The map $\rho:=\rho_{\algC\otimes_{\CC} M}$ in~\eqref{eq:rho-LambdaM} is a morphism in 
	$\catU_p^\oplus$. 
Moreover, the pair $(\algC\otimes_{\CC} M, \rho)$ is an object in $\Cmodl$.
\end{lemma}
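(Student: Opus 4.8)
The plan is to verify the two assertions separately, treating them as routine but somewhat lengthy checks. For the first assertion -- that $\rho := \rho_{\algC\otimes_\CC M}$ is a morphism in $\catU_p^\oplus$ -- I would show that $\rho$ commutes with the action of each generator $E, F, K, H$ of $\UQG{q}$, where the source $\algC \otimes (\algC \otimes_\CC M)$ carries the tensor product action built from the coproduct of $\UQG{q}$ (recall $\Delta(E) = \one\otimes E + E \otimes K$, etc.), and the target $\algC \otimes_\CC M$ carries the action defined in \eqref{eq:funF-UqH-1}--\eqref{eq:funF-UqH-4}. The key point is that $\algC$ is built from the one-dimensional modules $\repS_{1,2m}$ on which $E$ and $F$ act as zero and $K$ acts as the identity; hence on $\algC$ the generators $E,F$ act trivially and $K$ acts as $\id$, while $H$ acts by the grading $H(1_m) = 2pm\cdot 1_m$. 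Plugging this into $\Delta(E), \Delta(F), \Delta(K), \Delta(H)$ one sees that the tensor product action on $1_m \otimes (1_k \otimes \e{a}m')$ reduces, after applying $\mu_\algC$, to exactly the formulas \eqref{eq:funF-UqH-1}--\eqref{eq:funF-UqH-4} shifted by $m$ in the $\algC$-label; the shifts $\kappa_\pm(a)$ coming from the definition of the action match the degree bookkeeping because $\mu_\algC(1_m \otimes 1_k) = 1_{m+k}$ is strictly additive and $\re{a\pm 2}$ is used consistently on both sides. For $H$ this is the same computation as in the Cartan example (cf.\ the verification around \eqref{eq:C-gr-ex_K-action_to_H-action}), and for $E,F$ it is the straightforward generalisation. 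The only subtlety is keeping track of which representative in $\RS$ is used for the $K$-grade after acting with $E$ or $F$, but \eqref{eq:funF-UqH-p1} already records the iterated formula and makes the bookkeeping transparent.

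For the second assertion -- that $(\algC\otimes_\CC M, \rho)$ lies in $\Cmodl$, i.e.\ it is a finitely generated local $\algC$-module -- I would invoke the results already established. Associativity and unitality of $\rho$ are immediate from associativity and unitality of $\mu_\algC$ (the $H$-action and the other generators' actions were just shown to be intertwined, so $\rho$ is an honest module action internal to $\catU_p^\oplus$). Locality follows from Proposition~\ref{prop:UHq-local-Lam-modules}(1): by construction all $H$-eigenvalues of $\algC\otimes_\CC M$ are of the form $2pk + a$ with $a \in \RS \subset \ZZ$, hence are integers, so $M$ is local. Finite generation follows from Proposition~\ref{prop:UHq-local-Lam-modules}(2): the $H$-eigenspace of $\algC\otimes_\CC M$ for eigenvalue $2pk+a$ is $1_k \otimes \e{a}M$, which is finite-dimensional because $M$ is a finite-dimensional $\Q$-module; alternatively, one exhibits the covering induced module directly as $\mathrm{Ind}(M)$ (viewing $M$ with its $\RS$-lifted $H$-grading as an object of $\catUfd_p$), which surjects onto $\algC\otimes_\CC M$ via $\rho$ by the same argument as in the proof of Proposition~\ref{prop:UHq-local-Lam-modules}(2). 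This shows $(\algC\otimes_\CC M, \rho) \in \fgloc{\algC}{(\catU_p^\oplus)} = \Cmodl$.

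The main obstacle -- though it is more bookkeeping than genuine difficulty -- will be the $E,F$-intertwining check for $\rho$: one must be careful that the coproduct formula introduces a factor $K$ (resp.\ $K^{-1}$) on one tensor leg, that on $\algC$ this factor is trivial but on $M$ it contributes a power of $q$, and that after identifying along the coequaliser defining $\otimes_\algC$ (not $\otimes_\CC$) these factors are consistent. However, since $\rho$ is defined via left multiplication by $\algC$ on the \emph{left} tensor factor, and $\algC$ acts trivially through $E,F$ and via $\id$ through $K$, all the $q$-powers land on the $M$-leg and agree with \eqref{eq:funF-UqH-3}--\eqref{eq:funF-UqH-4} after the degree shift; so in fact no genuine obstruction arises, and the proof is a direct verification. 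I would present it as: (i) check $\rho$ intertwines $K, H$ (immediate from gradings), (ii) check $\rho$ intertwines $E, F$ using $\Delta$ and triviality of $E,F$ on $\algC$, (iii) associativity/unitality from $\mu_\algC$, (iv) locality and finite generation from Proposition~\ref{prop:UHq-local-Lam-modules}.
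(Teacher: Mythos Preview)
Your proposal is correct and follows essentially the same approach as the paper. The only minor difference is that for locality the paper carries out the direct $R$-matrix computation $\rho\circ c_{M,\algC}\circ c_{\algC,M}(1_t\otimes 1_k\otimes e_a m) = q^{2pt(2pk+a)}\rho(1_t\otimes 1_k\otimes e_a m) = \rho(1_t\otimes 1_k\otimes e_a m)$, whereas you invoke Proposition~\ref{prop:UHq-local-Lam-modules}(1) (integer $H$-eigenvalues) --- but this is precisely the same check, just packaged differently.
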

\begin{proof}
Using the coproduct $\Delta$ for $\UQG{q}$ and the action~\eqref{eq:funF-UqH-1}-\eqref{eq:funF-UqH-4} it is straightforward to check that the map $\rho_{\algC\otimes_{\CC} M}$ from~\eqref{eq:rho-LambdaM} intertwines the  action of $\UQG{q}$, i.e.\ that it is a morphism in $\catU_p^\oplus$.

We next show the locality. Using the braiding $\brC$ in $\catU_p^\oplus$ defined through the $R$-matrix in~\eqref{eq:R-H}, we see that the composition $\rho\circ \brC\circ \brC:=\rho_{\algC\otimes_{\CC} M}\circ \brC_{\algC\otimes_{\CC} M,\algC}\circ \brC_{\algC,\algC\otimes_{\CC} M}$ acts on the basis elements as
\begin{multline}
\rho\circ \brC\circ \brC
(1_t\otimes 1_k\otimes \e{a}m) = \rho_{\algC\otimes_{\CC} M}\circ q^{H\otimes H}(1_t\otimes 1_k\otimes \e{a}m) 
\\= \rho_{\algC\otimes_{\CC} M} q^{2pt(2pk+a)} (1_t\otimes 1_k\otimes \e{a}m)
=\rho_{\algC\otimes_{\CC} M} (1_t\otimes 1_k\otimes \e{a}m) \ .
\end{multline}
Thus the locality condition $\rho\circ \brC\circ \brC = \rho$ is satisfied.

It remains to check that $(\algC\otimes_{\CC} M, \rho)$ is finitely generated.
Since $M$ is finite-dimensional by definition of $\catQ_p$, also the $H$-eigenspaces as defined via \eqref{eq:funF-UqH-2} are finite-dimensional. The claim now follows from Proposition~\ref{prop:UHq-local-Lam-modules}.
\end{proof}

This finishes our definition of $\funF_\RS$ from~\eqref{eq:funF} and the proof that it is a  functor from $\catQ_p$ to $\Cmodl$.
We summarise:

\begin{proposition}\label{prop:funF}
For a set $\RS$ satisfying~\eqref{eq:S}, the formulas~\eqref{eq:funF-2} 
together with~\eqref{eq:funF-UqH-1}-\eqref{eq:funF-UqH-4} and~\eqref{eq:rho-LambdaM},
define $\funF_\RS$ as a $\CC$-linear functor $\catQ_p \to \Cmodl$.
\end{proposition}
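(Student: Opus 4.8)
The plan is to collect the proposition from the facts already established above and to add the routine verification of functoriality. Fix a set $\RS$ satisfying~\eqref{eq:S}. First I would check that for each $M\in\catQ_p$ the formulas~\eqref{eq:funF-UqH-1}--\eqref{eq:funF-UqH-4} really define a weight module over $\UQG{q}$ on $\algC\otimes_{\CC}M$. The relations~\eqref{Uq-relations} that involve only $K$, the relations~\eqref{eq:H-rel} that involve $H$, diagonalisability of $H$, and the identity $K=q^H$ are immediate from~\eqref{eq:funF-UqH-1}--\eqref{eq:funF-UqH-2}, since on $1_k\otimes\e{a}m$ the generator $K$ acts by $q^{a}$ and $H$ by $2pk+a$. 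For $KEK^{-1}=q^2E$ and $KFK^{-1}=q^{-2}F$ I would track the weight label: applying $E$ moves the $K$-weight from $a$ to $\re{a+2}$, which equals $a+2$ in $\ZZ_{2p}$, so that conjugation by $K$ produces the factor $q^{2}$, while the compensating shift $\kappa_{+}(a)$ of the $\algC$-degree (recall~\eqref{eq:sig}) keeps the $H$-weight consistent; the case of $F$ is analogous. For $[E,F]=\frac{K-K^{-1}}{q-q^{-1}}$ I would evaluate $EF$ and $FE$ on $1_k\otimes\e{a}m$ and observe that the two composite $\algC$-degree shifts $\kappa_{-}(a)+\kappa_{+}(\re{a-2})$ and $\kappa_{+}(a)+\kappa_{-}(\re{a+2})$ each telescope to $0$, so that the identity reduces to the corresponding relation of $\UQG{q}$ acting on $\e{a}m\in M$. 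Finally $E^{p}=F^{p}=0$ is exactly the recursive computation~\eqref{eq:funF-UqH-p1}, whose right-hand side vanishes at $n=p$ by~\eqref{EpFp-rel}.

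Next, the lemma proved just above the proposition shows that $(\algC\otimes_{\CC}M,\rho)$ with $\rho$ as in~\eqref{eq:rho-LambdaM} is a finitely-generated local $\algC$-module in $\catU_p^{\oplus}$, i.e.\ an object of $\Cmodl$, so nothing more is needed there. It then remains to treat morphisms. For $f\colon M\to N$ in $\catQ_p$ I would check that $\id_{\algC}\otimes_{\CC}f$ is a morphism in $\Cmodl$: since the $\UQG{q}$-action on $\algC\otimes_{\CC}M$ is built from the fixed idempotents $\e{a}$ and the fixed $\algC$-degree shifts $\kappa_{\pm}$ together with the action of $E,F,K$ on $M$, with which $f$ commutes, the map $\id_{\algC}\otimes f$ intertwines $E,F,K,H$; and it commutes with $\rho=\mu_{\algC}\otimes\id$, which acts only on the first tensor factor, hence it is also $\algC$-linear. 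The functor axioms $\funF_\RS(\id_M)=\id_{\funF_\RS(M)}$ and $\funF_\RS(g\circ f)=\funF_\RS(g)\circ\funF_\RS(f)$, as well as $\CC$-linearity of $f\mapsto\id_{\algC}\otimes f$, are then inherited from the tensor product of $\CC$-vector spaces.

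The only step that calls for genuine care is the verification of the quantum group relations in the first paragraph --- specifically, confirming that the representative-dependent corrections $\kappa_{\pm}$ have been set up so that the commutator $[E,F]$ comes out precisely as required and no spurious $\algC$-degree shift survives. Everything else is formal bookkeeping.
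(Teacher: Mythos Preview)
Your proposal is correct and follows essentially the same approach as the paper: the paper establishes the $\UQG{q}$-module structure in the paragraphs preceding the proposition (calling the relation checks ``straightforward'' and only spelling out $E^p=F^p=0$ via~\eqref{eq:funF-UqH-p1}), invokes the lemma for membership in $\Cmodl$, and then states the proposition as a summary. You simply make explicit the details the paper omits --- in particular the telescoping of the $\kappa_\pm$ shifts in the $[E,F]$ check and the verification that $\id_\algC\otimes f$ is a morphism in $\Cmodl$.
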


For two different choices $\RS$ and $\RS'$ of representative sets, one can show analogously to the proof of Lemma~\ref{lem:etaSS} that $\funF_\RS$ and $\funF_{\RS'}$ are naturally isomorphic. The natural isomorphism is given by the same expression as in \eqref{eq:natis-FS-FS'}.
We can thus work with an arbitrary choice of $\RS$ (satisfying~\eqref{eq:S})
as the result does not depend on the choice, up to a natural isomorphism.

\medskip

	The main goal in this subsection is to show that $\funF_\RS$ is an equivalence. As preparation we need a number of lemmas.
For 
	$(V,\rho_V)\in \Cmod^\oplus$, 
we define the action $\rho(1_t)$ of the basis elements of $\algC$
as in~\eqref{eq:rho1t}.

\begin{lemma}\label{lem:Z-V}
Let 
	$(V,\rho_V)\in \Cmod^\oplus$.
 We then have the equality
(for any $v\in V$)
\begin{align}
\rho(1_t)\circ E (v) = E\circ \rho(1_t) (v)\ , \label{eq:1t-E}\\
\rho(1_t)\circ F (v) = F\circ \rho(1_t) (v)\ . \label{eq:1t-F}
\end{align}
\end{lemma}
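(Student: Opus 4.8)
The statement to prove is Lemma~\ref{lem:Z-V}: for a local $\algC$-module $(V,\rho_V)$, the action $\rho(1_t)$ of the basis elements $1_t \in \algC$ commutes with the $E$- and $F$-action on $V$. My plan is the following.

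The key observation is that $\rho(1_t) = \rho_V(1_t \otimes -)$ is the action of an element of the commutative algebra $\algC$, and that $\algC$ itself has trivial $E$- and $F$-action (each $\repS_{1,2m}$ is a one-dimensional module on which $E,F$ act as zero). So I expect the commutation to follow directly from $\rho_V$ being a $\UQG{q}$-module morphism $\algC \otimes V \to V$, combined with the explicit coproduct formulas for $E$ and $F$ in~\eqref{eq:coprod}. Concretely, since $\rho_V$ intertwines the $\UQG{q}$-action, for $v \in V$ we have $E \cdot \rho_V(1_t \otimes v) = \rho_V(\Delta(E)(1_t \otimes v))$. Now $\Delta(E) = \one \otimes E + E \otimes K$, so $\Delta(E)(1_t \otimes v) = 1_t \otimes E v + (E\cdot 1_t) \otimes (K v)$. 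Because $E$ acts as zero on $\algC$, the second term vanishes, and we are left with $\rho_V(1_t \otimes Ev) = \rho(1_t)(Ev)$, which is exactly \eqref{eq:1t-E}. The argument for $F$ is identical, using $\Delta(F) = K^{-1}\otimes F + F \otimes \one$ and that $F$ acts as zero on $\algC$; here the surviving term is $(K^{-1}\cdot 1_t)\otimes F v = 1_t \otimes Fv$ since $K$ acts as the identity on each $\repS_{1,2m}$, giving $\rho(1_t)(Fv)$.

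I would write this out as two short displayed computations, one for $E$ and one for $F$, each a chain of equalities: apply the intertwining property of $\rho_V$, substitute the coproduct, drop the term with $E$ (resp.\ $F$) acting on $1_t$, and simplify the $K^{\pm 1}$ action on $1_t$. No induction or case analysis is needed, and locality of $V$ is not even used for this particular lemma (only that $V$ is an $\algC$-module in $\catU_p^\oplus$).

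I do not anticipate a genuine obstacle here; the only mild care needed is bookkeeping of which factor of the coproduct hits $\algC$ versus $V$, and recalling that $K = q^H$ acts trivially (as $q^{2pm}=1$) on the relevant grades of $\algC$ so that the $K^{-1}$ in $\Delta(F)$ is harmless. If one wanted to be slightly more conceptual, one could instead note that $\algC = \mathcal{J}(\Lambda_p)$ lies in the image of the functor $\mathcal{J}$ from Lemma~\ref{lem:H-in-Cp-embed}, i.e.\ it is built from modules with trivial $E,F$ action, and that the map $\rho(1_t)$ is (by definition of the $\algC$-action) $\UQG{q}^{\geq 0}$- and $\UQG{q}^{\leq 0}$-linear in the appropriate sense; but the direct computation with the coproduct is the cleanest and I would present that.
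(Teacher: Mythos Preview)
Your proposal is correct and follows essentially the same approach as the paper: use that $\rho_V$ is a $\UQG{q}$-intertwiner, apply the coproduct $\Delta(E)$ (resp.\ $\Delta(F)$) from~\eqref{eq:coprod}, and observe that $E$ and $F$ act trivially on $\algC$. Your remark that locality is not actually needed here is also correct, and your extra care about the $K^{-1}$ factor in $\Delta(F)$ acting trivially on $1_t$ (since $K=q^H$ has eigenvalue $q^{2pm}=1$ on $\repS_{1,2m}$) makes the $F$-case explicit where the paper merely says ``analogous''.
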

\begin{proof}
	We will show only~\eqref{eq:1t-E}, the proof of~\eqref{eq:1t-F} is analogous.
The map $\rho_V$ is a morphism in 
	$\catU_p^\oplus$ 
and thus intertwines the $\UQG{q}$ action on $V$.
 In particular, this means  the equality $\rho_V \circ \Delta(E) = E \circ \rho_V$ and $\Delta(E) = \one\otimes E + E\otimes K$ acts on $\algC\otimes V$. As $E$ acts trivially on $\algC$, so $E\cdot 1_t = 0$, we have
\be
\rho_V(1_t\otimes E\cdot V) = E\circ \rho_V (1_t\otimes V)\ , \qquad t\in \ZZ\ , 
\ee
and this gives~\eqref{eq:1t-E}.
\end{proof}

Combining Propositions~\ref{prop:Lambda-loc-mod}  and~\ref{prop:UHq-local-Lam-modules}  we have the following.

\begin{lemma}\label{lem:V-loc-decomp}
Any local $\algC$-module $V$ has the decomposition into 
 $H$-eigenspaces $V_x$ of integer eigenvalues~$x$:
\be\label{eq:V-decomp}
V = \bigoplus_{k\in\ZZ}\bigoplus_{a\in \RS} V_{2pk+a}
\ee
and the basis elements $1_t\in\algC$ act by isomorphisms of vector spaces:
\be\label{eq:iso-Vx}
\rho(1_t): \quad V_{2pk+a} \xrightarrow{\; \sim \;} V_{2p(k+t)+a}\ .
\ee
\end{lemma}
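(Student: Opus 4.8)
The plan is to prove Lemma~\ref{lem:V-loc-decomp} by combining the two structural results already established for local modules in this setting. First I would invoke Proposition~\ref{prop:UHq-local-Lam-modules}(1): since $V$ is a local $\algC$-module in $\catU_p^\oplus$, all of its $H$-eigenvalues lie in $\ZZ$. Because $V$ is a weight module by definition of $\catU_p^\oplus = \repwt\UQG{q}$, the action of $H$ is diagonalisable, so $V = \bigoplus_{x\in\ZZ} V_x$ where $V_x$ is the $H$-eigenspace of eigenvalue $x$. Re-indexing $x = 2pk + a$ with $k\in\ZZ$ and $a\in\RS$ (using that $\pi|_\RS\colon \RS\to\ZZ_{2p}$ is a bijection, so every integer is uniquely of this form) gives the decomposition~\eqref{eq:V-decomp}.

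Next I would establish~\eqref{eq:iso-Vx}. As in~\eqref{eq:rho1t}, set $\rho(1_t) := \rho_V(1_t\otimes -)\colon V\to V$. Since $\algC$ is a commutative algebra with product~\eqref{eq:muC} satisfying $1_t\cdot 1_{-t} = 1_0 = \one_\algC$, the module axiom gives $\rho(1_t)\circ\rho(1_{-t}) = \rho(1_0) = \id_V$, so each $\rho(1_t)$ is a linear isomorphism of $V$ (this is exactly the observation used in the proof of Proposition~\ref{prop:Lambda-loc-mod} and recalled in part 1 of the proof of Proposition~\ref{prop:UHq-local-Lam-modules}). It remains to check that $\rho(1_t)$ maps $V_{2pk+a}$ into $V_{2p(k+t)+a}$, i.e.\ that it shifts the $H$-degree by $2pt$. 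This follows because $\rho_V$ is a $\UQG{q}$-module intertwiner, hence commutes with $H$; and $\algC = \bigoplus_m \repS_{1,2m}$ with $1_t$ spanning $\repS_{1,2t}$, on which $H$ acts by $2pt$. Using $\Delta(H) = H\otimes\one + \one\otimes H$, the intertwining property $\rho_V\circ\Delta(H) = H\circ\rho_V$ evaluated on $1_t\otimes v$ with $v\in V_{2pk+a}$ gives $H\,\rho(1_t)(v) = \rho_V\big((2pt)1_t\otimes v + 1_t\otimes (2pk+a)v\big) = (2p(k+t)+a)\,\rho(1_t)(v)$, so $\rho(1_t)(V_{2pk+a})\subseteq V_{2p(k+t)+a}$. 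Applying the same argument to $\rho(1_{-t})$ shows $\rho(1_{-t})(V_{2p(k+t)+a})\subseteq V_{2pk+a}$, and since $\rho(1_t)$ and $\rho(1_{-t})$ are mutually inverse, the restriction $\rho(1_t)\colon V_{2pk+a}\to V_{2p(k+t)+a}$ is an isomorphism.

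This argument is essentially bookkeeping, so I do not anticipate a genuine obstacle; the only point requiring a little care is making sure the re-indexing by $\RS$ is consistent with how the $H$-action was normalised in~\eqref{eq:funF-UqH-2}, but since~\eqref{eq:V-decomp} is stated purely in terms of integer $H$-eigenvalues, any choice of $\RS$ works and the decomposition is canonical. One could phrase the whole proof in one line: ``Immediate from Proposition~\ref{prop:UHq-local-Lam-modules}(1), diagonalisability of $H$ on weight modules, and the fact that each $\rho(1_t)$ is invertible and shifts the $H$-degree by $2pt$,'' but spelling out the degree-shift computation as above makes the statement self-contained.
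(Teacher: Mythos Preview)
Your proof is correct and follows essentially the same approach as the paper, which simply states that the lemma is obtained by combining Propositions~\ref{prop:Lambda-loc-mod} and~\ref{prop:UHq-local-Lam-modules}. You have just unpacked what that combination means: integer $H$-spectrum from Proposition~\ref{prop:UHq-local-Lam-modules}(1), invertibility of $\rho(1_t)$ from the module axiom (as in the proof of Proposition~\ref{prop:Lambda-loc-mod}), and the degree shift from the $\UQG{q}$-intertwining property of $\rho_V$.
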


\begin{lemma}\label{lem:V-f}
 A morphism $f\colon V\to W$  in $\Cmodl$ 
	has the 
decomposition $f = \oplus_{n\in\ZZ} f_n$ and the property
\be\label{eq:1_t-f}
f_{n+2pk}\circ\rho_V(1_k)(\e{n} v) = \rho_W(1_k)\circ f_n(\e{n}v)\ ,
\ee
where $\rho_V\colon \algC\otimes V\to V$ and $\rho_W\colon \algC\otimes W\to W$ are the corresponding $\algC$ actions on $V,W\in\Cmodl$.
\end{lemma}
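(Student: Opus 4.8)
The plan is to unwind the two conditions defining a morphism in $\Cmodl$ -- namely that $f$ intertwines the $\UQG{q}$-action and that it intertwines the $\algC$-action -- and to combine them with the grading information from Lemma~\ref{lem:V-loc-decomp}.

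First I would use that $f\colon V\to W$ commutes in particular with the action of $H$. Since by Lemma~\ref{lem:V-loc-decomp} both $V$ and $W$ are $\ZZ$-graded by their $H$-eigenspaces, $f$ must map the degree-$n$ component of $V$ into the degree-$n$ component of $W$ for each $n\in\ZZ$. Writing $f_n$ for the restriction of $f$ to this component, this yields the claimed decomposition $f=\bigoplus_{n\in\ZZ}f_n$. Equivalently, $f\circ H=H\circ f$ forces $f$ to preserve the weight components $\e{n}V$ and, within each of these, the finer $\ZZ$-grading given by the $H$-eigenspaces.

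Next I would write out the condition that $f$ intertwines the $\algC$-action, i.e.\ $f\circ\rho_V=\rho_W\circ(\id_\algC\otimes f)$, and evaluate both sides on an element of the form $1_k\otimes\e{n}v$. The left-hand side is $f\bigl(\rho_V(1_k)(\e{n}v)\bigr)$; by the degree-shift statement in Lemma~\ref{lem:V-loc-decomp} the element $\rho_V(1_k)(\e{n}v)$ lies in the $H$-eigenspace of degree $n+2pk$, so applying $f$ to it agrees with applying $f_{n+2pk}$. The right-hand side is $\rho_W(1_k)\bigl(f(\e{n}v)\bigr)=\rho_W(1_k)\bigl(f_n(\e{n}v)\bigr)$. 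Equating the two gives exactly \eqref{eq:1_t-f}.

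I do not expect a genuine obstacle here: the lemma is a direct bookkeeping consequence of the definition of a morphism in $\Cmodl$ together with the explicit description of the $H$-grading and of the degree shift $\rho(1_k)\colon V_{2pk'+a}\xrightarrow{\sim}V_{2p(k'+k)+a}$ furnished by Lemma~\ref{lem:V-loc-decomp}. The only point requiring a little care is to check that the index on $f$ on the left-hand side is indeed $n+2pk$, which is precisely what that degree-shift statement ensures.
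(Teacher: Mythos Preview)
Your proposal is correct and follows essentially the same route as the paper's own proof: first use the $H$-intertwining property together with Lemma~\ref{lem:V-loc-decomp} to obtain the decomposition $f=\bigoplus_{n\in\ZZ}f_n$, then evaluate the $\algC$-intertwining condition $f\circ\rho_V=\rho_W\circ(\id_\algC\otimes f)$ on $1_k\otimes\e{n}v$ and read off \eqref{eq:1_t-f} from the degree-shift isomorphism~\eqref{eq:iso-Vx}. There is no substantive difference in method or in level of detail.
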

\begin{proof}
As each morphism $f$ in $\Cmodl$ is the intertwiner of the $H$ action and by Lemma~\ref{lem:V-loc-decomp} the eigenvalues of $H$ are integer, we obviously have the decomposition  $f = \oplus_{n\in\ZZ} f_n$ where $f_n$ is the restriction of $f$ to $V_n\subset V$ and $f_n\colon V_n\to W_n$. We 
then use the condition on $f$ to be an intertwiner of the $\algC$-actions  $\rho_V$ and $\rho_W$:
\be
f\circ \rho_V = \rho_W \circ (\id_{\algC}\otimes f)\ .
\ee
The latter can be written on the $n$th grade components as
\be
f_{n+2pk} \circ \rho_V(1_k\tensor \e{n}v) = \rho_W \bigl(1_k\tensor f_n(\e{n}v)\bigr)
\ee
which gives~\eqref{eq:1_t-f} using the definition in~\eqref{eq:rho1t}.
\end{proof}

We now arrive at the main statement of this subsection.

\begin{proposition}\label{thm:funF-equiv}
The functor $\funF_\RS\colon \catQ_p \to \Cmodl$ defined in Proposition~\ref{prop:funF} is an equivalence of $\CC$-linear categories.
\end{proposition}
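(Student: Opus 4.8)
The plan is to construct an explicit quasi-inverse functor $\funG_\RS\colon \Cmodl \to \catQ_p$ and show that $\funG_\RS\circ\funF_\RS \cong \id_{\catQ_p}$ and $\funF_\RS\circ\funG_\RS \cong \id_{\Cmodl}$ via natural isomorphisms, exactly mirroring the Cartan-part argument around \eqref{eq:funG-Cartan}--\eqref{eq:H-FG-nat}. First I would define $\funG_\RS$ on objects by sending a local module $V = \bigoplus_{k\in\ZZ}\bigoplus_{a\in\RS} V_{2pk+a}$ (using the decomposition from Lemma~\ref{lem:V-loc-decomp}) to its ``fundamental component'' $V_{(0)} := \bigoplus_{a\in\RS} V_a$, and on morphisms $f = \oplus_n f_n$ to $f_{(0)} := \bigoplus_{a\in\RS} f_a$; this is well-defined since morphisms in $\Cmodl$ preserve the $H$-grading by Lemma~\ref{lem:V-f}. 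The key point is to equip $V_{(0)}$ with a $\Q$-module structure: $K$ acts by $q^H$, and for $E$ one sets $E\cdot v := \rho_V(1_{-\kappa_+(a)}) \circ E_V(v)$ for $v \in V_a$ (and analogously for $F$ with $\kappa_-$), i.e.\ one uses the $\algC$-action isomorphisms $\rho(1_k)$ of \eqref{eq:rho1t} to ``fold'' the action of $E_V$, which by \eqref{eq:funF-UqH-p1}-type bookkeeping lands in $V_{\re{a+2}}$ when $\re{a+2}\in\RS$ but in $V_{a+2}$ in general, back into the fundamental component. I would then verify that this reproduces the defining relations \eqref{Uq-relations}, \eqref{EpFpK2p-rel} of $\Q$ — here Lemma~\ref{lem:Z-V} is essential, as it guarantees $E_V$, $F_V$ commute with all $\rho(1_t)$, so the folded operators satisfy the same commutation relations, and $E^p=F^p=0$, $K^{2p}=\one$ are inherited directly.

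Next I would check functoriality (that $\funG_\RS(f)$ is a $\Q$-intertwiner), which uses \eqref{eq:1_t-f} from Lemma~\ref{lem:V-f} together with the fact that $f$ intertwines $E_V, F_V$. Then I would exhibit the two natural isomorphisms. For $\funG_\RS\circ\funF_\RS \cong \id_{\catQ_p}$: given $M \in \catQ_p$, the module $\funF_\RS(M) = \algC\otimes_\CC M$ has fundamental component $1_0\otimes M \cong M$ via $1_0\otimes m \mapsto m$, and one checks from \eqref{eq:funF-UqH-1}--\eqref{eq:funF-UqH-4} that the folded $E,F,K$ action on $1_0\otimes M$ matches the original $\Q$-action on $M$ (the folding precisely undoes the index-shift $\kappa_\pm$ introduced in \eqref{eq:funF-UqH-3}--\eqref{eq:funF-UqH-4}). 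For $\funF_\RS\circ\funG_\RS \cong \id_{\Cmodl}$: on objects $\funF_\RS\circ\funG_\RS(V) = \algC\otimes_\CC V_{(0)}$, and the map $1_k\otimes v \mapsto \rho_V(1_k)(v)$ for $v\in V_{(0)}$ is the candidate isomorphism, exactly as in \eqref{eq:H-FG-nat}. Surjectivity and injectivity follow from Lemma~\ref{lem:V-loc-decomp} (the $\rho(1_k)$ are vector-space isomorphisms and every homogeneous element lands in the image), and that it is a morphism in $\Cmodl$ — grade-preserving, $\algC$-intertwining, and $\UQG{q}$-intertwining — uses Lemma~\ref{lem:Z-V} again to move $E_V, F_V$ past $\rho(1_k)$. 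Naturality in both cases is a straightforward check from the explicit formulas.

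The main obstacle I anticipate is the careful combinatorics of the grading shifts: verifying that the folded action of $E$ and $F$ really does preserve the fundamental component $V_{(0)}$ and satisfies $[E,F] = \frac{K-K^{-1}}{q-q^{-1}}$ requires tracking how $\kappa_\pm(a)$, $\re{a\pm 2}$, and the case distinctions ``$a\pm2 \in \RS$ or not'' interact, and making sure all the $\rho(1_k)$ compose correctly. Concretely, one needs identities like $\kappa_+(a) + \kappa_-(\re{a+2}) = 0$ and the compatibility $\re{\re{a+2}-2} = a$, which hold because $\pi|_\RS$ is a bijection, but which must be applied consistently. This is the same bookkeeping that appears in step~\ref{plan-step1} of Section~\ref{sec:ex-qHopf} but is genuinely more involved here because $E$ and $F$ move between grades rather than acting diagonally. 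Once the functor $\funG_\RS$ and the two natural isomorphisms are in place, the statement follows, and this also sets up the proof of the multiplicative, monoidal, braided and balanced structure in the subsequent steps \ref{plan-step2}--\ref{plan-step5}.
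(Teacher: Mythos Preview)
Your proposal is correct and follows essentially the same route as the paper: the paper constructs the quasi-inverse $\funG_\RS$ exactly as you describe (sending $V$ to its fundamental component $V_{(0)}$ with the ``folded'' $E,F$-action via $\rho(1_{-\kappa_\pm(a)})$), verifies the $\Q$-relations using Lemma~\ref{lem:Z-V}, checks functoriality via Lemma~\ref{lem:V-f}, and exhibits the two natural isomorphisms $1_0\otimes m \mapsto m$ and $1_k\otimes v \mapsto \rho_V(1_k)(v)$ using Lemma~\ref{lem:V-loc-decomp}. The combinatorial bookkeeping you anticipate is precisely what the paper works through.
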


\begin{proof}
We prove the equivalence by introducing another functor
\be
\funG_\RS: \quad \Cmodl \to \catQ_p
\ee
that is an inverse to $\funF_\RS$. On objects, 
we define it similarly to~\eqref{eq:funG-Cartan}
 (recall that by Lemma~\ref{lem:V-loc-decomp} we have~\eqref{eq:V-decomp}):
\be\label{eq:funG}
\funG_\RS: \quad V=\bigoplus_{k\in\ZZ}\bigoplus_{a\in \RS} V_{2pk+a} \; \mapsto \; V_{(0)}:=\bigoplus_{a\in \RS} V_{a} \ .
\ee
The  action of $\Q$-generators
on $V_{(0)}$ is given by $K=q^H$ and the following maps (recall~\eqref{eq:sig})
\begin{align}
\dE &\,:=\, \big[\, V_a \xrightarrow{\,E\,} V_{a+2} \xrightarrow{\;\rho(1_{-\kappa_{+}(a)})\;} V_{\re{a+2}} \,\big]\ ,
\nonumber \\
\dF &\,:=\, \big[\, V_a \xrightarrow{\,F\,} V_{a-2} \xrightarrow{\;\rho(1_{-\kappa_{-}(a)})\;} V_{\re{a-2}} \,\big] \ .
\label{eq:EF-in-D-def}
\end{align}
We check below that these maps together with $K$ action indeed satisfy the defining relations of $\Q$. Let us note that
 we use the notation $\dE$ and $\dF$ for the representation of the generators $E$ and $F$ of	$\Q$
(i.e. on objects in $\catQ_p$) in order to distinguish from the action of 
	$E$ and $F$ of $\UQG{q}$ on objects from $\catU_p^\oplus$.
We have thus well-defined maps on $\funG_\RS(V)=V_{(0)}$ and the relations $K\circ\dE = q^2 \dE \circ K$ and $K\circ\dF = q^{-2} \dF \circ K$ are straightforward to check. For the commutator $[\dE,\dF]$, we  compute first the composition on each $V_a$:
	for $a\in \RS$ we have
\be
\dE\circ \dF (\e{a}v) = \dE\circ \rho(1_{-\kappa_{-}(a)}) \circ F(\e{a}v) 
\overset{(*)}= 
\rho(1_{\kappa_{-}(a)}) \circ E\circ \rho(1_{-\kappa_{-}(a)}) \circ F(\e{a}v) \ ,
\ee
where	in $(*)$ 
we used that $-2p\kappa_+(\re{a-2}) = \re{\re{a-2}+2} - (\re{a-2}+2) = 2p \kappa_-(a)$.
	Applying
then~\eqref{eq:1t-E} and~\eqref{eq:1t-F} from Lemma~\ref{lem:Z-V} we obtain
\be
\dE\circ \dF (\e{a}v) = E F(\e{a}v) 
\ee
and therefore $[\dE,\dF] = \frac{K-K^{-1}}{q-q^{-1}}$. We also compute the powers 
\begin{align}
\dE^n (\e{a}v) &= \rho\bigl(1_{\frac{\re{a+2n}-(a+2n)}{2p}}\bigr)\circ E^n(\e{a}v)\ ,\\
\dF^n (\e{a}v) &= \rho\bigl(1_{\frac{\re{a-2n}-(a-2n)}{2p}}\bigr)\circ F^n(\e{a}v)\ 
\end{align}
and so $\dE^p=\dF^p=0$ and therefore we have shown that the defining relations of	$\Q$
hold indeed.

On morphisms $f\colon V\to W$, we define 
\be
\funG_\RS\colon  f \mapsto  f_{(0)} :=  \bigoplus_{a\in \RS} f_a
\ee
 with $f_{(0)}$ the restriction of $f$ to the ``fundamental'' component $V_{(0)}\subset V$. As morphisms in $\Cmodl$  respect the grading by~$H$, we have $f_{(0)}\colon  V_{(0)}\to W_{(0)}$. We have only got to show that $f_{(0)}$ is an $\Q$
 intertwiner, or that it commutes with $\dE$ and $\dF$. Using
Lemma~\ref{lem:V-f}, 
we have a sequence of the equalities
\begin{multline}\label{eq:f0-E} 
f_{\re{a+2}}\circ \dE (\e{a} v) 
\overset{\eqref{eq:EF-in-D-def}}{=} 
f_{(a+2) - 2p\kappa_{+}(a) }\circ \rho_V(1_{-\kappa_{+}(a)}) \circ E (\e{a}v)\\
\overset{\eqref{eq:1_t-f}}{=} 
\rho_W(1_{-\kappa_{+}(a)}) \circ f_{a+2}\circ E(\e{a}v) 
\overset{(*)}= 
 \rho_W(1_{-\kappa_{+}(a)}) \circ E\circ f_{a}(\e{a}v) 
\overset{\eqref{eq:EF-in-D-def}}{=} 
 \dE \circ f_a (\e{a}v)\ ,
\end{multline}
	where in $(*)$ we used the intertwining property of $f$ for $E\in\UQG{q}$ action, which is $f_{a+2}\circ E = E\circ f_{a}$. 
As~\eqref{eq:f0-E} is true for each $a\in \RS$, it is also true for $f_{(0)} = \oplus_{a\in \RS} f_a$ and therefore we have in total $f_{(0)}\circ \dE = \dE \circ f_{(0)}$. A similar computation shows the equality $f_{(0)}\circ \dF = \dF \circ f_{(0)}$, and therefore $f_{(0)}$ is an $\Q$
 intertwiner indeed.
  This finishes our definition of the functor $\funG_\RS$.

We next show that the composition $\funG_\RS\circ\funF_\RS$ is naturally isomorphic to the identity functor $\id_{\catQ_p}$.
It goes along the lines of Section~\ref{sec:ex-qHopf}.
On objects, the composition is
\be
\funG_\RS\circ\funF_\RS\colon  \quad M \mapsto \bigl(\algC\tensor_{\CC} M\bigr)_{(0)} 
\ee
where the image is obviously  $\CC 1_0\tensor M$, as a  vector space.
We use then the definition of $\dE$ and $\dF$, and
the action~\eqref{eq:funF-UqH-3}, \eqref{eq:funF-UqH-4}
to compute
\begin{align}
\dE (1_0\tensor \e{a}m) &= \rho_{\algC\tensor_{\CC} M}(1_{-\kappa_{+}(a)}) \circ E  (1_0\tensor \e{a}m)  
\nonumber \\
&= \rho_{\algC\tensor_{\CC} M}(1_{-\kappa_{+}(a)})   (1_{\kappa_{+}(a)}\tensor \dE \e{a}m) = 
1_0\tensor  \dE \e{a}m
\end{align}
and similarly
\be
\dF (1_0\tensor \e{a}m)  = 1_0\tensor  \dF \e{a}m\ .
\ee
We have thus the natural transformation
\begin{align}
\funG_\RS\circ\funF_\RS \xrightarrow{\;\cdot\;} \id_{\catQ_p}: \quad &\funG_\RS\circ\funF_\RS(M)\to M
\nonumber \\
&1_0\tensor m \mapsto m
\label{eq:GF-nat}
\end{align}
for any $m\in M$ (the naturality of this map is obvious). It is also clear that~\eqref{eq:GF-nat} is  an  isomorphism.

For the other direction,  we show that the composition $\funF_\RS\circ\funG_\RS$ is naturally isomorphic to the identity functor $\id_{\Cmodl}$. On objects, the composition is
(compare with~\eqref{eq:FG-Cartan})
\be
\funF_\RS\circ\funG_\RS: \quad V \mapsto \algC\tensor_{\CC} V_{(0)} \ .
\ee
The natural transformation in this case is  
\begin{align}
\funF_\RS\circ\funG_\RS \xrightarrow{\;\cdot\;} \id_{\Cmodl}: \quad &\funF_\RS\circ\funG_\RS(V)\to V
\nonumber\\
&1_k \tensor v
  \mapsto \rho_V(1_k)(v)\ , \qquad k\in\ZZ, \; v\in V_{(0)}\ .
\label{eq:FG-nat}
\end{align}
It is a straightforward check that this map is a morphism in $\Cmodl$, 
i.e.\ an $\UQG{q}$ intertwiner that also intertwines the $\algC$ action. The map in~\eqref{eq:FG-nat}  is  an isomorphism because of Lemma~\ref{lem:V-loc-decomp}. 
The naturality of~\eqref{eq:FG-nat} is also easy to see. We have thus shown that both $\funF_\RS$ and $\funG_\RS$ are equivalences.
This finishes our proof of the theorem.
\end{proof}

\subsection{Ribbon equivalence between $\catQ_p$ and $\Cmodl$}
The category $\Cmodl$ is a braided monoidal category with a ribbon twist. And we have shown in 
	Proposition~\ref{thm:funF-equiv} 
that	it
is equivalent,	as a $\CC$-linear category, 
to the representation category of the restricted quantum group $\UresSL2$.  The idea of this  section is to extend this equivalence up to braided monoidal categories level, of course, with all the necessary modifications of the standard monoidal structure on $\rep \UresSL2$, as this category with the initial monoidal structure can not be braided.

	We will denote the structure  maps \eqref{qHopf-cat-data} for
	$\repQ = \rep \Q$ 
	by $\assoc^{\mathcal{D}}$, $c^{\mathcal{D}}$ and $\theta^{\mathcal{D}}$, and
we will show below that these linear maps are indeed morphisms in $\catQ_p$, that they are natural and  that $\assoc^{\mathcal{D}}$ gives an associator, $c^{\mathcal{D}}$ a braiding and $\theta^{\mathcal{D}}$ a twist in $\repQ$.  The idea of the proof is to endow the equivalence functor $\funF\colon \catQ_p \to \Cmodl$ (we  use the abbreviation $\funF:= \funF_\RS$ and omit $\RS$ for brevity) with a certain	 multiplicative structure $\funF_{M,N}\colon\funF(M) \tensor_{\algC} \funF(N) \xrightarrow{ \sim } \funF(M\tensor N)$ and to show that it solves the coherence conditions. As explained in Remark~\ref{rem:solve-pent-hex-autom} this implies that $\assoc^{\mathcal{D}}$ solves the pentagon, etc.,	and thus proves both parts of Theorem~\ref{thm:equiv-1} together. 

The twist-equivalence of $\Q$ for different values of $t$ in part 1
of Theorem~\ref{thm:equiv-1} 
 follows from part 2 and the fact that the underlying equivalence $\funF$ does not change (only its multiplicative structure does). This implies 
 that the monoidal structures on  $\repQ$ for different values of $t$ are related via the identity functor equipped with a suitable monoidal structure, which in turn is the same as twist-equivalence (cf.\ the proof of \cite[Prop.\,16.1.6]{ChPr}).
 
\medskip

As in step~\ref{plan-step2} in  Section~\ref{sec:ex-qHopf}, we begin with introducing  a family of $\CC$-linear maps
\begin{align}\label{eq:isoF-MN}
\funF_{M,N}: \quad \funF(M) \tensor_\algC \funF(N) &\longrightarrow \funF(M\tensor N)
\nonumber \\
(1_k\tensor \e{a}m)\tensor_\algC (1_l\tensor \e{b}n)
&\longmapsto 
(-1)^{al} \zeta_{a,b}\, 1_{k+l + \kappa(a,b)}\tensor (\e{a}m\tensor \e{b}n) \ ,
\end{align}
where $\kappa(a,b)$ is defined in~\eqref{eq:kappa}.
	The proof that this map is well-defined and an isomorphism is as in Lemma~\ref{lem:tfun-fact}. To obtain the quasi-Hopf algebra structure in Section~\ref{sec:Q-def} we need to make the following choice, for $a,b\in \RS$,
\be\label{eq:zeta-def}
\zeta_{a,b} = 
\begin{cases}
1 ~~ &; ~ a \; \text{even} \\
q^{t b/2}  ~~ &; ~ a \; \text{odd} 
\end{cases}
\qquad .
\ee
Here $t$ is the odd integer fixed in Section~\ref{sec:Q-def}.

\begin{lemma}\label{lem:FMN-iso-mod}
	The isomorphisms $\funF_{M,N}$ are morphisms in $\Cmodl$.
\end{lemma}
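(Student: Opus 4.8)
The plan is to verify directly that the linear isomorphisms $\funF_{M,N}$ from \eqref{eq:isoF-MN} intertwine the $\UQG{q}$-action and the $\algC$-action, i.e.\ that they are morphisms in $\Cmodl$. Since $\Cmodl$ sits inside $\catU_p^\oplus$ and the $\algC$-action is recovered from the left multiplication by the basis elements $1_s$, it suffices to check compatibility with $K$, $H$, $E$, $F$ on both sides together with compatibility with the $\algC$-action maps $\rho(1_s)$. The $K$- and $H$-compatibility is essentially the computation already carried out in the proof of Proposition~\ref{prop:funF-kappa}: one compares the $H$-eigenvalue of $(1_k\tensor\e am)\tensorL(1_l\tensor\e bn)$, which is $2p(k+l)+a+b$, with that of $1_{k+l+\kappa(a,b)}\tensor(\e am\tensor\e bn)$, which is $2p(k+l+\kappa(a,b))+\re{a+b}$, and the definition \eqref{eq:kappa} of $\kappa(a,b)$ makes these agree. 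Compatibility with $\rho(1_s)$ is immediate from the explicit formula \eqref{eq:isoF-MN}, since $\rho(1_s)$ just shifts $k\mapsto k+s$ on either side and the sign $(-1)^{al}$ and constant $\zeta_{a,b}$ are untouched.

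The substantive part is the compatibility with $E$ and $F$, and this is where the specific choice \eqref{eq:zeta-def} of $\zeta_{a,b}$ and the modified coproduct \eqref{eq:cop-new} enter. First I would recall the $E$-action on $\funF(X)$ from \eqref{eq:funF-UqH-3}: $E(1_k\tensor\e ax)=1_{k+\kappa_+(a)}\tensor\e{\re{a+2}}Ex$, together with the fact that on $\funF(M)\tensorL\funF(N)$ the $\UQG{q}$-action is the one induced by $\Delta$, so $E$ acts via $\Delta(E)=E\tensor K+(\idem_0+q^t\idem_1)\tensor E$. Concretely, on $(1_k\tensor\e am)\tensorL(1_l\tensor\e bn)$ the first summand $E\tensor K$ produces $q^b$ times $(1_{k+\kappa_+(a)}\tensor\e{\re{a+2}}Em)\tensorL(1_l\tensor\e bn)$, while the second summand $(\idem_0+q^t\idem_1)\tensor E$ produces $\epsilon_a$ times $(1_k\tensor\e am)\tensorL(1_{l+\kappa_+(b)}\tensor\e{\re{b+2}}En)$, where $\epsilon_a=1$ if $a$ even and $\epsilon_a=q^t$ if $a$ odd — i.e.\ exactly the ``$a$ even/odd'' dichotomy appearing in \eqref{eq:zeta-def}. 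Then I would apply $\funF_{M,N}$ to each term and, on the other side, apply $\funF_{M,N}$ first and then let $E$ act on $\funF(M\tensor N)$ via $\Delta(E)$ again. Matching the two results reduces to a pair of scalar identities: one relating $\zeta_{\re{a+2},b}$ to $q^b\zeta_{a,b}$ and signs $(-1)^{\re{a+2}\cdot l}$ versus $(-1)^{al}$, and one relating $\zeta_{a,\re{b+2}}$ to $\epsilon_a\zeta_{a,b}$, together with the additivity-type identities $\kappa(\re{a+2},b)+\kappa_+(a)=\kappa(a,b)+\kappa_+(a+b)$ and its analogue in the second slot. These follow from the definitions of $\kappa$ in \eqref{eq:kappa}, of $\kappa_\pm$ in \eqref{eq:sig}, and of $\zeta_{a,b}$ in \eqref{eq:zeta-def}, using that $q^{2p}=1$ so $q^{\re{x}}=q^x$, and that parity of $a$ is what controls both $\epsilon_a$ and the first case of \eqref{eq:zeta-def}. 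The $F$-case is entirely parallel, using \eqref{eq:funF-UqH-4}, $\Delta(F)=F\tensor 1+(\idem_0+q^{-t}\idem_1)K^{-1}\tensor F$, the identity $\kappa_-$ in place of $\kappa_+$, and the $q^{-tb/2}$-type factors.

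The main obstacle I anticipate is purely bookkeeping: keeping straight the three sources of scalar factors — the sign $(-1)^{al}$, the structure constant $\zeta_{a,b}$, and the powers of $q$ coming from $K$ and from the $q^t$/$q^{-t}$ in the modified coproduct — and checking that they conspire correctly once one passes representatives through $\re{\,\cdot\,}$ in $\ZZ_{2p}$. In particular one must be careful that $\re{a+2}$ has the opposite parity to $a$, so that the ``even/odd'' branch of $\zeta$ flips exactly in step with the $\idem_0+q^{\pm t}\idem_1$ factor, and that the $q^{\half t b}$ prefactor for odd $a$ is consistent after the shift $b\mapsto\re{b+2}$, i.e.\ $q^{\half t\re{b+2}}=q^{\half t(b+2)}=q^t q^{\half tb}$, the extra $q^t$ being absorbed by the $\idem_0+q^t\idem_1$ term. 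Once these scalar identities are isolated, each is a one-line check, so the lemma reduces to organising the computation cleanly rather than to any conceptual difficulty; the reason the particular $\zeta$ in \eqref{eq:zeta-def} is chosen is precisely to make these identities hold and thereby reproduce the coproduct \eqref{eq:cop-new}, coassociator \eqref{eq:Phi-t}, $R$-matrix \eqref{eq:R-quasiH} and ribbon element \eqref{eq:ribbon-quasiH} in the subsequent steps.
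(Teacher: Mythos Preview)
There is a genuine confusion in your setup about which coproduct acts where. The domain $\funF(M)\tensorL\funF(N)$ is an object of $\catU_p^\oplus$, and the $\UQG{q}$-action on a tensor product in $\catU_p^\oplus$ is given by the \emph{standard} coproduct \eqref{eq:coprod} of $\UQG{q}$, namely $\Delta(E)=\one\otimes E+E\otimes K$. The modified coproduct $\Delta_t$ from \eqref{eq:cop-new} is the coproduct of $\Q$, and it enters only on the \emph{target} side: $M\tensor N$ is the tensor product in $\catQ_p$, so when you compute $E$ acting on $\funF(M\tensor N)=\algC\tensor_\CC(M\tensor N)$ via \eqref{eq:funF-UqH-3}, the $E$ hitting $\e am\tensor\e bn$ uses $\Delta_t(E)$. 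Your proposal has this backwards (you apply $\Delta_t$ on the domain), and the whole point of the lemma is that $\funF_{M,N}$ intertwines the two different coproduct actions; using the same coproduct on both sides trivialises the check and the $\zeta_{a,b}$ would play no role. Compare with the paper's computation \eqref{eq:F-MN-Eaction}--\eqref{eq:funF-E-act}: the domain side has no $\epsilon_a$ factor, and that factor appears on the target side precisely because $\Delta_t$ is used there.

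There is also a computational slip: $\re{a+2}$ has the \emph{same} parity as $a$, not the opposite (since $\re{a+2}\equiv a+2\equiv a\pmod 2$). Hence the even/odd branch of $\zeta_{\cdot,b}$ does not flip under $a\mapsto\re{a+2}$; rather, $\zeta_{\re{a+2},b}=\zeta_{a,b}$ for all $a,b$. The actual mechanism is that the $(-1)^{a\kappa_+(b)}$ sign arising from the shift $l\mapsto l+\kappa_+(b)$, together with the ratio $\zeta_{a,\re{b+2}}/\zeta_{a,b}$, produces exactly the coefficient of the $(\idem_0+q^t\idem_1)\tensor E$ summand in $\Delta_t(E)$ when compared against the standard $\one\tensor E$ on the domain. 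Once you place the two coproducts correctly, the scalar identities you describe do reduce to short checks, but as written your bookkeeping rests on the wrong starting point.
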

\begin{proof}
We start by showing that $\funF_{M,N}$  intertwines the action of $\UQG{q}$. We first notice that the action of the generators 
on $\funF(M) \tensorC \funF(N)$ is given via  the coproduct formulas~\eqref{eq:coprod} and~\eqref{eq:H-coprod}.
This follows from the fact that $\funF(M) \tensorC \funF(N)$ is a quotient of $\funF(M) \tensor \funF(N)$ in $\catU_p^\oplus$. 

The check that $\funF_{M,N}$ intertwines the $H$-action literally  repeats the first part of the proof of Proposition~\ref{prop:funF-kappa}, where the necessity of  the shift by $\kappa(a,b)$ in the $\algC$ part becomes evident.
Since on weight modules we have $K = q^H$, it also follows that $\funF_{M,N}$ intertwines the $K$-action.

For the $E$-action we have 
\begin{multline}\label{eq:F-MN-Eaction}
(1_k\tensor \e{a}m)\tensorC (1_l\tensor \e{b}n)
 \xrightarrow{\;(\pi\tensor \pi)\Delta(E)\;} (1_{k}\tensor \e{a}m)\tensorC E(1_l\tensor \e{b}n) + E(1_{k}\tensor \e{a}m)\tensorC K(1_l\tensor \e{b}n)  \\
 = (1_{k}\tensor \e{a}m)\tensorC (1_{l+\kappa_+(b)}\tensor \e{\re{b+2}}E\, n) + (1_{k+\kappa_+(a)}\tensor \e{\re{a+2}}E \,m)\tensorC q^b (1_l\tensor \e{b}n)
 \end{multline}
 where $\pi\tensor \pi$  stands for  the product $\pi_{\funF(M)}\tensor \pi_{\funF(N)}$
 of the corresponding representations of $\UQG{q}$ and in the last equality we  used~\eqref{eq:funF-UqH-1} and~\eqref{eq:funF-UqH-3}. Then the map $\funF_{M,N}$ is
 \begin{align}
\text{RHS of}\, \eqref{eq:F-MN-Eaction} \xrightarrow{\;\funF_{M,N}\;} (-1)^{al} 1_{k+l + \kappa_+(a+b)}\tensor
\Bigl(& (-1)^{a\kappa_{+}(b)}\zeta_{a,\re{b+2}}\,  \bigl(\e{a}m\tensor\e{\re{b+2}}E\, n\bigr) \nonumber\\
&+  q^b \zeta_{\re{a+2},b}  \bigl(\e{\re{a+2}}E \,m\tensor \e{b}n\bigr) \Bigr)\ . \label{eq:funF-E-act}
\end{align}
This finally gives  the image of composition $\funF_{M,N}\circ(\pi\tensor \pi)\Delta(E)$ on $(1_k\tensor \e{a}m)\tensorC (1_l\tensor \e{b}n)$.
On the other side, we get
\be
E\circ\funF_{M,N}\colon\; (1_k\tensor \e{a}m)\tensorC (1_l\tensor \e{b}n) \mapsto  (-1)^{al} \zeta_{a,b}1_{k+l + \kappa_+(a+b)}\tensor \e{\re{a+b+2}} E(\e{a}m\tensor\e{b}n),
\ee
then applying $\Delta_t(E)$ for the action on $\e{a}m\tensor\e{b}n$ and for the choice of $\zeta_{a,b}$ in~\eqref{eq:zeta-def} we get indeed RHS of~\eqref{eq:funF-E-act}.

We then similarly check that the $F$-action also commutes with the morphisms $\funF_{M,N}$. 
It remains to show that the maps $\funF_{M,N}$ intertwine the 
	$\Lambda$-action.
This part repeats the second part of the proof of Proposition~\ref{prop:funF-kappa}.
We thus conclude that $\funF_{M,N}$ intertwine both the $\UQG{q}$- and $\algC$-actions and are therefore  morphisms  in $\Cmodl$.
\end{proof}

We can now state the following refinement of 	Proposition~\ref{thm:funF-equiv}.

\begin{proposition}\label{thm:funF-br-equiv}
The functor $\funF\colon \catQ_p \to \Cmodl$ defined in Proposition~\ref{prop:funF} and equipped with the isomorphisms $\funF_{M,N}$ in~\eqref{eq:isoF-MN} is a ribbon equivalence. 
\end{proposition}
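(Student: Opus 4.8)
The plan is to follow the five-step strategy outlined in Section~\ref{sec:ex-qHopf} (steps~\ref{plan-step1}--\ref{plan-step5}) and invoke Remark~\ref{rem:solve-pent-hex-autom} to conclude simultaneously that $\Q$ is a ribbon quasi-Hopf algebra and that $\funF$ is a ribbon equivalence. Steps~\ref{plan-step1} and~\ref{plan-step2} are already done: Proposition~\ref{thm:funF-equiv} provides the $\CC$-linear equivalence $\funF$ (with inverse $\funG$), and Lemma~\ref{lem:FMN-iso-mod} shows the maps $\funF_{M,N}$ of~\eqref{eq:isoF-MN} are isomorphisms in $\Cmodl$, so $\funF$ is multiplicative. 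What remains is steps~\ref{plan-step3}--\ref{plan-step5}: verify that the linear maps $\assoc^{\catD}$, $c^{\catD}$, $\theta^{\catD}$ defined from $\Phi_t$, $R_t$, $\ribbon_t$ via \eqref{qHopf-cat-data} satisfy the coherence diagrams~\eqref{eq:transport-assoc-diag}, \eqref{eq:transport-braiding-via-functorequiv}, \eqref{eq:transport-twist} relative to the multiplicative structure $\funF_{M,N}$.

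First I would carry out step~\ref{plan-step3}: plug an element $(1_k\tensor\e a u)\tensorC\bigl((1_l\tensor\e b v)\tensorC(1_t\tensor\e c w)\bigr)$ into both paths of~\eqref{eq:transport-assoc-diag}, using that the associator in $\Cmodl$ is the trivial one of vector spaces, and read off the constraint on $\Phi$. Exactly as in~\eqref{eq:Phi-Z2p}, the Cartan part forces $\kappa(a,b)=\frac{a+b-\re{a+b}}{2p}$ and the scalar prefactors assemble into $\Phi=\sum_{a,b,c}(-1)^{a\kappa(b,c)}\frac{\zeta_{a,b}\zeta_{\re{a+b},c}}{\zeta_{b,c}\zeta_{a,\re{b+c}}}\,\e a\tensor\e b\tensor\e c$; one then checks that substituting the specific choice $\zeta_{a,b}$ from~\eqref{eq:zeta-def} (which depends on the parity of $a$ and the odd integer $t$) collapses this sum to $\Phi_t=\one\tensor\one\tensor\one+\idem_1\tensor\idem_1\tensor(K^{-t}-\one)$ of~\eqref{eq:Phi-t}. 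For step~\ref{plan-step4} I would similarly compute both sides of~\eqref{eq:transport-braiding-via-functorequiv}; now the full $R$-matrix~\eqref{eq:R-H} of $\UQG{q}$ contributes (not just the Cartan part), and the nontrivial new ingredient compared to the $\CC\ZZ_{2p}$ example is the interplay of the $E^n\tensor F^n$ terms with the shifts $\kappa_\pm$ in~\eqref{eq:funF-UqH-3}--\eqref{eq:funF-UqH-4}; matching coefficients should reproduce $R_t$ of~\eqref{eq:R-quasiH}. For step~\ref{plan-step5}, evaluating~\eqref{eq:transport-twist} on $1_k\tensor\e a u$ using $\theta_U=e^{\frac{\pi i}{2p}(H_U)^2}$ on $\Cmodl$ and solving for $\ribbon$ via a Gauss sum (as in~\eqref{eq:ribbon-Z2p}--\eqref{eq:gauss-sum}) yields $\ribbon_t$ of~\eqref{eq:ribbon-quasiH}. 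Finally, the antipode structure has already been checked in Section~\ref{sec:proof} (the identities~\eqref{eq:Salpha-1}--\eqref{eq:Salpha-2}), so $\Q$ is a quasi-Hopf algebra; by Remark~\ref{rem:solve-pent-hex-autom}, since $\Cmodl$ is ribbon and $\funF$ is an equivalence, $\Phi_t,R_t,\ribbon_t$ automatically satisfy the pentagon, hexagon and ribbon axioms, and $\funF$ is a ribbon equivalence.

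The main obstacle I anticipate is the bookkeeping in steps~\ref{plan-step3} and~\ref{plan-step4}: the representatives $\re{\,\cdot\,}_\RS$, the parity-dependent $\zeta_{a,b}$, and the interaction between the Cartan grading shifts and the $E^n$/$F^n$ actions make the coefficient-matching considerably more delicate than in the $\CC\ZZ_{2p}$ warm-up, and one must be careful that the final answers are genuinely independent of the choice of $\RS$ (which is guaranteed abstractly by the natural isomorphism $\funF_\RS\cong\funF_{\RS'}$ of~\eqref{eq:natis-FS-FS'}, but should be visible in the formulas for $\Phi_t$, $R_t$, $\ribbon_t$ since those are expressed in $K$ and $e_n$ with $n\in\ZZ_{2p}$ rather than in $\RS$). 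I would handle this by first doing the purely Cartan computation to pin down $\kappa$ and the general form of $\Phi$ and $R$, then isolating the $E^n\tensor F^n$ contributions, and only at the end substituting~\eqref{eq:zeta-def} to recognise the closed forms~\eqref{eq:Phi-t} and~\eqref{eq:R-quasiH}. Twist-equivalence for different $t$ is then immediate, as noted in Section~\ref{sec:proof}: the underlying equivalence $\funF$ is unchanged and only its multiplicative structure varies, so the identity functor on $\repQ$ carries a monoidal structure intertwining the two quasi-Hopf structures, which is precisely twist-equivalence (cf.\ \cite[Prop.\,16.1.6]{ChPr}).
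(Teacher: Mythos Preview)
Your overall strategy is exactly the paper's: steps~\ref{plan-step1}--\ref{plan-step5} plus Remark~\ref{rem:solve-pent-hex-autom}. Steps~\ref{plan-step3} and~\ref{plan-step4} are described correctly; in particular you are right that the associator computation reduces verbatim to the $\CC\ZZ_{2p}$ case since $\Phi_t$ involves only $K$, and that the braiding computation is new because of the $E^n\tensor F^n$ contributions.

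However, there is a genuine gap in your step~\ref{plan-step5}. You propose to use $\theta_U=e^{\frac{\pi i}{2p}H_U^2}$ as the balancing on $\Cmodl$, but that is the balancing of $\mathcal{H}^\oplus$, not of $\catU_p^\oplus$. The twist in $\catU_p^\oplus$ (and hence the one inherited by $\Cmodl$) is given by $\ribbon^{-1}$ with $\ribbon$ as in the second line of~\eqref{eq:R-H}: it carries the prefactor $K^{p-1}$ and the full sum over $n$ involving $S(F^n)q^{-\frac12 H^2}E^n$, not just the Cartan exponential. Even on one-dimensional modules the two differ by the sign $(-1)^{(p-1)k}$ (cf.\ the footnote to Lemma~\ref{lem:H-in-Cp-embed}), and on general modules the nilpotent terms are essential. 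If you actually carried out your proposed computation you would obtain only (a $K$-twisted version of) the $n=0$ summand of $\ribbon_t$, not the expression in~\eqref{eq:ribbon-quasiH}. The correct computation evaluates $\ribbon(1_k\tensor\e{a}u)$ using the formulas~\eqref{eq:funF-UqH-p1} for the $E^n$, $F^n$ action on $\funF(U)$, rewrites $S(F^n)=(-1)^nq^{n(n-1)}K^nF^n$, and only then performs the Gauss sum over $a$ to arrive at $\ribbon_t$; the $F^nE^n$ factors survive intact.
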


\begin{proof}~
\\
$\bullet$~\textsl{$\funF$ is monoidal:}
We have to check  commutativity of the diagram~\eqref{eq:transport-assoc-diag},
or equivalently the equation~\eqref{eq:transport-assoc-diag-eq} where $\Phi$ should be replaced by $\Phi_t$. 
Since in terms of $H$-graded vector spaces, $\funF$ and $\funF_{M,N}$ are the same in Section~\ref{sec:ex-qHopf} and here, and since $\Phi_t$
 only involves~$K$, evaluating \eqref{eq:transport-assoc-diag-eq} gives again \eqref{eq:Phi-Z2p}. After substituting \eqref{eq:zeta-def} the coefficient in \eqref{eq:Phi-Z2p} becomes
\be
 (-1)^{a\kappa(b,c)} \ffrac{ \zeta_{a,b} \zeta_{\re{a+b},c}}{\zeta_{b,c} \zeta_{a,\re{b+c}}} = 
\begin{cases}
q^{-t c}, & \qquad a,b \; \text{are odd},\\
1, & \qquad  \text{otherwise}.\\
\end{cases}
\ee
Recall then from~\eqref{eq:idem-e} that the central idempotent $\idem_0$ is the projector onto even $a\in\ZZ_{2p}$ and $\idem_1$ is to odd values of $a$. We can therefore write
\be\label{eq:Phi-t-idem}
\Phi_t = 
\idem_0\tensor \idem_0 \tensor \one + 
\idem_0\tensor \idem_1 \tensor \one + 
\idem_1\tensor \idem_0\tensor \one +  
\idem_1\tensor \idem_1\tensor \sum_{c\in\ZZ_{2p}} q^{-tc} \e{c}\ ,
\ee
where the four terms correspond to the four possible parities of $(a,b) \in \ZZ_{2p}^{\times2}$.
Using~\eqref{eq:idemp-prim} the expression in~\eqref{eq:Phi-t-idem} is resummed into~\eqref{eq:Phi-t}.

\newcommand{\ev}{\mathrm{ev}}
\newcommand{\odd}{\mathrm{odd}}

\medskip

\noindent
$\bullet$~\textsl{$\funF$ is braided:}
We need to check 
commutativity of
the  diagram~\eqref{eq:transport-braiding-via-functorequiv},
or equivalently the condition in~\eqref{eq:transport-br-diag-eq} where $R$ should be replaced by $R_t$.
Recall from discussion around~\eqref{eq:braiding-descends} that the braiding in $\mathcal{M}=\Cmodl$ is inherited from the one in $\catU_p^\oplus$ where it is defined via the $R$-matrix in~\eqref{eq:R-H} as $c_{U,V} = \tau_{U,V} \circ R$. $c^{\mathcal{M}}$ is then the unique solution of commutativity of the right quadrant of the diagram in~\eqref{eq:braiding-descends}, i.e.\ $c^{\mathcal{M}}_{\funF(U),\funF(V)}=\tilde c_{\funF(U),\funF(V)}$.

The calculation for RHS of~\eqref{eq:transport-br-diag-eq} is the same as for the Cartan part in~\eqref{eq:br-diag-RHS} and for our choice in~\eqref{eq:zeta-def} is
\begin{align}
 &(1_k\tensor \e{a} u)\tensorL (1_l\tensor \e{b} v)
\nonumber \\
 & \qquad \longmapsto  \bigl(\delta_{a,\ev} + (-1)^l q^{\half tb}\delta_{a,\odd}\bigr) 1_{k+l +\kappa(a,b)} \tensor \sum_{(R_t)}(R_t)_2 \cdot \e{b}v\tensor(R_t)_1 \cdot \e{a}u\ .
\label{eq:braid-eq-RHS}
\end{align}
We emphasise  that  we assume here and below that $a,b\in \RS$, otherwise $q^{\half tb}$ would not be well-defined here.

The calculation of the LHS of~\eqref{eq:transport-br-diag-eq} is different due to the presence of the nilpotent part in the $R$-matrix~\eqref{eq:R-H}, and we do it again. We first calculate the image of the braiding:
 \begin{align}
 &(1_k\tensor \e{a} u)\tensorL (1_l\tensor \e{b} v)
\nonumber \\ 
&
\xymatrix{ \ar@{|->}[rr]^{c^{\mathcal{M}}_{\funF(U),\funF(V)} } && }
 (-1)^{al+bk}\sum_{n=0}^{p-1}q^{\half(a+2n)(b-2n)+\half n(n-1)} \ffrac{(q-q^{-1})^n}{[n]!}
 \nonumber \\
 &\hspace*{8em}  \times \bigl(1_{l+ \kappa(b,-2n)}\otimes  \e{\re{b-2n}}
 F^n.v\bigr) \tensorL \bigl( 1_{k+ \kappa(a,2n)}\otimes \e{\re{a+2n}}E^n.u\bigr) \ ,
\label{eq:braid-eq-cM}
\end{align}
 where  we used~\eqref{eq:funF-UqH-p1}.
 Then the image of $\funF_{V,U} $ on RHS of~\eqref{eq:braid-eq-cM} is
\begin{align} \label{eq:braid-eq-FVU}
 \text{RHS of}~\eqref{eq:braid-eq-cM}& 
 \xymatrix{ \ar@{|->}[r]^{ \funF_{V,U} }& }
(-1)^{al+bk}\sum_{n=0}^{p-1}q^{\half(a+2n)(b-2n)+\half n(n-1)} \ffrac{(q-q^{-1})^n}{[n]!}  \\
&  \times \bigl(\delta_{b,\ev} + (-1)^{k+\kappa(a,2n)}q^{\half t \re{a+2n}}\delta_{b,\odd}\bigr)1_{k+l + \kappa(a,b)}\tensor \bigl(\e{\re{b-2n}}F^n.v\tensor \e{\re{a+2n}}E^n.u\bigr)
 \nonumber
 \end{align}
where we used in the expression for $\kappa\bigl(\re{b-2n},\re{a+2n}\bigr)$ that $\re{\re{b-2n}+\re{a+2n}}=\re{a+b}$. 

Comparing the both sides of~\eqref{eq:transport-br-diag-eq}, which is RHS of~\eqref{eq:braid-eq-RHS} and RHS of~\eqref{eq:braid-eq-FVU}, we have thus to check the equality for $a,b\in \RS$
\begin{align}\label{eq:braid-eq-fin}
\sum_{(R_t)}(R_t)_2 \cdot \e{b}v\tensor(R_t)_1 \cdot \e{a}u &=\sum_{n=0}^{p-1}q^{\half(a+2n)(b-2n)+\half n(n-1)} \ffrac{(q-q^{-1})^n}{[n]!} \\
&\times  \bigl(\delta_{a,\ev} + q^{-\half tb}\delta_{a,\odd}\bigr) 
 \bigl(\delta_{b,\ev} + (-1)^{\kappa(a,2n)}q^{\half t \re{a+2n}}\delta_{b,\odd}\bigr)\nonumber\\
&\times  \bigl(F^n \e{b}v\tensor E^n\e{a}u\bigr)\nonumber
\end{align}
using $R_t$ from~\eqref{eq:R-quasiH}, we also used here $\e{\re{b-2n}}F^n = F^n \e{b}$.
We note that the second line here can be simplified using that $t$ is odd and $\kappa(a,2n)$ is integer: we can replace $(-1)^{\kappa(a,2n)}$ by $(-1)^{t\kappa(a,2n)}$ and thus
\be\label{eq:second-line-sim}
(-1)^{\kappa(a,2n)}q^{\half t \re{a+2n}} = q^{\half t(a+2n)} \ .
\ee

Recall then that $\idem_0$ is the projection onto eigenspaces with even $a$ and $\idem_1$ is onto odd values of $a$. 
To check~\eqref{eq:braid-eq-fin} for the four possible cases of parity of $(a,b)$ it is convenient to rewrite the expression for $R_t$ in \eqref{eq:R-quasiH} in terms of central idempotents as
\begin{multline}\label{eq:Rt-4-sectors}
R_t= \frac{1}{p}\sum_{n,s,r=0}^{p-1} \frac{(q-q^{-1})^n}{[n]!}q^{\frac{n(n-1)}{2}+2n(s-r)-2sr} \,E^nK^s\otimes F^n K^r\\
\times (\idem_0\otimes \idem_0 +q^{t r}  \idem_0\otimes \idem_1 + q^{-t(n+s)} \idem_1\otimes \idem_0 + q^{\frac{t^2}{2} +t r -t(n+s)}  \idem_1\otimes \idem_1)\ .
\end{multline}
We illustrate 	the verification of~\eqref{eq:braid-eq-fin} 
only in the case when both $a$ and $b$ are odd (the other case are treated similarly). 
	On the LHS of~\eqref{eq:braid-eq-fin} we have after a simple calculation (and after flipping the tensor factors)
\be\label{eq:Rt-ea-eb}
R_t(\e{a}\tensor \e{b}) = \sum_{n=0}^{p-1} \ffrac{(q-q^{-1})^n}{[n]!} q^{\half n(n-1)} q^{nb-na+nt-2n^2} q^{\half(at -bt + ab)} E^n\e{a} \tensor F^n \e{b} \ ,
\ee
where we used only the $\idem_1\tensor \idem_1$ contribution 
	from~\eqref{eq:Rt-4-sectors}, 
applied~\eqref{eq:K-en} and evaluated the sum over $s$. 
	On the RHS 
of~\eqref{eq:braid-eq-fin} we have the contribution for $\delta_{a,\odd}\delta_{b,\odd}$ 
(using~\eqref{eq:second-line-sim} and applying the flip):
\be\label{eq:ab-odd}
\delta_{a,\odd}\delta_{b,\odd}\colon \; 
\sum_{n=0}^{p-1}\ffrac{(q-q^{-1})^n}{[n]!} q^{\half n(n-1)} q^{nb-na+nt-2n^2}  q^{\half(at -bt + ab)} E^n\e{a} \tensor F^n \e{b} \ ,
\ee
and we see that   \eqref{eq:ab-odd} agrees with~\eqref{eq:Rt-ea-eb}.
We have thus proven that the equality~\eqref{eq:transport-br-diag-eq} holds and thus $\funF$ is braided.

\begin{remark}
In the proof we require $a,b \in \RS$, but we note 
that by construction the RHS of \eqref{eq:braid-eq-fin} must be independent of the choice of representatives $a,b$. This is not immediately obvious from the explicit expression, so let us verify it in the case where $a$ and $b$ are odd, i.e.\ in \eqref{eq:ab-odd}. It is only in the third $q$-exponent one have to check: we first  write it  in the form
$q^{\half({a}(t+{b}) -{b}t)}$ -- it is now clear that it does not depend on the choice of the representative~${a}$ as $t+{b}$ is even. Then we rewrite it as
 $q^{\half({a}t + {b}({a} - t))}$ and it is clearly independent of the choice of the representative ${b}$ as ${a}-t$ is even.  
\end{remark}

\medskip

\noindent
$\bullet$~\textsl{$\funF$ is ribbon:}
We finally check the ribbon twist in $\catQ_p$, similarly to step~\ref{plan-step5} in Section~\ref{sec:ex-qHopf}.  Recall that the twist on 
	$M \in \catU_p^\oplus$ 
is $\theta^{\catU}_M = \ribbon^{-1}.(-)$ 
with $\ribbon$ defined  in the second line of~\eqref{eq:R-H}, and for $U \in \catQ_p$ is $\theta^{\catQ}_U = \ribbon_t^{-1}.(-)$ with $\ribbon_t$ from~\eqref{eq:ribbon-quasiH}.  
The required
 equation~\eqref{eq:transport-twist} for the inverse twist
on $1_k \otimes \e au\in\funF(U)$ then takes the form
\be
	\ribbon \bigl(1_k \otimes \e{a} u\bigr) = 1_k \otimes \ribbon_t \,\e au \ .
\ee
LHS of this equation is
\be
\ribbon \bigl(1_k \otimes \e{a} u\bigr) =   \sum_{n=0}^{p-1} (-1)^n  \ffrac{(q-q^{-1})^{n}}{[n]!} 
 q^{-\frac{3}{2} n-\half (n^2+ a^2)} 1_k\tensor \e{a} K^{p-1-n}F^nE^n.u\ ,
\ee
where we first used~\eqref{eq:funF-UqH-p1}, then~\eqref{eq:funF-UqH-2}, then rewrote $S(F^n)=(-1)^n q^{n(n-1)}K^n F^n$ and used again~\eqref{eq:funF-UqH-p1}. We now take the sum over $a\in\ZZ_{2p}$ for both sides, so the LHS above is just $\ribbon \bigl(1_k \otimes  u\bigr)$, while for the RHS we 
	repeat step $(*)$ in \eqref{eq:ribbon-Z2p} to carry out the sum over $a$.
The result is
\be
	\ribbon \bigl(1_k \otimes  u\bigr) = \ffrac{1-\rmi}{2\sqrt{p}}  \sum_{n=0}^{p-1} \sum_{l=0}^{2p-1} \ffrac{(q-q^{-1})^{n}}{[n]!} 
 q^{np -\frac{3}{2} n+\half(l^2 - n^2)} \, 1_k\tensor K^{p-1+l-n}F^nE^n.u\ .
\ee
Finally, substituting $l\mapsto j+p+1+n$ and replacing the sum $\sum_{j=-p-1-n}^{p-2-n}$ just by $\sum_{j\in\ZZ_{2p}}$ we get indeed the equality
$\ribbon \bigl(1_k \otimes  u\bigr)  =  1_k \otimes  \ribbon_t . u$,  cf.~\eqref{eq:ribbon-quasiH}.
\end{proof}

\subsection{Factorisability}

In Section~\ref{sec:conventions} we gave two of the equivalent characterisations of factorisability: no non-trivial transparent objects and non-degeneracy of $\hat{\mathcal{D}}$. In the proofs of Propositions~\ref{prop:Z-2p-quasiH} and~\ref{prop:C-transp-obj} we used the first criterion. We could do the same here, but we prefer to use this opportunity to illustrate the second criterion.
	Thus, to show factorisability we will check that
the element $\hat{\mathcal{D}} \in \Q \otimes \Q$ defined  in~\eqref{eq:Q-for-fact}  provides a non-degenerate copairing, or in other words has an expansion $\hat{\mathcal{D}}= \sum_I f_I\tensor g_I$ over two bases $f_I$ and $g_I$ in $\Q$.

We first calculate the element $X$ from~\eqref{eq:Rem-X} to be $X=\one\tensor\one  + \idem_0\tensor \idem_1 \cdot(K^{-t}-\one)$. 
Since  $\mu \circ(S\tensor\id) \bigl(\Delta(\idem_1)\bigr)=0$ (recall the coproduct formula~\eqref{eq:cop-idem} and that $S(\idem_i)=\idem_i$) only the
	summand
$\one\tensor \one$ in $X$ contributes into the expression for $\hat{\mathcal{D}}$, 
and after a simple calculation we get
\be\label{eq:hat-D-Q}
\hat{\mathcal{D}} = \sum_{(W)} S(W_3) W_4  \otimes S(W_1) W_2  = \sum_{(M)} \Bigl(S(M_2)\tensor \idem_0 M_1 +  K^{-t} S(M_2)K^t \tensor \idem_1 M_1\Bigr).
\ee
We then rewrite the $M$-matrix~\eqref{eq:M}
 using the idempotents~\eqref{eq:idemp-prim}:
\begin{align}
  M_t 
  =  \sum_{m,n=0}^{p-1}  \sum_{i=0}^{2p-1}
  \ffrac{(\q - \q^{-1})^{m + n}}{[m]! [n]!}\,
  & \q^{ \half n(n - 1) - \half m(m + 1+2i)}  
  \left( \delta_{i+m,\mathrm{ev}} + \delta_{i+m,\mathrm{odd}}
  \,q^{t(m-n)} 
  \right)
  \nonumber\\
  & \times  \e{i-m}  F^{m} E^{n} \tensor K^i E^{m} F^{n}\ .
\label{eq:M-e}
\end{align}
The resulting $\hat{\mathcal{D}}$ obviously has the desired decomposition over product of two bases labelled by $I=\{(i,m,n)\} = \ZZ_{2p} \times \ZZ_p \times \ZZ_p$. Namely, if we  introduce
\begin{align}
f_{i,m,n}&:=
  \e{i-m}  F^{m} E^{n} \ ,
  \nonumber\\
g_{i,m,n}&:= 
\ffrac{(\q - \q^{-1})^{m + n}}{[m]! [n]!}\,
  \q^{ \half n(n - 1) - \half m(m + 1+2i)}  
  \left( \delta_{i+m,\mathrm{ev}} + \delta_{i+m,\mathrm{odd}}
  \,q^{t(m-n)} 
  \right)
  K^i E^{m} F^{n} \ ,
\end{align}
we have $M_t=  \sum_{i,m,n} f_{i,m,n}\tensor g_{i,m,n}$.
On the RHS of~\eqref{eq:hat-D-Q}, $S$ maps the basis $g_{i,m,n}$ to a basis, as $S$ is invertible, and the conjugation by $K^t$ does the same. The two central idempotents $\idem_0$ and $\idem_1$ in the second tensor factor  just provide decompositions in the two ideals corresponding to~\eqref{Q-decomp}, where we used the obvious fact that 
$\{ f_{i,m,n}\idem_k \,|\, i+m+k \text{ even} \}$ 
is a basis of $\idem_k\cdot\Q$.

This finishes the proof of our main theorem.

\appendix

\section{Proof of Proposition~\ref{prop:comm-alg-unique}}\label{app:proof-Prop}

{}From the outset, by Proposition~\ref{prop:all-cont-braidings} we may simplify our lives and set $r=1$. The result for general $r$ is recovered by the appropriate braided monoidal equivalence. 

Let $A \in \mathcal{H}^\oplus$ be an algebra satisfying 1--3. By condition 3,
we can write $A = \bigoplus_{s \in U} \mathbb{C}_s$ for some subset $U \subset \mathbb{C}$.
Since $A$ is unital, we have $0 \in U$.
We may assume (by passing to an isomorphic algebra if necessary) that the unit is $\eta(1) = 1_0$. The product $\mu$ of $A$ can be described by constants $t_{a,b} \in \mathbb{C}$, $a,b \in U$, via
\be
	\mu(1_a \otimes 1_b) = t_{a,b} \, 1_{a+b} \ .
\ee
We adopt the convention that $t_{a,b}=0$ if either of $a,b$ is not in $U$.
The structure constants must satisfy, for all $a,b,c \in U$:
\begin{align}
&t_{a,b+c} \, t_{b,c} = t_{a+b,c} \, t_{a,b} &\text{(associativity)}
\nonumber\\
&t_{a,b} = t_{b,a} \,e^{\frac{\pi i}2 ab} &\text{(commutativity)}
\nonumber\\
&t_{0,a} = 1 = t_{a,0}  &\text{(unitality)}
\end{align}

Any invariant pairing on $A$ is of the form $\varpi = \eps \circ \mu$, where $\eps : A \to \one$ is non-zero. Up to a non-zero factor, the pairing is hence given by $\varpi(1_a,1_b) = \delta_{a+b,0} \, t_{a,b}$. Non-degeneracy requires
\be\label{eq:alg-unique-aux1}
	s \in U ~\Rightarrow~ -s \in U
	\text{ and }
	t_{s,-s} \neq 0 \ .
\ee

It will be convenient to abbreviate $\mu(1_a \otimes 1_b)$ by just $1_a1_b$.  The rebracketing $(1_{-a}1_a)(1_b1_{-b}) = (1_{-a}(1_a1_b))1_{-b}$ results in,
	for $a,b \in U$,
\be
	0
	\overset{\text{non-deg.}}\neq
	t_{-a,a} t_{b,-b}
	\overset{\text{assoc.}}=
	t_{a,b} t_{-a,a+b} t_{b,-b} \ .
\ee
In particular we learn the important constraint
\be\label{eq:alg-unique-aux2}
	a,b \in U ~\Rightarrow~ a+b \in U \text{ and } t_{a,b} \neq 0 \ .
\ee
Together with \eqref{eq:alg-unique-aux1} this shows that $U$ is a subgroup of $\mathbb{C}$.

Next we exploit commutativity. Applying the commutativity condition twice gives $t_{a,b} = t_{a,b} e^{ \pi i ab}$. Since $t_{a,b} \neq 0$ 
	for $a,b\in U$,
we obtain the condition 
\be
	a,b \in U ~\Rightarrow~ ab \in 2 \mathbb{Z} \ .
\ee
For $a=b$ we learn that $a^2 \in 2\mathbb{Z}$. In particular, $U$ is discrete, and for all $a \in U$
	we have $a \in \mathbb{R} \cup i \mathbb{R}$.
In fact, we even have $U \subset \mathbb{R}$ or $U \subset i\mathbb{R}$, for otherwise there would be nonzero $a,b \in U$ with $a \in \mathbb{R}$ and $b \in i\mathbb{R}$, and by \eqref{eq:alg-unique-aux2} then also $a+b \in U$, a contradiction. 

For $U = \{0\}$ we obtain $A = \mathbb{C}_0$. Consider now the case $U \neq \{0\}$.
Since $U$ is discrete, there is a nonzero element closest to $0$, call it $\gamma$. Since $U$ is a subgroup also $\mathbb{Z} \gamma \subset U$.
Suppose $\mathbb{Z} \gamma \subsetneq U$ and let $z \in U \setminus \mathbb{Z} \gamma$. Since $U \subset \mathbb{R}$ or $U \subset i\mathbb{R}$ there is $\lambda \in \mathbb{R}$ such that $z = \lambda \gamma$. A suitable $\mathbb{Z}$-linear combination of $z$ and $\gamma$ hence produces a nonzero element of $U$ which is closer to $0$ than $\gamma$, a contradiction. Thus 
\be
U = \gamma\mathbb{Z} \ .
\ee

Next we study the effect of algebra isomorphism. Let $A'$ be an algebra satisfying 1--3 with the same underlying object as $A$ and denote by $t'_{a,b}$ its structure constants. We may again take $t'_{a,0}=1=t'_{0,a}$. It is easy to see that $A$ and $A'$ are isomorphic as algebras iff there are $\varphi_a \neq 0$, $a \in U$,  such that for all $a,b \in U$,
$t'_{a,b} \,\varphi_a \,\varphi_b = \varphi_{a+b} \,t_{a,b}$, or, equivalently,
\be\label{eq:alg-unique-aux3}
	t'_{a,b} = \frac{\varphi_{a+b}}{\varphi_a \,\varphi_b} \, t_{a,b} \ .
\ee
More conceptually, $t$ is a 2-cocycle in group cohomology, and the above operation changes it by a 1-cocycle.
Note that necessarily $\varphi_0=1$. 
As suggested by \eqref{eq:alg-unique-aux3}, we may think of $t'_{a,b}$ as being determined by $t_{a,b}$ and $\varphi_a$, giving an algebra $A'$ isomorphic to $A$.

Fix now $(\varphi_{m\gamma})_{m \in \mathbb{Z}}$ 
	recursively
such that $\varphi_0=1$ and 
	$\varphi_{a+\gamma}/(\varphi_\gamma\varphi_a) = 1/t_{\gamma,a}$. 
Then $t'$ as determined by \eqref{eq:alg-unique-aux3} satisfies $t'_{\gamma,a} = \frac{\varphi_{\gamma+a}}{\varphi_\gamma \,\varphi_a} \, t_{\gamma,a} = 1$. That is, after possibly passing to an isomorphic algebra, we may assume that
\be\label{eq:alg-unique-aux4}
	t_{\gamma,a} = 1 \quad
	\text{for all} ~~ a \in U \ .
\ee

We now use associativity in the form $1_\gamma(1_{m\gamma}1_{n\gamma}) = (1_\gamma 1_{m\gamma})1_{n\gamma}$ to get $t_{\gamma,(m+n)\gamma} t_{m\gamma,n\gamma} = t_{(m+1)\gamma,n\gamma} t_{\gamma,m\gamma}$. Together with \eqref{eq:alg-unique-aux4} this becomes
\be
	t_{m\gamma,n\gamma} = t_{(m+1)\gamma,n\gamma} 
\quad
	\text{for all} ~~ 
	m,n \in \mathbb{Z} \ .
\ee
But this implies that $t_{m\gamma,n\gamma} = t_{\gamma,n\gamma} = 1$ for all $m,n$. 

Finally, since all $t_{a,b}$ are equal to 1, the commutativity condition becomes $ab \in 4 \mathbb{Z}$, or, equivalently, $\gamma^2 \in 4 \mathbb{Z}$. This proves the proposition.

\newcommand{\botpr}{\mathsf{a}}
\newcommand{\leftpr}{\mathsf{b}}

\section{Tensor products with the modules $\modO^{\pm}_s(1;\lambda)$}\label{app:O}

It is known that the representation category $\rep \UresSL2$ is not braidable~\cite{KS} because of the non-commutativity for $p>2$ of tensor products of a certain class of modules paramterised by points on $\CC\mathbb{P}^1$. (While for $p=2$ the tensor product is commutative the category nevertheless does not allow for a braiding~\cite{GR1}.)
These modules are denoted by $\modO^{\pm}_{s}(n,\lambda)$, for integer $n\geq1$ and $\lambda\in \CC\mathbb{P}^1$, and were introduced independently in~\cite{Xi,Feigin:2005xs}. We first recall the action in an explicit basis and then show examples of tensor product decompositions for $n=1$ where the non-commutativity is apparent. 
We will then turn to  a similar decomposition with respect to the new coproduct~\eqref{eq:cop-new}, that is, for the quasi-Hopf modification $\Q$, in order to see how the new coproduct $\Delta_t$ solves the issue of non-commutativity.

Let
$1\leq s\leq p{-}1$, $a=\pm$, and
$\lambda\in \CC\mathbb{P}^1$, so we will write 
	$\lambda\,{=}\,[\lambda_1:\lambda_2]$.  
The $\modO^{a}_{s}(1,\lambda)$
module is $p$-dimensional and has  a basis (we follow here~\cite{Feigin:2005xs})
\begin{equation}\label{Nbasis}
  \{\botpr_j\}_{0\le n\le s-1}\cup\{\leftpr_k\}_{0\le k\le p-s-1}
\end{equation}
with the $\UresSL2$-action given by
\begin{align}
  K\botpr_j&=aq^{s-1-2j}\botpr_j, \quad
  K\leftpr_k=-aq^{p-s-1-2k}\leftpr_k,
  \quad 0\le j\le s{-}1,~0\le k\le p{-}s{-}1,\notag\\
  E\botpr_j&=
  \begin{cases}
    a[j][s-j]\botpr_{j-1}, &1\le j\le s{-}1,\\
    \lambda_2 \leftpr_{p-s-1}, &j=0,\\
  \end{cases}
  \\
  E\leftpr_k&=-a[k][p-s-k]\leftpr_{k-1}, \quad 0\le k\le p{-}s{-}1,\notag\\
  \intertext{where we set $\leftpr_{-1}=0$, and} F\botpr_j&=
  \begin{cases}
    \botpr_{j+1}, &0\le j\le s{-}2,\\
    \lambda_1 \leftpr_0, &j=s{-}1,\\
  \end{cases}
  \\
  F\leftpr_k&=\leftpr_{k+1}, \quad 0\le k\le p{-}s{-}1,\notag
\end{align}
where we set $\leftpr_{p-s}=0$. 
One can check that different choices of representatives $\lambda_1,\lambda_2$ for $\lambda\,{=}\,[\lambda_1:\lambda_2]$ yield isomorphic modules, so that the notation $\modO^{a}_{s}(1,\lambda)$ is justified.

Let  $\modX^{-}_1$ denote the one-dimensional $\UresSL2$-module with the trivial action by $E$ and $F$ while  $K$ acts by $-1$.
Using the basis above it is straightforward to establish 
the isomorphisms~\cite[Prop.\ 3.4.1]{KS}:
\begin{equation}
\begin{split}\label{eq:modO-X}
\modO^{\pm}_s(1;\lambda)\otimes \modX^-_1&\cong \modO^{\mp}_s(1;-\lambda)\ ,\\
\modX^-_1\otimes\modO^{\pm}_s(1;\lambda)&\cong \modO^{\mp}_s(1;-(-1)^p\lambda)\ .
\end{split}
\end{equation}
 Of course, the tensor products in~\eqref{eq:modO-X} are not the only examples of the non-commutativity -- see more in~\cite[Sec.\ 3.4]{KS}. 

Using now the new  coproduct $\Delta_t$ (with the corresponding tensor product denoted by~$\tensor_t$) we get the following isomorphisms 
of $\Q$ modules
\begin{equation}\label{eq:modO-X-2}
\modO^{\pm}_s(1;\lambda)\otimes_{t} \modX^-_1\cong \modX^-_1\otimes_{t}\modO^{\pm}_s(1;\lambda)\cong \modO^{\mp}_s(1;-\lambda)
\end{equation}
for any odd values of $t$.
The first isomorphism can be taken to be the action with $\tau\circ R_t$ where $\tau$ is the flip of vector spaces and $R_t$ is introduced in~\eqref{eq:R-quasiH}.

\newcommand\arxiv[2]      {\href{http://arXiv.org/abs/#1}{#2}}
\newcommand\doi[2]        {\href{http://dx.doi.org/#1}{#2}}
\newcommand\httpurl[2]    {\href{http://#1}{#2}}


\begin{thebibliography}{XXX}

\small
\setlength{\parskip}{2pt}
\setlength{\itemsep}{1pt}

\bibitem[Ab]{Abe:2005}
T.~Abe,
{\it A $\ZZ_2$-orbifold model of the symplectic fermionic vertex operator superalgebra},
\doi{10.1007/s00209-006-0048-5}{Mathematische Zeitschrift {\bf 255}  (2007) 755--792}
\arxiv{math/0503472}{[math.QA/0503472]}.

\bibitem[Ad]{Ad} D.~Adamovic,
{\it Classification of irreducible modules of certain subalgebras of free boson vertex algebra},
\doi{10.1016/j.jalgebra.2003.07.011}{J.\ Algebra {\bf 270} (2003) 115--132}
\arxiv{math/0207155}{[math/0207155 [math-qa]]}.

\bibitem[ALM]{ALM} 
  D.~Adamovic, X.~Lin, A.~Milas,
  {\it ADE subalgebras of the triplet vertex algebra $\mathcal{W}(p)$: A-series},
  \doi{10.1142/S0219199713500284}{Commun. Contemp. Math. 15, 1350028 (2013)}
  \arxiv{ arXiv:1212.5453}{[ arXiv:1212.5453 [math.QA]]}.

\bibitem[AM1]{AM1} 
  D.~Adamovic, A.~Milas,
  {\it Logarithmic intertwining operators and $W(2,2p{-}1)$-algebras},
  \doi{10.1063/1.2747725}{J.\ Math.\ Phys.\ {\bf 48} (2007) 073503}
  \arxiv{math/0702081}{[math/0702081 [math.QA]]}.

\bibitem[AM2]{AM2}
  D.~Adamovic, A.~Milas,
  {\it On the triplet vertex algebra $W(p)$},
  \doi{10.1016/j.aim.2007.11.012}{Adv.\ Math.\  {\bf 217} (2008) 2664--2699}
  \arxiv{0707.1857}{[0707.1857 [math.QA]]}.

\bibitem[AM3]{AM5}
  D.~Adamovic, A.~Milas,
  {\it Lattice construction of logarithmic modules for certain vertex algebras}. \doi{10.1007/s00029-009-0009-z}{Selecta Math.\ New Ser.\ {\bf 15} (2009) 535--561}
  \arxiv{0902.3417}{[0902.3417 [math.QA]]}.

\bibitem[AM4]{AM4} 
D.~Adamovic, A.~Milas,
{\it The Structure of Zhu's algebras for certain $W$-algebras},
\doi{10.1016/j.aim.2011.05.007}{Adv.\ Math.\ {\bf 227} (2011) 2425--2456}
\arxiv{1006.5134}{[1006.5134 [math.QA]]}.

\bibitem[AM5]{AM6}
  D.~Adamovic, A.~Milas,
  {\it $C_2$-cofinite $W$-algebras and their logarithmic representations}. \doi{10.1007/978-3-642-39383-9}{Conformal field theories and tensor categories, Math.\ Lect.\ Peking Univ., Springer, 2014, 249--270}
  \arxiv{1212.6771}{[1212.6771 [math.QA]]}.

\bibitem[AM6]{AM3}
  D.~Adamovic, A.~Milas,
  {\it Some applications and constructions of intertwining operators in LCFT},
  \arxiv{1605.05561}{1605.05561 [math.QA]}.

\bibitem[AN]{Arike:2011ab}
Y.~Arike, K.~Nagatomo,
{\it Some remarks on pseudo-trace functions for orbifold models associated with symplectic fermions},
\doi{10.1142/S0129167X13500080}{Int.\ J.\ Math.\ {\bf 24} (2013) 1350008}
\arxiv{1104.0068}{[1104.0068 [math.QA]]}.

\bibitem[AR]{AR} J.~Auger, M.~Rupert, 
\textit{On infinite order simple current extensions of vertex operator algebras}, 
\arxiv{1711.05343}{1711.05343 [math.CT]}.

\bibitem[AS]{Andrus:2017}
N.~Andruskiewitsch, C.~Schweigert,
{\it On unrolled Hopf algebras},
\arxiv{1701.00153}{1701.00153 [math.QA]}.

\bibitem[BBG]{BBG} 
A.~Beliakova, C.~Blanchet, N.~Geer, 
{\it Logarithmic Hennings invariants for restricted quantum sl(2)}, \arxiv{1705.03083}{1705.03083 [math.GT]}

\bibitem[BEK]{Bockenhauer:1999wt}
  J.~B\"ockenhauer, D.E.~Evans, Y.~Kawahigashi,
  {\it Chiral structure of modular invariants for subfactors},
  \doi{10.1007/s002200050798}{Commun.\ Math.\ Phys.\  {\bf 210} (2000) 733--784}
  \arxiv{math/9907149}{[math/9907149]}.

\bibitem[BFM]{BFM} 
K.~Bringmann, A.~Folsom, A.~Milas,
{\it Asymptotic behavior of partial and false theta functions arising from Jacobi forms and regularized characters},
\doi{10.1063/1.4973634}{J.\ Math.\ Phys.\ {\bf 58} (2017) 011702}
\arxiv{1604.01977}{[1604.01977 [math.NT]]}.


\bibitem[BR]{Buecher:2012ma}
  D.~B\"ucher, I.~Runkel,
  {\it Integrable perturbations of conformal field theories and Yetter-Drinfeld modules},
  \doi{10.1063/1.4902003}{J.\ Math.\ Phys.\ {\bf 55} (2014) 111705}
  \arxiv{1211.4726}{[1211.4726 [math.QA]]}.

\bibitem[BS]{Brunner:2000wx}
  I.~Brunner, V.~Schomerus,
  {\it On superpotentials for D-branes in Gepner models},
  \doi{10.1088/1126-6708/2000/10/016}{JHEP {\bf 0010} (2000) 016}
  \arxiv{hep-th/0008194}{[hep-th/0008194]}.
  
\bibitem[BT]{BT} 
D.~Bulacu, 
B.~Torrecillas, 
\textit{Factorizable quasi-Hopf algebras -- applications},
\doi{10.1016/j.jpaa.2004.04.010}{J.\ Pure and Appl.\ Alg.\ 194 (2004) 39--84} 
\arxiv{math/0312076}{[math.QA/0312076]}.

\bibitem[CF]{Carqueville:2005nu}
  N.~Carqueville, M.~Flohr,
  {\it Nonmeromorphic operator product expansion and $C_2$-cofiniteness for a family of W-algebras},
  \doi{10.1088/0305-4470/39/4/015}{J.\ Phys.\ A  {\bf 39} (2006) 951--966}
  \arxiv{math-ph/0508015}{[math-ph/0508015]}.

\bibitem[CG]{CG} 
T.~Creutzig, T.~Gannon, 
{\it Logarithmic conformal field theory, log-modular tensor categories and modular forms}, 
\doi{10.1088/1751-8121/aa8538}{J.\ Phys.\ A {\bf 50} (2017) 404004}
\arxiv{1605.04630}{[1605.04630 [math.QA]]}.

\bibitem[CGP]{CGP} 
F.~Costantino, N.~Geer, B.~Patureau-Mirand,
{\it Some remarks on the unrolled quantum group of sl(2)},
\doi{10.1016/j.jpaa.2014.10.012}{J.\ Pure and Appl.\ Algebra {\bf 219} (2015) 3238--3262}
\arxiv{1406.0410}{[1406.0410 [math.QA]]}.
  
\bibitem[CKL]{CKL} 
T.~Creutzig, S.~Kanade, A.~Linshaw, 
{\it Simple current extensions beyond semi-simplicity}, \arxiv{1511.08754}{1511.08754 [math.QA]}.

\bibitem[CKLR]{CKLR}
  T.~Creutzig, S.~Kanade, A.R.~Linshaw, D.~Ridout,
  {\it Schur-Weyl Duality for Heisenberg Cosets},
  \arxiv{1611.00305}{1611.00305 [math.QA]}.

\bibitem[CKM]{CKM}
  T.~Creutzig, S.~Kanade, R.~McRae, 
  {\it Tensor categories for vertex operator superalgebra extensions}, 
  \arxiv{1705.05017}{1705.05017 [math.QA]}.

\bibitem[CM1]{CM} 
  T.~Creutzig, A.~Milas,
  {\it False Theta Functions and the Verlinde formula},
  \doi{10.1016/j.aim.2014.05.018}{Adv.\ Math.\  {\bf 262} (2014) 520--545}
  \arxiv{1309.6037}{[1309.6037 [math.QA]]}.

\bibitem[CM2]{CM2} 
  T.~Creutzig, A.~Milas, {\it Higher rank partial and false theta functions and representation theory}, \doi{10.1016/j.aim.2017.04.027}{Adv.\ Math. {\bf 314} (2017) 203-227}
  \arxiv{1607.08563}{[1607.08563 [math.QA]]}.

\bibitem[CMR]{CMR} T.~Creutzig, A.~Milas, M.~Rupert, \textit{Logarithmic Link Invariants of $\overline{U}_q^H(\mathfrak{sl}_2)$ and Asymptotic Dimensions of Singlet Vertex Algebras}, 
\arxiv{1605.05634}{1605.05634 [math.QA]}.

\bibitem[CMW]{CMW} 
T.~Creutzig, A.~Milas, S.~Wood,
{\it On Regularised Quantum Dimensions of the Singlet Vertex Operator Algebra and False Theta Functions},
\doi{10.1093/imrn/rnw037}{Int.\ Math.\ Res.\ Not.\ (2017) 1390--1432}
\arxiv{1411.3282}{[1411.3282 [math.QA]]}.

\bibitem[CMy]{CaM}	
S.~Carnahan, M.~Miyamoto,
{\it Regularity of fixed-point vertex operator algebras}, \arxiv{1603.05645}{1603.05645 [math.RT]}.

\bibitem[CP]{ChPr} V.~Chari, A.~Pressley, \textit{A guide to quantum groups}, CUP, 1994. 

\bibitem[CRi]{CR} T.~Creutzig, D.~Ridout, 
  {\it Logarithmic Conformal Field Theory: Beyond an Introduction},
  \doi{10.1088/1751-8113/46/49/494006}{J.\ Phys.\ A {\bf 46} (2013) 494006}
  \arxiv{1303.0847}{[1303.0847 [hep-th]]}.

\bibitem[CRu]{Carqueville:2012dk}
N.~Carqueville, I.~Runkel,
{\it Orbifold completion of defect bicategories},
\doi{10.4171/QT/76}{Quantum Topol.\ {\bf 7} (2016) 203--279}
\arxiv{1210.6363}{[1210.6363 [math.QA]]}.

\bibitem[CRW]{CRW}  
  T.~Creutzig, D.~Ridout, S.~Wood,
  {\it Coset Constructions of Logarithmic $(1,p)$ Models},
  \doi{10.1007/s11005-014-0680-7}{Lett.\ Math.\ Phys.\  {\bf 104} (2014) 553--583}
  \arxiv{1305.2665}{[1305.2665 [math.QA]]}.

\bibitem[DGP]{DeRenzi:2017}
M.~De~Renzi, N.~Geer, B.~Patureau-Mirand,
{\it Renormalized Hennings Invariants and 2+1-TQFTs}
\arxiv{1707.08044}{1707.08044 [math.GT]}.

\bibitem[DL]{DL} 
C.~Dong, J.~Lepowsky, 
{\it Generalized Vertex Algebras and Relative Vertex Operators},
\doi{10.1007/978-1-4612-0353-7}{Progress in Mathematics {\bf 112},
Birkh\"auser, 1993}.

\bibitem[DLM]{DLM} 
C.~Dong, H.~Li, G.~Mason, 
{\it Simple currents and extensions of vertex operator algebras}, 
\doi{10.1007/BF02099628}{Commun.\ Math.\ Phys.\ {\bf 180} (1996) 671--707}
\arxiv{q-alg/9504008}{[q-alg/9504008]}.

\bibitem[Do]{D} 
C.~Dong, 
{\it Vertex algebras associated with even lattices}, 
\doi{10.1006/jabr.1993.1217}{J.\ Algebra {\bf 160} (1993) 245--265}.

\bibitem[DR]{Davydov:2012xg}
  A.~Davydov, I.~Runkel,
  {\it $\mathbb{Z}/2\mathbb{Z}$-extensions of Hopf algebra module categories by their base categories},
\doi{doi:10.1016/j.aim.2013.06.024}{Adv.\ Math.\ {\bf 247} (2013) 192--265}
\arxiv{1207.3611}{[1207.3611 [math.QA]]}.

 \bibitem[EGNO]{EGNO-book}
P.I.~Etingof, S.~Gelaki, D.~Nikshych, V.~Ostrik, 
{\it Tensor categories},
Math.\ Surveys Monographs {\bf 205}, AMS, 2015.

\bibitem[EM]{Eilenberg:1954} S.~Eilenberg, S.~Mac Lane,
\textit{On the groups $H(\Pi,n)$, II: Methods of Computation},
\doi{10.2307/1969702}{Ann.\ Math.\ \textbf{60} (1954) 49--137}.

\bibitem[FB]{FB} E.~Frenkel, D.~Ben-Zvi, 
{\it Vertex Algebras and Algebraic Curves},
Mathematical Surveys and Monographs {\bf 88}, AMS, 2001.

\bibitem[FFHST]{[FFHST]} 
J.~Fjelstad, J.~Fuchs, S.~Hwang, A.M.~Semikhatov, I.Yu.~Tipunin, 
\textit{Logarithmic Conformal Field Theories via Logarithmic
    Deformations}, 
\doi{10.1016/S0550-3213(02)00220-1}{Nucl.\ Phys.\ {\bf B633} (2002) 379--413}  
\arxiv{hep-th/0201091}{[hep-th/0201091]}.

\bibitem[FGR1]{FGR1}
V.~Farsad, A.M. Gainutdinov, I.~Runkel,
\newblock {\em $\SLiiZ$ action for ribbon quasi-Hopf algebras},
\arxiv{1702.01086}{1702.01086 [math.QA]}.

\bibitem[FGR2]{FGR2}
V.~Farsad, A.M.~Gainutdinov, I.~Runkel,
{\it The symplectic fermion ribbon quasi-Hopf algebra and the $\SLiiZ$-action on its centre},
\arxiv{1706.08164}{1706.08164 [math.QA]}.

\bibitem[FGRS]{Fuchs:2007tx} 
J.~Fuchs, M.R.~Gaberdiel, I.~Runkel, C.~Schweigert,
{\it Topological defects for the free boson CFT},
\doi{10.1088/1751-8113/40/37/016}{J.\ Phys.\ A  {\bf 40} (2007) 11403}
\arxiv{0705.3129}{[0705.3129 [hep-th]]}.

\bibitem[FGST1]{FGST} 
  B.L.~Feigin, A.M.~Gainutdinov, A.M.~Semikhatov, I.Y.~Tipunin,
  \textit{Modular group representations and fusion in logarithmic conformal field theories and in the quantum group center},
  \doi{10.1007/s00220-006-1551-6}{Commun.\ Math.\ Phys.\  {\bf 265} (2006) 47--93}
  \arxiv{hep-th/0504093}{[hep-th/0504093]}.

\bibitem[FGST2]{Feigin:2005xs}
B.L.~Feigin, A.M.~Gainutdinov, A.M.~Semikhatov, I.Y.~Tipunin,
{\it Kazhdan--Lusztig correspondence for the representation category
of the triplet W-algebra in logarithmic conformal field theory},
\doi{10.1007/s11232-006-0113-6}{Theor.\ Math.\ Phys.\  {\bf 148} (2006) 1210--1235}
\arxiv{math.qa/0512621}{[math.qa/0512621]}.

\bibitem[FGST3]{FGST3}
 B.L.~Feigin, A.M.~Gainutdinov, A.M.~Semikhatov, I.Y.~Tipunin, \textit{Logarithmic extensions of minimal models:
    characters and modular transformations}, 
\doi{10.1016/j.nuclphysb.2006.09.019}{Nucl.\ Phys.\ B {\bf 757} (2006) 303--343}
\arxiv{hep-th/0606196}{[hep-th/0606196]}.

\bibitem[FHL]{FHL-book}
I.B.~Frenkel, Y.-Z.~Huang, J.~Lepowsky,
\textit{On Axiomatic Approaches to Vertex Operator Algebras and Modules},
\doi{10.1090/memo/0494}{Memoirs of the AMS {\bf 494} (1993)}.

\bibitem[FHST]{FHST} J.~Fuchs, S.~Hwang, A.M.~Semikhatov, I.Y.~Tipunin,
  {\it Nonsemisimple fusion algebras and the Verlinde formula},
  \doi{10.1007/s00220-004-1058-y}{Commun.\ Math.\ Phys.\  {\bf 247} (2004) 713--742}
  \arxiv{hep-th/0306274}{[hep-th/0306274]}.

\bibitem[FjFRS]{Runkel:2005qw}
  J.~Fjelstad, J.~Fuchs, I.~Runkel, C.~Schweigert,
  {\it Topological and conformal field theory as Frobenius algebras},
  Contemp.\ Math.\  {\bf 431} (2007) 225--248
  \arxiv{math/0512076}{[math-ct/0512076]}.

\bibitem[Fl]{Fl}
  M.A.I.~Flohr,
  {\it On modular invariant partition functions of conformal field theories with logarithmic operators},  
  \doi{10.1142/S0217751X96001954}{Int.\ J.\ Mod.\ Phys.\ A {\bf 11} (1996) 4147--4172}
  \arxiv{hep-th/9509166}{[hep-th/9509166]}.

\bibitem[FL]{Flandoli:2017}
I.~Flandoli, S.D.~Lentner,
{\it Logarithmic conformal field theories of type $B_n$, $\ell=4$ 
and symplectic fermions},
\arxiv{1706.07994}{1706.07994 [math.RT]}.

\bibitem[FrFRS]{Frohlich:2003hm}
J.~Fr\"ohlich, J.~Fuchs, I.~Runkel, C.~Schweigert,
{\it Correspondences of ribbon categories},
\doi{10.1016/j.aim.2005.04.007}{Adv.\ Math.\ {\bf 199} (2006) 192--329}
\arxiv{math/0309465}{[math.CT/0309465]}.

\bibitem[FRS1]{tft1}
J.~Fuchs, I.~Runkel, C.~Schweigert,
{\it TFT construction of RCFT correlators. I: Partition functions},
\doi{10.1016/S0550-3213(02)00744-7}{Nucl.\ Phys.\ B {\bf 646} (2002) 353--497} 
\arxiv{hep-th/0204148}{[hep-th/0204148]}.

\bibitem[FRS2]{tft3}
J.~Fuchs, I.~Runkel, C.~Schweigert,
{\it TFT construction of RCFT correlators. III: Simple currents},
\doi{10.1016/j.nuclphysb.2004.05.014}{Nucl.\ Phys.\  B {\bf 694} (2004) 277--353}
\arxiv{hep-th/0403157}{[hep-th/0403157]}.

\bibitem[FS1]{Fuchs:2001qc}
  J.~Fuchs, C.~Schweigert,
  {\it Category theory for conformal boundary conditions},
  Fields Inst.\ Comm.\ {\bf 39} (2003) 25--70
  \arxiv{math/0106050}{[math.CT/0106050]}.

\bibitem[FS2]{Fuchs:2016wjr}
  J.~Fuchs, C.~Schweigert,
  {\it Consistent systems of correlators in non-semisimple conformal field theory},
  \doi{10.1016/j.aim.2016.11.020}{Adv.\ Math.\ {\bf 307} (2017) 598--639}
  \arxiv{1604.01143}{[1604.01143 [math.QA]]}.

\bibitem[FT]{FT} 
B.~Feigin, I.~Tipunin, 
{\it Logarithmic CFTs connected with simple Lie algebras}, 
\arxiv{1002.5047}{1002.5047 [math.QA]}.
  
\bibitem[Fu]{Fuchs:2006nx}
  J.~Fuchs,
  {\it On non-semisimple fusion rules and tensor categories},
  \doi{10.1090/conm/442}{Contemp.\ Math.\ {\bf 442} (2007) 315--337}
  \arxiv{hep-th/0602051}{[hep-th/0602051]}.
  
\bibitem[GbR2]{GR2} 
  M.R.~Gaberdiel, I.~Runkel,
  {\it From boundary to bulk in logarithmic CFT},
  \doi{10.1088/1751-8113/41/7/075402}{J.\ Phys.\ A {\bf 41} (2008) 075402}
  \arxiv{0707.0388}{[0707.0388 [hep-th]]}.
 
\bibitem[GK1]{Gaberdiel:1996kx}
  M.R.~Gaberdiel, H.G.~Kausch,
  {\it Indecomposable fusion products},
  \doi{10.1016/0550-3213(96)00364-1}{Nucl.\ Phys.\ B {\bf 477} (1996) 293--318}
  \arxiv{hep-th/9604026}{[hep-th/9604026]}.

\bibitem[GK2]{GK} M.R.~Gaberdiel, H.G.~Kausch,
  {\it A rational logarithmic conformal field theory},
  \doi{10.1016/0370-2693(96)00949-5}{Phys.\ Lett.\  B {\bf 386} (1996) 131--137}
  \arxiv{hep-th/9606050}{[hep-th/9606050]}.

\bibitem[GLO]{GLO} A.M.~Gainutdinov, S.D.~Lentner, T.~Ohrmann, in preparation.

\bibitem[GnR1]{GR1}
A.M.~Gainutdinov, I.~Runkel,
{\it Symplectic fermions and a quasi-Hopf algebra structure on $\overline{U}_{\mathrm{i}} s\ell(2)$},
\doi{10.1016/j.jalgebra.2016.11.026}{J.\ Algebra (2017) {\bf 476} (2017) 415--458}
\arxiv{1503.07695}{[1503.07695 [math.QA]]}.

\bibitem[GnR2]{GnR2}
A.M.~Gainutdinov, I.~Runkel, \textit{The non-semisimple Verlinde formula and pseudo-trace functions}, \arxiv{1605.04448}{1605.04448 [math.QA]}.

\bibitem[GnR3]{GnR3}
A.M.~Gainutdinov, I.~Runkel, \textit{Projective objects and the modified trace in factorisable finite tensor categories}, \arxiv{1703.00150}{1703.00150 [math.QA]}.

\bibitem[GP]{GP} 
N.~Geer, B.~Patureau-Mirand,
{\it The trace on projective representations of quantum groups},
\arxiv{1610.09129}{1610.09129 [math.QA]}.

\bibitem[GPT]{GPT} 
N.~Geer, B.~Patureau-Mirand, V.~Turaev,
{\it Modified quantum dimensions and
re-normalized link invariants},
\doi{10.1112/S0010437X08003795}{Compositio Math.\ {\bf 145} (2009) 196--212}
\arxiv{0711.4229}{[0711.4229 [math.QA]]}.

\bibitem[HKL]{HKL} 
  Y.-Z.~Huang, A.~Kirillov, J.~Lepowsky,
  {\it Braided tensor categories and extensions of vertex operator algebras},
  \doi{10.1007/s00220-015-2292-1}{Commun.\ Math.\ Phys.\  {\bf 337} (2015)  1143--1159}
  \arxiv{1406.3420}{[1406.3420 [math.QA]]}.

\bibitem[HLZ]{HLZ}Y.-Z. Huang, J. Lepowsky, L. Zhang, Logarithmic tensor category theory for generalized modules for a conformal vertex algebra, Parts I-VIII, 
\arxiv{1012.4193}{1012.4193},
\arxiv{1012.4196}{1012.4196},
\arxiv{1012.4197}{1012.4197},
\arxiv{1012.4198}{1012.4198},
\arxiv{1012.4199}{1012.4199},
\arxiv{1012.4202}{1012.4202},
\arxiv{1110.1929}{1110.1929},
\arxiv{1110.1931}{1110.1931},
Part~I published in Conformal Field Theories and Tensor Categories, Springer, 2014, 169--248.

\bibitem[Hu1]{Hu1} 
Y.-Z.~Huang,
{\it Vertex operator algebras and the Verlinde conjecture},
\doi{10.1142/S0219199708002727}{Commun.\ Contemp.\ Math.\ {\bf 10} (2008) 103--154}
\arxiv{math/0406291}{[math.QA/0406291]}.

\bibitem[Hu2]{Hu2} 
Y.-Z.~Huang, 
{\it Rigidity and modularity of vertex tensor categories}, 
\doi{10.1142/S0219199708003083}{Commun.\ Contemp.\ Math.\ {\bf 10} (2008) 871--911}
\arxiv{math/0502533}{[math.QA/0502533]}.

\bibitem[Hu3]{H} 
Y.-Z. Huang, 
{\it Cofiniteness conditions, projective covers and the logarithmic tensor product theory}, 
\doi{10.1016/j.jpaa.2008.07.016}{J.\ Pure Appl.\ Algebra {\bf 213} (2009) 458--475}
\arxiv{0712.4109}{[0712.4109 [math.QA]]}.

\bibitem[In]{Intriligator:1989zw}
  K.A.~Intriligator,
  {\it Bonus Symmetry in Conformal Field Theory},
  \doi{10.1016/0550-3213(90)90001-T}{Nucl.\ Phys.\ B {\bf 332} (1990) 541--565}.
  
\bibitem[JS]{Joyal:1993}
A.~Joyal, R.~Street, 
{\it Braided tensor categories}, 
\doi{doi:10.1006/aima.1993.1055}{Adv.\ Math.\ {\bf 102} (1993) 20--78}.


\bibitem[Kau1]{Kausch:1990vg}
H.G.~Kausch,
{\it Extended conformal algebras generated by a multiplet of primary fields},
\doi{10.1016/0370-2693(91)91655-F}{Phys.\ Lett.\ B {\bf 259} (1991) 448--455}.

\bibitem[Kau2]{Kausch:1995py}
  H.G.~Kausch,
  {\it Curiosities at $c = -2$},
  \arxiv{hep-th/9510149}{hep-th/9510149}.

\bibitem[Kas]{Kassel} 
C.~Kassel, \textit{Quantum groups}, Springer, 1995. 


\bibitem[KO]{KO}  
  A.A.~Kirillov, V.~Ostrik,
  {\it On q-analog of McKay correspondence and ADE classification of sl(2) conformal field theories},
  \doi{10.1006/aima.2002.2072}{Adv.\ Math.\ {\bf 171} (2002) 183--227}
  \arxiv{math/0101219}{[math.QA/0101219]}.

\bibitem[KRi]{KRi} S.~Kanade, D.~Ridout, in preparation.

\bibitem[KS]{KS} 
H.~Kondo, Y.~Saito, 
\textit{Indecomposable decomposition of tensor products of modules over the restricted quantum universal enveloping algebra associated to $sl(2)$},
\doi{10.1016/j.jalgebra.2011.01.010}{J.\ Algebra {\bf 330} (2011) 103--129}
\arxiv{0901.4221}{[0901.4221 [math.QA]]}.

\bibitem[KSk]{KS91}
P.P.~Kulish,  E.K.~Sklyanin, 
\textit{The general $U_q[sl(2)]$ invariant XXZ integrable quantum spin chain}, 
\doi{10.1088/0305-4470/24/8/009}{J.\ Phys.\ A\ {\bf 24} (1991) L435--L439}.

\bibitem[Le1]{Lentner:2017oxe}
  S.D.~Lentner,
  {\it The unrolled quantum group inside Lusztig's quantum group of divided powers},
  \arxiv{1702.05164}{1702.05164 [math.QA]}.

\bibitem[Le2]{Lentner:2017dkg}
  S.D.~Lentner,
  {\it Quantum groups and Nichols algebras acting on conformal field theories},
  \arxiv{1702.06431}{1702.06431 [math.QA]}.

\bibitem[Li]{Li:1994}
H.~Li,
{\it Symmetric invariant bilinear forms on vertex operator algebras}, 
\doi{10.1016/0022-4049(94)90104-X}{J.\ Pure Appl.\ Algebra {\bf 96} (1994) 279--297}.

\bibitem[LMSS]{Lentner:2017f}
S.D.~Lentner, S.N.~Mierach, C.~Schweigert, Y.~Sommerh\"auser,
{\it Hochschild Cohomology and the Modular Group},
\arxiv{1707.04032}{1707.04032 [math.RA]}.

\bibitem[LW]{LW} 
J.~Lepowsky, R.L.~Wilson, 
{\it A Lie theoretic interpretation and proof of the Rogers-Ramanujan identities}, 
\doi{10.1016/S0001-8708(82)80012-1}{Adv.\ Math.\ {\bf 45} (1982) 21--72}.

\bibitem[Ly1]{Lyubashenko:1995} 
V.V.~Lyubashenko, 
\textit{Modular transformations for tensor categories}, 
\doi{doi:10.1016/0022-4049(94)00045-K}{J.\ Pure Appl.\ Algebra\ {\bf 9} (1995) 279--327}.

\bibitem[Ly2]{Lyubashenko:1994tm}
V.V.~Lyubashenko,
{\it Invariants of three manifolds and projective representations of mapping class groups via quantum groups at roots of unity},
\doi{10.1007/BF02101805}{Commun.\ Math.\ Phys.\ {\bf 172} (1995) 467--516}
\arxiv{hep-th/9405167}{[hep-th/9405167]}.
	
\bibitem[MN]{MN} 
J.~Murakami, K.~Nagatomo,
{\it Logarithmic knot invariants arising from restricted quantum groups},
\doi{10.1142/S0129167X08005060}{Int.\ J.\ Math.\ {\bf 19} (2008) 1203--1213}
\arxiv{0705.3702}{[0705.3702 [math.GT]]}.

\bibitem[MS]{MS} 
  G.W.~Moore, N.~Seiberg,
  {\it Classical and Quantum Conformal Field Theory},
  \doi{10.1007/BF01238857}{Commun.\ Math.\ Phys.\  {\bf 123} (1989) 177--254}.

\bibitem[Mu]{Mu} 
J.~ Murakami, 
{\it From colored Jones invariants to logarithmic invariants}, \arxiv{1406.1287}{1406.1287 [math.GT]}.

\bibitem[Na]{Nahm:1996zn}
  W.~Nahm,
  {\it On quasi-rational conformal field theories},
  \doi{10.1016/0920-5632(96)00323-4}{Nucl.\ Phys.\ Proc.\ Suppl.\  {\bf 49} (1996) 107--114}.

\bibitem[NT]{Nagatomo:2009xp}
K.~Nagatomo, A.~Tsuchiya,
{\it The triplet vertex operator algebra $W(p)$ and the restricted quantum group at root of unity}, in:
Adv.\  Studies in Pure Math.\ {\bf 61} (2011), 
``Exploring new structures and natural constructions in mathematical physics'',
K.~Hasegawa et al.\ (eds.)
\arxiv{0902.4607}{[0902.4607 [math.QA]]}.

\bibitem[Oh]{Oh} 
T.~Ohtsuki,
{\it Quantum invariants. A study of knots, 3-manifolds, and their sets}, \doi{10.1142/9789812811172}{Series on Knots and
Everything {\bf 29}}, World Scientific, 2001.

\bibitem[Os]{Ostrik:2001}
V.\ Ostrik, 
{\it Module categories, weak Hopf algebras and modular invariants}, 
\doi{10.1007/s00031-003-0515-6}{Transform.\ Groups {\bf 8} (2003) 177--206}
\arxiv{math/0111139}{[math.QA/0111139]}.

\bibitem[Pa]{Pareigis:1995}
B.\ Pareigis, 
{\it On Braiding and Dyslexia}, 
\doi{10.1006/jabr.1995.1019}{J.\ Algebra {\bf 171} (1995) 413--425}.

\bibitem[Ru]{Runkel:2012cf}
  I.~Runkel,
  {\it A braided monoidal category for free super-bosons},
  \doi{10.1063/1.4868467}{J.\ Math.\ Phys.\  {\bf 55} (2014) 041702}
  \arxiv{1209.5554}{[1209.5554 [math.QA]]}.

\bibitem[RW1]{RW1} 
D.~Ridout, S.~Wood, 
{\it Modular transformations and Verlinde formulae for logarithmic $(p_+, p_-)$-models,}  
\doi{10.1016/j.nuclphysb.2014.01.010}{Nucl.\ Phys.\ B {\bf 880} 175--202}
\arxiv{1310.6479}{[1310.6479 [hep-th]]}.

\bibitem[RW2]{RW2} D.~Ridout, S.~Wood, 
{\it The Verlinde formula in logarithmic CFT},  \doi{10.1088/1742-6596/597/1/012065}{J.\ Phys.\ Conf.\ Ser.\ {\bf 597} (2015) 012065}
\arxiv{1409.0670}{[1409.0670 [hep-th]]}.

\bibitem[Sch]{Sch} 
N.R.~Scheithauer, 
{\it The fake monster superalgebra}, 
\doi{10.1006/aima.1999.1886}{Adv.\ Math.\ {\bf 151} (2000) 226--269}
\arxiv{math/9905113}{[math.QA/9905113]}.

\bibitem[Shi]{Shimizu:2016}
K.~Shimizu,
{\it Non-degeneracy conditions for braided finite tensor categories},
\arxiv{1602.06534}{1602.06534 [math.QA]}.

\bibitem[Shu]{Shum}
M.C.~Shum,
{\it Tortile tensor categories},
\doi{10.1016/0022-4049(92)00039-T}{J.\ Pure Appl.\ Algebra {\bf 93} (1994) 57--110}.

\bibitem[So]{Yorck} 
Y.~Sommerh\"auser, 
\textit{On the notion of a ribbon quasi-Hopf algebra}, 
\httpurl{inmabb.criba.edu.ar/revuma/revuma.php?p=toc/vol51}{Revista de la Uni\'on Mat.\ Argentina {\bf 51} (2010) 177--192}
\arxiv{0910.1638}{[0910.1638 [math.RA]]}.

\bibitem[ST]{ST} 
A.M.~Semikhatov, I.Yu.~Tipunin, 
\textit{The Nichols algebra of screenings}, 
  \doi{10.1142/S0219199712500290}{Commun.\ Contemp.\ Math.\  {\bf 14} (2012) 1250029}
	\arxiv{1101.5810}{[1101.5810 [math.QA]]}.

\bibitem[SY1]{Schellekens:1989am}
  A.N.~Schellekens, S.~Yankielowicz,
  {\it Extended Chiral Algebras and Modular Invariant Partition Functions},
  \doi{10.1016/0550-3213(89)90310-6}{Nucl.\ Phys.\ B {\bf 327} (1989) 673--703}.

\bibitem[SY2]{Schellekens:1990xy}
  A.~N.~Schellekens, S.~Yankielowicz,
  {\it Simple Currents, Modular Invariants and Fixed Points},
  \doi{10.1142/S0217751X90001367}{Int.\ J.\ Mod.\ Phys.\ A {\bf 5} (1990) 2903--2952}.

\bibitem[Ta]{T} 
K.~Tanabe,
{\it Simple weak modules for some subalgebras of the Heisenberg vertex algebra and Whittaker vectors}. 
\arxiv{1706.02947}{1706.02947 [math.QA]}.

\bibitem[TW]{TW}
A.~Tsuchiya, S.~Wood,
{\it The tensor structure on the representation category of the $W_p$ triplet algebra},
\doi{10.1088/1751-8113/46/44/445203 }{J.\ Phys.\ {\bf A46} (2013) 445203}
\arxiv{1201.0419}{[1201.0419 [hep-th]]}.

\bibitem[Ve]{V} 
E.P.~Verlinde, {\it Fusion rules and modular transformations in 2D
conformal field theory}, \doi{10.1016/0550-3213(88)90603-7}{Nucl.\
Phys.\ B {\bf 300} (1988) 360--376}.

\bibitem[Ya]{Y} 
H.~Yamauchi, 
{\it Module categories of simple current extensions of vertex operator algebras},
\doi{10.1016/j.jpaa.2003.10.006}{J.\ Pure Appl.\ Algebra {\bf 189}
(2004) 315--328}
\arxiv{math/0211255}{[math.QA/0211255]}.

\bibitem[Xi]{Xi}
J. Xiao, \textit{Finite dimensional representations of $U_t(sl(2))$ at root of unity}, 
\doi{10.4153/CJM-1997-038-4}{Canad.\ J.\ Math.\ {\bf 49} (1997) 772--787}.

\end{thebibliography}
\end{document}